\documentclass[11pt]{article}
\usepackage{smile}


\usepackage{fullpage}
\usepackage{lscape}
\usepackage{bigints}
\usepackage{upgreek}
\usepackage{framed}
\usepackage{mdframed}
\usepackage{enumerate}
\usepackage[inline]{enumitem}
\usepackage[T1]{fontenc}
\usepackage{moresize}
\usepackage{bm}
\usepackage{bbm}
\usepackage{dsfont}
\usepackage{amsmath}
\usepackage{amssymb}
\usepackage{amsthm}
\usepackage{amsfonts}
\usepackage{stmaryrd}
\usepackage{array}
\usepackage{mathrsfs}
\usepackage{mathtools} 
\usepackage{extarrows}
\usepackage{stackrel}
\usepackage{relsize,exscale}
\usepackage{scalerel}
\usepackage[nodisplayskipstretch]{setspace}
\usepackage{color}
\usepackage[usenames,dvipsnames]{xcolor}
\usepackage{cancel}
\usepackage{soul}
\usepackage{xfrac}
\usepackage{siunitx}
\usepackage{graphicx}
\usepackage{float}
\usepackage{rotating}
\usepackage{subcaption}
\usepackage{overpic}
\usepackage[all]{xy}
\DeclareGraphicsRule{*}{mps}{*}{}
\usepackage{tikz}
\usetikzlibrary{arrows,matrix,positioning,calc,automata,patterns}
\usepackage{booktabs}
\usepackage{dcolumn}
\usepackage{multirow}
\usepackage{diagbox}
\usepackage{tabularx}
\usepackage{verbatim}
\usepackage{listings}
\usepackage[ruled,vlined]{algorithm2e}
\usepackage{fancyvrb}
\usepackage{hyperref}
\usepackage[round]{natbib}
\usepackage{sectsty}
\usepackage{etoolbox}
\usepackage[capitalize,noabbrev]{cleveref}
\crefname{assumption}{Assumption}{Assumptions}
\newcounter{step}
\AtBeginEnvironment{proof}{\setcounter{step}{0}}
\crefname{step}{Step}{Steps}


\hypersetup{
    bookmarks=true,         
    unicode=false,          
    pdftoolbar=true,        
    pdfmenubar=true,        
    pdffitwindow=false,     
    pdfstartview={FitH},    
    pdftitle={My title},    
    pdfauthor={Author},     
    pdfsubject={Subject},   
    pdfcreator={Creator},   
    pdfproducer={Producer}, 
    pdfkeywords={key1, key2}, 
    pdfnewwindow=true,      
    colorlinks=true,        
    linkcolor=blue,         
    citecolor=blue,         
    filecolor=blue,         
    urlcolor=cyan           
}

\def\scaleint#1{\vcenter{\hbox{\scaleto[3ex]{\displaystyle\int}{#1}}}}

\usepackage{stackengine}
\stackMath
\newcommand\tenq[2][1]{%
\def\useanchorwidth{T}%
\ifnum#1>1%
\stackunder[0pt]{\tenq[\numexpr#1-1\relax]{#2}}{\!\scriptscriptstyle\thicksim}%
\else%
\stackunder[1pt]{#2}{\!\scriptstyle\thicksim}%
\fi%
}

\makeatletter
\DeclareRobustCommand\widecheck[1]{{\mathpalette\@widecheck{#1}}}
\def\@widecheck#1#2{%
    \setbox\z@\hbox{\m@th$#1#2$}%
    \setbox\tw@\hbox{\m@th$#1%
       \widehat{%
          \vrule\@width\z@\@height\ht\z@
          \vrule\@height\z@\@width\wd\z@}$}%
    \dp\tw@-\ht\z@
    \@tempdima\ht\z@ \advance\@tempdima2\ht\tw@ \divide\@tempdima\thr@@
    \setbox\tw@\hbox{%
       \raise\@tempdima\hbox{\scalebox{1}[-1]{\lower\@tempdima\box
\tw@}}}%
    {\ooalign{\box\tw@ \cr \box\z@}}}
\makeatother

\newcommand{\Dc}{\mathcal{D}}

\newcommand{\Nc}{\mathcal{N}}

\newcommand{\var}{\mathrm{Var}}

\newcommand{\Step}[2][]{%
  \refstepcounter{step}%
  \paragraph*{Step \thestep: #2.}%
  \if\relax\detokenize{#1}\relax\else\label{#1}\fi
}



\numberwithin{equation}{section}

\newtheorem{theorem}{Theorem}[section]
\newtheorem{lemma}{Lemma}[section]
\newtheorem{proposition}{Proposition}[section]
\newtheorem{assumption}{Assumption}[section]
\newtheorem{corollary}{Corollary}[section]

\providecommand{\customgenericname}{}
\newcommand{\newcustomtheorem}[2]{%
  \newenvironment{#1}[1]
  {%
   \renewcommand\customgenericname{#2}%
   \renewcommand\theinnercustomgeneric{##1}%
   \innercustomgeneric
  }
  {\endinnercustomgeneric}
}
\newcustomtheorem{customdefinition}{Definition}
\newcustomtheorem{customdefinitions}{Definitions}
\newcustomtheorem{customtheorem}{Theorem}
\newcustomtheorem{customassumption}{Assumption}
\newcustomtheorem{customlemma}{Lemma}
\newcustomtheorem{customexample}{Example}
\theoremstyle{definition}

\newtheorem{remark}{Remark}[section]

\usepackage{enumitem}
\makeatletter
\newcommand{\mylabel}[2]{#2\def\@currentlabel{#2}\label{#1}}
\makeatother

\setcounter{tocdepth}{4}
\setcounter{secnumdepth}{4}

\graphicspath{{./fig3/}}



\def\d{{\mathrm d}}

\newcommand{\R}{\mathbb{R}}  

\newcommand{\N}{\mathbb{N}}

\renewcommand{\i}{\mathbf{i}}
\renewcommand{\j}{\mathbf{j}}

\newcommand{\E}{\mathbb{E}}
\renewcommand{\P}{\mathbb{P}}

\renewcommand{\mb}{\mathbb}

\renewcommand\mf{\mathbf}
\renewcommand\ml{\mathcal}
\newcommand\tp{\intercal}
\newcommand\wt{\widetilde}

\newcommand{\rw}{\rightarrow}

\renewcommand{\1}{^{(1)}}
\renewcommand{\2}{^{(2)}}
\renewcommand{\l}{\left}
\renewcommand{\r}{\right}

\newcommand{\be}{\begin{equation}}
\newcommand{\ee}{\end{equation}}
\newcommand{\bea}{\begin{equation}\begin{aligned}}
\newcommand{\eea}{\end{aligned}\end{equation}}
\renewcommand{\tr}{\operatorname{tr}}
\renewcommand{\var}{\operatorname{var}}
\renewcommand\ind{\text{\large 1\hskip-0.29em I}}

\newcommand{\dotheadrightarrow}{%
\tikz[baseline=-0.5ex]{
  \draw[->] (0,0) -- (0.3,0);      
  \fill (0.3,0) circle (0.8pt);    
  \fill (0,0) circle (0.8pt);
}
}

\newcommand{\dotheadrightrightarrow}{%
\tikz[baseline=-0.5ex]{
  \draw[->] (0,0) -- (0.3,0);      
  \fill (0.3,0) circle (0.8pt);    
  \draw[->] (0.3,0) -- (0.6,0);    
  \fill (0.6,0) circle (0.8pt);
  \fill (0,0) circle (0.8pt);
}
}

\newcommand{\dotheadDoubleRightarrow}{%
\tikz[baseline=-0.5ex]{
  \draw[->>] (0,0) -- (0.3,0);      
  \fill (0.3,0) circle (0.8pt);    
  \fill (0,0) circle (0.8pt);
}
}

\newcommand{\dotheadleftrightarrow}{%
\tikz[baseline=-0.5ex]{
  \draw[<-] (0,0) -- (0.3,0);      
  \fill (0.3,0) circle (0.8pt);    
  \draw[->] (0.3,0) -- (0.6,0);    
  \fill (0.6,0) circle (0.8pt);
  \fill (0,0) circle (0.8pt);
}
}

\newcommand{\dotheadrightleftarrow}{%
\tikz[baseline=-0.5ex]{
  \draw[->] (0,0) -- (0.3,0);      
  \fill (0.3,0) circle (0.8pt);    
  \draw[<-] (0.3,0) -- (0.6,0);    
  \fill (0.6,0) circle (0.8pt);
  \fill (0,0) circle (0.8pt);
}
}

\newcommand{\dotheadSingleDoublearrow}{%
\tikz[baseline=-0.5ex]{
  \draw[<->] (0,0) -- (0.3,0);      
  \fill (0.3,0) circle (0.8pt);    
  \fill (0,0) circle (0.8pt);
}
}

\allowdisplaybreaks

\begin{document}

\setlength{\abovedisplayskip}{5pt}
\setlength{\belowdisplayskip}{5pt}
\setlength{\abovedisplayshortskip}{5pt}
\setlength{\belowdisplayshortskip}{5pt}
\hypersetup{colorlinks,breaklinks,urlcolor=blue,linkcolor=blue}

\title{\LARGE Spectral analysis of large dimensional Chatterjee's rank correlation matrix}

\author{Zhaorui Dong\thanks{School of Data Science, The Chinese University of Hong Kong, Shenzhen, China; {\tt zhaoruidong@link.cuhk.edu.cn}},~~~
Fang Han\thanks{Department of Statistics, University of Washington, Seattle, WA 98195, USA; {\tt fanghan@uw.edu}},
	     ~~and~~
Jianfeng Yao\thanks{School of Data Science, The Chinese University of Hong Kong, Shenzhen, China; {\tt jeffyao@cuhk.edu.cn}}	    
}

\date{}

\maketitle

\vspace{-1em}

\begin{abstract}
This paper studies the spectral behavior of large dimensional Chatterjee's rank correlation matrix when observations are independent draws from a high-dimensional random vector with independent continuous components. 
Limits for the empirical spectral distributions of its two symmetrized versions are established in the proportional high-dimensional regime, one of them being   the semicircle law, thereby giving a first example
of a correlation matrix with a non-Marchenko--Pastur spectral limit, in contrast to the Pearson, Kendall, and Spearman cases. We further establish central limit theorems for linear spectral statistics of the symmetrized matrices. As an important application of this theory, we develop  Chatterjee's rank correlation-based tests for the complete independence among the components.
\end{abstract}

\medskip
{\bf MSC2020 Classification}:  Primary 62H15;
    secondary 60B20,62H10. 
 \medskip 
 
{\bf Keywords}: Rank-based statistics, Permutations, Empirical spectral distribution, Linear spectral statistics, Independence testing

\section{Introduction}

Let $\mX=\mX^{(p)}=(X_1,\ldots,X_p)^\top$ be a $p$-dimensional random vector with mutually independent and continuous margins, i.e., the marginal distribution functions  are continuous. Let $\mX_1,\ldots,\mX_n$ be $n$ independent copies of $\mX$, with $\mX_i=(X_{i1},\ldots,X_{ip})^\top$ for $i \in [n]:=\{1,2,\ldots,n\}$. Define the data matrix $\fX=\fX^{(n)}=[\mX_1,\ldots,\mX_n]^\top \in \mathbb{R}^{n \times p}$. Throughout the manuscript, we impose no moment conditions on $\mX$, nor do we require $X_1,\ldots,X_p$ to be identically distributed.  

In this paper, we study the spectral behavior of Chatterjee's rank correlation matrix under the following asymptotic regime:
\begin{align}\label{eq:asymptotics}
p=p(n),\qquad n\rw\infty, \qquad \frac{p}{n} \to \gamma \in (0,\infty).
\end{align}

\subsection{Chatterjee's rank correlation matrix}

The Chatterjee rank correlation is based on the notion of rankings.
For each $j \in [p]$, let $R_j = R_j^{(n)} : [n] \to [n]$ denote the permutation that assigns to each $k \in [n]$ the rank of $X_{kj}$ among the $n$ observations in $\fX_{\cdot j}=\{X_{1j}, \ldots, X_{nj}\}$. Equivalently, using the indicator function $\ind(\cdot)$,
\[
R_j(k) = \sum_{i=1}^n \ind(X_{ij} \leq X_{kj}),
\]
where, by continuity of the marginal distributions, ties occur with probability zero.  

For $i\ne j\in [p]$, Chatterjee's rank correlation \citep{chatterjee2020new} between $\fX_{\cdot i}$ and $\fX_{\cdot j}$ is then defined as
\begin{align}\nonumber
\xi_n(\fX_{\cdot i}, \fX_{\cdot j}) 
:= 1 - \frac{3}{n^2 - 1} \sum_{k=1}^{n-1} 
\Big| (R_j \circ R_i^{-1})(k+1) - (R_j \circ R_i^{-1})(k) \Big|.
\end{align}

Sourav Chatterjee showed that $\xi_n$ is a strongly consistent estimator of the dependence measure introduced by Dette, Siburg, and Stoimenov \citep{MR3024030}:
\[
\xi(X_i, X_j) := 
\frac{\scaleint{4.5ex} \Var\!\Big( \E[\ind(X_j \geq x) \mid X_i] \Big) \,\mathrm{d}F_j(x)}
{\scaleint{4.5ex} \Var\!\big( \ind(X_j \geq x) \big) \,\mathrm{d}F_j(x)},
\]
where $F_j$ denotes the distribution function of $X_j$.

Unlike the population correlation measures of Pearson, Kendall, and Spearman, $\xi(X_i,X_j)$ has the appealing property that it equals $0$ if and only if $X_j$ is independent of $X_i$, and equals $1$ if and only if $X_j$ is a measurable function of $X_i$. Consequently, the estimator $\xi_n$, being consistent for $\xi$, can be used both to construct consistent tests of pairwise independence and to detect functional dependence \citep{bickel2022measures}.

Next we define Chatterjee's rank correlation matrix as
\[
\mXi_n = \Big[\Xi_{ij}^{(n)}\Big]_{i,j \in [p]}, \qquad 
\Xi_{ij}^{(n)} := 
\begin{cases}
1, & \text{if } i = j,\\[2mm]
\xi_n(\fX_{\cdot i}, \fX_{\cdot j}), & \text{if } i \neq j.
\end{cases}
\]

Note that $\xi_n$ is generally asymmetric, so $\Xi_{ij}^{(n)} \neq \Xi_{ji}^{(n)}$ in general.  
We analyze the spectrum of $\mXi_n$ through the following two symmetrizations:
\begin{align}\label{eq:phin}
\mPhi_n := \frac{1}{2}\Big(\mXi_n + \mXi_n^\top \Big),
\end{align}
 and
\begin{align}\label{eq:psin}
\mPsi_n := (\mXi_n - \mf{I}_p)(\mXi_n - \mf{I}_p)^\top,
\end{align}
where $\mf{I}_p$ denotes the $p$-dimensional identity matrix.

\subsection{Motivation}
This work is primarily motivated by the goal of understanding the spectral behavior of large-dimensional Chatterjee's rank correlation matrices. As a natural application of the resulting theory for linear spectral statistics (LSSs), we also investigate high-dimensional independence tests constructed from these matrices.

In the spectral analysis of large-dimensional covariance and correlation matrices, \citet{marvcenko1967distribution} provided the first global spectral analysis of large-dimensional sample covariance matrices. They showed that the empirical spectral distribution (ESD) of the sample covariance matrix converges to the distribution known as the Marchenko-Pastur (M-P) law, denoted by $\operatorname{MP}(y,\sigma^2)$, with probability measure
\[
\mu_{y,\sigma^2}(\d x) = \frac{\sqrt{(b-x)(x-a)}}{2 \pi xy \sigma^2} \ind(a \le x \le b) \, \d x + (1-y^{-1}) \ind(y>1) \, \delta_0(\d x),
\]
where $a = \sigma^2(1-\sqrt{y})^2$, $b = \sigma^2(1+\sqrt{y})^2$, and $\delta_0$ is the Dirac measure at the origin. Subsequently, \citet{jiang2004limiting}, \citet{bai2008large}, and \citet{bandeira2017marvcenko} established analogous ESD convergence results for Pearson's sample correlation, Spearman's rho, and Kendall's tau matrices, respectively.

Interestingly, all of the covariance and correlation matrices mentioned above converge to the M-P law, possibly after some rescaling. In random matrix theory (RMT), another well-known universality result concerns Wigner matrices and the semicircle law. In particular, \citet{wigner1958distribution}, followed by \cite{grenander2008probabilities} and \cite{arnold1971wigner}, showed that the ESD of a normalized $p \times p$ standard Gaussian matrix converges almost surely to the semicircle law, with density
\[
\rho_{u,r}(x) = \frac{2}{\pi r^2} \sqrt{r^2 - (x-u)^2} \, \ind(x\in [u-r,\,u+r]),
\]
denoted by $\mathrm{W}(u,r)$ with $u \in \R$ as the center and $r \in \R^+$ as the radius.  
To date, \emph{no} correlation matrix has been identified whose ESD converges to a semicircle law.

As an application of the LSS theory developed below, we consider   testing the null hypothesis
\begin{align}\label{eq:H0}
H_0: X_1, \ldots, X_p \text{ are mutually independent.}
\end{align}
This is the classical problem of testing the mutual independence of the coordinates of a high-dimensional random vector, and it coincides with the null setting under which the spectral theory in this paper is developed. \citet{yin2023central}, \citet{bao2015spectral}, and \citet{li2021central} have previously constructed LSS tests for this hypothesis using Pearson sample correlation, Spearman's rho, and Kendall's tau matrices. Given the demonstrated ability of Chatterjee's rank correlation to detect certain nontrivial forms of dependence, it is therefore natural to develop a comparable LSS test based on $\Xi_n$.

Therefore, the main purpose of the present work is to establish the spectral theory of large-dimensional symmetrizations $\mPhi_n$ and $\mPsi_n$ of the Chatterjee rank correlation matrix. We show that their ESDs have a limit in the proportional asymptotic regime. In particular, the ESD of $\mPhi_n$  converges to Wigner's semicircle law, thereby giving a first concrete example  of a correlation matrix with a non-M-P spectral limit. We also develop the theory of LSSs for $\mPhi_n$ and $\mPsi_n$ and use the LSS of $\mPsi_n$ to construct Chatterjee rank correlation-based tests of $H_0$ as an application.
Here, $\mPhi_n$ and $\mPsi_n$ serve complementary purposes. The symmetrization  $\mPhi_n$ is the natural object for uncovering the new semicircle-law phenomenon. By contrast, $\mPsi_n=(\mXi_n-\mf{I}_p)(\mXi_n-\mf{I}_p)^\top$ is naturally suited to testing, since
\[
\tr(\mPsi_n)=\sum_{i\ne j}\bigl\{\Xi_{ij}^{(n)}\bigr\}^2
\]
aggregates the nonnegative pairwise dependence signals measured by Chatterjee's rank correlation. Taken together, these results contribute to the growing literature on the statistical properties of Chatterjee's rank correlation and on realizing its broader potential for dependence analysis \citep{azadkia2019simple,deb2020kernel,auddy2021exact,shi2020power,shi2021ac,bickel2022measures,lin2021boosting,lin2022limit,lin2024failure,han2024azadkia,zhang2022asymptotic,bucher2024lack,tran2024rank,ansari2025directextensionazadkia,azadkia2025bias}.

\subsection{Main results}

To state our main results, we first introduce some notation.  
For any real symmetric matrix $\fA \in \R^{p \times p}$, let its eigenvalues be ordered as
\[
\lambda_1(\fA) \geq \lambda_2(\fA) \geq \cdots \geq \lambda_p(\fA).
\]
The empirical spectral distribution (ESD) of $\fA$ is defined by
\[
F^{\fA} := \frac{1}{p} \sum_{i=1}^p \delta_{\lambda_i(\fA)},
\]
where $\delta_x$ denotes the Dirac measure at $x$. The trace of $\fA$ is denoted by  
\[
\tr(\fA) := \sum_{i=1}^p \lambda_i(\fA).
\]

We use the following notation:  
$\N = \{1,2,3,\ldots\}$; for $K \in \N$, $[K] = \{1,2,\ldots,K\}$; for $k,n \in \N$, $(n)_k = n(n-1)\cdots(n-k+1)$.  
Weak convergence of probability measures is denoted by ``$\Rightarrow$''.  
For two random variables $X$ and $Y$, we write ``$X \stackrel{d}{=} Y$'' to indicate that $X$ and $Y$ are identically distributed.

All subsequent results are established under the following data-generating assumption.

\begin{assumption}\label{assump:dgp}
Let $\mX_1,\ldots,\mX_n$ be independent copies of $\mX = (X_1,\ldots,X_p)^\top$, where $X_1,\ldots,X_p$ are mutually independent with continuous distribution functions.
\end{assumption}

Our first result establishes the weak limit of $F^{\mPhi_n}$.

\begin{theorem}[Semicircle law for $\mPhi_n$]\label{thm:sc}  
Under Assumption~\ref{assump:dgp} and the asymptotic regime \eqref{eq:asymptotics}, we have
\[
F^{\mPhi_n} \;\Rightarrow\; \mathrm{W}(1,2\sqrt{\gamma/5}) \quad \text{almost surely,}
\]
where $\mPhi_n$ was introduced in \eqref{eq:phin}.
\end{theorem}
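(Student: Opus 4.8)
The plan is to prove the statement by the moment method, applied to the recentred matrix $\B_n := \mPhi_n - \mf{I}_p$; since $\lambda_\ell(\mPhi_n) = 1 + \lambda_\ell(\B_n)$ for every $\ell$ and $F^{\mPhi_n}$ is the pushforward of $F^{\B_n}$ under $x\mapsto x+1$, it suffices to show $F^{\B_n}\Rightarrow\mathrm{W}(0,2\sqrt{\gamma/5})$ almost surely. The matrix $\B_n$ is real symmetric with zero diagonal and $(\B_n)_{ij} = \tfrac12\big(\xi_n(\fX_{\cdot i},\fX_{\cdot j}) + \xi_n(\fX_{\cdot j},\fX_{\cdot i})\big)$ for $i\ne j$. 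The starting point is that, under Assumption~\ref{assump:dgp}, the column rankings $R_1,\dots,R_p$ are i.i.d.\ uniform on $\S_n$, so $\sigma_{ij}:=R_j\circ R_i^{-1}$ is uniform on $\S_n$ and $\xi_n(\fX_{\cdot i},\fX_{\cdot j}) = g_n(\sigma_{ij})$, $\xi_n(\fX_{\cdot j},\fX_{\cdot i}) = g_n(\sigma_{ij}^{-1})$, where $g_n(\sigma):=1-\tfrac{3}{n^2-1}\sum_{k=1}^{n-1}|\sigma(k+1)-\sigma(k)|$. Two structural consequences will be used throughout: for $i\ne j$ the entry $(\B_n)_{ij}$ has the same law conditionally on $R_i$ (and on $R_j$) as unconditionally, hence is independent of each of them individually; and, conditionally on $R_v$, the family $\{(\B_n)_{vu}\}_{u\ne v}$ consists of independent variables with their unconditional marginals. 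Thus $\B_n$ behaves, for moment purposes, like a Wigner-type matrix with pairwise conditionally independent, mean-zero entries.

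The analytic core is a set of entrywise estimates. First, $\E[g_n(\sigma)]=0$ exactly: writing $\sum_k|\sigma(k+1)-\sigma(k)| = \sum_{t=1}^{n-1}X_t(\sigma)$, where $X_t(\sigma)$ is the number of sign changes of the binary word $(\ind\{\sigma(i)>t\})_{i\in[n]}$, one has $\E X_t(\sigma)=2t(n-t)/n$, hence $\E\!\sum_k|\sigma(k+1)-\sigma(k)| = (n^2-1)/3$ and $\E(\B_n)_{ij}=0$. Second, $n\,\mathrm{Var}(g_n(\sigma))\to 2/5$ (Chatterjee's limiting variance under independence, see \citep{chatterjee2020new}), and, crucially, $n\,\mathrm{Cov}\big(g_n(\sigma),g_n(\sigma^{-1})\big)\to 0$; since $\sigma^{-1}\stackrel{d}{=}\sigma$, these give $n\,\mathrm{Var}\big((\B_n)_{ij}\big) = \tfrac12\,n\,\mathrm{Var}(g_n(\sigma)) + \tfrac12\,n\,\mathrm{Cov}\big(g_n(\sigma),g_n(\sigma^{-1})\big)\to\tfrac15$, i.e.\ $\E(\B_n)_{ij}^2 = \tfrac{\gamma}{5p}(1+o(1))$ by \eqref{eq:asymptotics}. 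Third, for every fixed $m\ge1$, $\E|(\B_n)_{ij}|^m = O(n^{-m/2})$: altering one coordinate of $\sigma$ changes $\sum_k|\sigma(k+1)-\sigma(k)|$ by $O(n)$, so a bounded-differences inequality on $\S_n$ yields sub-Gaussian fluctuations of $g_n$ at scale $n^{-1/2}$.

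With these in hand the Wigner bookkeeping is routine. Expanding $\tfrac1p\,\E\tr\B_n^{\,m} = \tfrac1p\sum_{i_1,\dots,i_m}\E\big[(\B_n)_{i_1i_2}\cdots(\B_n)_{i_mi_1}\big]$ and grouping closed walks $i_1\to\cdots\to i_m\to i_1$ by shape, the centring and the conditional-independence structure (applied through repeated leaf-stripping) kill every walk that traverses some edge exactly once, and make a ``tree walk'' with $k$ doubled edges contribute exactly $(\E(\B_n)^2)^k$; by the moment bound above, all remaining walks contribute $o(p)$ in aggregate. Since the number of such tree walks is $(1+o(1))\,C_k\,p^{k+1}$ with $C_k$ the $k$-th Catalan number, $\tfrac1p\,\E\tr\B_n^{\,2k}\to C_k(\gamma/5)^k = \int x^{2k}\,\mathrm{d}\mathrm{W}(0,2\sqrt{\gamma/5})$ and $\tfrac1p\,\E\tr\B_n^{\,2k+1}\to 0$. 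A standard two-walk estimate then gives $\mathrm{Var}\big(\tfrac1p\tr\B_n^{\,m}\big)=O(p^{-2})$ — two walks are uncorrelated unless they share a vertex, and each shared vertex forces the loss of an extra power of $p$ via the same conditional-independence structure — so Borel--Cantelli together with moment-determinacy of the semicircle law upgrades the convergence to $F^{\B_n}\Rightarrow\mathrm{W}(0,2\sqrt{\gamma/5})$ almost surely, and the claim follows.

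The main obstacle is entirely contained in the entrywise estimates, and specifically in the bound $n\,\mathrm{Cov}\big(g_n(\sigma),g_n(\sigma^{-1})\big)\to 0$ — which, note, is exactly what pins the radius down to $2\sqrt{\gamma/5}$ rather than $2\sqrt{2\gamma/5}$. Both $g_n(\sigma)$ and $g_n(\sigma^{-1})$ are sums of $\asymp n$ ``local'' statistics — of $\sigma$ read in position-space, of $\sigma^{-1}$ read in value-space — each of variance $\asymp n^{-1}$, so their covariance is a priori also of order $n^{-1}$, and showing it is genuinely $o(n^{-1})$ requires quantifying the near-independence of the ``horizontal'' and ``vertical'' local structure of a uniform random permutation. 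Concretely, through the level-crossing representation above one must show $\sum_{s,t}\mathrm{Cov}\big(X_s(\sigma),\widetilde X_t(\sigma)\big) = o(n^3)$, where $\widetilde X_t(\sigma)$ counts the value-adjacent crossings of level $t$; the $(s,t)$ pairs whose position/value incidences overlap are the delicate ones, and controlling their contribution is where the real work lies.
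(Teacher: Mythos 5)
The overall plan --- moment method on $\B_n = \mPhi_n - \mf I_p$, reduce to tree walks and estimate the rest --- matches the paper's Section~\ref{section-LSD-SC}. Your structural observations are also correct: the leaf-stripping independence works, and the paper establishes it in a stronger form (Proposition~\ref{lem:key} / Corollary~\ref{Corollary:tree-independence-coef}: the entries are jointly independent precisely when the underlying undirected skeleton is a forest). The entrywise facts $\E(\B_n)_{ij}=0$, $n\Var((\B_n)_{ij})\to 1/5$, and $n\Cov(g_n(\sigma),g_n(\sigma^{-1}))\to 0$ are all available; the last one is \citet[Theorem 1]{zhang2022asymptotic} and is \emph{not} the main obstacle.

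The genuine gap is in the sentence ``the centring and the conditional-independence structure (applied through repeated leaf-stripping) kill every walk that traverses some edge exactly once; by the moment bound above, all remaining walks contribute $o(p)$.'' Leaf-stripping only disposes of single edges that sit in the \emph{tree} part of the walk skeleton; once stripping terminates at a cycle, every remaining vertex has degree $\ge 2$ and there is no independent factor to pull out, so single edges around the cycle are not annihilated by centring. The fallback bound $\E|(\B_n)_{ij}|^m = O(n^{-m/2})$ is then far too weak. Take the smallest bad case, the $4$-cycle walk $i_1\to i_2\to i_3\to i_4\to i_1$ of length $m=4$ on four distinct vertices with all four edges of multiplicity one. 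Your H\"older bound gives $|\E[(\B_n)_{i_1i_2}(\B_n)_{i_2i_3}(\B_n)_{i_3i_4}(\B_n)_{i_4i_1}]| = O(n^{-2})$, and there are $\asymp p^4$ such walks, so their total contribution to $\tfrac1p\E\tr\B_n^4$ is $O(p^3 n^{-2}) = O(n)$, which diverges --- it is not $o(p)$, and in fact swamps the Catalan term $C_2(\gamma/5)^2 = O(1)$ you want to isolate. You need the expectation over a $4$-cycle to be $o(n^{-3})$, and this requires exploiting cancellation between the \emph{dependent} entries around the cycle, not just size bounds. This is precisely what the paper's Section~\ref{section-illustrative-example} works out for $E_4 = \E[\Xi_{12}\Xi_{32}\Xi_{34}\Xi_{14}] = O(n^{-4})$, via the arrow-decomposition and the observation that $\sum_{i\ne j}a(i,j)=0$, and what is generalized in Propositions~\ref{Graphical-Representation}, \ref{Graphical-Representation-Reduced}, \ref{Local-Estimate-Partition} and \ref{Graph-Estimate-EDelta0}. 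Your proposal has none of this machinery, so the non-tree contribution is not controlled.

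It is worth noting that the paper explicitly warns against this exact shortcut: Proposition~\ref{prop:basic}\ref{prop:basic5} and the counterexample in equations \eqref{eq-counterExample-Independence-Xia-n=3-E-Prod}--\eqref{eq-counterExample-Independence-Xia-n=3-Prod-E} show that the entries are \emph{not} mutually independent, and the same misstep in \citet{xia2025consistent} is called out in Section~\ref{Section-Chatterjee-correlation}. Pairwise (or hub-conditional) independence plus crude moment bounds is not enough; controlling the dependence across a cycle is the heart of the proof, and your proposal leaves exactly that part unproved.
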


With a minor modification of the proof of Theorem~\ref{thm:sc}, we also obtain the limiting spectral distribution of the deviation matrix $\mPsi_n$, this time following the M-P law due to the Gram matrix structure of $\mPsi_n$.

\begin{theorem}[M-P law for $\mPsi_n$]\label{thm:mp}  
Under Assumption~\ref{assump:dgp} and the asymptotic regime \eqref{eq:asymptotics}, we have
\[
F^{\mPsi_n} \;\Rightarrow\; \mathrm{MP}(1,2\gamma/5) \quad \text{almost surely,}
\]
where $\mPsi_n$ was introduced in \eqref{eq:psin}.
\end{theorem}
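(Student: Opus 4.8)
The plan is to run the moment method in close parallel with the proof of Theorem~\ref{thm:sc}, the only change of substance being that the Wigner-type walk expansion is replaced by the sample-covariance (Gram-type) one. Write $\mf{Z}_n:=\mXi_n-\mf{I}_p$, so that $\mPsi_n=\mf{Z}_n\mf{Z}_n^\top$, $(\mf{Z}_n)_{ii}=0$, and $(\mf{Z}_n)_{ij}=\xi_n(\fX_{\cdot i},\fX_{\cdot j})$ for $i\neq j$. Under Assumption~\ref{assump:dgp} the rank maps $R_1,\dots,R_p$ are independent and each uniform on $S_n$, and from this one extracts the handful of facts on which everything rests and which are exactly those already used in the proof of Theorem~\ref{thm:sc}: (i) $\E[\xi_n(\fX_{\cdot i},\fX_{\cdot j})]=0$ for all $i\neq j$ and all $n$ (since $R_j\circ R_i^{-1}$ is uniform on $S_n$ and $\E\sum_k|\pi(k+1)-\pi(k)|=\tfrac{n^2-1}{3}$); (ii) $\E[\xi_n(\fX_{\cdot i},\fX_{\cdot j})^2]=\tfrac{2}{5n}(1+o(1))$, the Dette--Siburg--Stoimenov variance asymptotics, which is where the constant $2/5$ enters; (iii) $\xi_n(\fX_{\cdot i},\fX_{\cdot j})$ and $\xi_n(\fX_{\cdot k},\fX_{\cdot\ell})$ are \emph{independent} whenever $\{i,j\}\neq\{k,\ell\}$, the only residual dependence being between a ``swapped'' pair $\xi_n(\fX_{\cdot i},\fX_{\cdot j})$ and $\xi_n(\fX_{\cdot j},\fX_{\cdot i})$, for which $|\operatorname{Cov}|\le\tfrac{2}{5n}(1+o(1))$ by Cauchy--Schwarz; and (iv) the crude moment bound $\E[|\xi_n|^m]=O(n^{-\lceil m/2\rceil})$.

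With these in hand I would proceed as follows. Since $\mathrm{MP}(1,2\gamma/5)$ is compactly supported, hence moment-determined, and since $\mathrm{MP}(1,\sigma^2)$ is the pushforward of $\mathrm{MP}(1,1)$ under $x\mapsto\sigma^2 x$ and therefore has $k$-th moment $\sigma^{2k}C_k$ with $C_k=\tfrac1{k+1}\binom{2k}{k}$, it suffices to show that $\tfrac1p\tr\mPsi_n^k\to(2\gamma/5)^kC_k$ almost surely for each fixed $k\in\N$. For the expectation, expand
\[
\tr\mPsi_n^k=\sum_{a_1,\dots,a_k\in[p]}\ \sum_{c_1,\dots,c_k\in[p]}\ \prod_{t=1}^{k}\xi_n(\fX_{\cdot a_t},\fX_{\cdot c_t})\,\xi_n(\fX_{\cdot a_{t+1}},\fX_{\cdot c_t}),
\]
with $a_{k+1}:=a_1$; terms with $a_t=c_t$ or $a_{t+1}=c_t$ vanish by the zero diagonal of $\mf{Z}_n$. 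Group the remaining summands by the isomorphism type of the multigraph spanned by the $2k$ factors: by (iii) the expectation factorizes over the classes $\{(u,v),(v,u)\}$ of ordered pairs, and any class consisting of a single factor contributes exactly $0$ by (i). A power count in $p$ and $n$, using (ii) and (iv), then isolates the surviving configurations — the ``doubled trees'': each of the $k$ distinct ordered pairs occurs exactly twice (with its swap absent), the underlying multigraph is a tree on $k+1$ distinct vertices, and there are exactly $C_k$ such closed alternating walks, each contributing $p^{k+1}(1+o(1))$ labelings times $\bigl(\tfrac{2}{5n}\bigr)^k$, whence $\E[\tfrac1p\tr\mPsi_n^k]=(p/n)^k(2/5)^kC_k(1+o(1))\to(2\gamma/5)^kC_k$. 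Everything else is lower order: a cycle costs a free vertex, while a swapped pair or an ordered pair of multiplicity $\ge3$ costs at least one power of $n$ against its vertex budget.

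For the almost-sure statement I would reproduce the concentration step of the proof of Theorem~\ref{thm:sc}: resampling a single column $\fX_{\cdot c}$ alters only the $c$-th row and column of $\mXi_n$, hence perturbs $\mPsi_n$ by a matrix of rank at most $4$, so by the rank inequality the empirical distribution function of $\mPsi_n$ moves by at most $4/p$ uniformly; McDiarmid's inequality then gives exponential concentration of $\langle F^{\mPsi_n},f\rangle$ (for smooth compactly supported $f$) around its mean with summable tail probabilities under \eqref{eq:asymptotics}, and Borel--Cantelli together with the moment computation yields $F^{\mPsi_n}\Rightarrow\mathrm{MP}(1,2\gamma/5)$ almost surely. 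I expect the genuine work — and the only real departure from Theorem~\ref{thm:sc} — to be the combinatorial bookkeeping for the Gram-type walks: confirming that among the $p^{2k}$ index patterns precisely the $C_k$ doubled-tree patterns survive in the limit, that the swapped-pair patterns (which do not arise for the symmetric matrix $\mPhi_n$) are subleading, and that the surviving count reproduces the Narayana/Catalan moment formula of the Marchenko--Pastur law; the remaining steps are a direct transcription, with ``closed walks on $[p]$'' replaced by ``closed alternating walks on $[p]\times[p]$'' and the semicircle double-edge trees by their bipartite counterparts.
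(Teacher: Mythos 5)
Your proposal contains a fundamental error that would break the argument: your fact (iii) — namely that $\xi_n(\fX_{\cdot i},\fX_{\cdot j})$ and $\xi_n(\fX_{\cdot k},\fX_{\cdot\ell})$ are independent whenever $\{i,j\}\neq\{k,\ell\}$, and more importantly the use you make of it, "the expectation factorizes over the classes $\{(u,v),(v,u)\}$ of ordered pairs, and any class consisting of a single factor contributes exactly $0$" — is precisely the false independence claim that this paper explicitly refutes. Pairwise independence of two entries whose index sets are distinct does hold (by Proposition~\ref{prop:basic}), but \emph{mutual} independence over more than two entries fails in general: Proposition~\ref{prop:basic}\ref{prop:basic5} states that the family $\{\sigma_v\circ\sigma_u^{-1}\}$ is \emph{not} mutually independent, and the paper's concrete $n=3$ counterexample in \eqref{eq-counterExample-Independence-Xia-n=3-E-Prod}--\eqref{eq-counterExample-Independence-Xia-n=3-Prod-E} shows $\Xi_{12},\Xi_{13},\Xi_{23}$ (a triangle) are not mutually independent. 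In particular, your assertion that configurations containing a single-edge factor vanish \emph{exactly} is false — this is the error, attributed to \citet[Theorem~2.1]{xia2025consistent}, that the paper explicitly corrects, and overcoming it is described in Section~\ref{Section-technical-tools} as "precisely the main technical challenge we resolve in this manuscript, via Proposition~\ref{lem:key}."

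The correct independence statement is Proposition~\ref{lem:key}/Corollary~\ref{Corollary:tree-independence-coef}: the entries indexed by a set of edges are mutually independent if and only if the underlying undirected skeleton is a (collection of) tree(s). Your factorization and your exact-vanishing argument are therefore legitimate only for tree skeletons — which is exactly how the paper computes the leading (doubled-tree, Narayana/Catalan) contribution in Proposition~\ref{Emoments-convergence-MPlaw}. For all non-tree $\Delta$-graphs (cycles), and for tree graphs that contain a genuine single edge, the expectation does \emph{not} factor, does \emph{not} vanish identically, and does not follow from the crude bound $\E[|\xi_n|^m]=O(n^{-\lceil m/2\rceil})$ alone; one needs the much more delicate estimate $\E[\mf{\Xi}_n(\Delta)]=O(n^{-(\#E(\Delta^0)\wedge(\#E(\Delta)+Q_0)/2)})$ of Proposition~\ref{Graph-Estimate-EDelta0}, whose derivation (Propositions~\ref{Graphical-Representation}, \ref{Graphical-Representation-Reduced}, \ref{Local-Estimate-Partition}) is the bulk of the paper's technical work. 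Your concentration step via the rank-$4$ column-resampling perturbation and McDiarmid is a valid alternative to the paper's second-moment graph-counting bound (Proposition~\ref{moments-concentration-MPlaw}) and would work, but it does not rescue the moment computation, which collapses without the tree-independence dichotomy.
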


Our final results concern the LSSs of $\mPhi_n$ and $\mPsi_n$. In the following, let $a\wedge b$ denote $\min(a,b)$.

\begin{theorem}[CLT for LSS of $\mPhi_n$]\label{thm:clt:Phi}
    Assume Assumption~\ref{assump:dgp} and the asymptotic regime \eqref{eq:asymptotics} hold. As $n\to\infty$,
\[
\Big\{\tr((\mPhi_n-\fI_p)^k)-\E\tr((\mPhi_n-\fI_p)^k)\Big\}_{k=1}^\infty 
\;\Rightarrow\; \{H_k\}_{k=1}^{\infty},
\]
where $\{H_k\}_{k=1}^{\infty}$ is a mean-zero Gaussian process with covariance function $\Cov(H_{k_1},H_{k_2})$
given as follows.
\begin{enumerate}[itemsep=-.5ex,label=(\roman*)]
\item If $k_1,k_2$ are both odd, then

\begin{align*}
\Cov(H_{k_1},H_{k_2})=\l(\frac{\gamma}{5}\r)^{\frac{k_1+k_2}{2}}\sum_{t=1}^{\lfloor (k_1\wedge k_2)/2 \rfloor } 8t^2(2t+&1)\left( \sum_{\ell=0}^{\left\lfloor  k_1/2   \right \rfloor-t} \frac{\binom{k_{1}-2\ell-1}{\left\lfloor  k_1/2   \right \rfloor-t-\ell}\binom{2\ell+1}{\ell+1}}{k_{1}-2\ell-1} \right)\\&\left( \sum_{\ell=0}^{\left\lfloor  k_2/2   \right \rfloor-t} \frac{\binom{k_{2}-2\ell-1}{\left\lfloor  k_2/2   \right \rfloor-t-\ell}\binom{2\ell+1}{\ell+1}}{k_{2}-2\ell-1} \right).
\end{align*}

\item If $k_1,k_2$ are both even, then
\begin{align*}
\Cov(H_{k_1},H_{k_2})=&\l(\frac{\gamma}{5}\r)^{\frac{k_1+k_2}{2}}\Bigg\{4\binom{k_{1}}{k_1/2+1}\binom{k_{2}}{k_2/2+1}\\  +4\sum_{t=2}^{(k_1\wedge k_2)/2  } t&(2t-1)^2\Bigg[\Bigg( \sum_{\ell=0}^{ k_1/2 -t} \frac{\binom{k_{1}-2\ell-1}{  k_1/2  -t-\ell}\binom{2\ell+1}{\ell+1}}{k_{1}-2\ell-1}  \Bigg)\Bigg( \sum_{\ell=0}^{ k_2/2 -t} \frac{\binom{k_{2}-2\ell-1}{  k_2/2  -t-\ell}\binom{2\ell+1}{\ell+1}}{k_{2}-2\ell-1}  \Bigg)\Bigg]\Bigg\}.
\end{align*}

\item Otherwise, $\Cov(H_{k_1},H_{k_2})=0$.
\end{enumerate}
\end{theorem}

Note that when $k_1=k_2=1$, the theorem above gives $\Var(H_1)=0$. This is consistent with the fact that the diagonal entries of $\mf{\Phi}_n$ are all 1 and $\tr(\mf{\Phi}_n-\fI_p)=0$. 

\begin{theorem}[CLT for LSS of $\mPsi_n$]\label{thm:clt}
Assume Assumption~\ref{assump:dgp} and the asymptotic regime \eqref{eq:asymptotics} hold.
\begin{enumerate}[itemsep=-.5ex,label=(\roman*)]
\item As $n\to\infty$,
\[
\Big\{\tr(\mPsi_n^k)-\E\tr(\mPsi_n^k)\Big\}_{k=1}^\infty 
\;\Rightarrow\; \{G_k\}_{k=1}^{\infty},
\]
where $\{G_k\}_{k=1}^{\infty}$ is a mean-zero Gaussian process with covariance function
\begin{align*}\label{eq-covariance-function}
\Cov(G_{k_1},G_{k_2}) 
&= \left(\tfrac{2\gamma}{5}\right)^{k_1+k_2} \Bigg\{ 
  2\binom{2k_1}{k_1+1}\binom{2k_2}{k_2+1} \\
&\quad + 2\sum_{t=2}^{k_1\wedge k_2} t(2t-1)^2 
   \left[\Bigg(\sum_{\ell=0}^{k_1-t} \frac{\binom{2k_1-2\ell-1}{k_1-\ell-t}\binom{2\ell+1}{\ell+1}}{2k_1-2\ell-1}\Bigg)
          \Bigg(\sum_{\ell=0}^{k_2-t} \frac{\binom{2k_2-2\ell-1}{k_2-\ell-t}\binom{2\ell+1}{\ell+1}}{2k_2-2\ell-1}\Bigg)\right]\Bigg\}.
\end{align*}

\item Moreover, the expectations $\E\tr(\mPsi_n^k)$ are distribution-free and can be computed numerically. In the special case $k=1$, we have
\[
\E\tr(\mPsi_n)=\frac{p(p-1)(n-2)(4n-7)}{10(n-1)^2(n+1)}.
\]
\end{enumerate}
\end{theorem}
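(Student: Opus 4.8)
The natural strategy is the method of moments for the random matrix $\mPsi_n$, which parallels the approach that must underlie Theorems~\ref{thm:sc} and~\ref{thm:mp}. Write $\mf{Z}_n := \mXi_n - \mf{I}_p$, so that $\mPsi_n = \mf{Z}_n \mf{Z}_n^\top$ and
\[
\tr(\mPsi_n^k) = \sum_{i_1,\ldots,i_k \in [p]} \prod_{\ell=1}^k Z_{i_\ell i_{\ell+1}} Z_{i_{\ell+1} i_\ell}^{} \quad\text{(indices cyclic)},
\]
where each $Z_{ij}$ is a centered-by-subtracting-the-identity Chatterjee statistic between coordinates $i$ and $j$. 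The key structural input, which should already be extracted in the proof of Theorem~\ref{thm:mp}, is that $\xi_n(\fX_{\cdot i}, \fX_{\cdot j})$ is a bounded functional of the pair of rank vectors $(R_i, R_j)$, hence under Assumption~\ref{assump:dgp} the family $\{Z_{ij}\}_{i<j}$ is an exchangeable, pairwise-independent array of bounded variables whose joint moments are \emph{distribution-free} (they depend only on $n$, via the uniform distribution of the relative ranking $R_j \circ R_i^{-1}$ on the symmetric group $\mathfrak{S}_n$). First I would record the asymptotics of the low-order cumulants of a single entry: $\E Z_{ij} = -c_1/n + O(n^{-2})$ with $c_1 = 2/5$ (consistent with part (ii), since $\tr(\mPsi_n) = \sum_{i\neq j} Z_{ij}^2$ and $\E Z_{ij}^2 \to (2/5)^2 \cdot \text{something}$... more precisely one needs $\E Z_{ij}^2$ and $\var(Z_{ij})$ to leading order), and the variance scaling $\var(Z_{ij}) \asymp 1/n$. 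The factor $2\gamma/5$ in the M-P parameter and in the covariance function is exactly $p \cdot \E Z_{ij}^2$ to leading order.

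Next I would set up the graph-theoretic bookkeeping standard in RMT moment computations. Expanding $\E\tr(\mPsi_n^k)$ over closed walks $i_1 \to i_2 \to \cdots \to i_{2k} \to i_1$ on the complete graph $K_p$ (each forward step paired with its reverse, as in the Wishart/M-P computation), the surviving contributions in the regime \eqref{eq:asymptotics} come from walks whose edge-multigraph is a tree traversed twice — these give the Catalan-type count producing $\mathrm{MP}(1, 2\gamma/5)$ — and the correction terms. For the CLT one passes to the joint cumulants of $\{\tr(\mPsi_n^k) - \E\tr(\mPsi_n^k)\}_k$. The third and higher cumulants must vanish in the limit (this is where boundedness of the entries and the pairwise-independence / weak-dependence structure of the Chatterjee array enters, controlling how many edges can be "shared" between two walks), leaving only the covariance. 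The covariance $\Cov(\tr(\mPsi_n^{k_1}), \tr(\mPsi_n^{k_2}))$ is computed by counting pairs of closed walks of lengths $2k_1$ and $2k_2$ that share edges in a way that creates exactly one cycle; the two families of terms in the stated formula — the $2\binom{2k_1}{k_1+1}\binom{2k_2}{k_2+1}$ term and the sum over $t$ of $t(2t-1)^2 (\cdots)(\cdots)$ — correspond respectively to walks sharing a single edge versus sharing a path of length $t$, with the inner sums $\sum_\ell \frac{\binom{2k-2\ell-1}{k-\ell-t}\binom{2\ell+1}{\ell+1}}{2k-2\ell-1}$ being the generating-function coefficients that count the ways to "decorate" a shared path of length $t$ inside a walk of half-length $k$ with Catalan-distributed excursions. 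These combinatorial identities are essentially the known formulas for the covariance of LSS of Wishart-type matrices (e.g. Bai--Silverstein), which the Chatterjee matrix inherits because its entries match a Wishart matrix up to the first two moments after the $1/n$ rescaling is absorbed.

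The main obstacle is the \emph{universality step}: showing that despite $\mf{Z}_n$ not having i.i.d.\ or even independent entries — the entries $Z_{ij}$ are built from overlapping rank vectors $R_1, \ldots, R_p$, so $Z_{ij}$ and $Z_{ik}$ both depend on $R_i$ — the joint moments of the traces are asymptotically those of a Gaussian-entry matrix with matched variance. Concretely, one must verify that in the moment expansion every walk whose contribution would be non-negligible factorizes, across the marginals $R_1, \ldots, R_p$ it touches, into a product of single-coordinate expectations up to $O(n^{-1})$ errors, and that the number of "coincidence" patterns that could spoil this is of lower order. This requires a careful combinatorial lemma: for a fixed closed walk, the coordinates visited determine a partition, and the expectation $\E \prod Z_{i_\ell i_{\ell+1}} Z_{i_{\ell+1} i_\ell}$ depends only on the \emph{isomorphism type of the edge-colored multigraph} on those coordinates, with the leading term equal to the product over coordinates of local contributions exactly when the multigraph is a double tree. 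I would isolate this as a standalone distribution-free identity about the symmetric group (expectations of products of functionals of independent uniform permutations sharing a common re-ranking), prove it by the same conditioning argument that gives part (ii), and then feed it into the generic RMT machinery. Part (ii)'s closed form for $\E\tr(\mPsi_n)$ is a direct by-product: $\E\tr(\mPsi_n) = p(p-1)\,\E[\xi_n(\fX_{\cdot 1},\fX_{\cdot 2})^2]$, and $\E[\xi_n^2]$ over a uniform random permutation in $\mathfrak{S}_n$ is a finite rational computation (expanding the defining sum, using exchangeability and the joint law of consecutive order statistics of a uniform permutation) yielding the stated ratio $\frac{(n-2)(4n-7)}{10(n-1)^2(n+1)}$.
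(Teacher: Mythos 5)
Your framework---moment method over $\Delta$-graph closed walks, exact factorization on trees via independence of relative ranks, Catalan-type enumeration of dominant graphs, and Wick's formula---is the same route the paper takes, and your reduction in part~(ii) to $\E\tr(\mPsi_n)=p(p-1)\,\E[(\Xi_{12}^{(n)})^2]$ is correct. However, the plan has a missing idea at its center. The factorization input (Proposition~\ref{lem:key}) is a dichotomy: $\{R_v\circ R_u^{-1}:(u,v)\in E(\Delta)\}$ is mutually independent if and only if the underlying undirected graph is a forest. That gives exact factorization on trees, which is the easy half. The hard half is that, unlike the Wigner/Wishart computation, walk graphs with single edges cannot simply be discarded using zero-mean plus independence: $\E[\Xi_{uv}^{(n)}]=0$, but $\Xi_{uv}^{(n)}$ is dependent on neighboring entries through shared ranks, so a naive local moment bound on $\E[\mf{\Xi}_n(\Delta)]$ is too weak to kill the non-tree, non-quadruple contributions. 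The paper supplies an explicit cancellation argument (Propositions~\ref{Graphical-Representation}--\ref{Graph-Estimate-EDelta0}), built on the identity $\sum_{i\ne j}a(i,j)=0$, that penalizes each single edge and sharpens the bound to $O\bigl(n^{-(\# E(\Delta)+Q_0)/2}\bigr)$ with $Q_0$ the number of single edges. Your ``careful combinatorial lemma'' and ``conditioning argument'' gesture at the need but never name this mechanism, and without something of comparable strength the moment estimates do not close.

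There are also some misreadings worth flagging. Matching a Wishart matrix ``up to the first two moments'' cannot reproduce the covariance: the $2\binom{2k_1}{k_1+1}\binom{2k_2}{k_2+1}$ term carries a factor $\var\bigl[(\Xi_{12}^{(n)})^2\bigr]/\bigl(\E[(\Xi_{12}^{(n)})^2]\bigr)^2\to 2$, a genuinely fourth-moment quantity. What is actually needed, and proved (Proposition~\ref{approx1}, via Chatterjee's normal approximation for graphical statistics), is joint asymptotic Gaussianity of any fixed finite collection of entries. Your description of the two covariance families is also off: the first comes from pairs of double-tree walks whose union is \emph{still} a tree with exactly one quadruple edge (no cycle is created), while the $t$-indexed sum counts pairs in which each walk traverses a common even cycle of length $2t$ once (cycle edges have multiplicity $1$ in each walk and $2$ in the union), with Catalan excursion trees attached along the cycle via the $m$-fold Catalan convolution identity (Lemma~\ref{k-fold involution of Catalan number}). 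Finally, $\E Z_{ij}=0$ exactly for $i\ne j$; the quantity that is asymptotically $2/(5n)$ is $\E Z_{ij}^2$, not $-\E Z_{ij}$.
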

Note that the above formula of $\E\tr(\mPsi_n)$ for the special case $k=1$ is given in \citet[Lemma 2.2]{xia2025consistent}. Figures~\ref{Figure-LSD-Semicircle} and \ref{Figure-LSD-MP} provide numerical illustrations of Theorems~\ref{thm:sc} and \ref{thm:mp}. Figures~\ref{Figure-CLT-Histograms-Phi} and \ref{Figure-CLT-Histograms-Variance-function} provide numerical illustrations for the CLT results in Theorems~\ref{thm:clt:Phi} and \ref{thm:clt}.

\begin{figure}[H]
    \centering
    \begin{minipage}{0.48\linewidth}
        \centering
        \includegraphics[width=\linewidth]{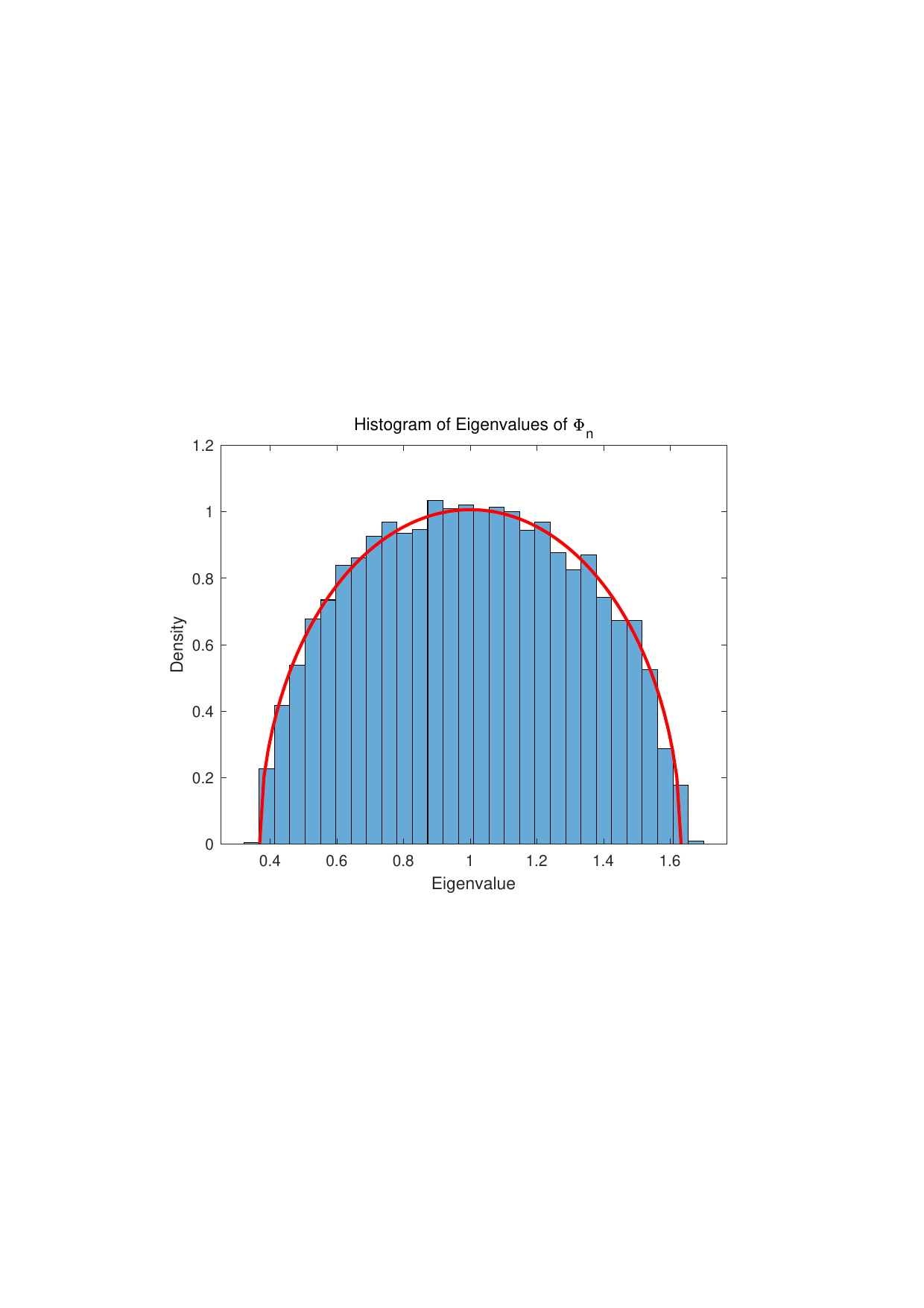}
        \caption{Semicircle law of $\mf{\Phi}_n$ with $p=100$ and $n=200$.}
        \label{Figure-LSD-Semicircle}
    \end{minipage}
    \hfill
    \begin{minipage}{0.48\linewidth}
        \centering
        \includegraphics[width=\linewidth]{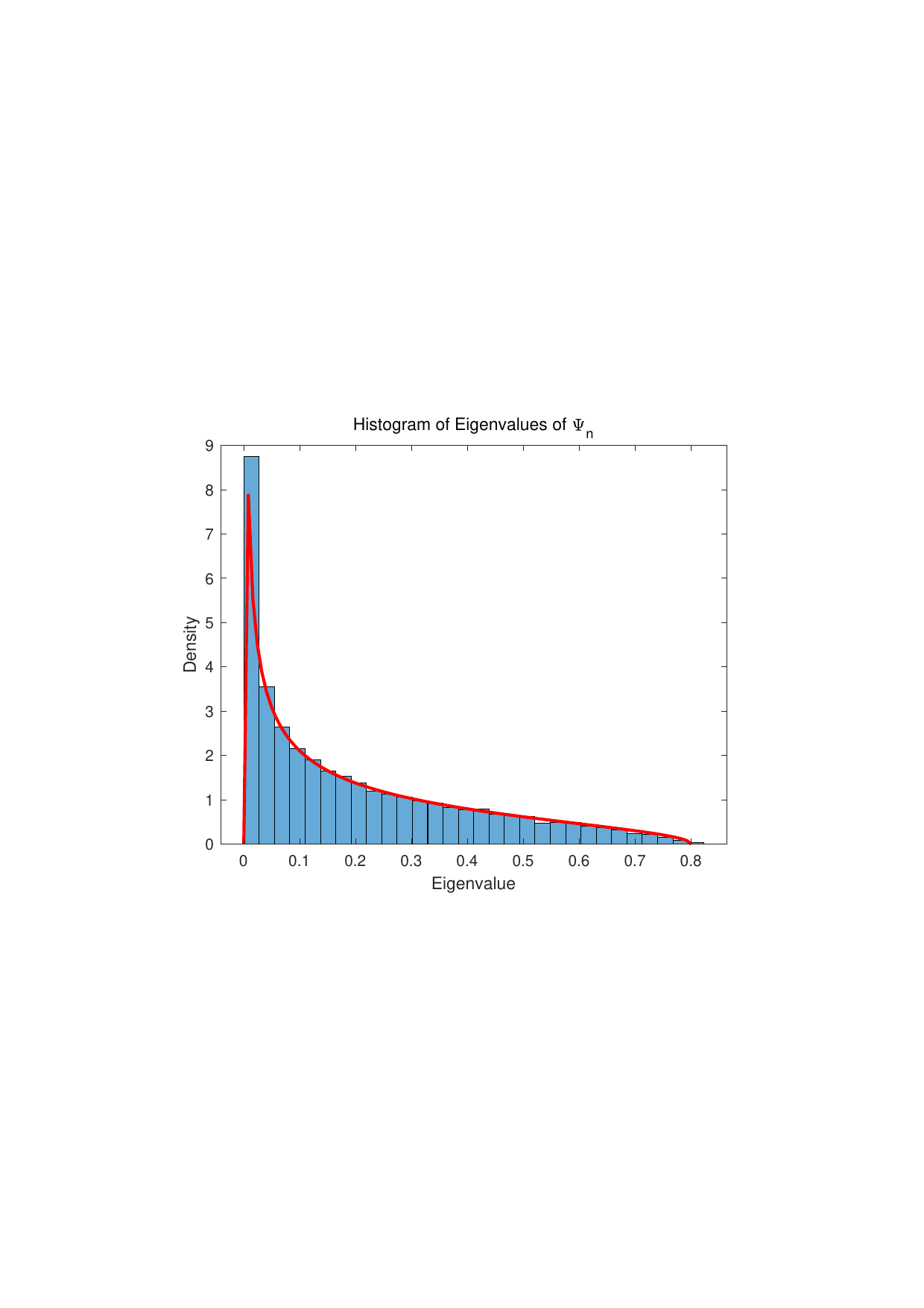}
        \caption{M-P law of $\mPsi_n$ with $p=100$ and $n=200$.}
        \label{Figure-LSD-MP}
    \end{minipage}
\end{figure}

\begin{figure}[H]
    \centering
    \includegraphics[width=1\linewidth]{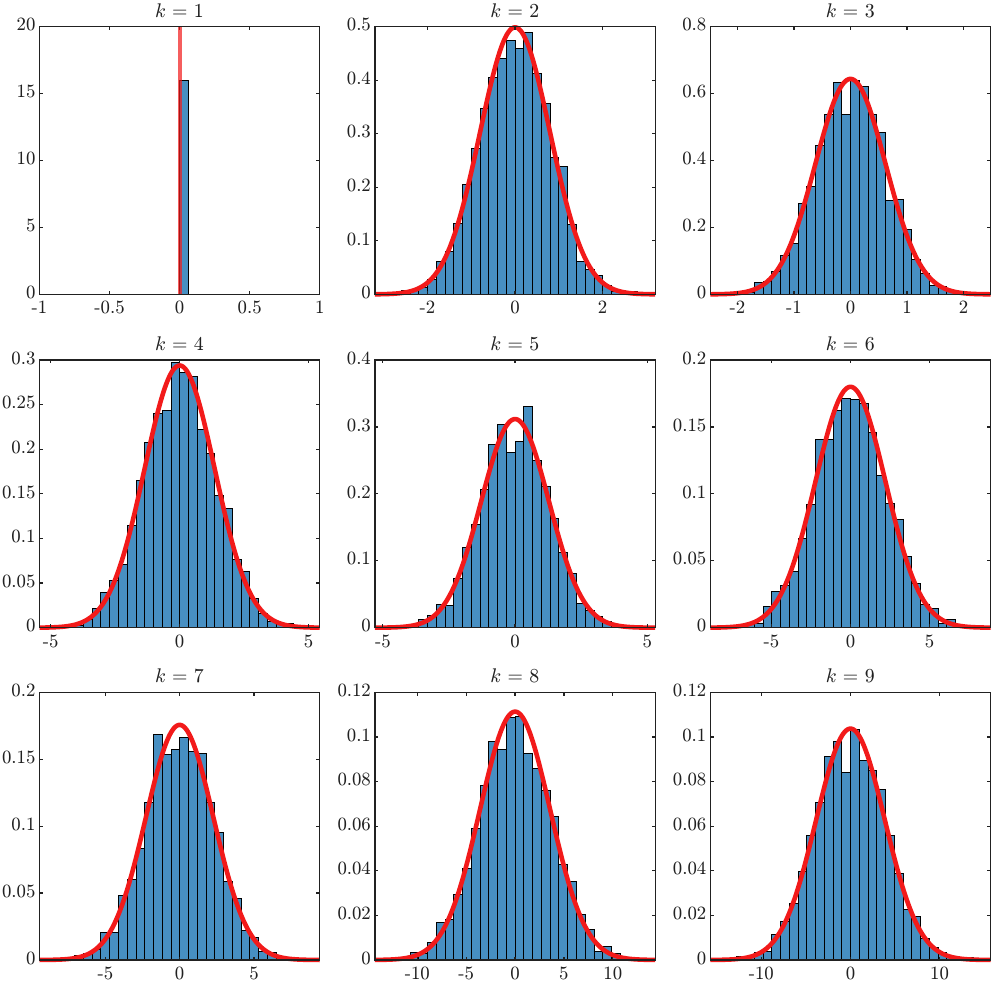}
    \caption{Histograms of $\operatorname{tr}((\mf{\Phi}_n-\mf{I}_p)^k)$ centered by sample means, overlaid with the centered Gaussian densities predicted by Theorem~\ref{thm:clt:Phi}, with $p=1000$, $n=500$, and over 2000 replications.}
    \label{Figure-CLT-Histograms-Phi}
\end{figure}

\begin{figure}[H]
    \centering
    \includegraphics[width=1\linewidth]{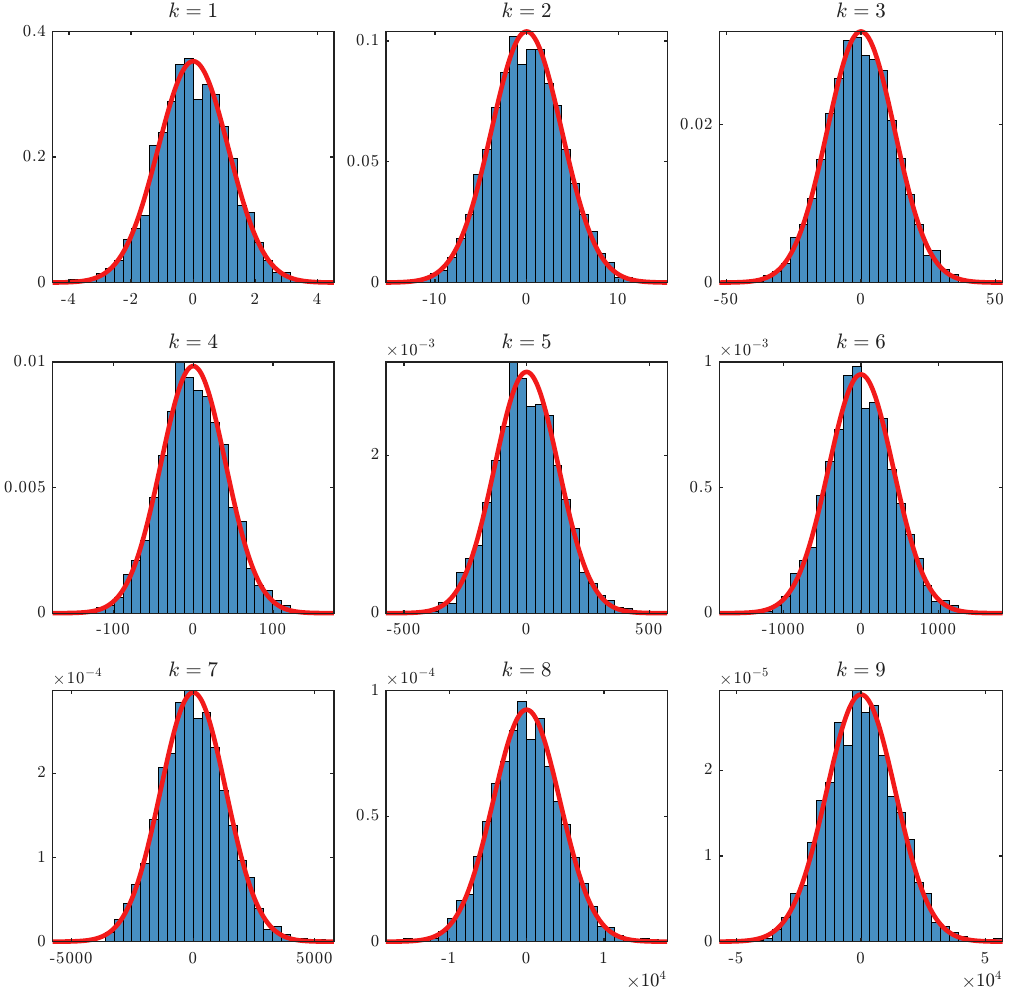}
    \caption{Histograms of $\operatorname{tr}(\mf{\Psi}_n^k)$ centered by sample means, overlaid with the centered Gaussian densities predicted by Theorem~\ref{thm:clt}, with $p=1000$, $n=500$, and over 2000 replications.}
    \label{Figure-CLT-Histograms-Variance-function}
\end{figure}

\section{Technical tools and proof strategy}\label{Section-technical-tools}

In the course of our proofs, we develop several key technical tools that are of independent interest beyond this work. We first highlight these tools in this section, and then provide an overview of the main proof strategy.

\subsection{Rankings and random permutations}\label{Section-rankings}
Chatterjee's rank correlation matrix $\mXi_n$ is independent of the data matrix $\fX$ once the rankings $R_1,\ldots,R_p$ are given. Hence, to study $\mXi_n$'s stochastic behavior under the null hypothesis $H_0$ of complete independence, it suffices to analyze the distribution of the rankings $R_j$'s, which is equivalent to studying the joint behavior of random permutations \citep{han2024introduction}. Let $\ml{S}_n$ be the group of permutations on $[n]$. We call $\sigma=\sigma^{(n)}$ a uniform random permutation if $\sigma$ has the uniform distribution on $\ml{S}_n$. Let $\sigma_1,\ldots,\sigma_p$ be $p$ independent uniform random permutations. Under Assumption \ref{assump:dgp}, it then follows immediately that
\[
\Big\{R_u : u\in[p]\Big\} \;\overset{d}{=}\; \Big\{\sigma_u : u\in[p]\Big\}.
\]
The following result, adapted from \citet[Proposition~2.1]{MR4185806}, will be the starting point for our analysis.

\begin{proposition}[\cite{MR4185806}]\label{prop:basic}
The following properties hold:
\begin{enumerate}[itemsep=-.5ex,label=(\roman*)]
\item for any $u\ne v \in [p]$, $\sigma_v \circ \sigma_u^{-1}$ is a uniform random permutation;
\item fixing an arbitrary $u \in [p]$, the collection 
\[
\{\sigma_v \circ \sigma_u^{-1} : v \in [p],\, v \neq u\}
\]
consists of mutually independent uniform random permutations;
\item fixing an arbitrary $v \in [p]$, the collection 
\[
\{\sigma_v \circ \sigma_u^{-1} : u \in [p],\, u \neq v\}
\]
consists of mutually independent uniform random permutations;
\item\label{prop:basic5} the family $\{\sigma_v \circ \sigma_u^{-1} : u\ne v \in [p]\}$ is \emph{not} mutually independent.
\end{enumerate}
\end{proposition}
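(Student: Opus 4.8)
The plan is to reduce all four claims to two elementary invariance facts about the uniform (Haar) measure on the finite group $\ml{S}_n$. First, for any fixed $\tau\in\ml{S}_n$, if $\sigma$ is uniform on $\ml{S}_n$ then so are $\tau\circ\sigma$, $\sigma\circ\tau$, and $\sigma^{-1}$, because each of these is a bijection of the finite set $\ml{S}_n$. Second, applying any one of these maps coordinatewise to a mutually independent family of uniform permutations again yields a mutually independent family of uniform permutations, since a bijection applied coordinatewise preserves both independence and marginal laws. The only real logical point to be careful about is the step ``a conditional law that does not depend on the conditioning variable coincides with the unconditional law,'' which I will apply to full joint distributions, not just marginals.

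For (i), I would condition on $\sigma_u=\tau$; by independence the conditional law of $\sigma_v$ is still uniform, hence so is that of $\sigma_v\circ\tau^{-1}$, and this conditional law is free of $\tau$, so $\sigma_v\circ\sigma_u^{-1}$ is uniform. For (ii), fix $u$ and condition on $\sigma_u=\tau$: the family $\{\sigma_v:v\ne u\}$ is, conditionally, still a mutually independent uniform family, and applying the coordinatewise right translation $\sigma\mapsto\sigma\circ\tau^{-1}$ (invariance fact two) shows that the conditional law of $\{\sigma_v\circ\tau^{-1}:v\ne u\}$ is that of a mutually independent uniform family; since this conditional law does not depend on $\tau$, it is also the unconditional law. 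Part (iii) is the mirror image: condition instead on $\sigma_v=\tau$ and use that $\sigma\mapsto\tau\circ\sigma^{-1}$ is a bijection of $\ml{S}_n$, so applied coordinatewise to the independent uniform family $\{\sigma_u:u\ne v\}$ it produces a mutually independent uniform family with law free of $\tau$.

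For (iv), the obstruction is the cocycle identity
\[
\sigma_w\circ\sigma_u^{-1}=(\sigma_w\circ\sigma_v^{-1})\circ(\sigma_v\circ\sigma_u^{-1}),\qquad u,v,w\in[p]\text{ distinct},
\]
available whenever $p\ge 3$. It exhibits one member of the family as a deterministic function of two others; since a uniform random permutation of $\ml{S}_n$ with $n\ge 2$ is not almost surely constant, $\sigma_w\circ\sigma_u^{-1}$ cannot be independent of the pair $(\sigma_v\circ\sigma_u^{-1},\,\sigma_w\circ\sigma_v^{-1})$ — the conditional probability that $\sigma_w\circ\sigma_u^{-1}$ equals a prescribed permutation given the other two is an indicator, hence non-constant, contradicting what mutual independence would force. (In the borderline case $p=2$ the same conclusion follows from $\sigma_1\circ\sigma_2^{-1}=(\sigma_2\circ\sigma_1^{-1})^{-1}$.)

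I do not anticipate a genuine obstacle here: the argument is essentially bookkeeping. The one place that needs care is matching the correct one-sided invariance to each part — right translation in (ii), inversion followed by left translation in (iii) — and verifying in each case that the \emph{joint} conditional distribution, not merely the marginals, is independent of the conditioning permutation, so that one may pass from the conditional to the unconditional statement.
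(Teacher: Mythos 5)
The paper does not include its own proof of Proposition~\ref{prop:basic}: it is adapted from \citet[Proposition~2.1]{MR4185806} and cited without proof, so there is no in-paper argument to compare against directly. Your proof is correct and self-contained. Moreover, the two ideas you use --- conditioning on one of the base permutations combined with coordinatewise Haar invariance for parts (i)--(iii), and the cocycle identity $\sigma_w\circ\sigma_u^{-1}=(\sigma_w\circ\sigma_v^{-1})\circ(\sigma_v\circ\sigma_u^{-1})$ for part (iv) --- are precisely the two pillars of the paper's proof of the more general Proposition~\ref{lem:key}. There the ``one direction'' (a cycle in the undirected skeleton forces dependence) uses your cocycle obstruction in the equivalent form $R_{i_1}R_{i_k}^{-1}=(R_{i_1}R_{i_2}^{-1})(R_{i_2}R_{i_3}^{-1})\cdots(R_{i_{k-1}}R_{i_k}^{-1})$, and the ``other direction'' (a tree gives mutual independence) uses your conditioning-on-a-fixed-permutation argument, extended to general trees by induction on a leaf. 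One small point worth retaining from your writeup is the explicit observation that the conditional \emph{joint} law, not just the conditional marginals, must be checked to be free of the conditioning variable before passing to the unconditional statement; this is exactly the step that makes (ii) and (iii) genuine mutual-independence claims rather than pairwise ones.
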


Unfortunately, Proposition \ref{prop:basic} alone is insufficient to characterize the spectral behavior of some intricate rank correlation matrices like Chatterjee's. To address this, we need a more precise description of the joint dependence structure of
\[
\{\sigma_v \circ \sigma_u^{-1} : u\ne v \in [p]\}.
\]

\begin{remark}\label{remark:relative-rank}
Under Assumption \ref{assump:dgp}, the above collection is identically distributed to
\[
\{R_v \circ R_u^{-1} : u\ne v \in [p]\},
\]
commonly referred to in the literature as the \emph{relative ranks}, which are known to play a pivotal role in rank-based statistical analysis \citep{MR3737306}. In particular, define
\begin{align*}
f_{\rho}(\sigma) &= 1 - \frac{6}{n(n^2-1)}\sum_{k=1}^n \big(\sigma(k)-k\big)^2,\\
f_{\tau}(\sigma) &= \frac{2}{n(n-1)}\sum_{1\leq i<j\leq n} \operatorname{sgn}(i-j)\,\operatorname{sgn}\big(\sigma(i)-\sigma(j)\big),\\
f_{\xi}(\sigma) &= 1 - \frac{3}{n^2-1}\sum_{k=1}^{n-1} \big|\sigma(k+1)-\sigma(k)\big|.
\end{align*}
Then $f_\rho(R_v\circ R_u^{-1}), f_\tau(R_v\circ R_u^{-1}), f_\xi(R_v\circ R_u^{-1})$ correspond to Spearman, Kendall, and Chatterjee's rank correlations between $\fX_{\cdot u}$ and $\fX_{\cdot v}$, respectively.
\end{remark}

The following proposition gives a pivotal result that characterizes the joint dependence structure of $\{\sigma_v \circ \sigma_u^{-1} : u\ne v \in [p]\}$.

\begin{proposition}[Dependence structure of relative ranks]\label{lem:key} 
Let $\sigma_1,\cdots,\sigma_p$ be $p$ independent uniform random permutations on $[n]$, and consider $\Delta$ to be a directed multigraph with vertex set $V(\Delta)= [p]$, finite edge set $E(\Delta)=\{(u_k,v_k)\}_k$, and no self-loops. Then 
\[
\big\{\sigma_v \circ \sigma_u^{-1} : (u,v) \in E(\Delta)\big\}
\]
are mutually independent if and only if $\Delta$'s underlying undirected graph\footnote{See Section \ref{sec:graph-notation} ahead for a review of standard graph-theoretic terminology.} is a collection of non-overlapping trees. 
\end{proposition}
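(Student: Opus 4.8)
The plan is to prove the two implications separately; the ``only if'' direction is short, while the ``if'' direction carries the main content. Throughout I assume $n\geq 2$, since for $n=1$ every permutation is the identity and the statement is vacuous. Write $g_{(u,v)}:=\sigma_v\circ\sigma_u^{-1}$ for $(u,v)\in E(\Delta)$, recall from Proposition~\ref{prop:basic}(i) that each $g_{(u,v)}$ is marginally uniform on $\ml{S}_n$, and note $g_{(v,u)}=g_{(u,v)}^{-1}$.

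For the ``only if'' direction, suppose the underlying undirected graph of $\Delta$ contains a cycle, i.e.\ there are vertices $w_0,w_1,\ldots,w_{m-1},w_m=w_0$ (with $m\geq2$, the case $m=2$ corresponding to a double edge) such that each pair $\{w_{i},w_{i+1}\}$ underlies an edge $e_i\in E(\Delta)$. Set $\tilde g_i:=\sigma_{w_{i+1}}\circ\sigma_{w_i}^{-1}$, which equals either the relative rank of $e_i$ or its inverse, hence is a deterministic function of that relative rank. Telescoping gives $\tilde g_{m-1}\circ\cdots\circ\tilde g_0=\sigma_{w_0}\circ\sigma_{w_0}^{-1}=\mathrm{id}$, so the relative rank of $e_{m-1}$ is a deterministic function of the relative ranks of $e_0,\ldots,e_{m-2}$. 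Since this relative rank is uniform on $\ml{S}_n$ and $|\ml{S}_n|=n!\geq2$, it is non-degenerate, and a non-degenerate random variable cannot simultaneously be independent of and a measurable function of another random vector; hence the sub-collection $\{g_{e_0},\ldots,g_{e_{m-1}}\}$, and a fortiori $\{g_{(u,v)}:(u,v)\in E(\Delta)\}$, is not mutually independent.

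For the ``if'' direction, suppose the underlying undirected graph is a forest and decompose it into its connected components $T_1,\ldots,T_c$, each a tree on a vertex set $V(T_i)\subseteq[p]$ with the $V(T_i)$ pairwise disjoint (isolated vertices carry no edge and are discarded). Fix $T_i$, choose a root $r_i\in V(T_i)$, and consider
\[
\Theta_i:\ \ml{S}_n^{V(T_i)}\longrightarrow \ml{S}_n\times\ml{S}_n^{E(T_i)},\qquad (\tau_v)_{v\in V(T_i)}\longmapsto\bigl(\tau_{r_i},\,(h_e)_{e\in E(T_i)}\bigr),
\]
where $h_e$ is the relative rank of $e$ computed from $(\tau_v)$ with the orientation that $e$ carries in $\Delta$. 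I claim $\Theta_i$ is a bijection: processing vertices of $T_i$ in order of distance from $r_i$, each non-root $v$ has a unique parent $u$, and whichever way the edge $\{u,v\}$ is oriented, the defining identity ($h_e=\tau_v\circ\tau_u^{-1}$ or $h_e=\tau_u\circ\tau_v^{-1}$) determines $\tau_v$ from $\tau_u$ and $h_e$ uniquely; this inverts $\Theta_i$, and both sides are finite of cardinality $(n!)^{|V(T_i)|}$ because $|E(T_i)|=|V(T_i)|-1$. Since $(\sigma_v)_{v\in V(T_i)}$ is uniform on $\ml{S}_n^{V(T_i)}$ and a bijection between equinumerous finite sets pushes the uniform law forward to the uniform law, $\bigl(\sigma_{r_i},(g_e)_{e\in E(T_i)}\bigr)$ is uniform on the product, so $\{g_e:e\in E(T_i)\}$ are mutually independent and uniform on $\ml{S}_n$. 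Finally, $\{g_e:e\in E(T_i)\}$ is a measurable function of $(\sigma_v)_{v\in V(T_i)}$, and these blocks are independent across $i$ because the $V(T_i)$ are disjoint and $\sigma_1,\ldots,\sigma_p$ are independent; combining within-component mutual independence with between-component independence shows $\{g_{(u,v)}:(u,v)\in E(\Delta)\}=\bigcup_{i=1}^c\{g_e:e\in E(T_i)\}$ is mutually independent.

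The only real obstacle is the bookkeeping in the ``if'' direction: verifying that $\Theta_i$ is genuinely a bijection requires care with edge orientations — handled uniformly by $g_{(v,u)}=g_{(u,v)}^{-1}$ together with the fact that $g\mapsto g^{-1}$ and right-translation by a fixed permutation are bijections of $\ml{S}_n$ — and the cardinality count relies on the tree identity $|E|=|V|-1$, which is precisely where the forest hypothesis enters. The passage from ``independent blocks, each internally mutually independent'' to ``globally mutually independent'' is a standard product-measure fact I would invoke without proof. An alternative to $\Theta_i$ is an induction on $|E(\Delta)|$ by leaf removal: deleting a leaf $w$ with incident edge $e$ and conditioning on $\{\sigma_v:v\neq w\}$, the conditional law of $g_e$ is uniform regardless of the conditioning values (since $\sigma_w$ is uniform and independent of that family), so $g_e$ is independent of all relative ranks that avoid $w$, which feeds the induction.
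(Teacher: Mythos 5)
Your proof is correct. The ``only if'' direction is essentially the paper's: both extract a cycle and use the telescoping identity to express one relative rank as a measurable function of the others, then invoke non-degeneracy to contradict independence. For the ``if'' direction, however, you take a genuinely different route. The paper runs an induction on the number of vertices of the tree: relabel so a leaf $i_1$ with incident edge $(i_1,i_2)$ is exposed, condition on $R_{i_1}=\uptau_1$, use the uniformity of $R_{i_2}$ to peel off one factor of $1/n!$, and appeal to the induction hypothesis on the subtree --- a direct computation that the joint probability equals $(1/n!)^{|E|}$. You instead construct the map $\Theta_i:(\tau_v)_{v\in V(T_i)}\mapsto(\tau_{r_i},(h_e)_{e\in E(T_i)})$, verify it is a bijection by processing vertices outward from the root and using $|E(T_i)|=|V(T_i)|-1$ to match cardinalities, and then push forward the uniform product measure. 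This change-of-variables argument dispenses with the probability bookkeeping entirely and, as a by-product, handles the forest case in one stroke by working component-by-component, whereas the paper's induction is phrased for a single tree and leaves the extension to a forest implicit. Your closing remark correctly identifies the leaf-removal induction as an alternative that is, in spirit, the paper's proof. Both approaches are sound; yours is arguably the cleaner bookkeeping, the paper's is the more elementary (no appeal to push-forward of measure).
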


A direct consequence of Proposition \ref{lem:key} is the characterization of the joint dependence structure of rank correlations, including Spearman's rho, Kendall's tau, and Chatterjee's $\xi_n$, the last of which will be leveraged in the subsequent theoretical studies.

\begin{corollary}\label{Corollary:tree-independence-coef} 
Let $\cS_n$ denote the permutation group on $[n]$, and let $f:\cS_n\to\R$ be an arbitrary measurable function. If $\Delta$ is a tree, then the variables in
\[
\Big\{ f(\sigma_v\circ \sigma_u^{-1}) : (u,v)\in E(\Delta) \Big\}
\] 
are mutually independent. 
\end{corollary}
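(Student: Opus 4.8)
The plan is to derive Corollary~\ref{Corollary:tree-independence-coef} directly from Proposition~\ref{lem:key} as a routine consequence of the ``if'' direction, combined with the fact that applying a fixed measurable function to each coordinate of a family of independent random variables preserves independence.

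\textbf{Step 1: Reduce to the permutation family.} Since $\Delta$ is assumed to be a tree, its underlying undirected graph is, trivially, a collection of non-overlapping trees (a single tree). Hence the ``if'' direction of Proposition~\ref{lem:key} applies and yields that the family
\[
\big\{\sigma_v \circ \sigma_u^{-1} : (u,v)\in E(\Delta)\big\}
\]
is a collection of mutually independent (uniform) random permutations on $[n]$, taking values in the finite set $\cS_n$.

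\textbf{Step 2: Push forward through $f$.} Given a measurable $f:\cS_n\to\R$, each $f(\sigma_v\circ\sigma_u^{-1})$ is a measurable function of the single permutation $\sigma_v\circ\sigma_u^{-1}$ alone, with no dependence on the permutations attached to other edges. A standard fact — independence of a family of random elements is preserved under applying, to each element separately, an arbitrary measurable map into any measurable space — then gives that
\[
\big\{ f(\sigma_v\circ\sigma_u^{-1}) : (u,v)\in E(\Delta)\big\}
\]
are mutually independent real random variables. (Concretely: for any finite subcollection of edges $(u_1,v_1),\dots,(u_m,v_m)$ and Borel sets $B_1,\dots,B_m\subseteq\R$, the events $\{f(\sigma_{v_i}\circ\sigma_{u_i}^{-1})\in B_i\}$ pull back to events of the form $\{\sigma_{v_i}\circ\sigma_{u_i}^{-1}\in f^{-1}(B_i)\}$, and factorization of probabilities follows from Step~1.) This completes the proof.

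\textbf{Main obstacle.} There is essentially no obstacle here: the entire content of the corollary is the ``if'' half of Proposition~\ref{lem:key}, and the passage from permutations to $f$-values is the elementary stability of independence under measurable maps. The only point deserving a word of care is to make explicit that $f(\sigma_v\circ\sigma_u^{-1})$ depends on \emph{only} the edge $(u,v)$'s permutation — which is immediate from the definition — so that the measurable-map argument applies coordinatewise. The substantive work has already been done in establishing Proposition~\ref{lem:key}.
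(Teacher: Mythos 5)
Your proof is correct and is exactly the intended derivation: the paper states the corollary immediately after Proposition~\ref{lem:key} without giving a separate proof, precisely because it follows from the ``if'' direction of that proposition together with the elementary fact that independence is preserved under coordinatewise measurable maps. Your two steps make this explicit, which is all that is needed.
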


Corollary~\ref{Corollary:tree-independence-coef} is also important for interpreting the spectral result. Indeed, the three coefficients considered in Remark~\ref{remark:relative-rank} are all constructed from the same relative-rank variables and therefore share the tree-independence structure established in Proposition~\ref{lem:key}. Moreover, all three coefficients are asymptotically Gaussian under bivariate independence. Nevertheless, the corresponding high-dimensional matrices have fundamentally different LSDs: the Spearman and Kendall matrices converge to M-P laws, whereas the Chatterjee matrix exhibits a semicircle limit. The emergence of the semicircle law must therefore be attributed to the particular functional of the relative permutation underlying Chatterjee's statistic and the resulting graph-moment combinatorics.

A recent follow-up study by \citet{dong2026limiting} shows that the same semicircle phenomenon established in Theorem~\ref{thm:sc} also arises for other consistent rank correlation coefficients, including Hoeffding's $D$ \citep{MR0029139}, the Blum--Kiefer--Rosenblatt $R$ \citep{MR0125690}, and the Bergsma--Dassios--Yanagimoto $\tau^*$ \citep{MR3178526}. This finding supports the broader view that consistency against all forms of dependence, together with the associated rank-statistic structure, may underlie the emergence of semicircle limits.

\subsection{Preliminary results on Chatterjee's rank correlation}\label{Section-Chatterjee-correlation}

This section reviews and introduces some useful results on Chatterjee's rank correlation, while also clarifying some potential sources of confusion. Specifically, we begin by presenting several key results for the bivariate case under the assumption of independence.

\begin{proposition}\label{prop:crc}
Assume Assumption \ref{assump:dgp}. Then, for any $i \ne j \in [p]$, the following results are valid.
\begin{enumerate}[itemsep=-.5ex,label=(\roman*)]
\item \citet[Theorem 2.2]{chatterjee2020new}: 
\[
\sqrt{n}\,\Xi_{ij}^{(n)} \;\Rightarrow\; \mathcal{N}_1(0,2/5);
\]

\item \citet[Lemma 2]{zhang2022asymptotic}: 
\[
\Var\!\bigl(\sqrt{n}\,\Xi_{ij}^{(n)}\bigr) \;=\; \frac{n(n-2)(4n-7)}{10(n+1)(n-1)^2};
\]

\item \citet[Theorem 1]{zhang2022asymptotic}: 
\[
\bigl(\sqrt{n}\,\Xi_{ij}^{(n)}, \; \sqrt{n}\,\Xi_{ji}^{(n)}\bigr)^\top 
\;\Rightarrow\; \mathcal{N}_2\!\left(\mathbf{0}_2, \tfrac{2}{5}\mathbf{I}_2\right),
\]
where $\mathbf{0}_2$ denotes the two-dimensional zero vector;

\item \citet[Lemma 2.1]{xia2025consistent}: 
\begin{align*}
\E\!\left[\bigl\{\Xi_{ij}^{(n)}\bigr\}^2\right] 
&= \frac{(n-2)(4n-7)}{10(n-1)^2(n+1)}, \\[0.5ex]
\Var\!\left(\bigl\{\Xi_{ij}^{(n)}\bigr\}^2\right) 
&= \frac{224n^5 - 1792n^4 + 5051n^3 - 4969n^2 - 2458n + 18128}{700(n-1)^4(n+1)^3}.
\end{align*}
\end{enumerate}
\end{proposition}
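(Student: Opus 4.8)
The plan is to reduce all four assertions to properties of a single uniform random permutation, and then to treat the limit laws (i), (iii) and the exact moments (ii), (iv) by two separate routines.

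Fix $i\ne j\in[p]$. By Remark~\ref{remark:relative-rank}, $\Xi_{ij}^{(n)}=f_\xi(R_j\circ R_i^{-1})$, and since $R_i\circ R_j^{-1}=(R_j\circ R_i^{-1})^{-1}$ we also have $\Xi_{ji}^{(n)}=f_\xi\big((R_j\circ R_i^{-1})^{-1}\big)$. By Proposition~\ref{prop:basic}(i) together with Assumption~\ref{assump:dgp}, $\pi:=R_j\circ R_i^{-1}$ is a uniform random permutation of $[n]$ whose law does not depend on $(i,j)$, and $\pi^{-1}$ is again uniform. Hence it suffices to prove (i), (ii), (iv) for the single variable $f_\xi(\pi)$ and (iii) for the pair $(f_\xi(\pi),f_\xi(\pi^{-1}))$ with $\pi$ uniform on $\ml{S}_n$; the statements then hold verbatim for every $i\ne j$, and this reduction is also where the $\Xi_{ij}^{(n)}$-versus-$\Xi_{ji}^{(n)}$ asymmetry, the main source of potential confusion here, is made transparent. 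Granting the reduction, (i)--(iv) are precisely \citet[Theorem~2.2]{chatterjee2020new}, \citet[Lemma~2]{zhang2022asymptotic}, \citet[Theorem~1]{zhang2022asymptotic} and \citet[Lemma~2.1]{xia2025consistent}; for a self-contained treatment I would also include direct arguments, sketched next.

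For the limit theorems I work with $T_n:=\sum_{k=1}^{n-1}|\pi(k+1)-\pi(k)|$, so that $f_\xi(\pi)=-\tfrac{3}{n^2-1}(T_n-\E T_n)$ with $\E T_n=(n^2-1)/3$ (an elementary computation via $\E|\pi(2)-\pi(1)|=(n+1)/3$). Realizing $\pi$ through i.i.d.\ $\mathrm{Uniform}[0,1]$ variables $U_1,\dots,U_n$, so that $\pi(k)$ is the rank of $U_k$, the indicator differences $\ind(U_i\le U_{k+1})-\ind(U_i\le U_k)$ all share the sign of $U_{k+1}-U_k$, which gives $|\pi(k+1)-\pi(k)|=1+\operatorname{Binom}\!\big(n-2,|U_{k+1}-U_k|\big)$ conditionally on $(U_k,U_{k+1})$, and hence $T_n=\sum_{i=1}^n N_i$ where $N_i$ counts the consecutive pairs $\{U_k,U_{k+1}\}$ whose interval contains $U_i$. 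Asymptotic normality of $T_n$ then follows from a martingale central limit theorem applied to the Doob decomposition $M_\ell=\E[T_n\mid U_1,\dots,U_\ell]$, $0\le\ell\le n$: one computes the conditional variances of the increments, shows the normalized quadratic variation converges, and checks a Lindeberg condition; the limiting variance of $\sqrt n\,f_\xi(\pi)$ equals $2/5$, consistent with the exact formula in part~(ii). For part~(iii), one uses the Cram\'er--Wold device: an arbitrary linear combination $af_\xi(\pi)+bf_\xi(\pi^{-1})$ is again a statistic of the above type ($f_\xi(\pi^{-1})$ being the analogue with the roles of time index and value interchanged), so the same martingale CLT applies, and since a direct computation gives $n\,\Cov(f_\xi(\pi),f_\xi(\pi^{-1}))\to0$ the limiting covariance is the diagonal $\tfrac25\mathbf{I}_2$.

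For the exact moments I would use a purely combinatorial expansion. Since $\{\Xi_{ij}^{(n)}\}^m$ is a fixed polynomial in $T_n$, part~(ii) reduces to computing $\E[T_n^m]$ for $m\le2$ and part~(iv) to computing $\E[T_n^m]$ for $m\le4$. Expanding $T_n^m=\sum_{k_1,\dots,k_m}\prod_{r=1}^m|\pi(k_r+1)-\pi(k_r)|$ and using uniformity of $\pi$, the expectation $\E\big[\prod_{r=1}^m|\pi(k_r+1)-\pi(k_r)|\big]$ depends on $(k_1,\dots,k_m)$ only through the incidence pattern of the intervals $\{k_r,k_r+1\}$ on the path $1,\dots,n$ (which coincide, which share an endpoint, which are disjoint); for a pattern spanning $q$ distinct labels it equals $\E\big[\prod_{r}|a_r-b_r|\big]$ for a uniformly random injection of those labels into $[n]$, an explicit rational function of $n$ obtained from power sums such as $\sum_{d=1}^{n-1}d^{\,\ell}(n-d)$. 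Weighting each pattern's value by the number of index tuples that realize it (again a polynomial in $n$), summing, and simplifying yields the closed forms in (ii) and (iv).

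The step I expect to be the main obstacle is the enumeration behind part~(iv): the variance of $\{\Xi_{ij}^{(n)}\}^2$ requires $\E[T_n^4]$, hence a complete list of incidence patterns among four intervals $\{k_r,k_r+1\}$ (involving up to eight distinct labels), the evaluation of each mixed moment $\E[|a_1-b_1|\cdots|a_4-b_4|]$ (a polynomial of degree up to eight in $n$), and the assembly of all contributions into the stated rational function --- elementary but long and error-prone, which is why we rely on the verified computation in \cite{xia2025consistent}. A secondary point worth stressing is that the value $2/5$ in part~(i) is not visible from the naive approximation $T_n\approx n\sum_k|U_{k+1}-U_k|$: the $\operatorname{Binom}(n-2,\cdot)$ fluctuations are correlated across $k$ and contribute at leading order, so either the exact moment identity of (ii) or Chatterjee's careful CLT argument is genuinely needed, and for (iii) the vanishing of the cross-covariance must be checked rather than assumed.
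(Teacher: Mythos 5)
The paper states this proposition purely as a collection of known results, with the citations in the statement itself serving as the "proof," and no separate derivation is given; your opening reduction to a single uniform random permutation via Remark~\ref{remark:relative-rank} and Proposition~\ref{prop:basic}(i), followed by invoking those same references, reproduces exactly the paper's implicit reasoning. The supplementary martingale-CLT and moment-expansion sketches go beyond what the paper records and are plausible in outline, but they are not needed once the citations are in place and so do not constitute a different route.
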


Next, we characterize the joint behavior of multiple Chatterjee rank correlations under complete independence. In this regard, the following proposition extends \citet[Theorem~1]{zhang2022asymptotic} to the multivariate setting. Its proof builds on Chatterjee's method \citep{MR2435859,auddy2021exact,lin2022limit} and will be instrumental in deriving the closed-form expressions for the covariances in Theorem~\ref{thm:clt}.

\begin{proposition}\label{approx1}
Assume Assumption \ref{assump:dgp}.  Then, for any fixed $K\in\N$ and any distinct index pairs
\[
(i_1,j_1), \ldots, (i_K,j_K)\in[p]^2
\]
such that $i_k \ne j_k$ for all $k \in [K]$, we have
\[
\Big(\sqrt{n}\,\Xi_{i_1j_1}^{(n)}, \; \sqrt{n}\,\Xi_{i_2j_2}^{(n)}, \; \ldots, \; \sqrt{n}\,\Xi_{i_Kj_K}^{(n)}\Big)^\top 
\;\Rightarrow\; \mathcal{N}_K\!\left(\mathbf{0}_K, \tfrac{2}{5}\mathbf{I}_K\right).
\]
\end{proposition}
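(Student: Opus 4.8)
The plan is to combine the Cramér--Wold device with Chatterjee's method of normal approximation \citep{MR2435859}, in the spirit of the bivariate treatments in \citet{lin2022limit,auddy2021exact}. By Assumption~\ref{assump:dgp} and Remark~\ref{remark:relative-rank} we may take $\fX$ to have i.i.d.\ $\mathrm{Uniform}(0,1)$ entries, so that, writing $\xi_n^{(k)}:=\Xi_{i_kj_k}^{(n)}$ and $\pi_k:=\sigma_{j_k}\circ\sigma_{i_k}^{-1}$, each $\xi_n^{(k)}=f_\xi(\pi_k)$ is a rank functional of the two columns $\fX_{\cdot i_k},\fX_{\cdot j_k}$, hence a function $g(\mX_1,\dots,\mX_n)$ of the $n$ i.i.d.\ rows, through only the finitely many coordinates appearing among the $K$ pairs. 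By Cramér--Wold it suffices to fix arbitrary $c_1,\dots,c_K\in\R$ and prove
\[
W_n:=\sqrt n\sum_{k=1}^K c_k\,\xi_n^{(k)}\ \Rightarrow\ \mathcal N_1\!\Big(0,\ \tfrac25\textstyle\sum_{k=1}^K c_k^2\Big).
\]

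The limiting variance is essentially free. For $k\ne k'$ the ordered pairs $(i_k,j_k)$ and $(i_{k'},j_{k'})$ span the same unordered pair only when $(i_{k'},j_{k'})=(j_k,i_k)$, in which case $\pi_{k'}=\pi_k^{-1}$ and $\Cov(\sqrt n\,\xi_n^{(k)},\sqrt n\,\xi_n^{(k')})\to 0$ by Proposition~\ref{prop:crc}(iii); otherwise the underlying two-edge graph is a forest, so by Corollary~\ref{Corollary:tree-independence-coef} (with $f=f_\xi$) the variables $\xi_n^{(k)}$ and $\xi_n^{(k')}$ are \emph{exactly} independent and their covariance vanishes. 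Together with Proposition~\ref{prop:crc}(ii) this gives $\Var(W_n)\to\tfrac25\sum_k c_k^2$. In particular, if the underlying undirected multigraph of $\Delta:=\{(i_k,j_k)\}_{k=1}^K$ is a forest, then Corollary~\ref{Corollary:tree-independence-coef}, applied componentwise, makes $\xi_n^{(1)},\dots,\xi_n^{(K)}$ \emph{exactly} mutually independent for every $n$, and the claim follows from the marginal central limit theorem Proposition~\ref{prop:crc}(i) since a linear combination of independent, individually convergent sequences converges to the convolution of their limits.

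It remains to treat the case where $\Delta$ contains a cycle, so that the relative ranks are genuinely dependent (Proposition~\ref{lem:key}); here I would run Chatterjee's Stein-type normal approximation for $W_n=g(\mX_1,\dots,\mX_n)$. Resampling a single row $\mX_l$ by an independent copy produces a discrete derivative $\Delta_lW_n=\sqrt n\sum_k c_k\,\Delta_l\xi_n^{(k)}$, and Chatterjee's inequality bounds the distance from $\mathcal L(W_n)$ to $\mathcal N(0,\Var W_n)$ by the standard deviation of a conditional variance-type functional assembled from $\{\Delta_lW_n\}_{l=1}^n$ (together with a coupled copy), plus a third-order term $\sum_{l=1}^n\E|\Delta_lW_n|^3$. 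Here one uses the elementary rearrangement $f_\xi(\pi)=-2+\tfrac{6}{n^2-1}\sum_{l=1}^{n-1}\min(\pi(l),\pi(l+1))+O(n^{-1})$ (from $|a-b|=a+b-2\min(a,b)$): perturbing $\mX_l$ only displaces point $l$ in each $\fX_{\cdot i_k}$-ordering and re-ranks the $Y$-values for that pair, which changes $\sum_{l'=1}^{n-1}\min(\pi_k(l'),\pi_k(l'+1))$ by at most $O(n)$ --- a cyclic shift of the affected block leaves that sum invariant away from its endpoints, and the $Y$-re-ranking perturbs $O(n)$ of the summands by $O(1)$ each --- so $\Delta_l\xi_n^{(k)}=O(1/n)$, $\Delta_lW_n=O(n^{-1/2})$, and the third-order term is $O(n^{-1/2})$. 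What remains is to show the conditional variance-type functional concentrates around $\Var(W_n)$.

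The main obstacle is precisely this last concentration step, which amounts to controlling, inside that functional, the cross contributions from distinct pairs $k\ne k'$. For such a pair, $\xi_n^{(k)}$ and $\xi_n^{(k')}$ are either a reverse pair (already handled through Proposition~\ref{prop:crc}(iii)) or exactly independent; yet the corresponding resampled-derivative cross terms, though centered, have a priori non-negligible fluctuations and must be shown to vanish in the limit. This calls for a careful combinatorial analysis of how the short-range (nearest-neighbour, in the respective $\fX_{\cdot i_k}$-orderings) structures underlying two Chatterjee statistics built on overlapping column sets can interact through the rows and coordinates they have in common --- roughly, that the set of positions simultaneously affected in both relevant orderings by a single resampled row is a vanishing fraction of $[n]$. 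This is the technical heart of the argument, and, carried one order further, the same bookkeeping produces the explicit covariance formula of Theorem~\ref{thm:clt}.
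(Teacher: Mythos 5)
Your overall direction—Cramér–Wold followed by Chatterjee's Stein-type normal approximation, with the variance computed via exact independence on trees (Corollary~\ref{Corollary:tree-independence-coef}) plus Proposition~\ref{prop:crc}(ii)--(iii)—matches the paper's, and the variance bookkeeping you sketch is essentially correct. The forest case is indeed immediate.

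However, the Gaussianity argument in the genuinely dependent case has a real gap, which you yourself flag: you invoke the generic resampling version of Chatterjee's method and then explicitly defer the key step, namely showing that the conditional-variance functional built from the discrete derivatives $\{\Delta_l W_n\}$ concentrates around $\Var(W_n)$. That step is not routine here, precisely because distinct Chatterjee statistics built on overlapping columns interact through shared rows, and you give no mechanism for bounding those fluctuations. The paper avoids this issue by using a different and more specific tool: Chatterjee's normal approximation for \emph{graphical statistics} (Theorem~2.5 of \citet{MR2435859}), which replaces the concentration-of-conditional-variance step by the verification of a \emph{symmetric interaction rule}. Concretely, one first reduces, via the local representation of $\xi_n$ from \citet{MR1378827}, to a sum $W_n=n^{-1/2}\sum_{k,h}a_k\bigl[|U_k(h+1)-U_k(h)|+2U_k(h)(1-U_k(h))-\tfrac23\bigr]$; then one defines a graph on $[n]$ in which two rows $j',j''$ are adjacent when some relevant column places a third row within rank-distance $\le 2$ of both (the paper's $\mathcal{D}^s_{\mathcal{M}}$), and checks that this is a symmetric interaction rule for $W_n$ with degree $O(K)$ and $|\Delta_j W_n|=O(K/\sqrt n)$. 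Theorem~2.5 of \citet{MR2435859} then gives a Wasserstein bound $\delta(W_n)=O(K^3/(\sqrt n\,\sigma^2))$ directly, with no separate concentration argument needed. To close your proof you would either need to supply that concentration estimate yourself, which is nontrivial, or switch to the graphical-statistics version of Chatterjee's theorem as the paper does.

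A second, smaller point: your reduction from $\Xi^{(n)}_{i_kj_k}$ to a sum of i.i.d.-row local functionals relies on the \citet{MR1378827} identity, which introduces a $o_P(1)$ remainder $R^k_n$; this must be tracked through the Cramér–Wold linear combination and the normal-approximation bound rather than just written as ``$O(n^{-1})$'' inside $f_\xi$. The paper carries this remainder explicitly.
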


Lastly, we discuss the work of \citet{xia2025consistent}, which also investigates the problem of testing \eqref{eq:H0} using Chatterjee's rank correlation and LSSs. While their results are related and correspond to the special case $k=1$ in our Theorem~\ref{thm:clt} (and we have duly cited all that we used), their main result (Theorem~2.1) unfortunately suffers from serious technical flaws. Below, we point out two such issues and, in doing so, take the opportunity to clarify some common misconceptions about Chatterjee's rank correlation.

First, we emphasize that, in general, $\{\Xi_{ij}^{(n)}:\ 1\leq i< j\leq p\}$ are \emph{not} mutually independent; cf. Proposition~\ref{prop:basic}\ref{prop:basic5}. For instance, when $n=3$, we have
\bea\label{eq-counterExample-Independence-Xia-n=3-E-Prod}
\E\Big[\big((\Xi_{12}^{(3)})^2+(\Xi_{21}^{(3)})^2\big)\big((\Xi_{13}^{(3)})^2+(\Xi_{31}^{(3)})^2\big)\big((\Xi_{23}^{(3)})^2+(\Xi_{32}^{(3)})^2\big)\Big] &=& 5/16384,
\eea
while
\bea\label{eq-counterExample-Independence-Xia-n=3-Prod-E}
\E\big[(\Xi_{12}^{(3)})^2+(\Xi_{21}^{(3)})^2\big]\,
\E\big[(\Xi_{13}^{(3)})^2+(\Xi_{31}^{(3)})^2\big]\,
\E\big[(\Xi_{23}^{(3)})^2+(\Xi_{32}^{(3)})^2\big] &=& 1/4096.
\eea
The discrepancy between \eqref{eq-counterExample-Independence-Xia-n=3-E-Prod} and \eqref{eq-counterExample-Independence-Xia-n=3-Prod-E}\footnote{Equation~\eqref{eq-counterExample-Independence-Xia-n=3-E-Prod} was obtained via Matlab by enumerating all $6^3=216$ possible configurations of $(R_1,R_2,R_3)$, while \eqref{eq-counterExample-Independence-Xia-n=3-Prod-E} follows directly from \citet[Lemma~2.1]{xia2025consistent}.} shows that
\[
\Big\{(\Xi_{12}^{(3)})^2+(\Xi_{21}^{(3)})^2,\ 
(\Xi_{13}^{(3)})^2+(\Xi_{31}^{(3)})^2,\ 
(\Xi_{23}^{(3)})^2+(\Xi_{32}^{(3)})^2\Big\}
\]
are not mutually independent. Consequently, in the proof of asymptotic normality in \citet[Theorem~2.1]{xia2025consistent}, the assertion that
\[
\Big\{(\Xi_{ij}^{(n)})^2+(\Xi_{ji}^{(n)})^2:\ 1\leq i< j\leq p\Big\}
\]
are independent random variables is incorrect. Therefore, their use of the classical Lindeberg--Lévy central limit theorem is not justified. In fact, carefully addressing these dependencies is precisely the main technical challenge we resolve in this manuscript, via Proposition \ref{lem:key}.

Second, \citet[Theorem~2.1]{xia2025consistent} asserts that $\tr(\mPsi_n)$ is asymptotically normal even when $n$ is fixed, provided that $p\to\infty$. This, however, is incorrect. In detail, following the notation in \citet[Theorem~2.1]{xia2025consistent}, define
\[
J_\xi=\sigma_{np}^{-1}\sum_{1\leq k<\ell\leq p}\varphi_{k\ell}^{(n)},\quad 
\varphi_{k\ell}^{(n)}=(\Xi_{k\ell}^{(n)})^2+(\Xi_{\ell k}^{(n)})^2-2\E\big[(\Xi_{12}^{(n)})^2\big],
\]
\[
\sigma_{np}^2=p(p-1)\Big(\var\big[(\Xi_{12}^{(n)})^2\big]+\operatorname{cov}\big[(\Xi_{12}^{(n)})^2,(\Xi_{21}^{(n)})^2\big]\Big),
\]
where $\E[(\Xi_{12}^{(n)})^2]$, $\var[(\Xi_{12}^{(n)})^2]$, and $\operatorname{cov}[(\Xi_{12}^{(n)})^2,(\Xi_{21}^{(n)})^2]$ are explicit functions of $n$ given in \citet[Lemmas~2.1--2.2]{xia2025consistent}.  
\citet[Theorem~2.1]{xia2025consistent} claimed that
$
J_\xi \Rightarrow \mathcal{N}(0,1)
$
for both fixed and diverging $n$, as long as $p\to\infty$.  

However, consider the case where $n$ is fixed. As $p\to\infty$, we have
\begin{align*}
\E[J_\xi^3]
&=\sigma_{np}^{-3}\E\Bigg[\Bigg(\sum_{1\leq k<\ell\leq p}\varphi_{k\ell}^{(n)}\Bigg)^3\Bigg]\\
&=\sigma_{np}^{-3}\left(\frac{p(p-1)}{2}\E\big[(\varphi_{12}^{(n)})^3\big]+\frac{p(p-1)(p-2)}{2}\E\big[\varphi_{12}^{(n)}\varphi_{23}^{(n)}\varphi_{13}^{(n)}\big]\right)\\
&\longrightarrow \frac{1}{2}\cdot \frac{\E\big[\varphi_{12}^{(n)}\varphi_{23}^{(n)}\varphi_{13}^{(n)}\big]}{\var\big[(\Xi_{12}^{(n)})^2\big]+\operatorname{cov}\big[(\Xi_{12}^{(n)})^2,(\Xi_{21}^{(n)})^2\big]}.
\end{align*}
By \eqref{eq-counterExample-Independence-Xia-n=3-E-Prod} and \citet[Lemmas~2.1 and 2.2]{xia2025consistent}, we obtain
\[
\text{when }n=3,\quad \lim_{p\to\infty}\E[J_\xi^3]=\frac{5}{2752}\neq 0.
\]
Therefore, when $n=3$, the convergence $J_\xi \Rightarrow \mathcal{N}(0,1)$ (as $p\to \infty$) does not hold. This contradicts the claim made in \citet[Theorem~2.1]{xia2025consistent}.

\subsection{Proof strategy}
Our proof is based on the moment method. For the limiting spectral distribution (LSD) analysis (Theorems \ref{thm:sc} and \ref{thm:mp}), recall that in the classical argument for Wigner or Wishart matrices, the leading contributions arise from graphs that are trees with every edge appearing exactly twice. In our setting, when the underlying graph is a tree, we show that the corresponding matrix elements are mutually independent (Corollary~\ref{Corollary:tree-independence-coef} combined with Remark \ref{remark:relative-rank}), which yields the same classical contribution up to a scaling factor reflecting the limiting entry-wise variance of $2/5$; cf. Proposition \ref{prop:crc} above. 

The remaining task is to prove that contributions from non-tree graphs are negligible. Our approach proceeds in several steps. First, we decompose the expectation associated with a graph into a product of expectations over its vertices (Proposition~\ref{Graphical-Representation}). Next, we establish local estimates at each vertex (Proposition~\ref{Local-Estimate-Partition}), which are then aggregated into a global bound for the entire graph. To sharpen these bounds, we use an additional argument based on the idea that graphs with more single edges should be penalized and therefore contribute less (Proposition~\ref{Graphical-Representation-Reduced}). This refined estimate suffices to rule out all non-tree contributions (Proposition~\ref{Graph-Estimate-EDelta0}), leading to the semicircle and M-P laws.

For the LSS analysis (Theorem \ref{thm:clt}), the leading contribution to the covariance function arises from two types of graphs:  
\begin{enumerate}[itemsep=-.5ex,label=(\roman*)]
\item graphs containing exactly one cycle (referred to as \emph{1-cycle graphs}) with all edges doubled;
\item trees in which all edges are doubled except for a single quadruple edge.  
\end{enumerate}

The enumeration of the 1-cycle contribution is based on the following idea: to construct a 1-cycle graph, we first fix the cycle length and then attach trees along the cycle. As in the LSD case, the same technique—combined with a joint Gaussian approximation of the entries (Proposition~\ref{approx1})—is used to rule out all other graph contributions. The proof of Gaussianity itself relies on Wick's formula \citep{Wick-50}, which is inspired by the proof of \citep[Theorem 2.2]{Camille2014CentralLimitTheorems}.

\subsection{Eliminating non-tree contribution: an illustrative example}\label{section-illustrative-example}
We illustrate the proof with an example $E_4=\E[\Xi_{12}\Xi_{32}\Xi_{34}\Xi_{14}]$. We show that $E_4=O(n^{-4})$. The same strategy is applicable to general cases. Write 
\[
u_n=\frac{n^2-1}{3}, ~~a(i,j)=\frac{n+1}{3}-|i-j|,~~ \text{and }~~R_jR_{i}^{-1}=R_j\circ R_{i}^{-1} 
\]
for short. Recall that $R_1,R_2,R_3,R_4$ are independent uniform random permutations. By definition,
\bea\nonumber
E_4=u_n^{-4}\sum_{k_1,k_2,k_3,k_4=1}^{n-1}\E\bigg[&a(R_2R_1^{-1}(k_1+1), R_2R_1^{-1}(k_1)) a(R_2R_3^{-1}(k_2+1), R_2R_3^{-1}(k_2))\\ \cdot & a(R_4R_3^{-1}(k_3+1), R_4R_3^{-1}(k_3)) a(R_4R_1^{-1}(k_4+1), R_4R_1^{-1}(k_4))\bigg].
\eea
The first step is to take the expectation conditional on 
$$\l\{R_1^{-1}(k_1+1),R_1^{-1}(k_1),R_3^{-1}(k_2+1),R_3^{-1}(k_2),R_3^{-1}(k_3+1),R_3^{-1}(k_3),R_1^{-1}(k_4+1),R_1^{-1}(k_4)\r\}.$$ That is,
{\small
\bea\label{Eq-E4-Graphical-Representation}
E_4=&u_n^{-4}\sum_{\substack{k_1,k_2\\k_3,k_4}}\sum_{\substack{a_1,a_2,b_1,b_2\\c_1,c_2,d_1,d_2}}\E\Bigg[\substack{a(R_2R_1^{-1}(k_1+1), R_2R_1^{-1}(k_1))\\ \cdot a(R_2R_3^{-1}(k_2+1), R_2R_3^{-1}(k_2))\\ \cdot a(R_4R_3^{-1}(k_3+1), R_4R_3^{-1}(k_3))\\ \cdot a(R_4R_1^{-1}(k_4+1), R_4R_1^{-1}(k_4))}\Bigg|\substack{R_1^{-1}(k_1)=a_1,R_1^{-1}(k_1+1)=a_2,\\R_3^{-1}(k_2)=b_1,R_3^{-1}(k_2+1)=b_2,\\R_3^{-1}(k_3)=c_1,R_3^{-1}(k_3+1)=c_2,\\R_1^{-1}(k_4)=d_1,R_1^{-1}(k_4+1)=d_2} \Bigg]\P\Bigg(\substack{R_1^{-1}(k_1)=a_1,R_1^{-1}(k_1+1)=a_2,\\R_3^{-1}(k_2)=b_1,R_3^{-1}(k_2+1)=b_2,\\R_3^{-1}(k_3)=c_1,R_3^{-1}(k_3+1)=c_2,\\R_1^{-1}(k_4)=d_1,R_1^{-1}(k_4+1)=d_2} \Bigg)\\
=&u_n^{-4}\sum_{\substack{k_1,k_2\\k_3,k_4}}\sum_{\substack{a_1,a_2,b_1,b_2\\c_1,c_2,d_1,d_2}}\E\bigg[\substack{a(R_2(a_2), R_2(a_1))\\ \cdot a(R_2(b_2), R_2(b_1))}\bigg]\E\bigg[\substack{a(R_4(c_2), R_4(c_1))\\ \cdot a(R_4(d_2), R_4(d_1))}\bigg]\P\bigg(\substack{R_1^{-1}(k_1)=a_1,R_1^{-1}(k_1+1)=a_2,\\R_1^{-1}(k_4)=d_1,R_1^{-1}(k_4+1)=d_2}\bigg)\P\bigg(\substack{R_3^{-1}(k_2)=b_1,R_3^{-1}(k_2+1)=b_2,\\R_3^{-1}(k_3)=c_1,R_3^{-1}(k_3+1)=c_2}\bigg)\\
=&u_n^{-4}\sum_{\substack{a_1,a_2,b_1,b_2\\c_1,c_2,d_1,d_2}}\E\bigg[\substack{a(R_2(a_2), R_2(a_1))\\ \cdot a(R_2(b_2), R_2(b_1))}\bigg]\E\bigg[\substack{a(R_4(c_2), R_4(c_1))\\ \cdot a(R_4(d_2), R_4(d_1))}\bigg]\P\bigg(\substack{R_1(a_1)+1=R_1(a_2),\\R_1(d_1)+1=R_1(d_2)}\bigg)\P\bigg(\substack{R_3(b_1)+1=R_3(b_2),\\R_3(c_1)+1=R_3(c_2)}\bigg).
\eea
}
Here in the second equality, we use the independence between $\{R_2,R_4\}$ and $\{R_1,R_3\}$ to remove the conditioning in expectation, and then use the independence between $R_2$ and $R_4$ and between $R_1$ and $R_3$ to factorize the expectation or probability into product of two terms, respectively.

The second step is to further simplify $E_4$. The key observation is that, the term
$$
\P\bigg(\substack{R_1(a_1)+1=R_1(a_2),\\R_1(d_1)+1=R_1(d_2)}\bigg),
$$ denoted as $\P_\sigma(A,D)$,
only depends on how $A:=(a_1\rw a_2)$ intersects with $D:=(d_1\rw d_2)$. Denoting  $\E_a[A,D]:=\E\l[a(\sigma(a_1),\sigma(a_2))a(\sigma(d_1),\sigma(d_2))\r]$, we then have
\bea\nonumber
E_4&=u_n^{-4}\sum_{A,B,C,D}\E_a[A,B]\E_a[C,D]\P_\sigma(A,D)\P_\sigma(B,C)\\
&=u_n^{-4}\sum_{B,C,D}\E_a[C,D]\P_\sigma(B,C)\bigg(\sum_{A:A\cap D\ne\emptyset}+\sum_{A:A\cap D=\emptyset}\bigg)\E_a[A,B]\P_\sigma(A,D),
\eea
with $B:=(b_1 \rw b_2)$, $C:=(c_1 \rw c_2)$, and the corresponding $\P_\sigma$ and $\E_a$ defined similarly to those concerning $A,D$.

To proceed, we use arrows to represent all possible cases of $A\cap D$ in Table \ref{ProofSketch_Arrows}. For instance, when $a_1,a_2,d_1,d_2\text{ are distinct}$, $\P_\sigma(A,D)$ is denoted as $\P_\sigma(\dotheadrightarrow\ \dotheadrightarrow)$, with the value equal to $\P(\sigma(1)+1=\sigma(2),\sigma(3)+1=\sigma(4))$, where $\sigma$ is a uniform random permutation on $[n]$. 

\begin{table}[H]
\centering
\renewcommand{\arraystretch}{1.0} 
\caption{Arrow notations}
\begingroup
\small
\setlength{\tabcolsep}{3pt}
\renewcommand{\arraystretch}{1.05}
\begin{tabular}{@{}>{\centering\arraybackslash}m{0.24\textwidth}
                >{\centering\arraybackslash}m{0.22\textwidth}
                >{\centering\arraybackslash}m{0.43\textwidth}@{}}
\toprule
$(A,D)$ & Arrow notations & $\P_\sigma(A,D)$ \\
\midrule
$\substack{A\cap D=\emptyset\\(a_1,a_2,d_1,d_2\text{ are distinct})}$ & $\dotheadrightarrow\ \dotheadrightarrow$ & $\P(\sigma(1)+1=\sigma(2),\sigma(3)+1=\sigma(4))$\\
\midrule
$\substack{ a_2=d_1,\ a_1\ne d_2\\ \text{or}\ d_2=a_1,\ d_1\ne a_2}$ & $\dotheadrightrightarrow$ & $\P(\sigma(1)+1=\sigma(2),\sigma(2)+1=\sigma(3))$\\
\midrule
$a_1=d_1,\ a_2=d_2$ & $\dotheadDoubleRightarrow$ & $\P(\sigma(1)+1=\sigma(2))$\\
\midrule
$a_1=d_1,\ a_2\ne d_2$ & $\dotheadleftrightarrow$ & 0\\
\midrule
$a_2=d_2,\ a_1\ne d_1$ & $\dotheadrightleftarrow$ & 0\\
\midrule
$a_1=d_2,\ a_2=d_1$ & $\dotheadSingleDoublearrow$ & 0\\
\bottomrule
\end{tabular}
\label{ProofSketch_Arrows}
\endgroup
\end{table}

From $\sum_{1\leq i\ne j\leq n}a(i,j)=0$, we have
\bea\nonumber
\sum_{A:A\cap D=\emptyset}\E_a[A,B]\P_\sigma(A,D)&=\P_\sigma(\dotheadrightarrow\ \dotheadrightarrow)\sum_{A:A\cap D=\emptyset}\E_a[A,B]\\
&=\P_\sigma(\dotheadrightarrow\ \dotheadrightarrow)\E\bigg[a(\sigma(b_1),\sigma(b_2))\sum_{A:A\cap D=\emptyset}a(\sigma(a_1),\sigma(a_2))\bigg]\\
&=\P_\sigma(\dotheadrightarrow\ \dotheadrightarrow)\E\bigg[a(\sigma(b_1),\sigma(b_2))\bigg(-\sum_{A:A\cap D\ne\emptyset}a(\sigma(a_1),\sigma(a_2))\bigg)\bigg]\\
&=-\P_\sigma(\dotheadrightarrow\ \dotheadrightarrow)\sum_{A:A\cap D\ne\emptyset}\E_a[A,B],
\eea
so that
\bea\nonumber
E_4&=u_n^{-4}\sum_{\substack{A,B,C,D\\A\cap D\ne \emptyset}}\E_a[A,B]\E_a[C,D]\bigg(\P_\sigma(A,D)-\P_\sigma(\dotheadrightarrow\ \dotheadrightarrow)\bigg)\P_\sigma(B,C).
\eea
Proceeding similarly by separating the sum on $B$, we have
\bea\label{Eq-E4-Reduce-Single-Edges}
E_4&=u_n^{-4}\sum_{\substack{A,B,C,D\\A\cap D\ne \emptyset,\ B\cap C\ne \emptyset}}\E_a[A,B]\E_a[C,D]\bigg(\P_\sigma(A,D)-\P_\sigma(\dotheadrightarrow\ \dotheadrightarrow)\bigg)\bigg(\P_\sigma(B,C)-\P_\sigma(\dotheadrightarrow\ \dotheadrightarrow)\bigg).
\eea
Direct calculation gives $$\P_\sigma(\dotheadrightrightarrow)=\P_\sigma(\dotheadrightarrow\ \dotheadrightarrow)=\frac{1}{n(n-1)}.$$ Thus, we can write
$$
E_{4}=E_{41}+E_{42}+E_{43}+E_{44},
$$
where
\bea\nonumber
E_{41}&=u_n^{-4}\sum_{\substack{(A,D)=\dotheadDoubleRightarrow\\(B,C)=\dotheadDoubleRightarrow}}\E_a[A,B]\E_a[C,D]\bigg(\P_\sigma(A,D)-\P_\sigma(\dotheadrightarrow\ \dotheadrightarrow)\bigg)\bigg(\P_\sigma(B,C)-\P_\sigma(\dotheadrightarrow\ \dotheadrightarrow)\bigg),\\
E_{42}&=u_n^{-4}\sum_{\substack{(A,D)=\dotheadDoubleRightarrow\\(B,C)\in\{\dotheadleftrightarrow,\dotheadrightleftarrow,\dotheadSingleDoublearrow\}}}\E_a[A,B]\E_a[C,D]\bigg(\P_\sigma(A,D)-\P_\sigma(\dotheadrightarrow\ \dotheadrightarrow)\bigg)\bigg(\P_\sigma(B,C)-\P_\sigma(\dotheadrightarrow\ \dotheadrightarrow)\bigg),\\
E_{43}&=u_n^{-4}\sum_{\substack{(A,D)\in\{\dotheadleftrightarrow,\dotheadrightleftarrow,\dotheadSingleDoublearrow\}\\(B,C)=\dotheadDoubleRightarrow}}\E_a[A,B]\E_a[C,D]\bigg(\P_\sigma(A,D)-\P_\sigma(\dotheadrightarrow\ \dotheadrightarrow)\bigg)\bigg(\P_\sigma(B,C)-\P_\sigma(\dotheadrightarrow\ \dotheadrightarrow)\bigg),\\
E_{44}&=u_n^{-4}\sum_{\substack{(A,D)\in\{\dotheadleftrightarrow,\dotheadrightleftarrow,\dotheadSingleDoublearrow\}\\(B,C)\in\{\dotheadleftrightarrow,\dotheadrightleftarrow,\dotheadSingleDoublearrow\}}}\E_a[A,B]\E_a[C,D]\bigg(\P_\sigma(A,D)-\P_\sigma(\dotheadrightarrow\ \dotheadrightarrow)\bigg)\bigg(\P_\sigma(B,C)-\P_\sigma(\dotheadrightarrow\ \dotheadrightarrow)\bigg).
\eea
Carefully counting all possibilities gives
\bea\label{Eq-E4-Asymptotic-Order-Estimate}
E_{41}=\frac{1}{100n^3}+O(n^{-4}),\quad E_{42}=E_{43}=-\frac{1}{100n^3}+O(n^{-4}),\quad E_{44}=\frac{1}{100n^3}+O(n^{-4}),
\eea
which yields that $E_4=O(n^{-4})$.

The process of such calculation is generalized in Section \ref{Section-combinatornics} ahead as follows: 
\begin{enumerate}[itemsep=-.5ex,label=(\roman*)]
\item Proposition \ref{Graphical-Representation} generalizes the calculation of \eqref{Eq-E4-Graphical-Representation}; 
\item Proposition \ref{Graphical-Representation-Reduced} deals with the single edges as in \eqref{Eq-E4-Reduce-Single-Edges}; 
\item Propositions \ref{Local-Estimate-Partition} and \ref{Graph-Estimate-EDelta0} provide estimates analogous to \eqref{Eq-E4-Asymptotic-Order-Estimate}.
\end{enumerate}

\section{Applications to testing complete independence}\label{sec:han-app}
In this section, we apply Theorem~\ref{thm:clt} to the problem of testing complete independence in \eqref{eq:H0}. Throughout this section, let $\Phi$ denote the distribution function of the standard Gaussian distribution, and write 
\[
z_u=\Phi^{-1}(1-u)~~ \text{ for }~~ u\in(0,1). 
\]
Our basic test statistic is
\[
Q_{\xi,2}
=\frac{\operatorname{tr}(\mf{\Psi}_n)-\mu_{\xi,n,p}}{\sqrt{8p^2/(25n^2)}},
\quad~\text{with }~~
\mu_{\xi,n,p}
=\frac{p(p-1)(n-2)(4n-7)}{10(n-1)^2(n+1)}.
\]
Here the centering $\mu_{\xi,n,p}$ is the exact null mean of $\operatorname{tr}(\mf{\Psi}_n)$, and the denominator is obtained from the limiting variance $\operatorname{Var}(G_1)=8\gamma^2/25$ after replacing $\gamma$ by $p/n$. The level-$\alpha$ test based on $Q_{\xi,2}$ rejects when $Q_{\xi,2}>z_{\alpha}$.

In practice, it has been noticed that using the statistic $Q_{\xi,2}$ alone for testing  \eqref{eq:H0} may lack power in regular models \citep{shi2020power}. Thus, in order to combine ``the best of both worlds'' \cite[Section 4]{bickel2022measures}, in this section we also introduce a Bonferroni combination of $Q_{\xi,2}$ with a statistic of \citet{leung2018testing}. More specifically, for coordinates $i$ and $j$, set
\[
h_{ab}^{(ij)}=\operatorname{sign}\{(X_{ai}-X_{bi})(X_{aj}-X_{bj})\},\quad a<b,
\]
with $\operatorname{sign}(\cdot)$ denoting the sign function, and set
\[
W_{\tau,ij}
=\frac{1}{\binom{n}{2}\binom{n-2}{2}}
\sum_{a<b}
\sum_{\substack{c<d:\ \{c,d\}\cap\{a,b\}=\emptyset}}
h_{ab}^{(ij)}h_{cd}^{(ij)}.
\]
The Kendall-rank sum statistic is
\[
T_\tau=\frac{9n}{4p}\sum_{i<j}W_{\tau,ij}.
\]
Let
\[
p_\xi=1-\Phi(Q_{\xi,2}),\quad
p_\tau=1-\Phi(T_\tau),\quad
p_{\xi,\tau}^{\min}=\min(p_\xi,p_\tau).
\]
The Bonferroni-combined test, denoted by $B_{\xi,\tau}$, rejects the null hypothesis when $p_{\xi,\tau}^{\min}\leq \alpha/2$. This combination is intended to retain the sensitivity of $Q_{\xi,2}$ to nonlinear and nonmonotone dependence while inheriting the efficiency of $T_\tau$ in regular models.

\subsection{Size control of $Q_{\xi,2}$ and $B_{\xi,\tau}$}

Let $\cD_p^{\rm{ind}}$ be the class of all distributions on $\bR^p$ whose coordinates are mutually independent and have continuous marginal distribution functions. For $D\in\cD_p^{\rm{ind}}$, let $\mX_1,\ldots,\mX_n$ be independent copies of $\mX\sim D$. Denote by $\phi_\alpha(Q_{\xi,2})$ and $\phi_\alpha(B_{\xi,\tau})$ the rejection indicators of the level-$\alpha$ tests based on $Q_{\xi,2}$ and $B_{\xi,\tau}$, respectively:
$$
\phi_\alpha(Q_{\xi,2})=\ind\{Q_{\xi,2}>z_\alpha\},
$$
\begin{align}\label{eq:han-bonferoni}
\phi_\alpha(B_{\xi,\tau})=\ind\{Q_{\xi,2}>z_{\alpha/2}\ \text{or}\ T_{\tau}>z_{\alpha/2}\}.
\end{align}

The following theorem gives the asymptotically uniform size control of the level-$\alpha$ tests based on both $Q_{\xi,2}$ and $B_{\xi,\tau}$; it is analogous to \citet[Proposition 5.3]{shi2020rate}.

\begin{proposition}\label{prop:han-size}
    Assume that $p\rw\infty$, $n\rw\infty$, $p/n\rw \gamma\in (0,+\infty)$. Then
    $$
    \limsup_{n\rw\infty} \sup_{D\in \cD_p^{\rm{ind}}} \bE_D[\phi_\alpha(Q_{\xi,2})]\leq \alpha,\qquad
    \limsup_{n\rw\infty} \sup_{D\in \cD_p^{\rm{ind}}} \bE_D[\phi_\alpha(B_{\xi,\tau})]\leq \alpha.
    $$
\end{proposition}

\begin{remark}
Although the Bonferroni rule in \eqref{eq:han-bonferoni} is conservative, if $Q_{\xi,2}$ and $T_\tau$ are asymptotically independent under $H_0$, then its limiting rejection probability is $\alpha-\alpha^2/4$, differing from the nominal level by only $\alpha^2/4$. In fact, in the bivariate null setting, \citet{Zhang-relations} established the asymptotic independence of Chatterjee's $\xi$ and Spearman's $\rho$. We conjecture that an analogous high-dimensional phenomenon holds for $Q_{\xi,2}$ and $T_\tau$. Although a rigorous proof of this conjecture is currently beyond reach, the simulation results reported in Table~\ref{tab:empirical_size} are consistent with it. Specifically, in both Models~(a) and~(b), the empirical size of $B_{\xi,\tau}$ is very close to the nominal level of $0.05$.
\end{remark}

\subsection{Local power analysis of $Q_{\xi,2}$ and $B_{\xi,\tau}$}\label{sec:han-lpa}

We first characterize conditions under which the test based on $Q_{\xi,2}$ has nontrivial asymptotic power. Notably, this result does not require the high-dimensional regime in which $p\to\infty$ and $p/n\to\gamma\in(0,+\infty)$.

More precisely, let $\Dc_p$ denote the class of all distributions on $\R^p$ with continuous marginal distribution functions. For each $D\in\Dc_p$, let $\mX_1,\ldots,\mX_n$ be independent copies of $\mX\sim D$, and define
\[
\overline{\mXi}_n:=\bE_D[\mXi_n],
\]
which measures the dependence signal under $D$ through Chatterjee's rank correlation matrix. For any $t>0$, write
\[
\Dc_p\bigl(\|\overline{\mXi}_n-\mf{I}_p\|_{\mathrm F}\geq t\bigr)
\]
for the subclass of distributions in $\Dc_p$ satisfying the displayed Frobenius-norm condition.

\begin{theorem}[Power of $Q_{\xi,2}$ under a centered Frobenius signal]
\label{thm:power-Qxi2}
Assume that $n\to\infty$ and $p\geq 2$. Then, for any $0<\alpha<\beta<1$, there exists a constant $C=C(\alpha,\beta)>0$ such that
\[
\liminf_{n\to\infty}\ 
\inf_{D\in\Dc_p\bigl(\|\overline{\mXi}_n-\mf{I}_p\|_{\mathrm F}
\geq Cpn^{-1/2}\bigr)}
\bE_D\bigl\{\phi_\alpha(Q_{\xi,2})\bigr\}
>\beta.
\]
\end{theorem}

Notably, the signal strength in Theorem~\ref{thm:power-Qxi2} is expressed in terms of the population dependence measured by Chatterjee's rank correlation. In applications, it is also natural to characterize the signal strength in terms of Pearson correlation. To this end, let
\[
\Nc_p^{\mathrm{equi}}
=
\left\{
\Nc_p(\bm{0},\fR):
\fR=(1-\rho)\mf{I}_p+\rho\mf{1}_p\mf{1}_p^\top
\right\}
\]
denote the class of centered equicorrelated Gaussian distributions with population covariance matrix $\fR$. Similarly, for any $t>0$, write
\[
\Nc_p^{\mathrm{equi}}
\bigl(\|\fR-\mf{I}_p\|_{\mathrm F}\geq t\bigr)
\]
for the subclass of $\Nc_p^{\mathrm{equi}}$ satisfying the displayed Frobenius-norm condition.

The following corollary specializes Theorem~\ref{thm:power-Qxi2} to the distribution class $\Nc_p^{\mathrm{equi}}$.

\begin{corollary}[Equicorrelated Gaussian alternatives]
\label{cor:equi-gaussian-power-Qxi2}
Assume that $n\to\infty$ and $p\geq 2$. Then, for every $0<\alpha<\beta<1$, there exists a constant $C=C(\alpha,\beta)>0$ such that
\[
\liminf_{n\to\infty}
\inf_{D\in\Nc_p^{\mathrm{equi}}
\bigl(\|\fR-\mf{I}_p\|_{\mathrm F}\geq Cpn^{-1/4}\bigr)}
\bE_D\bigl\{\phi_\alpha(Q_{\xi,2})\bigr\}
>\beta.
\]
\end{corollary}

When $p=2$, the test based on $Q_{\xi,2}$ has nontrivial power at the signal scale $n^{-1/4}$, matching the detection boundary identified by \citet{auddy2021exact}. This relatively slow local rate is also consistent with the findings of \citet{shi2020power}, who showed that Chatterjee's rank correlation is rate-suboptimal, relative to Hoeffding's $D$, the Blum--Kiefer--Rosenblatt $R$, and the Bergsma--Dassios--Yanagimoto $\tau^*$, against certain bivariate local alternatives. For large $p$, our simulations likewise indicate that $Q_{\xi,2}$ alone may have limited power against local Gaussian alternatives.

The Bonferroni statistic $B_{\xi,\tau}$, by contrast, is designed to address this limitation by combining the complementary strengths of $Q_{\xi,2}$ and the Kendall-rank statistic $T_\tau$. In particular, \citet[Theorem~5.2]{leung2018testing} established the detection boundary of the test based on $T_\tau$ under the regime
\[
p\to\infty,\qquad n\to\infty,\qquad \frac{p}{n}\to\gamma\in(0,+\infty).
\]
The following theorem establishes an analogous result for $B_{\xi,\tau}$. Together with Proposition~\ref{prop:han-size} and \citet[Theorem~5.3]{leung2018testing}, it implies the rate optimality of the Bonferroni test based on $B_{\xi,\tau}$.

\begin{theorem}[Power of the Bonferroni test under equicorrelated Gaussian alternatives]
\label{thm-optimal-Bonferroni}
Assume that
\[
p\to\infty,\qquad n\to\infty,\qquad \frac{p}{n}\to\gamma\in(0,+\infty).
\]
Then, for every $0<\alpha<\beta<1$, there exists a constant
$C=C(\alpha,\beta,\gamma)>0$ such that
\[
\liminf_{n\to\infty}
\inf_{D\in\Nc_p^{\mathrm{equi}}
\bigl(\|\fR-\mf{I}_p\|_{\mathrm F}\geq C\bigr)}
\bE_D\bigl\{\phi_\alpha(B_{\xi,\tau})\bigr\}
>\beta.
\]
\end{theorem}

\subsection{Numerical simulation}

We next investigate the finite-sample performance of $Q_{\xi,2}$ and $B_{\xi,\tau}$. For comparison, we also include the existing tests proposed by \citet{Schott-05}, \citet{MR4185806}, \citet{MR3798874}, and \citet{leung2018testing}. The three tests of \citet{MR4185806} are calibrated using their limiting Gumbel null distributions, whereas all remaining procedures are calibrated by their asymptotic normal null distributions and implemented as right-tailed tests at significance level $\alpha=0.05$. The competing test statistics are defined as follows.

\begin{enumerate}
    \item \citet{Schott-05}: With $r_{ij}$ denoting the Pearson sample correlation between the $i$th and $j$th coordinates,
    \[
    Q_{r,2}=\sqrt{\frac{n^2(n+2)}{p(p-1)(n-1)}}\left(\sum_{i<j}r_{ij}^2-\frac{p(p-1)}{2(n-1)}\right).
    \]

    \item \citet{MR4185806}: Let $\widehat D_{ij}$, $\widehat R_{ij}$, and $\widehat\tau^*_{ij}$ denote Hoeffding's $D$, Blum--Kiefer--Rosenblatt's $R$, and Bergsma--Dassios--Yanagimoto's $\tau^*$, respectively, computed from the $i$th and $j$th coordinates as in \citet{MR4185806}. The three maximum-type statistics are
    \[
    \begin{aligned}
    M_D&=\frac{\pi^4(n-1)}{30}\max_{i<j}\widehat D_{ij}-4\log p+\log\log p+\frac{\pi^4}{36},\\
    M_R&=\frac{\pi^4(n-1)}{90}\max_{i<j}\widehat R_{ij}-4\log p+\log\log p+\frac{\pi^4}{36},\\
    M_{\tau^*}&=\frac{\pi^4(n-1)}{54}\max_{i<j}\widehat\tau^*_{ij}-4\log p+\log\log p+\frac{\pi^4}{36}.
    \end{aligned}
    \]
    These tests reject $H_0$ when the
    corresponding statistic exceeds
    \[
    q_D(\alpha)
    =\log\!\left(\frac{\kappa_D^2}{8\pi}\right)
    -2\log\log\!\left(\frac{1}{1-\alpha}\right),
    \quad
    \kappa_D=\left\{
    2\prod_{\ell=2}^{\infty}
    \frac{\pi/\ell}{\sin(\pi/\ell)}
    \right\}^{1/2}\approx 2.467.
    \]

    \item \citet{MR3798874}: For each coordinate $k$, let $\fA^{(k)}$ be the $n\times n$ matrix with $A^{(k)}_{aa}=0$ and, for $a\ne b$,
    \[
    \begin{aligned}
    A^{(k)}_{ab}
    =&\, |X_{ak}-X_{bk}|
    -\frac{1}{n-2}\sum_{\ell=1}^n |X_{ak}-X_{\ell k}|
    -\frac{1}{n-2}\sum_{\ell=1}^n |X_{\ell k}-X_{bk}|\\
    &+\frac{1}{(n-1)(n-2)}\sum_{\ell,m=1}^n |X_{\ell k}-X_{mk}|.
    \end{aligned}
    \]
    Define the unbiased squared sample distance covariance and distance variance by
    \[
    \widehat{V}_{ij}^2=\frac{1}{n(n-3)}\sum_{a,b=1}^n A^{(i)}_{ab}A^{(j)}_{ab}.
    \]
    The feasible distance covariance statistic is
    \[
    Z_{\mathrm{dCov}}
    =\binom{n}{2}^{1/2}
    \frac{\sum_{i<j}\widehat{V}_{ij}^2}
    {\left(\sum_{i<j}\widehat{V}_{ii}^2\widehat{V}_{jj}^2\right)^{1/2}}.
    \]

    \item \citet{leung2018testing}: We also include the Kendall-rank statistic $T_\tau$ defined above.
\end{enumerate}

To evaluate empirical size, we consider the following two null models for $\mX=(X_1,\dots,X_p)$:
\begin{enumerate}
    \item[(a)] $\mX\sim\mathcal{N}_p(\mf{0}_p,\mf{I}_p)$;
    \item[(b)] $X_1,\dots,X_p$ are independent and identically distributed (i.i.d.) as the standard Cauchy.
\end{enumerate}

To evaluate empirical power, we consider the following five alternative models:
\begin{enumerate}
    \item[(c)] \text{Linear dependence:} $\mX\sim\mathcal{N}_p(\mf{0}_p,\mf{\Sigma})$, where
    \[
    \mf{\Sigma}=(1-\rho)\mf{I}_p+\rho\mf{1}_p\mf{1}_p^\top,
    \qquad \rho=4/p;
    \]

    \item[(d)] \text{Nonlinear dependence:} Let
    \[
    \mZ\sim\mathcal{N}_p(\mf{0}_p,\mf{\Sigma}),\qquad
    \mf{\Sigma}=(1-\rho)\mf{I}_p+\rho\mf{1}_p\mf{1}_p^\top,
    \qquad \rho=5/p,
    \]
    and set
    \[
    X_i=\tan\{\pi(\Phi(Z_i)-1/2)\},\qquad i=1,\ldots,p;
    \]

    \item[(e)] \text{Oscillatory dependence:} $\mX=(\mU,\mV)$, where
    \[
    \mU\sim\mathcal{N}_{p/2}(\mf{0}_{p/2},\mf{I}_{p/2}), \quad
    \mV=\sin(2\pi\mU)+0.1\mZ, \quad
    \mZ\sim\mathcal{N}_{p/2}(\mf{0}_{p/2},\mf{I}_{p/2});
    \]

    \item[(f)] \text{W-shaped dependence:} $\mX=(\mU,\mV)$, where
    \[
    \mU\sim\mathcal{N}_{p/2}(\mf{0}_{p/2},\mf{I}_{p/2}), \quad
    \mV=|\mU+0.5|\ind(\mU<0)+|\mU-0.5|\ind(\mU\geq 0)+0.1\mZ,
    \]
    with $\mZ\sim\mathcal{N}_{p/2}(\mf{0}_{p/2},\mf{I}_{p/2})$;

    \item[(g)] \text{Shuffled copula dependence:} $\mX=(\mU,\mV)$, where $U_1,\ldots,U_{p/2}$ and $Z_1,\ldots,Z_{p/2}$ are mutually independent standard normal random variables. Fix $M=16$, and let $\varpi$ be any fixed permutation of $\{1,\ldots,M\}$. In the numerical experiments below, we take
    \[
    \varpi=(5,13,2,10,16,7,14,1,9,4,15,6,12,3,11,8).
    \]
    Define
    \[
    B_j=\lceil M\Phi(U_j)\rceil,\qquad j=1,\ldots,p/2,
    \]
    and set
    \[
    V_j=\frac{\varpi(B_j)-(M+1)/2}{\sqrt{(M^2-1)/12}}+0.6Z_j,
    \qquad j=1,\ldots,p/2.
    \]
\end{enumerate}

Model~(f) represents a typical case with zero linear correlation but nontrivial dependence. Model~(g) is a rank-local, nonmonotone alternative. Conditional on $U_j$, the mean of $V_j$ is obtained by shuffling the normal-quantile bins of $U_j$; the added Gaussian noise keeps every coordinate continuous. Thus the construction preserves strong copula-level dependence between paired coordinates while largely obscuring linear, monotone, and smooth global patterns. To our knowledge, this type of dependence has not been systematically examined in the existing literature, although it is precisely the kind of structure to which Chatterjee's $\xi$-based statistics are particularly sensitive.

Tables~\ref{tab:empirical_size} and~\ref{tab:empirical_power} report empirical rejection probabilities based on 5,000 Monte Carlo replications for each grid point $n=p\in\{50,70,100,200,300\}$. 
Under the Gaussian null Model~(a), all procedures have empirical sizes close to the nominal level, although $M_D$ is slightly oversized in lower dimensions. Under the Cauchy null Model~(b), Schott's Pearson-correlation-based statistic exhibits severe size distortion, whereas the rank-based and distance-covariance-based procedures remain substantially closer to the nominal level of $0.05$.

For the equicorrelated Gaussian linear Model~(c), Schott's test, $Z_{\mathrm{dCov}}$, $T_\tau$, and the Bonferroni test all exhibit high power, with the Bonferroni combination nearly matching Schott's test across all reported dimensions. Under the nonlinear Model~(d), the rank-based statistic $T_\tau$ is particularly effective, and the Bonferroni combination largely preserves this advantage, uniformly outperforming Schott's test, the three maximum-type tests, and $Z_{\mathrm{dCov}}$ over the reported dimensions. By contrast, $Q_{\xi,2}$ is especially powerful against the oscillatory and W-shaped dependence structures in Models~(e) and~(f). The Bonferroni combination inherits this strength while remaining competitive under Models~(c) and~(d).

Model~(g) yields an even sharper separation among the procedures. Already at $n=p=50$, $Q_{\xi,2}$ and $B_{\xi,\tau}$ attain empirical powers of $0.9000$ and $0.8408$, respectively, whereas every other procedure has power at most $0.1278$. For $n=p\geq 70$, both $Q_{\xi,2}$ and $B_{\xi,\tau}$ attain empirical power one. Although the power of the distance-covariance test increases with the dimension, reaching $0.8370$ at $n=p=300$, it remains below that of the Chatterjee-correlation-based procedures throughout. This rank-local shuffled-copula dependence model therefore illustrates a dependence regime that is not well captured by standard smooth or monotone alternatives and highlights a distinct advantage of $Q_{\xi,2}$ for high-dimensional mutual independence testing.

\begin{table*}
\centering
\caption{Empirical sizes under Models (a) and (b), based on 5,000 Monte Carlo replications}
\label{tab:empirical_size}
\begin{tabular}{llrrrrrrrr}
  \toprule
Model & $(n, p)$ & $Q_{r,2}$ & $M_D$ & $M_R$ & $M_{\tau^*}$ & $Z_{\mathrm{dCov}}$ & $T_\tau$ & $Q_{\xi,2}$ & $B_{\xi,\tau}$ \\
   \midrule
   \multirow{5}{*}{(a)}
  & $n=50, p=50$ & 0.0654 & 0.0980 & 0.0314 & 0.0420 & 0.0628 & 0.0698 & 0.0450 & 0.0634 \\
  & $n=70, p=70$ & 0.0504 & 0.0838 & 0.0400 & 0.0476 & 0.0552 & 0.0570 & 0.0422 & 0.0520 \\
  & $n=100, p=100$ & 0.0546 & 0.0734 & 0.0362 & 0.0420 & 0.0514 & 0.0618 & 0.0414 & 0.0526 \\
  & $n=200, p=200$ & 0.0486 & 0.0670 & 0.0448 & 0.0484 & 0.0514 & 0.0578 & 0.0514 & 0.0542 \\
  & $n=300, p=300$ & 0.0528 & 0.0592 & 0.0396 & 0.0448 & 0.0530 & 0.0536 & 0.0476 & 0.0494 \\
   \midrule
   \multirow{5}{*}{(b)}
    & $n=50, p=50$ & 0.2392 & 0.0894 & 0.0320 & 0.0410 & 0.0572 & 0.0706 & 0.0416 & 0.0602 \\
  & $n=70, p=70$ & 0.2674 & 0.0932 & 0.0396 & 0.0510 & 0.0518 & 0.0624 & 0.0408 & 0.0512 \\
  & $n=100, p=100$ & 0.3028 & 0.0688 & 0.0388 & 0.0436 & 0.0502 & 0.0562 & 0.0396 & 0.0494 \\
  & $n=200, p=200$ & 0.3576 & 0.0660 & 0.0452 & 0.0500 & 0.0488 & 0.0542 & 0.0464 & 0.0520 \\
  & $n=300, p=300$ & 0.3708 & 0.0542 & 0.0370 & 0.0402 & 0.0438 & 0.0554 & 0.0508 & 0.0590 \\
   \bottomrule
\end{tabular}
\end{table*}

\begin{table*}
\centering
\caption{Empirical powers under Models (c)--(g), based on 5,000 Monte Carlo replications}
\label{tab:empirical_power}
\begin{tabular}{llrrrrrrrr}
  \toprule
Model & $(n, p)$ & $Q_{r,2}$ & $M_D$ & $M_R$ & $M_{\tau^*}$ & $Z_{\mathrm{dCov}}$ & $T_\tau$ & $Q_{\xi,2}$ & $B_{\xi,\tau}$ \\
  \midrule
  \multirow{5}{*}{(c)}
	  & $n=50, p=50$ & 0.9788 & 0.2632 & 0.1178 & 0.1406 & 0.9588 & 0.9704 & 0.1130 & 0.9598 \\
	  & $n=70, p=70$ & 0.9910 & 0.2012 & 0.1016 & 0.1200 & 0.9756 & 0.9856 & 0.0850 & 0.9790 \\
	  & $n=100, p=100$ & 0.9970 & 0.1528 & 0.0840 & 0.0972 & 0.9896 & 0.9934 & 0.0758 & 0.9886 \\
	  & $n=200, p=200$ & 0.9996 & 0.1056 & 0.0712 & 0.0764 & 0.9978 & 0.9996 & 0.0626 & 0.9986 \\
	  & $n=300, p=300$ & 0.9998 & 0.0868 & 0.0590 & 0.0634 & 0.9996 & 0.9998 & 0.0594 & 0.9996 \\
   \midrule
   \multirow{5}{*}{(d)}
   & $n=50, p=50$ & 0.7830 & 0.3670 & 0.1678 & 0.2118 & 0.8886 & 0.9966 & 0.1816 & 0.9942 \\
  & $n=70, p=70$ & 0.7208 & 0.2722 & 0.1452 & 0.1680 & 0.8692 & 0.9990 & 0.1268 & 0.9984 \\
  & $n=100, p=100$ & 0.6512 & 0.2062 & 0.1174 & 0.1362 & 0.8708 & 0.9998 & 0.0920 & 0.9994 \\
  & $n=200, p=200$ & 0.5372 & 0.1302 & 0.0876 & 0.0960 & 0.8402 & 1.0000 & 0.0632 & 1.0000 \\
  & $n=300, p=300$ & 0.5086 & 0.1022 & 0.0732 & 0.0798 & 0.8118 & 1.0000 & 0.0638 & 1.0000 \\
   \midrule
   \multirow{5}{*}{(e)}
   & $n=50, p=50$ & 0.0550 & 0.1260 & 0.0376 & 0.0560 & 0.1506 & 0.0694 & 1.0000 & 1.0000 \\
  & $n=70, p=70$ & 0.0544 & 0.1184 & 0.0414 & 0.0534 & 0.2124 & 0.0634 & 1.0000 & 1.0000 \\
  & $n=100, p=100$ & 0.0532 & 0.1268 & 0.0346 & 0.0452 & 0.3234 & 0.0590 & 1.0000 & 1.0000 \\
  & $n=200, p=200$ & 0.0536 & 0.4878 & 0.0392 & 0.0646 & 0.7540 & 0.0550 & 1.0000 & 1.0000 \\
  & $n=300, p=300$ & 0.0514 & 1.0000 & 0.0536 & 0.1532 & 0.9714 & 0.0516 & 1.0000 & 1.0000 \\
   \midrule
   \multirow{5}{*}{(f)}
    & $n=50, p=50$ & 0.3806 & 0.9826 & 0.2168 & 0.5244 & 0.9836 & 0.0738 & 1.0000 & 1.0000 \\
  & $n=70, p=70$ & 0.3756 & 1.0000 & 0.4910 & 0.9294 & 1.0000 & 0.0660 & 1.0000 & 1.0000 \\
  & $n=100, p=100$ & 0.3810 & 1.0000 & 0.9836 & 1.0000 & 1.0000 & 0.0644 & 1.0000 & 1.0000 \\
  & $n=200, p=200$ & 0.3928 & 1.0000 & 1.0000 & 1.0000 & 1.0000 & 0.0574 & 1.0000 & 1.0000 \\
  & $n=300, p=300$ & 0.3876 & 1.0000 & 1.0000 & 1.0000 & 1.0000 & 0.0534 & 1.0000 & 1.0000 \\
   \midrule
   \multirow{5}{*}{(g)}
   & $n=50, p=50$ & 0.0470 & 0.0868 & 0.0298 & 0.0384 & 0.1278 & 0.0710 & 0.9000 & 0.8408 \\
  & $n=70, p=70$ & 0.0364 & 0.0810 & 0.0372 & 0.0448 & 0.1508 & 0.0556 & 1.0000 & 1.0000 \\
  & $n=100, p=100$ & 0.0428 & 0.0740 & 0.0398 & 0.0452 & 0.2304 & 0.0552 & 1.0000 & 1.0000 \\
  & $n=200, p=200$ & 0.0390 & 0.0626 & 0.0428 & 0.0462 & 0.5324 & 0.0542 & 1.0000 & 1.0000 \\
  & $n=300, p=300$ & 0.0416 & 0.0592 & 0.0434 & 0.0450 & 0.8370 & 0.0530 & 1.0000 & 1.0000 \\
   \bottomrule
\end{tabular}
\end{table*}

\section{Proof of main theorems}

\subsection{Graph notation}\label{sec:graph-notation}

The proofs of our main results, Theorems \ref{thm:sc}--\ref{thm:clt}, are based on the moment method, in particular on the moment convergence theorems of Riesz and Carleman \citep[Lemmas B.2 and B.3]{bai2010spectral}. This method involves calculating the limiting moments of the spectrum of $\mXi_n$, which requires estimating sums of expectations of products of matrix entries that typically reduces to counting the number of non-negligible terms. For this purpose, it is convenient to introduce some graph-theoretic notation.  

Throughout the remainder of the paper, let $\Delta$ denote a generic directed \emph{multigraph} with vertex set $V(\Delta)\subset[p]$, edge set $E(\Delta)=(e_k=(u_k,v_k))_k$, and no self-loops, i.e., $u\neq v$ for any $(u,v)\in E(\Delta)$. For such a $\Delta$, we define its \emph{underlying undirected graph} $\Delta^u$ by replacing each directed edge $(u_k,v_k)$ with the undirected edge $\{u_k,v_k\}$. Note that $\Delta^u$ is still allowed to be a multigraph; in particular, we do not merge multiple directed edges into a single undirected edge.  

A directed graph $\Delta$ is called a \emph{directed skeleton graph} if it contains no multiple edges. An undirected graph $\overline\Delta$  is called an \emph{undirected skeleton graph} if it contains no multiple edges. We denote by $\Delta^0$ the directed skeleton graph of $\Delta$, i.e.,
\[
V(\Delta^0)=V(\Delta) \quad \text{and} \quad 
E(\Delta^0)=\{e_k\in E(\Delta): e_j\neq e_k \text{ for all } j\neq k\}.
\]
We denote by $\overline{\Delta}^0$ the undirected skeleton graph of $\Delta$, which is defined as the skeleton graph of the underlying undirected graph $\Delta^u$, i.e.,
\[
V(\overline{\Delta}^0)=V(\Delta) \quad \text{and} \quad 
E(\overline{\Delta}^0)=\{e_k\in E(\Delta^u): e_j\neq e_k \text{ for all } j\neq k\}.
\]

For an edge $e=(u,v)\in E(\Delta^0)$, let $s_{uv}$ denote the multiplicity of $e$ in $\Delta$. For a vertex $v\in V(\Delta)$, let $N^+(v)$ and $N^-(v)$ denote its out-neighbor and in-neighbor sets in $\Delta$, respectively; that is,
\[
N^+(v)=\{u\in V(\Delta):\ (v,u)\in E(\Delta^0)\}, 
\qquad
N^-(v)=\{u\in V(\Delta):\ (u,v)\in E(\Delta^0)\}.
\]
We further define the out-degree and in-degree of $v$ by 
\[
d^+(v):=\# N^+(v), 
\qquad 
d^-(v):=\# N^-(v),
\]
where $\#$ denotes set cardinality.

\subsection{Combinatorics}\label{Section-combinatornics}
Recall the graph notation introduced in Section \ref{sec:graph-notation}. For any directed multigraph $\Delta$ considered in Section \ref{sec:graph-notation}, define
\bea\nonumber
\mf{\Xi}_n(\Delta)=\prod_{(u,v)\in E(\Delta^0)} (\mf{\Xi}_{uv}^{(n)})^{s_{uv}}.
\eea

The key step in the moment method is to estimate the asymptotic order of $\E[\mf{\Xi}_n(\Delta)]$ as $n\rw \infty$. We first give a graphical representation of $\E[\mf{\Xi}_n(\Delta)]$. Define 
\be\nonumber
a(i,j)=-|i-j|+\frac{n+1}{3},
\ee
and notice
\bea\nonumber
\sum_{1\leq i\ne j\leq n} a(i,j)=0,
\eea
\bea\nonumber
\mf{\Xi}_{ij}^{(n)}=\frac{3}{n^2-1}\sum_{k=1}^{n-1}a\Big(R_jR_i^{-1}(k),R_jR_i^{-1}(k+1)\Big).
\eea

\begin{proposition}(Graphical representation of $\E[\mf{\Xi}_n(\Delta)]$)\label{Graphical-Representation} Let $\sigma$ be the uniform random permutation on $[n]$. We then have
\bea\label{eqrep}
&\E[\mf{\Xi}_n(\Delta)]\\=&\Big(\frac{3}{n^2-1}\Big)^{\# E(\Delta)}\sum_{\cL(\Delta)}\prod_{v\in V(\Delta)}\E\Bigg[\prod_{\substack{u\in N^-(v)\\1\leq s\leq s_{uv}}}a\Big(\sigma(\ell_{uv}^{s,1}),\sigma(\ell_{uv}^{s,2})\Big)\cdot\prod_{\substack{w\in N^+(v)\\1\leq s\leq s_{vw}} }\ind\Big(\sigma(\ell_{vw}^{s,1})+1=\sigma(\ell_{vw}^{s,2})\Big) \Bigg],
\eea
where the summation $\sum_{\cL(\Delta)}$ has the summation index 
\[
\cL(\Delta)=\l\{\ell_{uv}^{s,r}:(u,v)\in E(\Delta^0), 1\leq s\leq s_{uv}, r\in \{1,2\}\r\},
\]
and each $\ell_{uv}^{s,r}$ goes from $1$ to $n$ satisfying $\ell_{uv}^{s,1}\ne \ell_{uv}^{s,2}$.
\end{proposition}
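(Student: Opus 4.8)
The plan is to start from the defining identity
\[
\mf{\Xi}_{ij}^{(n)}=\frac{3}{n^2-1}\sum_{k=1}^{n-1}a\!\Big(R_jR_i^{-1}(k),\,R_jR_i^{-1}(k+1)\Big),
\]
substitute it into each factor of $\mf{\Xi}_n(\Delta)=\prod_{(u,v)\in E(\Delta^0)}(\mf{\Xi}_{uv}^{(n)})^{s_{uv}}$, and expand the resulting product of sums. This produces the prefactor $\bigl(\tfrac{3}{n^2-1}\bigr)^{\# E(\Delta)}$ — since $\#E(\Delta)=\sum_{(u,v)\in E(\Delta^0)}s_{uv}$ counts edges with multiplicity — together with an outer sum over one summation index $k_{uv}^{s}\in[n-1]$ for each edge-copy $(u,v,s)$, of an expectation of a product of terms $a\!\bigl(R_vR_u^{-1}(k_{uv}^{s}),R_vR_u^{-1}(k_{uv}^{s}+1)\bigr)$.

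The heart of the argument is to rewrite this expectation by conditioning on the values of the "inner" permutations at the relevant arguments, exactly as in the illustrative computation of $E_4$ in Section~\ref{section-illustrative-example}. Concretely, for each directed edge $(u,v)$ and each copy $s$, introduce fresh labels $\ell_{uv}^{s,1}=R_u^{-1}(k_{uv}^{s})$ and $\ell_{uv}^{s,2}=R_u^{-1}(k_{uv}^{s}+1)$; these range over $[n]$ with the sole constraint $\ell_{uv}^{s,1}\ne\ell_{uv}^{s,2}$ (they are distinct preimages under a permutation), and the change of variables from the $k$'s to the $\ell$'s is a bijection once we also record the ordering constraint $R_u(\ell_{uv}^{s,1})+1=R_u(\ell_{uv}^{s,2})$. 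After this relabelling the term $a\!\bigl(R_vR_u^{-1}(k),R_vR_u^{-1}(k+1)\bigr)$ becomes $a\!\bigl(R_v(\ell_{uv}^{s,1}),R_v(\ell_{uv}^{s,2})\bigr)$, which depends only on $R_v$, while the ordering constraint $\ind\!\bigl(R_u(\ell_{uv}^{s,1})+1=R_u(\ell_{uv}^{s,2})\bigr)$ depends only on $R_u$. Since $R_1,\dots,R_p$ are independent (Assumption~\ref{assump:dgp} via the reduction to independent uniform permutations), the full expectation factorizes over vertices: vertex $v$ collects the $a$-factors coming from its in-edges $u\in N^-(v)$ (where $v$ plays the role of the outer permutation $R_v$) and the indicator-factors coming from its out-edges $w\in N^+(v)$ (where $v$ plays the role of the inner permutation $R_v$ being conditioned on). Replacing each $R_v$ by an independent uniform $\sigma$ and reading off the two products gives exactly \eqref{eqrep}.

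I expect the main technical care — not difficulty, but care — to lie in the bookkeeping of the relabelling: one must check that summing over all $k_{uv}^{s}\in[n-1]$ with the inner permutations free is the same as summing over all $\ell_{uv}^{s,r}\in[n]$ with $\ell_{uv}^{s,1}\ne\ell_{uv}^{s,2}$ while inserting the indicator $\ind(\sigma(\ell_{uv}^{s,1})+1=\sigma(\ell_{uv}^{s,2}))$ into the expectation for the relevant vertex, with no double-counting and no lost configurations. Here it is important that distinct edge-copies get independent summation labels (so the labels in $\cL(\Delta)$ are indexed by $(u,v)\in E(\Delta^0)$, $1\le s\le s_{uv}$, $r\in\{1,2\}$, matching the statement), and that a vertex $v$ never appears simultaneously as the inner and outer permutation of the \emph{same} edge, which is guaranteed by the no-self-loop hypothesis $u\ne v$. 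Once the relabelling is justified, the factorization across vertices is immediate from mutual independence of $R_1,\dots,R_p$, and the identification of the prefactor is just counting edges with multiplicity.
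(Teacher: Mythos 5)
Your proof is correct and follows essentially the same strategy as the paper's: introduce $\ell_{uv}^{s,1}=R_u^{-1}(k_{uv}^s)$, $\ell_{uv}^{s,2}=R_u^{-1}(k_{uv}^s+1)$, observe that the $a$-factor of edge $(u,v)$ then depends only on $R_v$ while the resulting indicator $\ind(R_u(\ell_{uv}^{s,1})+1=R_u(\ell_{uv}^{s,2}))$ depends only on $R_u$, and factorize the expectation over vertices by mutual independence of $R_1,\dots,R_p$. The paper phrases this via conditioning on the event $A_{\cK,\cL(\Delta)}=\{R_u(\ell_{uv}^{s,1})=k_{uv}^s,\ R_u(\ell_{uv}^{s,2})=k_{uv}^s+1\ \forall (u,v),s\}$ and then summing over $\cK_v^+$ to collapse the event probability into the indicator, whereas you treat it as a direct change of variables per edge; these are two presentations of the same substitution, so there is no substantive difference.
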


The graphical representation \eqref{eqrep} decomposes $\E[\mf{\Xi}_n(\Delta)]$ into the product of some expectation terms at each vertex of $\Delta$. It suggests that we can first study 
\be\nonumber
G_v := \E\Bigg[\prod_{\substack{u\in N^-(v)\\1\leq s\leq s_{uv}}}a\Big(\sigma(\ell_{uv}^{s,1}),\sigma(\ell_{uv}^{s,2})\Big)\cdot\prod_{\substack{w\in N^+(v)\\1\leq s\leq s_{vw}} }\ind\Big(\sigma(\ell_{vw}^{s,1})+1=\sigma(\ell_{vw}^{s,2})\Big) \Bigg],\quad v\in V(\Delta).
\ee


Notice that $G_v$ is the expectation of mixed product of some $a(\cdot,\cdot)$ terms and some $\ind(\cdot)$ terms. A natural idea is that we first study expectation of purely $a(\cdot,\cdot)$ terms, and purely $\ind(\cdot)$ terms, respectively. And then we combine them to estimate $G_v$.

To proceed, we first introduce some combinatorial notation.  
For any $m \in \mathbb{N}$, let $\mathcal{P}(m)$ denote the set of all partitions of $[m]$. That is, $\uppi = \{V_1, \ldots, V_k\} \in \mathcal{P}(m)$ means that the blocks of $\uppi$ are subsets $V_1, \ldots, V_k \subset [m]$ such that  
\[
\bigcup_{i=1}^k V_i = [m], 
\qquad 
V_i \bigcap V_j = \emptyset \quad \text{for } i \neq j.
\]  
We denote $\#\uppi = k$ for the number of blocks.  

For any $\uppi \in \mathcal{P}(m)$, by suitably relabeling the blocks we may write  
\[
\uppi = \{V_1, \ldots, V_k\} = \{V_1', \ldots, V_k'\}
\]
such that
\[
\min V_1' < \min V_2' < \cdots < \min V_k'.
\]  
The arrangement $\{V_1',\ldots,V_k'\}$ is unique. Define the map
\[
\varphi_{\uppi} : [m] \to [k], 
\qquad 
\varphi_{\uppi}(i) = j \ \text{ if and only if } \ i \in V_j'.
\]  
For brevity, we write $\uppi(i)$ in place of $\varphi_{\uppi}(i)$ for $i \in [m]$, and $\uppi(B)$ in place of $\varphi_{\uppi}(B)$ for $B \subset [m]$.  

For $W \subset [m]$ and $\uppi \in \mathcal{P}(m)$, the restriction of $\uppi$ to $W$, denoted $\uppi|_{W}$, is defined as the partition of $W$ determined by the restricted map $\varphi_{\uppi}|_W$.  

Finally, for any $L \in \mathbb{N}$, define
\[
\mathcal{RP}(L) 
= \Big\{\uppi \in \mathcal{P}(2L) : \uppi(2k-1) \neq \uppi(2k) \ \text{ for all } k \in [L] \Big\}.
\]  
Unless otherwise specified, we assume $L$ is fixed.

Lemmas \ref{lem1} and \ref{lem2} below provide estimates of pure $a(\cdot,\cdot)$ products and pure $\ind(\cdot)$ products, respectively.

\begin{lemma}\label{lem1}
    Let $\uppi\in \mathcal{RP}(L)$ and $\sigma$ be the uniform random permutation on $[n]$. Then as $n\rw \infty$,
    \be\label{11}
    \sum_{1\leq i_1\ne\cdots\ne i_{\# \uppi}\leq n}\prod_{k=1}^L a(i_{\uppi(2k-1)},i_{\uppi(2k)})=O\Big(n^{\# \uppi+L-\lceil \frac{s(\uppi)}{2} \rceil}\Big),
    \ee
    and
    \be\label{12}
    \E\Big[\prod_{k=1}^L a\Big(\sigma(\uppi(2k-1)),\sigma(\uppi(2k))\Big)\Big]=O\Big(n^{L-\lceil \frac{s(\uppi)}{2} \rceil}\Big),
    \ee    
    where 
    $$
    s(\uppi):=\# \Big\{k\in[L]:\ \uppi(2k-1)\ne \uppi(2k),\ \Big\{\uppi(2k-1),\uppi(2k)\Big\}\bigcap\uppi([m] \setminus \{2k-1,2k\})=\emptyset \Big\}.
    $$
\end{lemma}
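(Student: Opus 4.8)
The plan is to prove \eqref{11} first; \eqref{12} follows immediately. Writing $m:=\#\uppi$, for $n\ge m$ the values $(\sigma(1),\dots,\sigma(m))$ of a uniform random permutation (recall that the arguments $\uppi(2k-1),\uppi(2k)$ all lie in $\{1,\dots,m\}$) form a uniform random ordered $m$-tuple of distinct elements of $[n]$, so
$$\E\Big[\prod_{k=1}^{L}a\big(\sigma(\uppi(2k-1)),\sigma(\uppi(2k))\big)\Big]=\frac{1}{(n)_{m}}\sum_{1\le i_1\ne\cdots\ne i_m\le n}\prod_{k=1}^{L}a\big(i_{\uppi(2k-1)},i_{\uppi(2k)}\big),$$
and \eqref{12} is obtained from \eqref{11} by dividing by $(n)_{m}=\Theta(n^{m})$ (here $m\le 2L$ is fixed).

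For \eqref{11}, the only features of $a$ that I would use are $|a(i,j)|<n$ and the cancellation $\sum_{1\le i\ne j\le n}a(i,j)=0$. To exploit the latter cleanly, introduce the modified kernel $\widehat a(i,j):=a(i,j)$ for $i\ne j$ and $\widehat a(i,i):=0$: since the two arguments of every factor are automatically distinct once $i_1,\dots,i_m$ are distinct, the left-hand side of \eqref{11} is unchanged if $a$ is replaced by $\widehat a$, and $\widehat a$ satisfies the \emph{full} identity $\sum_{i,j=1}^n\widehat a(i,j)=\sum_{i\ne j}a(i,j)=0$, as well as $|\widehat a|<n$. I would also record the elementary fact that if $k$ is one of the $s:=s(\uppi)$ single pairs, then the labels $\uppi(2k-1),\uppi(2k)$ are distinct and each occurs in exactly one of the $2L$ slots, so the variables $i_{\uppi(2k-1)},i_{\uppi(2k)}$ occur in exactly one of the $L$ factors; moreover distinct single pairs involve disjoint label sets. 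Then apply standard inclusion–exclusion over the partition lattice: with $\widehat F:=\prod_{k=1}^{L}\widehat a(i_{\uppi(2k-1)},i_{\uppi(2k)})$,
$$\sum_{1\le i_1\ne\cdots\ne i_m\le n}\widehat F=\sum_{\eta\in\mathcal{P}(m)}\mu(\eta)\,h(\eta),\qquad h(\eta):=\sum_{\substack{i\in[n]^m\\ i\text{ constant on blocks of }\eta}}\widehat F(i),$$
where $|\mu(\eta)|=\big|\prod_{B\in\eta}(-1)^{|B|-1}(|B|-1)!\big|\le(m-1)!$ is the usual Möbius coefficient.

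The heart of the argument is that $h(\eta)$ vanishes unless (a) $\eta$ separates the two labels of every factor, and (b) $\eta$ \emph{disturbs} every single pair, i.e.\ for no single pair $k$ are both $\uppi(2k-1)$ and $\uppi(2k)$ singleton blocks of $\eta$. Indeed, if (a) fails for some $k$ then that factor equals $\widehat a(z,z)=0$; and if (b) fails for some single pair $k$, then in $h(\eta)$ the variables $i_{\uppi(2k-1)},i_{\uppi(2k)}$ range freely and independently over $[n]$ and appear in no other factor, so $h(\eta)$ contains the factor $\sum_{u,v}\widehat a(u,v)=0$. For an $\eta$ satisfying (a)–(b): by (b) every single pair has a label lying in a non-singleton block of $\eta$, and since distinct single pairs use disjoint labels, at least $s$ labels lie in non-singleton blocks; hence $m-\#\eta=\sum_{B}(|B|-1)=\sum_{|B|\ge2}(|B|-1)\ge\tfrac12\sum_{|B|\ge2}|B|\ge s/2$, and being an integer, $m-\#\eta\ge\lceil s/2\rceil$. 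For such $\eta$ the crude bound $|h(\eta)|\le n^{\#\eta}\cdot n^{L}\le n^{\,m-\lceil s/2\rceil+L}$ holds, and summing over the $O(1)$ many partitions $\eta$ gives \eqref{11}. (This also shows the exponent is sharp: for $\uppi$ with all $L$ pairs single, one merge disturbs at most two single pairs, so one cannot beat $\lceil s/2\rceil$; a naive ``one cancellation per single pair'' would incorrectly give $s$.)

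I expect the main obstacle to be the merge-counting step $m-\#\eta\ge\lceil s/2\rceil$, in particular arguing it cleanly while allowing single pairs both of whose labels get absorbed into (possibly the same or distinct) non-singleton blocks; the switch from $a$ to $\widehat a$ is what makes the two vanishing mechanisms uniform, after which the size bound on $h(\eta)$ and the passage from \eqref{11} to \eqref{12} are routine.
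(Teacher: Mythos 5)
Your proposal is correct, but it takes a genuinely different route from the paper's. The paper proves \eqref{11} by induction on $s(\uppi)$: for $s(\uppi)=r$ it writes the sum over tuples disjoint from the ``other'' indices as minus the sum over tuples that do intersect them (using $\sum_{i\ne j}a(i,j)=0$), and splits the intersecting tuples into two families of ``adjoint partitions'' $\operatorname{adj}_1(\uppi)$ and $\operatorname{adj}_2(\uppi)$, each with strictly smaller $s$ and with $\#\uptau-\lceil s(\uptau)/2\rceil\le\#\uppi-\lceil s(\uppi)/2\rceil$, so the induction hypothesis closes the loop. You instead do a one-shot M\"obius inversion over the partition lattice $\mathcal{P}(m)$, replace $a$ by $\widehat a$ (vanishing on the diagonal) so that $\sum_{u,v}\widehat a(u,v)=0$ is a genuine full-sum identity, and observe that $h(\eta)$ vanishes unless $\eta$ separates the two labels of every factor and absorbs at least one label of every single pair into a non-singleton block; the disjointness of single-pair label sets then forces $m-\#\eta\ge\lceil s/2\rceil$, and the crude bound $|h(\eta)|\le n^{\#\eta+L}$ finishes. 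Both arguments rely on the same cancellation $\sum_{i\ne j}a(i,j)=0$ and both count ``at least $\lceil s/2\rceil$ merges.'' What the paper's induction buys is that it manifestly feeds into the recursive structure used later (the adjoint-partition bookkeeping recurs in Propositions \ref{prop2} and \ref{Graph-Estimate-EDelta0}); what your inclusion--exclusion buys is a more transparent, non-inductive proof with a clean dichotomy for when a term vanishes, and it makes the sharpness of the exponent $\lceil s/2\rceil$ (one merge can kill at most two single pairs) immediately visible. Your argument is complete: the switch to $\widehat a$, the vanishing criteria (a)--(b), the merge-count $m-\#\eta\ge\lceil s/2\rceil$ via disjointness of single-pair labels, and the crude size bound on $h(\eta)$ are all sound, and the passage from \eqref{11} to \eqref{12} by dividing by $(n)_m=\Theta(n^m)$ is exactly as in the paper.
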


\begin{remark}\label{remark:lem1}
For simplicity of the proof, we assume $\uppi\in \mathcal{RP}(L)$
in the lemma above. In general, for $\uppi\in \mathcal{P}(2L)$, the same estimates \eqref{11} and \eqref{12} still hold. To see this, we can apply \eqref{12} to the restriction of $\uppi$ on $$\bigcup_{\{k:\ \uppi(2k-1)\ne\uppi(2k)\}}\{2k-1,2k\}$$
and from $a(i,i)=\frac{n+1}{3}$ we obtain the same bound \eqref{12} for $\uppi$.
\end{remark}

Now for $\uppi\in \mathcal{RP}(L)$, define the directed graph $G_\uppi$ as the graph with vertices $\{1,2,\cdots,\# \uppi\}$ that connects a directed edge from $\uppi(2k-1)$ to $\uppi(2k)$ for all $k\in [L]$. Recall that $G_\uppi^0$ is the directed skeleton graph of $G_\uppi$ and $E(G_\uppi^0)$ is the edge set.

\begin{lemma}\label{lem2}
    Let $\uppi\in \mathcal{RP}(L)$ and $\sigma$ be the uniform random permutation on $[n]$. Then the following hold.
    \begin{enumerate}[itemsep=-.5ex,label=(\roman*)]
    \item The product
    \be\label{lem2prod}
    \prod_{k=1}^L \ind\Big(\sigma(\uppi(2k-1))+1=\sigma(\uppi(2k))\Big)
    \ee
    is non-zero if and only if $G_\uppi^0$ is the disjoint union of \emph{chains}. Here a chain is a directed graph  with edges $i_1\rw i_2,\ i_2\rw i_3,\cdots,i_{k-1}\rw i_{k}$.
        \item As $n\rw \infty$, \be\label{eq19}
    \E\Big[\prod_{k=1}^L \ind\Big(\sigma(\uppi(2k-1))+1=\sigma(\uppi(2k))\Big)\Big]=O\Big(n^{-\# E(G_\uppi^0)}\Big).
    \ee
    \end{enumerate}
    
\end{lemma}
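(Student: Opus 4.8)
The plan is to handle the two parts separately; both reduce to elementary combinatorics on the directed skeleton graph $G_\uppi^0$, after the preliminary observation that the product in \eqref{lem2prod} depends on $\sigma$ only through the conditions $\sigma(a)+1=\sigma(b)$ ranging over the \emph{distinct} edges $(a,b)\in E(G_\uppi^0)$ (a repeated edge imposes the same constraint). Since $\uppi\in\mathcal{RP}(L)$, both $G_\uppi$ and $G_\uppi^0$ have no self-loops, and moreover $G_\uppi$ has no isolated vertex: every block of $\uppi$ contains some $i\in[2L]$, and writing $i\in\{2k-1,2k\}$ exhibits that vertex as an endpoint of the $k$-th edge.

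For \textbf{part (i)}, suppose first that the product is not identically $0$, i.e.\ some permutation $\sigma$ satisfies every edge condition. If a vertex $a$ had two distinct out-neighbours $b_1,b_2$ in $G_\uppi^0$, then $\sigma(b_1)=\sigma(a)+1=\sigma(b_2)$ contradicts injectivity of $\sigma$; symmetrically, no vertex has two in-neighbours. Hence every vertex of $G_\uppi^0$ has in-degree and out-degree at most one, so $G_\uppi^0$ is a vertex-disjoint union of directed paths and directed cycles. A directed cycle $a_1\to\cdots\to a_m\to a_1$ would force $\sigma(a_1)=\sigma(a_1)+m$, which is impossible, so $G_\uppi^0$ is a disjoint union of chains (each with at least one edge, since there are no isolated vertices). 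Conversely, if $G_\uppi^0$ is a disjoint union of chains with vertex counts $\ell_1,\dots,\ell_c$ (so $\ell_1+\cdots+\ell_c=\#\uppi\le n$ for $n$ large), then assigning to the vertices of the $i$-th chain, in order, the consecutive values $\ell_1+\cdots+\ell_{i-1}+1,\dots,\ell_1+\cdots+\ell_i$ and extending arbitrarily to a permutation of $[n]$ yields a $\sigma$ for which the product equals $1$.

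For \textbf{part (ii)}, write the expectation as $\P\big(\sigma(a)+1=\sigma(b)\ \text{for all}\ (a,b)\in E(G_\uppi^0)\big)$. If $G_\uppi^0$ is not a disjoint union of chains this probability is $0$ by part (i), and the bound is immediate. Otherwise, let $c$ be the number of chains and $e:=\#E(G_\uppi^0)$; a chain with $e_i$ edges has $e_i+1$ vertices, so $G_\uppi^0$ has $\#\uppi=e+c$ vertices. Along each chain the value of $\sigma$ at its initial vertex determines the values at all of its vertices, so the event confines $(\sigma(1),\dots,\sigma(\#\uppi))$ to at most $n^c$ tuples (at most $n$ choices of a starting value per chain). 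Each such partial assignment extends to $(n-\#\uppi)!=(n-e-c)!$ permutations of $[n]$, so
\[
\E\Big[\prod_{k=1}^L \ind\big(\sigma(\uppi(2k-1))+1=\sigma(\uppi(2k))\big)\Big]\le\frac{n^{c}\,(n-e-c)!}{n!}=\frac{n^{c}}{(n)_{e+c}}=O\big(n^{-e}\big),
\]
since $(n)_{e+c}\sim n^{e+c}$ as $n\to\infty$; this is exactly $O\big(n^{-\#E(G_\uppi^0)}\big)$.

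I expect \textbf{the main (though modest) obstacle} to be the structural step in part (i): recognizing that the edge constraints force all in- and out-degrees of $G_\uppi^0$ to be at most one and preclude directed cycles, so that $G_\uppi^0$ decomposes into chains. The companion bookkeeping identity — that $c$ chains spanning $e$ edges have exactly $e+c$ vertices — is what converts the $n^c$ choices of chain starting values into the advertised saving of $n^{-\#E(G_\uppi^0)}$ over the trivial count, and should be recorded explicitly.
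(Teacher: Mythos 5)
Your proof is correct and follows essentially the same route as the paper: for part (i), injectivity of $\sigma$ forces in- and out-degrees of $G_\uppi^0$ to be at most one and precludes directed cycles, hence a disjoint union of chains; for part (ii), the constraints pin down $\sigma$ on the $\#\uppi = e+c$ vertices up to at most $n^c$ choices (one per chain-start), which divided by $(n)_{\#\uppi}$ gives $O(n^{-e})$. Your write-up is a bit more careful than the paper's — you explicitly verify the converse in (i) and the absence of isolated vertices — but the core argument is identical.
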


We say that a partition $\uppi$ is \emph{valid} if $G_\uppi^0$ is a disjoint union of chains.  
Now let $\uppi \in \mathcal{RP}(L)$, and fix $L_1 < L$.  
Assume that the restriction $\uppi|_{[2L_1+1,2L]}$ is valid.  
We define the \emph{$*$-restriction} of $\uppi$ on $[2L_1]$, denoted $\uppi|^*_{[2L_1]}$, as the partition obtained from $\uppi|_{[2L_1]}$ by merging all $i,j \in [2L_1]$ with $i \neq j$ such that  
\[
\uppi(i), \uppi(j) \in \uppi([2L_1+1,2L]), \quad \uppi(i) \neq \uppi(j),
\]
and $\uppi(i)$ and $\uppi(j)$ belong to the same chain in $G_{\uppi|_{[2L_1+1,2L]}}^0$.

By merging $i$ and $j$ in a partition 
\[
\uppi = \{V_1, V_2, V_3, V_4, \ldots\}, \quad i \in V_1, \ j \in V_2,
\]
we mean obtaining the new partition
\[
\{V_1 \cup V_2, V_3, V_4, \ldots\}.
\]

For the special case $L_1 = L$, we simply set 
\[
\uppi|^*_{[2L_1]} = \uppi|_{[2L_1]}.
\]

By combining Lemmas \ref{lem1}-\ref{lem2} and carefully handling the interactions between $a(\cdot,\cdot)$ terms and $\ind(\cdot)$ terms, Proposition \ref{prop2} below gives estimates of the mixed products.

\begin{proposition}\label{prop2}
Let $\uppi\in \mathcal{RP}(L),\ L_1\leq L$, and $\sigma$ be the uniform random permutation on $[n]$. Then, as $n\rw \infty$,
\begin{equation}\nonumber
\begin{aligned}
&\E\Bigg[
\prod_{k=1}^{L_1} a\big(\sigma(\uppi(2k-1)), \sigma(\uppi(2k))\big)
\prod_{k=L_1+1}^L \ind\big(\sigma(\uppi(2k-1)) + 1 = \sigma(\uppi(2k))\big)
\Bigg] \\
&= 
\begin{cases}
O\Bigl(n^{-\# \uppi + L_1 - \big\lceil \frac{s(\uppi_1^*)}{2} \big\rceil + \# \uppi_1^* + h(\uppi)}\Bigr), & \text{if } \uppi|_{[2L_1+1, 2L]} \text{ is valid}, \\
0, & \text{otherwise.}
\end{cases}
\end{aligned}
\end{equation}
where $\uppi_1^*=\uppi|^*_{[2L_1]}$, and $h(\uppi)$ is the number of chains $A$ in $G_{\uppi|_{[2L_1+1,2L]}}^0$ such that $\uppi([2L_1])\cap A=\emptyset$.
\end{proposition}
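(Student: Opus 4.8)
The plan is to reduce the mixed moment to the two pure estimates already available—Lemma~\ref{lem1} for products of $a(\cdot,\cdot)$ and Lemma~\ref{lem2} for products of indicators—by conditioning on the values that $\sigma$ assigns to the vertices entering the indicator factors. First, if $\uppi|_{[2L_1+1,2L]}$ is not valid, then, after relabeling $[2L_1+1,2L]$, the factor $\prod_{k=L_1+1}^{L}\ind(\sigma(\uppi(2k-1))+1=\sigma(\uppi(2k)))$ is exactly a product of the form treated in Lemma~\ref{lem2}, associated with the partition $\uppi|_{[2L_1+1,2L]}$; by Lemma~\ref{lem2}(i) it vanishes identically, so the expectation is $0$. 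This gives the second branch, and from now on I assume $G_{\uppi|_{[2L_1+1,2L]}}^0$ is a disjoint union of chains $A_1,\dots,A_c$.

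I would then perform the conditioning. Let $S\subset[\#\uppi]$ be the set of block labels occurring in $[2L_1+1,2L]$, so that $|S|=\#(\uppi|_{[2L_1+1,2L]})$. The indicator event depends on $\sigma$ only through $\sigma|_S$, and on that event $\sigma$ is pinned along each chain: if $A_i=(b_1\rw\cdots\rw b_m)$, then $\sigma(b_\ell)=\sigma(b_1)+\ell-1$, and the runs of distinct chains are disjoint since $\sigma$ is injective. Using $\P(\sigma|_S=\phi)=1/(n)_{|S|}$ for every injection $\phi:S\to[n]$, together with the fact that conditionally the law of $\sigma$ on $[\#\uppi]\setminus S$ is uniform over injections into the remaining $n-|S|$ values, I would obtain
\[
\E\!\left[\prod_{k=1}^{L_1}a\big(\sigma(\uppi(2k-1)),\sigma(\uppi(2k))\big)\prod_{k=L_1+1}^{L}\ind\big(\sigma(\uppi(2k-1))+1=\sigma(\uppi(2k))\big)\right]
=\frac{1}{(n)_{\#\uppi}}\sum_{(x_b)}\ \prod_{k=1}^{L_1}a\big(x_{\uppi(2k-1)},x_{\uppi(2k)}\big),
\]
where the sum runs over all tuples $(x_b)_{b\in[\#\uppi]}$ of distinct elements of $[n]$ satisfying $x_{b_\ell}=x_{b_1}+\ell-1$ along each chain $A_i=(b_1\rw\cdots\rw b_m)$; here one uses $(n)_{|S|}(n-|S|)_{q}=(n)_{\#\uppi}$ with $q=\#\uppi-|S|$.

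It then remains to bound $\sum_{(x_b)}\prod_{k=1}^{L_1}a(\cdots)$. I would split the blocks of $\uppi|_{[2L_1]}$ into \emph{straddling} blocks (those also meeting $[2L_1+1,2L]$, hence lying in $S$ and on some chain) and the rest. A chain carrying no straddling block has all its $x$-values absent from every $a$-factor, so summing its head value out produces a factor $O(n)$; there are exactly $h(\uppi)$ such chains, contributing $O(n^{h(\uppi)})$. On a chain carrying straddling blocks, the $x$-values of those blocks all coincide with one summation variable up to bounded additive shifts, so in the $a$-factors they may be identified—and this identification is precisely the merging defining $\uppi_1^{*}=\uppi|^{*}_{[2L_1]}$. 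After it the remaining sum is, up to the $O(n^{h(\uppi)})$ factor, of the form $\sum_{\text{distinct }z}\prod_{k=1}^{L_1}a(z_{\uppi_1^{*}(2k-1)}+\delta_k,\,z_{\uppi_1^{*}(2k)}+\delta_k')$ with bounded $\delta_k,\delta_k'$. Since the estimate \eqref{11} of Lemma~\ref{lem1} relies only on $\sup|a|=O(n)$ and the cancellation $\sum_{i\ne j}a(i,j)=0$—both stable under bounded shifts of the arguments—it continues to hold here at the level of the exponent, and by Remark~\ref{remark:lem1} it applies to the possibly non-$\mathcal{RP}$ partition $\uppi_1^{*}$, giving $O(n^{\#\uppi_1^{*}+L_1-\lceil s(\uppi_1^{*})/2\rceil})$. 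Multiplying by $n^{h(\uppi)}$ and dividing by $(n)_{\#\uppi}\asymp n^{\#\uppi}$ yields exactly the exponent $-\#\uppi+L_1-\lceil s(\uppi_1^{*})/2\rceil+\#\uppi_1^{*}+h(\uppi)$.

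The hardest part is the bookkeeping in this last step: one must verify that the distinctness constraints (the $x_b$ globally distinct, distinct chains occupying disjoint runs, the free blocks avoiding the pinned values) contribute only lower-order corrections, so that the leading bound \eqref{11} indeed governs the constrained sum, and that the three ingredients—the cost $1/(n)_{|S|}$ of the indicator event, the free-chain factor $n^{h(\uppi)}$, and the $a$-product bound for $\uppi_1^{*}$—assemble into precisely the stated power rather than an upper bound with a larger exponent. The two boundary cases provide a useful consistency check: $L_1=L$ gives $\uppi_1^{*}=\uppi$ and $h(\uppi)=0$, recovering \eqref{12}; $L_1=0$ gives $\uppi_1^{*}=\emptyset$ and $h(\uppi)=c$, recovering \eqref{eq19}.
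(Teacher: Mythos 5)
Your proof follows the same strategy as the paper's: dispose of the invalid case via Lemma~\ref{lem2}(i), rewrite the expectation as a $\tfrac{1}{(n)_{\#\uppi}}$-normalized sum over tuples constrained along chains, observe that straddling blocks on a common chain merge into a single variable (yielding $\uppi_1^*$) while chains disjoint from $\uppi([2L_1])$ each contribute a free factor of $O(n)$ ($h(\uppi)$ of them), and then invoke Lemma~\ref{lem1} and Remark~\ref{remark:lem1} on $\uppi_1^*$. The conditioning on $\sigma|_S$ is just a different bookkeeping for the same reduction the paper carries out directly, and the point you explicitly flag—that the cancellation argument of Lemma~\ref{lem1} is robust at the level of the exponent to the bounded additive shifts introduced by merging—is precisely the step the paper also leaves unelaborated (asserting the shifted $a$-factors can be replaced by unshifted ones "without changing the asymptotic order").
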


Now we are ready to give an estimate of
$$
G_v=\E\Bigg[\prod_{\substack{u\in N^-(v)\\1\leq s\leq s_{uv}}}a\Big(\sigma(\ell_{uv}^{s,1}),\sigma(\ell_{uv}^{s,2})\Big)\cdot\prod_{\substack{w\in N^+(v)\\1\leq s\leq s_{vw}} }\ind\Big(\sigma(\ell_{vw}^{s,1})+1=\sigma(\ell_{vw}^{s,2})\Big) \Bigg],\quad v\in V(\Delta).
$$
For each $v \in V(\Delta)$, define
\[
\cL^-_v(\Delta) = \{\ell_{uv}^{s,j} : u \in N^-(v), \ 1 \leq s \leq s_{uv}, \ j \in \{1,2\}\},
\]
\[
\cL^+_v(\Delta) = \{\ell_{uv}^{s,j} : u \in N^+(v), \ 1 \leq s \leq s_{uv}, \ j \in \{1,2\}\},
\]
and
\[
\cL_v(\Delta) = \cL^+_v(\Delta) \cup \cL^-_v(\Delta).
\]

The definition of partitions can be extended to arbitrary finite sets. In particular, set
\[
\mathcal{RP}(\cL(\Delta)) 
:= \Big\{ \uppi \in \mathcal{P}(\cL(\Delta)) : 
\uppi(\ell_{uv}^{s,1}) \neq \uppi(\ell_{uv}^{s,2}), \ 
\text{for any } (u,v) \in E(\Delta^0), \ 1 \leq s \leq s_{uv} \Big\}.
\]
Let $\uppi \in \mathcal{RP}(\cL(\Delta))$ and denote by $\uppi^v = \uppi|_{\cL_v(\Delta)}$ its restriction to $\cL_v(\Delta)$.  
We write $\uppi^{v*}_1$ for the $*$-restriction of $\uppi^v$ in $N^-(v)$, interpreted as the $*$-restriction on $[2L]$ with
\[
L = d^-(v) + d^+(v), 
\qquad L_1 = d^-(v).
\]
The definition of $h(\uppi^v)$ is inherited accordingly.  

Finally, for $\uptau \in \mathcal{P}(L^-_v(\Delta))$, 
we define
{\small
\[
s(\uptau) := \# \Big\{ (u,s) : u \in N^-(v), \ 1 \leq s \leq s_{uv}, \ 
\uptau(\ell_{uv}^{s,1}) \neq \uptau(\ell_{uv}^{s,2}), \ 
\{\uptau(\ell_{uv}^{s,1}), \uptau(\ell_{uv}^{s,2})\} \cap 
\uptau\!\Big(L^-_v(\Delta) \setminus \{\ell_{uv}^{s,1}, \ell_{uv}^{s,2}\}\Big) 
= \emptyset \Big\}.
\]
}

By applying proposition \ref{prop2} to $\uppi^v$ with $L_1=d^-(\Delta)$, we have the following estimate of $G_v$.

\begin{proposition}\label{Local-Estimate-Partition}
    For $v\in V(\Delta),\ \uppi\in \mathcal{RP}(\cL(\Delta))$, as $n\rw\infty$,
\bea\nonumber
G_v(\uppi):=&\E\Bigg[\prod_{\substack{u\in N^-(v)\\1\leq s\leq s_{uv}}}a\Big(\sigma\uppi^v(\ell_{uv}^{s,1}),\sigma\uppi^v(\ell_{uv}^{s,2})\Big)\cdot\prod_{\substack{w\in N^+(v)\\1\leq s\leq s_{vw}} }\ind\Big(\sigma\uppi^v(\ell_{vw}^{s,1})+1=\sigma\uppi^v(\ell_{vw}^{s,2})\Big) \Bigg]\\
=&\left\{\begin{array}{cc}     O\Big(n^{\chi_v(\uppi)}\Big)\quad &\text{if }\uppi|_{\cL_v^+(\Delta)}\text{ is valid,} \\
      0 &\text{otherwise,}
\end{array}
\right.
\eea
where 
$$
\chi_v(\uppi):=-\# \uppi^v+d^-(v)-\Big\lceil\frac
{s(\uppi_1^{v*})}{2}\Big\rceil+\# \uppi_1^{v*}+h(\uppi^v).
$$
\end{proposition}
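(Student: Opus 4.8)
The plan is to obtain Proposition~\ref{Local-Estimate-Partition} as a direct consequence of Proposition~\ref{prop2}: the quantity $G_v(\uppi)$ is literally a mixed product of the type estimated there once the edge-terms incident to $v$ are relabelled, and the exponent $\chi_v(\uppi)$ is nothing but the exponent of Proposition~\ref{prop2} rewritten in graph notation. So the proof is a bookkeeping translation rather than a new estimate.

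Concretely, I would first fix an enumeration of the edge-terms at $v$: list the incoming terms --- the pairs $(u,s)$ with $u\in N^-(v)$ and $1\le s\le s_{uv}$ --- as $k=1,\dots,L_1$, and the outgoing terms --- the pairs $(w,s)$ with $w\in N^+(v)$ and $1\le s\le s_{vw}$ --- as $k=L_1+1,\dots,L$; here $L,L_1$ are the values fixed in the paragraph preceding the statement, so the $a$-factors occupy positions $1,\dots,L_1$ and the $\ind$-factors positions $L_1+1,\dots,L$. This identifies $\cL_v(\Delta)$ with $[2L]$ via $\ell^{s,1}\mapsto 2k-1$, $\ell^{s,2}\mapsto 2k$ for the $k$-th term, and turns $\uppi^v=\uppi|_{\cL_v(\Delta)}$ into a partition of $[2L]$. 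Because $\uppi\in\mathcal{RP}(\cL(\Delta))$ forces $\uppi(\ell_{uv}^{s,1})\ne\uppi(\ell_{uv}^{s,2})$ for every skeleton edge incident to $v$, this induced partition lies in $\mathcal{RP}(L)$; and since $\sigma$ acts identically on all elements of a block, the joint law of $\{\sigma\uppi^v(\ell_{uv}^{s,r})\}$ equals that of $\sigma$ evaluated at the canonical labels of the induced partition. Hence $G_v(\uppi)$ is precisely the left-hand side of the display in Proposition~\ref{prop2} for this partition.

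Applying Proposition~\ref{prop2} then gives $G_v(\uppi)=0$ unless the restriction to positions $[2L_1+1,2L]$ is valid --- which, under the identification, is exactly the statement that $G^0_{\uppi|_{\cL^+_v(\Delta)}}$ is a disjoint union of chains, i.e.\ that $\uppi|_{\cL^+_v(\Delta)}$ is valid --- and otherwise gives $G_v(\uppi)=O\bigl(n^{-\#\uppi^v+L_1-\lceil s(\uppi_1^*)/2\rceil+\#\uppi_1^*+h(\uppi^v)}\bigr)$ with $\uppi_1^*=\uppi^v|^*_{[2L_1]}$. To finish I would match each term of this exponent with the definition of $\chi_v(\uppi)$: $\#\uppi^v$ and $h(\uppi^v)$ (the number of chains of $G^0_{\uppi|_{[2L_1+1,2L]}}$ disjoint from $\uppi([2L_1])$) appear verbatim; $L_1$ is the term written $d^-(v)$; and $\uppi_1^*$ corresponds under the identification exactly to $\uppi_1^{v*}$, the $*$-restriction of $\uppi^v$ in $N^-(v)$ defined in the run-up (both merge two incoming terms whose canonical images lie on a common chain of the skeleton of the out-part), so that $\#\uppi_1^*=\#\uppi_1^{v*}$ and the $s(\cdot)$ of Proposition~\ref{prop2} is the graph-theoretic $s(\cdot)$ stated just before Proposition~\ref{Local-Estimate-Partition}.

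The main obstacle --- such as it is --- is this last dictionary step: being scrupulous that the combinatorial statistics $\#\uppi_1^{v*}$, $s(\uppi_1^{v*})$ and $h(\uppi^v)$ are the images, under the identification $\cL_v(\Delta)\leftrightarrow[2L]$, of $\#\uppi_1^*$, $s(\uppi_1^*)$ and $h(\uppi)$ from Proposition~\ref{prop2}. This forces one to track exactly which coordinates get merged in forming the $*$-restriction and to confirm that the chains of $G^0_{\uppi|_{[2L_1+1,2L]}}$ correspond to those of $G^0_{\uppi|_{\cL^+_v(\Delta)}}$; but the definitions preceding the statement were evidently set up so that this correspondence is mechanical, whence the proposition follows at once.
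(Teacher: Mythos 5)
Your proof is exactly the paper's argument: the paper's entire justification is the one sentence ``By applying Proposition~\ref{prop2} to $\uppi^v$ with $L_1=d^-(v)$, we have the following estimate of $G_v$,'' and you simply spell out the bookkeeping identification of $\cL_v(\Delta)$ with $[2L]$ and the term-by-term translation of the exponent that the paper leaves implicit. No new ideas or different route; this is the intended proof.
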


From Proposition \ref{Graphical-Representation}, we have
$$
\E[\mf{\Xi}_n(\Delta)]=\l(\frac{3}{n^2-1}\r)^{\# E(\Delta)}\sum_{\cL(\Delta)}\prod_{v\in V(\Delta)}G_v=\l(\frac{3}{n^2-1}\r)^{\# E(\Delta)}\sum_{\uppi\in\ml{RP}(\cL(\Delta))}(n)_{\# \uppi}G_v(\uppi).
$$
Together with Proposition \ref{Local-Estimate-Partition}, we now have a rough estimate for the asymptotic order of $\E[\mf{\Xi}_n(\Delta)]$. However, such estimate is not sharp enough. To explain this, recall that in the classical proof of Wigner matrices' semi-circle law with moment method, the contribution of those graphs containing {\it single edges} (i.e. edges of multiplicity 1) is 0, due to the zero-mean and independence of entries. However, in our problem, graphs with single edges can no longer be ruled out trivially, due to dependence between entries. But up to now, our estimate on $\E[\mf{\Xi}_n(\Delta)]$ has not exploited the zero-mean properties of entries yet. Hence we need an extra argument to deal with these single edges, and then the estimate of $\E[\mf{\Xi}_n(\Delta)]$ can be sharp enough for LSD/CLT results. The idea is direct: although graphs with single edges have non-zero contribution, they should contribute less if it contains more single edges. 

Letting $Q_0$ be the number of single edges in $\Delta$, we finally have the following proposition that proves sharp enough.

\begin{proposition}\label{Graph-Estimate-EDelta0} As $n\rw \infty$, 
    $$
    \E[\mf{\Xi}_n(\Delta)]=O(n^{-(\# E(\Delta^0)\land (\# E(\Delta)+Q_0)/2)}).
    $$
\end{proposition}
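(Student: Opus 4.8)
The plan is to combine the local vertex estimates of Proposition~\ref{Local-Estimate-Partition} with a counting argument that extracts an extra saving for every single edge, mirroring the mechanism already seen in the worked example $E_4$ of Section~\ref{section-illustrative-example}. Starting from
\[
\E[\mf{\Xi}_n(\Delta)]=\Big(\tfrac{3}{n^2-1}\Big)^{\# E(\Delta)}\sum_{\uppi\in\ml{RP}(\cL(\Delta))}(n)_{\#\uppi}\prod_{v\in V(\Delta)}G_v(\uppi),
\]
the factor $(n^{-2})^{\# E(\Delta)}$ contributes $-2\# E(\Delta)$ to the exponent, and each surviving partition $\uppi$ contributes $\#\uppi$ from $(n)_{\#\uppi}$ plus $\sum_v\chi_v(\uppi)$ from the vertex estimates. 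So the first step is purely bookkeeping: sum $\chi_v(\uppi)$ over $v$, keeping track of how each $\ell^{s,r}_{uv}$ is shared between the two endpoints of its edge, and express the total exponent $-2\# E(\Delta)+\#\uppi+\sum_v\chi_v(\uppi)$ in a form where the graph-theoretic quantities ($\# E(\Delta^0)$, $\# E(\Delta)$, $Q_0$) appear explicitly. A natural route is to prove the two bounds separately: (i) $\E[\mf{\Xi}_n(\Delta)]=O(n^{-\# E(\Delta^0)})$ and (ii) $\E[\mf{\Xi}_n(\Delta)]=O(n^{-(\# E(\Delta)+Q_0)/2})$, and then take the better of the two.

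For bound (i), the idea is that each \emph{distinct} edge of $\Delta$ forces at least a factor $n^{-1}$: the $\ind(\cdot)$ constraints from the out-edges already cost $n^{-\# E(G^0_\uppi)}$ per vertex in Lemma~\ref{lem2}, and after accounting for the $(n)_{\#\uppi}$ gain and the $a(\cdot,\cdot)$ contributions (which by Lemma~\ref{lem1}/Proposition~\ref{prop2} never overcome the $n^{-2}$ per edge coming from the prefactor), the worst case is that each skeleton edge nets exactly $n^{-1}$. This amounts to checking that $-2\# E(\Delta)+\#\uppi+\sum_v\chi_v(\uppi)\le -\# E(\Delta^0)$ for every valid $\uppi$; the $*$-restriction and the $h(\uppi^v)$ corrections must be handled carefully here because they are exactly the terms that could in principle push the exponent up, and one needs to see that they are compensated by a corresponding loss in $\#\uppi$ (two indices that get identified across a chain). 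For bound (ii), I would run the single-edge reduction of Proposition~\ref{Graphical-Representation-Reduced} (the generalization of \eqref{Eq-E4-Reduce-Single-Edges}): using $\sum_{i\ne j}a(i,j)=0$, the sum over the free index of each single edge $A$, restricted to $A$ disjoint from all other edges, can be rewritten (up to sign) as a sum over $A$ \emph{overlapping} some other edge, and the difference $\P_\sigma(A,\cdot)-\P_\sigma(\dotheadrightarrow\ \dotheadrightarrow)$ gains an extra $n^{-1}$ relative to the naive $n^{-1}$ already present. Thus each single edge contributes $n^{-1}$ instead of $n^{-1/2}$-worth on average, so $Q_0$ single edges plus $\# E(\Delta)-Q_0$ doubled-or-more edges give roughly $n^{-(\# E(\Delta)+Q_0)/2}$ after the prefactor; making this rigorous means re-deriving Proposition~\ref{Local-Estimate-Partition} on the reduced graph where single-edge indices are forced to coincide with others, and tracking that each such forced coincidence both removes a block from $\#\uppi$ and tightens the probability estimate.

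The main obstacle, I expect, is the interaction between the two savings mechanisms and the $*$-restriction combinatorics: the quantity $\chi_v(\uppi)$ involves $\#\uppi^v$, $\lceil s(\uppi^{v*}_1)/2\rceil$, $\#\uppi^{v*}_1$, and $h(\uppi^v)$ simultaneously, and when I glue the vertices together along shared indices these local quantities do not simply add — a single index $\ell^{s,r}_{uv}$ appears in both $\cL_v$ and $\cL_u$, so merging blocks at one vertex affects the partition seen at the other. Getting a clean global inequality will require a careful double-counting argument, likely organized by first contracting all multi-edges to get the skeleton $\Delta^0$, bounding the skeleton contribution, and then showing the remaining edge multiplicities and single edges only help. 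A secondary difficulty is ensuring the single-edge reduction of Proposition~\ref{Graphical-Representation-Reduced} can be applied iteratively without the error terms from earlier reductions spoiling later ones — in the $E_4$ example this was transparent because the two single-edge pairs $(A,D)$ and $(B,C)$ were independent, but in general single edges can share vertices, so one must reduce them in an order (say, by a spanning structure) that keeps the bookkeeping under control. Once both bounds are in hand, taking the minimum is immediate and gives the claimed $O\!\big(n^{-(\# E(\Delta^0)\wedge(\# E(\Delta)+Q_0)/2)}\big)$.
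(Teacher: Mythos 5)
The high-level ingredients you identify — the graphical representation, the local vertex estimates, and a single-edge reduction in the spirit of the $E_4$ example — are the right ones, and you correctly anticipate that iterating the reduction over single edges sharing a source vertex is a delicate point. The genuine gap is in your bound~(i). You claim $\E[\mf{\Xi}_n(\Delta)]=O(n^{-\# E(\Delta^0)})$ can be extracted from Proposition~\ref{Local-Estimate-Partition} by ``pure bookkeeping,'' with the $a(\cdot,\cdot)$ contributions ``never overcoming the $n^{-2}$ per edge.'' That is false. Summing $\chi_v(\uppi_0)$ over $v$ and combining with the prefactor and the $(n)_{\#\uppi_0}$ factor yields only $O\big(n^{-\sum_v\lceil d^-(v)/2\rceil}\big)$, and since $\sum_v d^-(v)=\# E(\Delta^0)$ while $\lceil x/2\rceil<x$ once $x\geq 2$, the exponent $\sum_v\lceil d^-(v)/2\rceil$ can be as small as roughly $\# E(\Delta^0)/2$ when in-edges concentrate at a few vertices. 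The $a(\cdot,\cdot)$ factors at a high-in-degree vertex do eat into the $n^{-2}$-per-edge prefactor, precisely through the $\lceil d^-(v)/2\rceil$ term, so ``each skeleton edge nets at least $n^{-1}$'' is not the worst case. The single-edge reduction therefore cannot be sequestered into a separate bound~(ii); it is indispensable for reaching either half of the stated minimum.

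Accordingly, the paper does not prove (i) and (ii) separately and then take a minimum. It applies the single-edge reduction first — iterated over all \emph{reducible} single edges (single edges $(u,v)$ with $(v,u)\notin E(\Delta^0)$) via a two-case induction depending on whether consecutive reducible single edges share a source vertex — to arrive at Proposition~\ref{Graphical-Representation-Reduced}, which restricts the sum to partitions in $\ml{RP}_{\mathrm{RSE}}(\cL(\Delta))$. It then bounds the resulting summand by building any such partition up from the all-singleton partition $\uppi_0$ through two merging procedures (Lemmas~\ref{Lemma-Reduce-Procedure-Before-Q0} and~\ref{Lemma-Reduce-Procedure-After-Q0}): the first $Q_0$ merges, forced by the $\ml{RP}_{\mathrm{RSE}}$ constraint, contribute an extra $-\lceil Q_0/2\rceil$ to the exponent, and all subsequent merges are shown never to increase it. This yields a single exponent $\sum_v\lceil d^-(v)/2\rceil+\lceil Q_0/2\rceil$, which is then compared against $\# E(\Delta^0)\wedge(\# E(\Delta)+Q_0)/2$. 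The obstacles you flag — the non-additivity of the $*$-restriction combinatorics across vertices sharing an index, and the ordering of iterated reductions — are exactly where the paper's induction does its work; your outline correctly locates them but does not resolve either, and its first prong rests on a bound that does not hold.
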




\subsection{Proof of Theorem \ref{thm:sc}}\label{section-LSD-SC}

The moments of W$(0,r)$
are
$$
\int_\R x^{2m}\rho_{0,r}(x)dx=\frac{r^{2m}(2m)!}{m!(m+1)!},\quad \int_\R x^{2m+1}\rho_{0,r}(x)dx=0,\quad m\in \N.
$$

For $k \in \mathbb{N}^+$, let $\mathbf{i} = (i_1, \ldots, i_k)$ with the constraint that $i_u \in [p]$,  
$i_u \neq i_{u+1}$ for all $u \in [k]$, and $i_1 \neq i_k$.  
By convention, set $i_{k+1} = i_1$. Define $\overline{T}(\mathbf{i})$ as the undirected multigraph with vertex set 
\[
\Big\{ i_u : u \in [k] \Big\},
\]
and edge set
\[
\Big\{ \{i_u, i_{u+1}\} : u \in [k] \Big\}.
\]
An undirected multigraph $\overline{\Delta}$ is called a \emph{$\overline{\Gamma}$-graph} if 
$\overline{\Delta} = \overline{T}(\mathbf{i})$ for some $\mathbf{i}$.  
It is denoted by $\overline{\Gamma}(k,t)$ if 
\[
\mathbf{i} = (i_1, \ldots, i_k), 
\qquad 
\# \{ i_u : u \in [k] \} = t.
\]  
Denote by $\overline{\Gamma}^0(k,t)$ the \emph{undirected skeleton graph} of $\overline{\Gamma}(k,t)$.  Two $\overline{\Gamma}(k,t)$-graphs are said to be \emph{isomorphic} if one can be obtained from the other by relabeling the vertices via a permutation on $[p]$.

Within each isomorphism class, there exists a unique representative, called the \emph{canonical $\overline{\Gamma}$-graph}, which satisfies
\[
i_1 = 1, 
\qquad 
i_j \leq \max\{i_1, \ldots, i_{j-1}\} + 1, 
\qquad j \geq 2.
\]
For an undirected multigraph $\overline{\Delta}$, define
\[
\mf{\Phi}_n(\overline{\Delta})
:= \mathbb{E}\!\left[ \prod_{\{u,v\} \in E(\overline{\Delta}^0)} 
\big(\Phi_{uv}^{(n)}\big)^{s_{uv}} \right].
\]
This quantity is well defined since $\mf{\Phi}_n$ is symmetric.

\begin{lemma}\label{tree-independence-semicircle}
    If an undirected skeleton graph $\overline{\Delta}^0$ is a tree, then $\big\{\Phi_{uv}^{(n)}:\{u,v\}\in E(\overline{\Delta}^0)\big\}$ is a family of independent random variables.
\end{lemma}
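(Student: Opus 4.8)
The plan is to reduce the statement to the tree-independence principle for relative ranks, Corollary~\ref{Corollary:tree-independence-coef}; the only genuine point to check is that each symmetrized entry $\Phi_{uv}^{(n)}$ is a measurable function of a \emph{single} relative-rank permutation, so that passing to an auxiliary directed graph does not force in a double edge (and hence a cycle).

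First I would record, via Remark~\ref{remark:relative-rank}, that $\Xi_{uv}^{(n)} = f_\xi(R_v\circ R_u^{-1})$, and combine it with the elementary identity $R_u\circ R_v^{-1} = (R_v\circ R_u^{-1})^{-1}$ to get $\Xi_{vu}^{(n)} = f_\xi\big((R_v\circ R_u^{-1})^{-1}\big)$. Setting $g(\sigma) := \tfrac12\big(f_\xi(\sigma) + f_\xi(\sigma^{-1})\big)$, a fixed measurable function on the symmetric group $\mathcal{S}_n$, this yields $\Phi_{uv}^{(n)} = g(R_v\circ R_u^{-1})$. Since $g(\sigma^{-1}) = g(\sigma)$, this representation is consistent with $\Phi_{uv}^{(n)} = \Phi_{vu}^{(n)}$ and, more importantly, it is insensitive to which of the two orientations of the undirected edge $\{u,v\}$ one picks.

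Next I would transfer to independent uniform permutations. By Assumption~\ref{assump:dgp} and Remark~\ref{remark:relative-rank}, the whole collection $\{R_v\circ R_u^{-1}: u\ne v\in[p]\}$ has the same joint law as $\{\sigma_v\circ\sigma_u^{-1}: u\ne v\in[p]\}$, where $\sigma_1,\dots,\sigma_p$ are independent uniform random permutations. Hence the joint distribution of $\{\Phi_{uv}^{(n)}: \{u,v\}\in E(\overline{\Delta}^0)\}$ coincides with that of $\{g(\sigma_v\circ\sigma_u^{-1})\}$ under any fixed orientation of the edges of $\overline{\Delta}^0$. Choosing such an orientation produces a directed graph $T$ whose underlying undirected graph is the tree $\overline{\Delta}^0$, so $T$ is a (directed) tree in the sense of Proposition~\ref{lem:key}. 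Applying Corollary~\ref{Corollary:tree-independence-coef} with $f = g$ shows that $\{g(\sigma_v\circ\sigma_u^{-1}): (u,v)\in E(T)\}$ are mutually independent, and transporting this back along the distributional identity proves the lemma.

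The main (indeed only) non-routine step is the first one: noticing that $\Phi_{uv}^{(n)}$ depends on the single permutation $R_v\circ R_u^{-1}$ rather than on the pair $\big(R_v\circ R_u^{-1},\,R_u\circ R_v^{-1}\big)$. Absent this observation one is tempted to encode each symmetrized edge by the two directed edges $(u,v)$ and $(v,u)$, which would create a $2$-cycle in the auxiliary graph and place it outside the scope of the tree hypothesis in Corollary~\ref{Corollary:tree-independence-coef} (equivalently, Proposition~\ref{lem:key}); everything after that is a direct invocation of the relative-rank machinery already established.
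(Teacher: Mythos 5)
Your proof is correct and takes essentially the same route as the paper: both define the symmetrized function $g(\sigma)=\tfrac12\big(f_\xi(\sigma)+f_\xi(\sigma^{-1})\big)$, observe $\Phi_{uv}^{(n)}=g(R_v\circ R_u^{-1})$, and then invoke Corollary~\ref{Corollary:tree-independence-coef}. The paper states this in one line; you additionally make explicit the (worth-noting) point that each $\Phi_{uv}^{(n)}$ depends on a single relative-rank permutation so no $2$-cycle is created, but the underlying argument is the same.
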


\begin{lemma}\label{EPhi(Delta)-semicirclelaw}
    Let $\overline{\Delta}$ be a $\overline{\Gamma}$-graph. Then, as $n\rw\infty$, we have 
    $$    \E[\mf{\Phi}_n(\overline{\Delta})]=O(n^{-\# E(\overline{\Delta}^0)}).
    $$
\end{lemma}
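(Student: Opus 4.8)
The plan is to reduce the bound on $\E[\mf{\Phi}_n(\overline{\Delta})]$ to the already-established estimate for $\E[\mf{\Xi}_n(\Delta)]$ in Proposition~\ref{Graph-Estimate-EDelta0}. Since $\mf{\Phi}_n = \tfrac12(\mXi_n + \mXi_n^\top)$, each symmetric entry expands as $\Phi_{uv}^{(n)} = \tfrac12\big(\Xi_{uv}^{(n)} + \Xi_{vu}^{(n)}\big)$, so $\prod_{\{u,v\}\in E(\overline\Delta^0)}(\Phi_{uv}^{(n)})^{s_{uv}}$ expands, by multilinearity, into a finite sum (over $2^{\#E(\overline\Delta^0)}\cdot\prod$ multiplicities many terms, a constant not depending on $n$) of products of the form $\prod (\Xi_{u'v'}^{(n)})^{\cdots}$, each of which is exactly $\mf{\Xi}_n(\Delta)$ for some directed multigraph $\Delta$ whose underlying undirected skeleton graph is $\overline\Delta^0$. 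Thus $\E[\mf{\Phi}_n(\overline\Delta)]$ is a constant-coefficient linear combination of terms $\E[\mf{\Xi}_n(\Delta)]$, and it suffices to show each such $\E[\mf{\Xi}_n(\Delta)]=O(n^{-\#E(\overline\Delta^0)})$.

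First I would record the combinatorial features of a $\overline\Gamma$-graph $\overline\Delta = \overline\Gamma(k,t)$: it is traced out by a closed walk $i_1\to i_2\to\cdots\to i_k\to i_1$ of length $k$ on $t$ vertices with consecutive vertices distinct, so $\#E(\overline\Delta)=k$ (counting multiplicity) and $\#V(\overline\Delta)=t$. Because the edges come from a closed walk, the skeleton $\overline\Delta^0$ is connected, hence $\#E(\overline\Delta^0)\ge t-1$, with equality exactly when $\overline\Delta^0$ is a tree. For the directed $\Delta$ arising in the expansion we have $\#E(\Delta)=k$ as well, and $\overline\Delta^0$ is its undirected skeleton, so $\#E(\Delta^0)\le \#E(\overline\Delta^0)$ and the number of single edges $Q_0$ of $\Delta$ satisfies $Q_0 \le \#E(\Delta^0)$.

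Next I would invoke Proposition~\ref{Graph-Estimate-EDelta0}, which gives $\E[\mf{\Xi}_n(\Delta)] = O\big(n^{-(\#E(\Delta^0)\wedge(\#E(\Delta)+Q_0)/2)}\big)$, and the task becomes purely book-keeping: show $\#E(\Delta^0)\wedge(\#E(\Delta)+Q_0)/2 \ge \#E(\overline\Delta^0)$. The subtle point — and the main obstacle — is that the exponent in Proposition~\ref{Graph-Estimate-EDelta0} involves the \emph{directed} skeleton $\Delta^0$, which can be strictly smaller than the undirected skeleton $\overline\Delta^0$ (two antiparallel directed edges $u\to v$ and $v\to u$ collapse to one undirected edge in $\overline\Delta^u$ yet remain two distinct directed edges, and conversely parallel copies in both directions inflate multiplicity). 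I would handle this by splitting on whether $\overline\Delta^0$ is a tree. If $\overline\Delta^0$ is a tree (the ``leading'' case that will ultimately survive in the moment computation), Lemma~\ref{tree-independence-semicircle} applies but is not even needed here; one just checks directly that in the closed-walk structure every undirected edge is traversed an even number of times, forcing $k\ge 2\#E(\overline\Delta^0)$ and $Q_0=0$ after collapsing, so $(\#E(\Delta)+Q_0)/2 \ge k/2 \ge \#E(\overline\Delta^0)$. If $\overline\Delta^0$ is not a tree, then it contains a cycle, so $\#E(\overline\Delta^0)\le t$ while a counting argument on the closed walk (each of the $t$ vertices needs its incident edges covered, and a walk visiting a vertex contributes two traversals there) again yields $k \ge 2\#E(\overline\Delta^0) - (\text{slack from the cycle})$; combined with $\#E(\Delta^0) \ge \#E(\overline\Delta^0)$ whenever no antiparallel collapse occurs (and a separate direct argument when it does, using that antiparallel pairs already make $k$ large), one closes the estimate. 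I expect this case analysis on the directed-versus-undirected skeleton discrepancy to be the only genuinely delicate part; everything else is a mechanical consequence of Proposition~\ref{Graph-Estimate-EDelta0} and the definition of a $\overline\Gamma$-graph.
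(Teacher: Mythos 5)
Your overall strategy — expand each $\Phi_{uv}^{(n)}=\tfrac12(\Xi_{uv}^{(n)}+\Xi_{vu}^{(n)})$, reduce to a finite sum of $\E[\mf{\Xi}_n(\Delta)]$ over directed multigraphs $\Delta$ with underlying undirected graph $\overline\Delta$, and apply Proposition~\ref{Graph-Estimate-EDelta0} — is exactly what the paper does. However, your execution of the bookkeeping step has a genuine error that inverts the whole analysis. You write $\#E(\Delta^0)\le \#E(\overline\Delta^0)$, but this inequality is backwards: every undirected edge of $\overline\Delta^0$ corresponds to \emph{at least one} directed edge of $\Delta^0$ (and to two when both orientations are present), so $\#E(\Delta^0)\ge\#E(\overline\Delta^0)$ always. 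Your parenthetical remark about antiparallel edges already implies this, so the main claim contradicts its own justification. Once the inequality is corrected, the ``obstacle'' you flag disappears and no tree/non-tree split is needed: the first term of the minimum in Proposition~\ref{Graph-Estimate-EDelta0} already dominates $\#E(\overline\Delta^0)$ for free.

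The second branch of the minimum requires a separate check, but it is also elementary and uniform over all $\overline\Gamma$-graphs. For each undirected edge $\{u,v\}\in E(\overline\Delta^0)$, let $m=m^++m^-$ be the total number of copies in $\overline\Delta$, split according to the orientations chosen in the expansion. Its contribution to $\#E(\Delta)+Q_0$ is $m+\mathbbm{1}(m^+=1)+\mathbbm{1}(m^-=1)$, which is $\ge 2$ whether $m\ge 2$ (so $m\ge 2$ already) or $m=1$ (so exactly one orientation has multiplicity one, giving $1+1=2$). Summing over $E(\overline\Delta^0)$ yields $(\#E(\Delta)+Q_0)/2\ge\#E(\overline\Delta^0)$. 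In particular, your assertion that $Q_0=0$ in the tree case is false: when some undirected edge of even multiplicity is split between the two orientations as $1+1$, both resulting directed edges are single, giving $Q_0\ge 2$; the estimate holds anyway precisely because those single edges are counted in $Q_0$. Your non-tree case never actually closes the bound. The paper's proof states only the reduction and cites Proposition~\ref{Graph-Estimate-EDelta0}, leaving exactly this straightforward per-edge tally implicit.
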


\begin{proposition}\label{Emoments-convergence-semicircle} For any fixed $m\in \mb{N}$, we have
    \bea\nonumber
    \lim_{n\rw\infty} \frac{1}{p}\E[\operatorname{tr}\big((\mf{\Phi}_n-\mf{I}_p)^{2m}\big)]=\left(\frac{\gamma}{5}\right)^{m}\frac{(2m)!}{m!(m+1)!},
    \eea
    and
    \bea\nonumber
    \lim_{n\rw\infty} \frac{1}{p}\E[\operatorname{tr}\big((\mf{\Phi}_n-\mf{I}_p)^{2m-1}\big)]=0.
    \eea
\end{proposition}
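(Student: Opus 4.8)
The plan is to carry out the classical moment (Wigner-type) computation using the graph apparatus already in place. First I would expand the trace over closed walks,
\[
\operatorname{tr}\big((\mf{\Phi}_n-\mf{I}_p)^{k}\big)=\sum_{\mathbf{i}=(i_1,\dots,i_k)\in[p]^k}\ \prod_{u=1}^{k}(\mf{\Phi}_n-\mf{I}_p)_{i_u i_{u+1}},\qquad i_{k+1}:=i_1 .
\]
Since $\Phi_{ii}^{(n)}=1$, the diagonal of $\mf{\Phi}_n-\mf{I}_p$ vanishes, so only walks with $i_u\neq i_{u+1}$ for every $u$ survive; for such a walk, writing $\overline{\Delta}:=\overline{T}(\mathbf{i})$, the summand equals $\prod_{\{a,b\}\in E(\overline{\Delta}^0)}(\Phi_{ab}^{(n)})^{s_{ab}}$, whose expectation is $\mf{\Phi}_n(\overline{\Delta})$. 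By exchangeability of $(\sigma_1,\dots,\sigma_p)$ this depends only on the isomorphism class of $\overline{\Delta}$, so grouping walks by class and summing over the $(p)_{t}$ injections of the $t$ vertices into $[p]$ gives
\[
\frac1p\,\E\big[\operatorname{tr}\big((\mf{\Phi}_n-\mf{I}_p)^{k}\big)\big]=\frac1p\sum_{t\ge1}\ \sum_{\overline{\Delta}\ \mathrm{canonical}\ \overline{\Gamma}(k,t)}(p)_{t}\,\mf{\Phi}_n(\overline{\Delta}),
\]
a sum with finitely many terms.

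Next I would discard the negligible classes. Each skeleton $\overline{\Delta}^0$ is connected on $t$ vertices, so $\#E(\overline{\Delta}^0)\ge t-1$ with equality iff $\overline{\Delta}^0$ is a tree; Lemma~\ref{EPhi(Delta)-semicirclelaw} gives $\mf{\Phi}_n(\overline{\Delta})=O(n^{-\#E(\overline{\Delta}^0)})$, so the normalized contribution of $\overline{\Delta}$ is $O(p^{\,t-1}n^{-\#E(\overline{\Delta}^0)})=O(n^{\,t-1-\#E(\overline{\Delta}^0)})$, which is $O(1)$ precisely when $\overline{\Delta}^0$ is a tree and $O(n^{-1})$ otherwise. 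A closed walk on a tree traverses each edge an even number of times, so for odd $k=2m-1$ no $\overline{\Gamma}(k,t)$-graph has a tree skeleton, the whole sum is $O(n^{-1})$, and the second assertion follows. For $k=2m$ only tree-skeleton classes remain, and on those Lemma~\ref{tree-independence-semicircle} lets me factorize $\mf{\Phi}_n(\overline{\Delta})=\prod_{\{u,v\}\in E(\overline{\Delta}^0)}\E[(\Phi_{uv}^{(n)})^{s_{uv}}]$.

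The core then reduces to three entrywise moment facts for a uniform pair $u\neq v$: (a)~$\E[\Phi_{uv}^{(n)}]=0$, immediate from $\E[\Xi_{uv}^{(n)}]=0$, itself a consequence of $\sum_{i\neq j}a(i,j)=0$; (b)~$\E[(\Phi_{uv}^{(n)})^2]=(5n)^{-1}(1+o(1))$, obtained by writing $\Phi_{uv}^{(n)}=\tfrac12(\Xi_{uv}^{(n)}+\Xi_{vu}^{(n)})$ and combining Proposition~\ref{prop:crc}(iv) ($\E[(\Xi_{uv}^{(n)})^2]=\tfrac{2}{5n}(1+o(1))$) with Proposition~\ref{prop:crc}(iii) ($n\,\E[\Xi_{uv}^{(n)}\Xi_{vu}^{(n)}]\to0$, where uniform integrability comes from $|\Xi_{uv}^{(n)}|\le1$); and (c)~$\E[(\Phi_{uv}^{(n)})^{2\ell}]=o(1/n)$ for every $\ell\ge2$, by expanding the $2\ell$-th power and bounding each mixed moment via $|\Xi_{\cdot}^{(n)}|\le1$ together with $\E[(\Xi_{uv}^{(n)})^4]=O(n^{-2})$ (Proposition~\ref{prop:crc}(iv)). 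Running through the tree-skeleton classes for $k=2m$: an edge of multiplicity $1$ annihilates the term by (a); if all multiplicities are $\ge2$ (hence even) and some is $\ge4$, then (b)--(c) give $\mf{\Phi}_n(\overline{\Delta})=o(n^{-(t-1)})$ and the contribution is $o(1)$; the only survivors carry every edge with multiplicity exactly $2$, whence $\sum_e s_e=2m$ forces $t-1=m$, i.e.\ $t=m+1$, and the contribution is $(p)_{m+1}\big(\E[(\Phi_{12}^{(n)})^2]\big)^m/p\to(\gamma/5)^m$ using (b) and $p/n\to\gamma$.

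Finally I would count: the canonical $\overline{\Gamma}(2m,m+1)$-graphs with tree skeleton are exactly the closed walks of length $2m$ traversing each edge of an $(m+1)$-vertex tree twice, and there are $C_m=\tfrac{(2m)!}{m!(m+1)!}$ of them --- the standard Catalan enumeration, via the bijection with rooted plane trees (or Dyck paths); see, e.g., \citep{bai2010spectral}. Summing, $\frac1p\E[\operatorname{tr}((\mf{\Phi}_n-\mf{I}_p)^{2m})]\to\tfrac{(2m)!}{m!(m+1)!}(\gamma/5)^m$, as claimed. \textbf{Main obstacle.} The one genuinely delicate point is that Lemma~\ref{EPhi(Delta)-semicirclelaw} alone is too crude to kill tree-skeleton graphs carrying an edge of multiplicity $\ge4$ (it bounds their normalized contribution only by $O(1)$); the sharper entrywise estimate (c) is essential there. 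The rest --- the walk expansion, the isomorphism bookkeeping, and the Catalan count --- is routine.
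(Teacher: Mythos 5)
Your proposal is correct and follows essentially the same route as the paper: expand the trace over closed walks, group by isomorphism class of the $\overline{\Gamma}$-graph, kill the non-tree skeletons with Lemma~\ref{EPhi(Delta)-semicirclelaw}, factorize over tree skeletons via Lemma~\ref{tree-independence-semicircle}, reduce to the case of all multiplicities equal to $2$ with $t=m+1$ vertices, and apply the Catalan count together with $\E[(\Phi_{12}^{(n)})^2]=\tfrac{1}{5n}(1+o(1))$. Two small remarks. First, your observation that for odd $k$ no closed walk has a tree skeleton lets you dispose of the odd case in one stroke, which is slightly tidier than the paper's parity-agnostic treatment of its Category~3 (where it settles for the cruder bound $O(n^{-1/2})$). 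Second, your justification for the cross-term estimate in (b) is off: you invoke weak convergence from Proposition~\ref{prop:crc}(iii) plus ``uniform integrability from $|\Xi_{uv}^{(n)}|\le 1$'', but the pointwise bound on $\Xi_{uv}^{(n)}$ only gives $|n\,\Xi_{uv}^{(n)}\Xi_{vu}^{(n)}|\le n$, which is of no use for uniform integrability. The correct argument is that $\sup_n\E\big[(n\,\Xi_{uv}^{(n)}\Xi_{vu}^{(n)})^2\big]\le \sup_n n^2\sqrt{\E[(\Xi_{uv}^{(n)})^4]\,\E[(\Xi_{vu}^{(n)})^4]}<\infty$ by Proposition~\ref{prop:crc}(iv), which gives $L^2$-boundedness and hence uniform integrability; alternatively the paper simply reads off $\E[\Xi_{12}^{(n)}\Xi_{21}^{(n)}]=O(n^{-2})$ from the exact formula in \citet[Corollary~1]{zhang2022asymptotic}, avoiding the UI step entirely. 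Your item (c), bounding higher entrywise moments to rule out tree skeletons carrying a multiplicity-$\ge 4$ edge, is the right sharpening and makes explicit a step the paper passes over rather quickly.
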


\begin{proof}
We classify all $\overline{\Gamma}$-graphs into the following three categories:

\begin{itemize}
    \item \textbf{Category 1 ($D_1$):} 
    $\overline{\Gamma}$-graphs in which every undirected edge has multiplicity exactly $2$, and the corresponding undirected skeleton graph is a tree;
    
    \item \textbf{Category 2 ($D_2$):}
    $\overline{\Gamma}$-graphs whose undirected skeleton graph is not a tree; 
    
    \item \textbf{Category 3 ($D_3$):}
    $\overline{\Gamma}$-graphs that do not belong to $D_1$, but whose corresponding undirected skeleton graph is a tree. 
\end{itemize}

For $k\in \N$, we obtain 
\bea\nonumber
\frac{1}{p}\E[\operatorname{tr}\big((\mf{\Phi}_n-\mf{I}_p)^{k}\big)]&=\frac{1}{p}\sum_{\i}\E[\Phi_{i_1i_2}^{(n)}\Phi_{i_2i_3}^{(n)}\cdots\Phi_{i_{k-1}i_k}^{(n)}\Phi_{i_ki_1}^{(n)}]\\
&=\frac{1}{p}\sum_{\i}\E[\mf{\Phi}_n(\overline{T}(\i))]\\
&=K_1+K_2+K_3.
\eea

where
\begin{equation}\nonumber
\begin{aligned}
       K_j&:=\frac{1}{p} \sum_{\overline{\Gamma}(k,t) \in D_j} \sum_{\overline{T}(\i) \simeq \overline{\Gamma}(k,t)} \E[\mf{\Phi}_n(\overline{T}(\i))],\quad j\in [3].
\end{aligned}
\end{equation}
Here $\sum_{\overline{\Gamma}(k,t) \in D_j}$ is taken on all different canonical $\overline{\Gamma}$-graphs in Category $j$, and  $\sum_{\overline{T}(\i) \simeq \overline{\Gamma}(k,t)}$ is taken on all isomorphic graphs of the canonical $\overline{\Gamma}$-graph $\overline{\Gamma}(k,t)$. Since $\E[\mf{\Phi}_n(\overline{T}(\i))]$ is unchanged under graph isomorphisms, we have

\begin{equation}\nonumber
\begin{aligned}
       K_j&=\frac{1}{p} \sum_{\overline{\Gamma}(k,t) \in D_j} p^{t}(1+o(1))\E[\mf{\Phi}_n(\overline{\Gamma}(k,t))],\quad j\in[3].
\end{aligned}
\end{equation}

\textbf{Category 1.} When $k=2m+1$, $K_1=0$. When $k=2m$, the number of different canonical $\overline{\Gamma}$-graphs in $D_1$ equals to
$$
\frac{(2m)!}{m!(m+1)!}.
$$
From \citet[Corollary 1]{zhang2022asymptotic}, we find
\bea\nonumber
\E[\Phi_{12}^{(n)}]=\frac{1}{2}\E[\Xi_{12}^{(n)}]+\frac{1}{2}\E[\Xi_{12}^{(n)}\Xi_{21}^{(n)}]=\frac{1}{5n}+O(n^{-2}).
\eea
Then from Lemma \ref{tree-independence-semicircle}, 
\bea\nonumber
K_1&=\frac{1}{p}\cdot p^{m+1}(1+o(1))\frac{(2m)!}{m!(m+1)!}\l(\frac{1}{5n}+O(n^{-2})\r)^m\\
&=\left(\frac{\gamma}{5}\right)^{m}\frac{(2m)!}{m!(m+1)!}+O(n^{-1}).
\eea

\textbf{Category 2.}  For any $\overline{\Gamma}(k,t)\in D_2$, from Lemma \ref{EPhi(Delta)-semicirclelaw}, 
$$
\E[\mf{\Phi}_n(\overline{\Gamma}(k,t))]=O(n^{-\# E(\overline{\Gamma}^0(k,t))}).
$$
Since $\overline{\Gamma}^0(k,t)$ is not a tree, $t\leq \# E(\overline{\Gamma}^0(k,t))$. Then
\bea\nonumber
K_2=\sum_{\overline{\Gamma}(k,t)\in D_2} O(n^{t-1-\# E(\overline{\Gamma}^0(k,t)})=O(n^{-1}).
\eea

\textbf{Category 3.}  Now let $\overline{\Gamma}(k,t)\in D_3$. If $\overline{\Gamma}(k,t)$ has an undirected edge of multiplicity 1, from Lemma \ref{tree-independence-semicircle} and $\E[\Phi_{12}^{(n)}]=0$, we have $\E[\mf{\Phi}_n(\overline{\Gamma}(k,t))]=0$. Then it suffices to consider those $\overline{\Gamma}(k,t)$ such that each edge has multiplicity at least $2$, and there is at least one edge of multiplicity at least 3. Then $t\leq (k+1)/2$. Then
\bea\nonumber
K_3=\sum_{\overline{\Gamma}(k,t)\in D_3} O(n^{t-1-\frac{k}{2}})=O(n^{-1/2})
\eea
and we finish the proof.
\end{proof}

\begin{proposition}\label{moments-concentration-semicircle}
    For any fixed $k\in \mb{N}$, we have, as $n\rw\infty$,
    \bea\nonumber
    \operatorname{Var}\l(\frac{1}{p}\operatorname{tr}((\mf{\Phi}_n-\mf{I}_p)^k)\r)=O(n^{-2}).
    \eea
\end{proposition}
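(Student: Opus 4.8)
The plan is to push the moment method through exactly as in the proof of Proposition~\ref{Emoments-convergence-semicircle}, but now tracking \emph{pairs} of closed walks. Expanding the trace,
\[
\operatorname{Var}\!\left(\frac1p\operatorname{tr}\big((\mf{\Phi}_n-\mf{I}_p)^{k}\big)\right)
=\frac{1}{p^{2}}\sum_{\i,\j}\operatorname{Cov}\!\big(\mf{\Phi}_n(\overline{T}(\i)),\,\mf{\Phi}_n(\overline{T}(\j))\big),
\]
where $\i=(i_1,\dots,i_k)$ and $\j=(j_1,\dots,j_k)$ run over closed walks of length $k$ on $[p]$. The decisive point is that each summand vanishes unless $\overline{T}(\i)$ and $\overline{T}(\j)$ share at least two vertices. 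Indeed, $\mf{\Phi}_n(\overline{T}(\i))$ is measurable with respect to $\{R_u:u\in V(\overline{T}(\i))\}$, and because every $\Xi_{uv}^{(n)}$ depends on $(R_u,R_v)$ only through the relative rank $R_v\circ R_u^{-1}$, for any base vertex $v_0\in V(\overline{T}(\i))$ one may write $\mf{\Phi}_n(\overline{T}(\i))$ as a function of $\{R_w\circ R_{v_0}^{-1}:w\in V(\overline{T}(\i))\}$. Since $R_{v_0}$ is independent of the other $R_w$, conditionally on $R_{v_0}$ these relative ranks are i.i.d.\ uniform permutations whose joint law does not involve $R_{v_0}$ (cf.\ Proposition~\ref{prop:basic}), so $\E[\mf{\Phi}_n(\overline{T}(\i))\mid R_{v_0}]$ is a.s.\ constant. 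Hence, if $V(\overline{T}(\i))$ and $V(\overline{T}(\j))$ are disjoint the two variables are independent, and if they meet in a single vertex $v_0$ they are independent given $R_{v_0}$; in either case the covariance is $0$.

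It therefore suffices to control the pairs with $S:=V(\overline{T}(\i))\cap V(\overline{T}(\j))$ of cardinality $\ge 2$. Set $t:=\#\big(V(\overline{T}(\i))\cup V(\overline{T}(\j))\big)$ and let $\Delta:=\overline{T}(\i)\cup\overline{T}(\j)$ be the multigraph union, which is connected. For a fixed isomorphism type on $t$ vertices there are $O(p^{t})=O(n^{t})$ ordered pairs $(\i,\j)$, and only finitely many types occur for fixed $k$; so the proposition follows from the bound
\[
\big|\operatorname{Cov}\!\big(\mf{\Phi}_n(\overline{T}(\i)),\,\mf{\Phi}_n(\overline{T}(\j))\big)\big|=O(n^{-t}),
\]
since then, summing over the $O(1)$ types, the variance is $O(p^{-2})=O(n^{-2})$.

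I estimate $|\operatorname{Cov}|\le|\E[\mf{\Phi}_n(\Delta)]|+|\E[\mf{\Phi}_n(\overline{T}(\i))]|\,|\E[\mf{\Phi}_n(\overline{T}(\j))]|$, distinguishing whether the undirected skeleton $\Delta^{0}$ contains a cycle. If it does, then $\#E(\Delta^{0})\ge t$; expanding $\Phi=\tfrac12(\Xi+\Xi^{\top})$ into orientations of $\Delta$ and applying Proposition~\ref{Graph-Estimate-EDelta0} to each orientation $\Delta'$ (for which $\#E((\Delta')^{0})\ge\#E(\Delta^{0})\ge t$ and $\#E(\Delta')+Q_0(\Delta')\ge 2\,\#E((\Delta')^{0})$) yields $\E[\mf{\Phi}_n(\Delta)]=O(n^{-t})$; moreover Lemma~\ref{EPhi(Delta)-semicirclelaw} and $\#S\ge 2$ give $\E[\mf{\Phi}_n(\overline{T}(\i))]\,\E[\mf{\Phi}_n(\overline{T}(\j))]=O\big(n^{-(v_{\i}-1)-(v_{\j}-1)}\big)=O(n^{-t})$, where $v_{\i}:=\#V(\overline{T}(\i))$. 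If instead $\Delta^{0}$ is a tree, then so are $(\overline{T}(\i))^{0}$ and $(\overline{T}(\j))^{0}$, so each closed walk crosses every skeleton edge an even number of times and every edge of $\Delta$ has even multiplicity; Lemma~\ref{tree-independence-semicircle} factorizes $\E[\mf{\Phi}_n(\Delta)]=\prod_{e\in E(\Delta^{0})}\E[(\Phi_e^{(n)})^{s_e(\Delta)}]$, which by the sharp single-entry estimate $\E[(\Phi_{uv}^{(n)})^{2j}]=O(n^{-j})$ is $O\big(n^{-\frac12\sum_e s_e(\Delta)}\big)=O(n^{-k})$; since both walks have length $k$, their vertex counts are at most $k/2+1$, so $\#S\ge 2$ forces $t\le k$ and $O(n^{-k})\subseteq O(n^{-t})$ (the product term is likewise $O(n^{-k/2})\cdot O(n^{-k/2})=O(n^{-k})$). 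This gives the displayed bound, hence the proposition.

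The main obstacle is the tree-skeleton case: the off-the-shelf Proposition~\ref{Graph-Estimate-EDelta0} only yields $O(n^{-(t-1)})$ there, one power of $n$ short, because it is blind to the fact that the shared part of $\Delta$ carries multiplicities $\ge 4$. The remedy is the sharpened moment estimate $\E[(\Phi_{uv}^{(n)})^{2j}]=O(n^{-j})$, which I would prove by writing $(\Xi_{uv}^{(n)})^{m}$ as a sum of $a(\cdot,\cdot)$-products over the uniform permutation $\sigma=R_v\circ R_u^{-1}$, bounding each product by Lemma~\ref{lem1} and counting admissible index tuples via the chain structure of Lemma~\ref{lem2}, obtaining $\E[(\Xi_{uv}^{(n)})^{m}]=O(n^{-\lceil m/2\rceil})$, and then combining with Cauchy--Schwarz and $\Phi=\tfrac12(\Xi+\Xi^{\top})$. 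A secondary, bookkeeping nuisance is the orientation expansion needed to route $\mf{\Phi}_n(\Delta)$ through the directed statement of Proposition~\ref{Graph-Estimate-EDelta0}: one must check that splitting an undirected edge into its two directions can only increase the directed skeleton size and cannot decrease the effective single-edge count $Q_0$, so that the exponent $t$ in the bound is preserved.
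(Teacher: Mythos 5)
Your proof is correct and reaches the same $O(n^{-2})$ bound, but by a genuinely different organization. The paper classifies pairs $(\overline{T}_1,\overline{T}_2)$ into four categories according to disjointness of vertex sets and whether $\overline{T}^0$, $\overline{T}_1^0$, $\overline{T}_2^0$ are trees, and handles each separately; you instead first establish that the covariance vanishes whenever the two walks share at most \emph{one} vertex (using the observation that $\mf{\Phi}_n(\overline{T}(\i))$ is a deterministic function of the relative ranks $\{R_wR_{v_0}^{-1}\}$ whose conditional law given $R_{v_0}$ is free of $R_{v_0}$, so the two factors are independent given $R_{v_0}$ and each has constant conditional mean), and then split the remaining $\#S\ge 2$ pairs into only two cases according to whether $\Delta^0$ contains a cycle. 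Your single-vertex vanishing covariance observation is stronger than the paper's Category~1 (disjoint vertices) and streamlines the bookkeeping that the paper spreads across Categories~2--4; the paper buys the same conclusion there by directly bounding covariances through Lemma~\ref{EPhi(Delta)-semicirclelaw} and counting inequalities such as $\# V(\overline{T})\le \# E(\overline{T}^0)$. You also correctly diagnose that in the tree-skeleton case Proposition~\ref{Graph-Estimate-EDelta0} alone is one power of $n$ short, and that the missing ingredient is the sharp single-pair moment bound $\E[(\Phi_{uv}^{(n)})^{2j}]=O(n^{-j})$ (equivalently, that $\sqrt{n}\,\Xi_{uv}^{(n)}$ has bounded moments of every fixed order). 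It is worth noting that the paper's own proof uses exactly this estimate implicitly --- its Category~2 asserts $\E[\mf{\Phi}_n(\overline{T}_1)]=O(n^{-k/2})$ when $\overline{T}_1$ has a tree skeleton with some multiplicity-$\ge 4$ edge, which after the Lemma~\ref{tree-independence-semicircle} factorization reduces to the same per-edge moment bound --- so your proposal to supply it via the $a(\cdot,\cdot)$-product expansion and Lemma~\ref{lem1}/\ref{lem2} (or alternatively via Lipschitz concentration of $\xi_n$ on the permutation group) closes a gap that the paper leaves tacit. Your orientation-expansion bookkeeping for applying the directed Proposition~\ref{Graph-Estimate-EDelta0} to the undirected $\mf{\Phi}_n(\Delta)$ is also sound: every orientation $\Delta'$ satisfies $\#E((\Delta')^0)\ge \#E(\Delta^0)$ and $\#E(\Delta')+Q_0(\Delta')\ge 2\#E((\Delta')^0)$, so the exponent is $\#E((\Delta')^0)\ge t$ in the cycle case, matching what is needed.
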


\begin{proof}
    We have
    \bea\nonumber
    \operatorname{Var}\l(\frac{1}{p}\operatorname{tr}((\mf{\Phi}_n-\mf{I}_p)^k)\r)
    =\frac{1}{p^2}\sum_{\i_1,\i_2}\big(\E[\mf{\Phi}_n(\overline{T}(\i_1))\mf{\Phi}_n(\overline{T}(\i_2))]-\E[\mf{\Phi}_n(\overline{T}(\i_1))]\E[\mf{\Phi}_n(\overline{T}(\i_2))]\big). 
    \eea
    Write $\overline{T}_1=\overline{T}(\i_1)$ and $\overline{T}_2=\overline{T}(\i_2)$ for short, and denote $\overline{T}=\overline{T}_1\cup \overline{T}_2$. Note that $\overline{T}_1,\overline{T}_2$ are both $\overline{\Gamma}$-graphs. We classify all possible graph pairs $(\overline{T}_1,\overline{T}_2)$ into the following four categories:

\begin{itemize}
    \item {\bf Category 1 ($\overline{D}_1$):} $V(\overline{T}_1)\cap V(\overline{T}_2)=\emptyset$;

    \item {\bf Category 2 ($\overline{D}_2$):} $V(\overline{T}_1)\cap V(\overline{T}_2)\ne\emptyset$, and $\overline{T}^0$ is a tree;

    \item {\bf Category 3 ($\overline{D}_3$):} $V(\overline{T}_1)\cap V(\overline{T}_2)\ne\emptyset$, $\overline{T}^0$ is not a tree, and both $\overline{T}_1^0$ and $\overline{T}_2^0$ are trees;

    \item {\bf Category 4 ($\overline{D}_4$):} $V(\overline{T}_1)\cap V(\overline{T}_2)\ne\emptyset$, $\overline{T}^0$ is not a tree, and at least one of $\overline{T}_1^0$ and $\overline{T}_2^0$ is not a tree.
    \end{itemize}
    
    We can then decompose the variance to 
    \bea\nonumber
    \operatorname{Var}\l(\frac{1}{p}\operatorname{tr}((\mf{\Phi}_n-\mf{I}_p)^k)\r)=\overline{K}_1+\overline{K}_2+\overline{K}_3+\overline{K}_4,
    \eea
    where, for each $j\in[4]$,
    \bea\nonumber
    \overline{K}_j:=\frac{1}{p^2}\sum_{(\overline{T}_1,\overline{T}_2)\in \overline{D}_j}p^{\# V(T)}\big(1+o(1)\big)\big(\E[\mf{\Phi}_n(\overline{T}_1)\mf{\Phi}_n(\overline{T}_2)]-\E[\mf{\Phi}_n(\overline{T}_1)]\E[\mf{\Phi}_n(\overline{T}_2)]\big).
    \eea
    Here the summation is taken over all different graph pairs $(\overline{T}_1,\overline{T}_2)$ in Category $j$ up to graph isomorphism.

    {\bf Category 1.} For any $(\overline{T}_1,\overline{T}_2)\in \overline{D}_1$,\ $\mf{\Phi}_n(\overline{T}_1)$ and $\mf{\Phi}_n(\overline{T}_2)$ are independent. Then 
    \bea\label{moments-concentration-semicircle-H1}
    \overline{K}_1=0.
    \eea

    {\bf Category 2.}    For any $(\overline{T}_1,\overline{T}_2)\in \overline{D}_2$, if 
$E(\overline{T}_1)\cap E(\overline{T}_2)=\emptyset$, then by Lemma~\ref{tree-independence-semicircle}, 
$\mf{\Phi}_n(\overline{T}_1)$ and $\mf{\Phi}_n(\overline{T}_2)$ are independent. 
If $\overline{T}$ contains an edge with multiplicity $1$, then
\[
\E\big[\mf{\Phi}_n(\overline{T}_1)\mf{\Phi}_n(\overline{T}_2)\big]
= \E[\mf{\Phi}_n(\overline{T}_1)] \, \E[\mf{\Phi}_n(\overline{T}_2)] = 0.
\]
If $E(\overline{T}_1)\cap E(\overline{T}_2)\neq \emptyset$ and each edge in $\overline{T}$ has multiplicity at least $2$, then $\overline{T}$ must contain an edge of multiplicity at least $4$. 
In this case, $\# V(\overline{T})\leq k$. 
Since 
\[
\E[\mf{\Phi}_n(\overline{T}_1)] = O(n^{-k/2}), 
\qquad 
\E[\mf{\Phi}_n(\overline{T}_2)] = O(n^{-k/2}), 
\qquad 
\E[\mf{\Phi}_n(\overline{T}_1)\mf{\Phi}_n(\overline{T}_2)] = O(n^{-k}),
\]
we obtain
\begin{equation}\label{moments-concentration-semicircle-H2}
\overline{K}_2 
= \sum_{(\overline{T}_1,\overline{T}_2)\in \overline{D}_2} 
O\big(n^{\# V(\overline{T}) - k - 2}\big) 
= O(n^{-2}).
\end{equation}

    {\bf Category 3.}     For any $(\overline{T}_1,\overline{T}_2)\in \overline{D}_3$, there exists an edge $\{u,v\}\in \overline{T}_1^0$ such that either $\{u,v\}\in \overline{T}_2^0$ or $\{v,u\}\in \overline{T}_2^0$. 
Otherwise, since both $\overline{T}_1^0$ and $\overline{T}_2^0$ are trees, 
$\overline{T}^0$ would also be a tree. In that case, 
\[
\# V(\overline{T}) \;\leq\; \# V(\overline{T}_1) + \# V(\overline{T}_2) - 2.
\]

From Lemma~\ref{tree-independence-semicircle}, we have
\[
\E[\mf{\Phi}_n(\overline{T}_1)] = O\big(n^{-\# E(\overline{T}_1^0)}\big), 
\quad 
\E[\mf{\Phi}_n(\overline{T}_2)] = O\big(n^{-\# E(\overline{T}_2^0)}\big), 
\quad 
\E[\mf{\Phi}_n(\overline{T}_1)\mf{\Phi}_n(\overline{T}_2)] = O\big(n^{-\# E(\overline{T}^0)}\big).
\]
Since $\overline{T}^0$ is not a tree, we have $\# V(\overline{T}) \leq \# E(\overline{T}^0)$. 
Moreover, because 
\[
\# V(\overline{T}_1) = \# E(\overline{T}_1^0) + 1, 
\qquad 
\# V(\overline{T}_2) = \# E(\overline{T}_2^0) + 1,
\]
it follows that
\begin{equation}\label{moments-concentration-semicircle-H3}
\overline{K}_3
= \sum_{(\overline{T}_1,\overline{T}_2)\in \overline{D}_3} 
O\!\left(n^{\# V(\overline{T}) - \# E(\overline{T}^0) - 2}\right) 
+ O\!\left(n^{\# V(\overline{T}_1) + \# V(\overline{T}_2) 
- \# E(\overline{T}_1^0) - \# E(\overline{T}_2^0) - 4}\right)
= O(n^{-2}).
\end{equation}

    {\bf Category 4.}     For any $(\overline{T}_1,\overline{T}_2)\in \overline{D}_4$, without loss of generality assume that 
$\overline{T}_1^0$ is not a tree. Then 
\[
\# V(\overline{T}_1) \;\leq\; \# E(\overline{T}_1^0).
\]
From Lemma~\ref{tree-independence-semicircle}, together with the inequalities
\[
\# V(\overline{T}_2) \leq \# E(\overline{T}_2^0)+1, 
\qquad 
\# V(\overline{T}) \leq \# E(\overline{T}^0)+1, 
\qquad 
\# V(\overline{T}) \leq \# V(\overline{T}_1) + \# V(\overline{T}_2) - 1,
\]
we obtain
\begin{equation}\label{moments-concentration-semicircle-H4}
\overline{K}_4
= \sum_{(\overline{T}_1,\overline{T}_2)\in \overline{D}_4} 
O\!\left(n^{\# V(\overline{T}) - \# E(\overline{T}^0) - 2}\right) 
+ O\!\left(n^{\# V(\overline{T}_1) + \# V(\overline{T}_2) 
- \# E(\overline{T}_1^0) - \# E(\overline{T}_2^0) - 3}\right)
= O(n^{-2}).
\end{equation}

    Combining \eqref{moments-concentration-semicircle-H1}-\eqref{moments-concentration-semicircle-H4}, we thus finish the proof.
\end{proof}

Leveraging Propositions \ref{Emoments-convergence-semicircle} and \ref{moments-concentration-semicircle}, combined with Borel-Cantelli lemma, we thus proved Theorem \ref{thm:sc}.

\subsection{Proof of Theorem \ref{thm:mp}}\label{Section-LSD-MP}


The moments of MP$(y,\sigma^2)$ are 
$$
\int_\mb{R} x^k\mu_{y,\sigma^2}(dx)=\sigma^{2k}\sum_{r=0}^{k-1}\frac{y^r}{r+1}\binom{k}{r}\binom{k-1}{r},\quad k\in \mb{N}.
$$

We begin by introducing some notation borrowed from \cite{bai2010spectral}.  
Let \[ i_1, j_1, \ldots, i_k, j_k \in [p] \] with the conditions
\[
i_1 \neq j_k, 
\qquad 
i_u \neq j_u \quad \text{for all } u \in [k].
\]
A \emph{$\Delta$-graph} is a directed multigraph constructed as follows.  
 \begin{enumerate}[itemsep=-.5ex,label=(\roman*)]
\item Draw two parallel horizontal lines, referred to as the \emph{$I$-line} and the \emph{$J$-line}.  
On the $I$-line, place the points \( i_1, \ldots, i_k \);  
on the $J$-line, place the points \( j_1, \ldots, j_k \).  
\item Next, draw $k$ downward edges connecting \( i_u \) to \( j_u \) for each \( u \in [k] \),  
and another $k$ downward edges connecting \( i_{u+1} \) to \( j_u \) for each \( u \in [k] \),  
where we adopt the cyclic convention \( i_{k+1} = i_1 \).  
\end{enumerate}

We denote this $\Delta$-graph by
\[
G(\mathbf{i}, \mathbf{j}), 
\qquad 
\mathbf{i} = (i_1, \ldots, i_k), \quad 
\mathbf{j} = (j_1, \ldots, j_k).
\]


Two $\Delta$-graphs \( G(\mathbf{i}_1, \mathbf{j}_1) \) and \( G(\mathbf{i}_2, \mathbf{j}_2) \) are said to be \emph{isomorphic} if one can be obtained from the other by relabeling the vertices via a permutation on $[p]$.  

Within each isomorphism class, there exists a unique representative, called the \emph{canonical $\Delta$-graph}, which satisfies the following conditions:  
\[
i_1 = j_1 = 1, 
\qquad 
i_u \leq \max\{i_1, \dots, i_{u-1}\} + 1, 
\qquad 
j_u \leq \max\{j_1, \dots, j_{u-1}\} + 1.
\]

A canonical $\Delta$-graph is denoted by \(\Delta(k,r,s)\) if it contains \(r+1\) distinct $I$-vertices, \(s\) distinct $J$-vertices, and $2k$ edges. Note that \(\Delta(k,r,s)\) is also a directed multigraph. We denote by \(\Delta^0(k,r,s)\) and \(\overline{\Delta}^0(k,r,s)\) the directed and undirected skeleton graph of \(\Delta(k,r,s)\), respectively.

\begin{proposition}\label{Emoments-convergence-MPlaw} For any fixed $k\in \mb{N}$, we have
    \bea\nonumber
    \lim_{n\rw\infty} \frac{1}{p}\E\l[\operatorname{tr}\big(\mf{\Psi}_n^{k}\big)\r]=\left(\frac{2\gamma}{5}\right)^{k}\sum_{r=0}^{k-1}\frac{1}{r+1}\binom{k}{r}\binom{k-1}{r}.
    \eea
\end{proposition}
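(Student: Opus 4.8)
The plan is to follow the moment method, paralleling the treatment of $\mf{\Phi}_n$ in Proposition \ref{Emoments-convergence-semicircle} but now for the Gram-type matrix $\mf{\Psi}_n = (\mf{\Xi}_n - \mf{I}_p)(\mf{\Xi}_n - \mf{I}_p)^\top$. First I would expand
\[
\frac{1}{p}\E[\operatorname{tr}(\mf{\Psi}_n^k)] = \frac{1}{p}\sum_{\mathbf{i},\mathbf{j}} \E\big[\Xi_{i_1 j_1}^{(n)}\Xi_{i_2 j_1}^{(n)}\Xi_{i_2 j_2}^{(n)}\Xi_{i_3 j_2}^{(n)}\cdots\Xi_{i_k j_k}^{(n)}\Xi_{i_1 j_k}^{(n)}\big],
\]
using $(\mf{\Xi}_n-\mf{I}_p)_{ij} = \Xi_{ij}^{(n)}\ind(i\ne j)$, so each term is $\E[\mf{\Xi}_n(\Delta)]$ for the directed multigraph $\Delta = G(\mathbf{i},\mathbf{j})$ associated with the closed path on the $I$-line and $J$-line. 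I would then group the sum over $(\mathbf{i},\mathbf{j})$ into isomorphism classes of canonical $\Delta$-graphs $\Delta(k,r,s)$, noting that a class with $t := (r+1)+s$ distinct vertices contributes a combinatorial factor $p^t(1+o(1))$, so that
\[
\frac{1}{p}\E[\operatorname{tr}(\mf{\Psi}_n^k)] = \sum_{\Delta(k,r,s)} p^{t-1}(1+o(1))\,\E[\mf{\Xi}_n(\Delta(k,r,s))].
\]

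Next I would classify the $\Delta$-graphs into three categories mirroring the semicircle proof: (1) graphs where every undirected edge of $\overline{\Delta}^u$ has multiplicity exactly $2$ and $\overline{\Delta}^0$ is a tree; (2) graphs where $\overline{\Delta}^0$ is not a tree; (3) remaining graphs whose skeleton is a tree but with some edge of higher multiplicity or some single edge. For Category 2, Proposition \ref{Graph-Estimate-EDelta0} gives $\E[\mf{\Xi}_n(\Delta)] = O(n^{-\#E(\overline{\Delta}^0)})$ and the non-tree condition forces $t \le \#E(\overline{\Delta}^0)$, so these contribute $O(n^{-1})$. For Category 3: any single edge kills the expectation via Corollary \ref{Corollary:tree-independence-coef}/Remark \ref{remark:relative-rank} (the relevant relative ranks are independent and $\E[\Xi_{12}^{(n)}]=0$), so only graphs with all multiplicities $\ge 2$ and some multiplicity $\ge 3$ survive; these have $t \le k$ vertices while $\E[\mf{\Xi}_n(\Delta)]=O(n^{-k})$ roughly, yielding a negligible contribution. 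For Category 1, each doubled edge contributes (asymptotically) the limiting second moment $\E[(\Xi_{12}^{(n)})^2]\cdot$(appropriate factor) — more precisely, when the skeleton is a tree, the entries attached to distinct skeleton edges are mutually independent (Corollary \ref{Corollary:tree-independence-coef}), and along the $\Delta$-graph each skeleton edge $\{i_u,j\}$ gets doubled, so the expectation factorizes into a product of $k$ terms each asymptotically $\E[(\Xi_{12}^{(n)})^2] = \frac{(n-2)(4n-7)}{10(n-1)^2(n+1)} = \frac{2}{5n}(1+o(1))$ by Proposition \ref{prop:crc}(iv). Since $\#E(\overline{\Delta}^0)=t-1$ for a tree and there are $k$ doubled edges so $t-1 = k$, giving $p^{t-1} = p^k \sim (\gamma n)^k$, the surviving contribution is $(\gamma n)^k \cdot (2/(5n))^k \cdot (\text{number of such trees}) = (2\gamma/5)^k \cdot (\text{count})$.

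The remaining — and genuinely combinatorial — task is to count the Category 1 graphs: the number of canonical $\Delta(k,r,s)$-graphs with all-doubled edges and tree skeleton, with $r+1$ $I$-vertices and $s$ $J$-vertices, for each $(r,s)$ with $(r+1)+s = k+1$. This is exactly the classical count that appears in the Marchenko–Pastur moment computation for sample covariance matrices (e.g. \citet{bai2010spectral}), where the number of such graphs with parameters $(k,r,s)$ equals $\frac{1}{r+1}\binom{k}{r}\binom{k-1}{r}$ and $s = k - r$; I would invoke that established bijection (closed paths in the $\Delta$-graph correspond to the noncrossing-partition / Narayana structure) rather than re-deriving it, so that summing over $r$ recovers $\sum_{r=0}^{k-1}\frac{1}{r+1}\binom{k}{r}\binom{k-1}{r}$, matching the $k$-th moment of $\mathrm{MP}(1, 2\gamma/5)$. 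The main obstacle is making the factorization in Category 1 fully rigorous — specifically, verifying that the doubled-edge structure of the $\Delta$-graph together with the tree skeleton really does let the expectation split as a product over skeleton edges with each factor converging to $2/(5n)$ at the right rate (this uses that the path structure forces each skeleton edge to carry exactly two parallel directed copies, with the orientations compatible so that the relevant $\Xi^{(n)}_{uv}$'s repeat rather than mixing $\Xi^{(n)}_{uv}$ with $\Xi^{(n)}_{vu}$ in an uncontrolled way). Everything else is bookkeeping against the estimates already proved.
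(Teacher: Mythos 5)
Your proposal follows essentially the same approach as the paper: expand $p^{-1}\E[\operatorname{tr}(\mf{\Psi}_n^k)]$ over canonical $\Delta$-graphs, classify them into the identical three categories, kill Categories 2 and 3 using Proposition \ref{Graph-Estimate-EDelta0} and the tree-independence of Corollary \ref{Corollary:tree-independence-coef}, and reduce Category 1 to the Bai--Silverstein Narayana count $\frac{1}{r+1}\binom{k}{r}\binom{k-1}{r}$ after factorizing the expectation via tree-independence. The one concern you flag---possible mixing of $\Xi^{(n)}_{uv}$ with $\Xi^{(n)}_{vu}$---does not arise, since in a $\Delta$-graph every directed edge points from an $I$-vertex to a $J$-vertex so the reversed pair $(v,u)$ never lies in $E(\Delta^0)$, exactly as the paper itself notes in its treatment of Category 3.
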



\begin{proof}
We classify all $\Delta$-graphs into the following three categories.
\begin{itemize}
    \item \textbf{Category 1 ($C_1$):} $\Delta$-graphs in which every directed edge has multiplicity exactly $2$, and the corresponding undirected skeleton graph is a tree. 
    
    \item \textbf{Category 2 ($C_2$):} $\Delta$-graphs whose 
    undirected skeleton graph is not a tree.
    
    \item \textbf{Category 3 ($C_3$):} $\Delta$-graphs that do not belong to $C_1$, but the corresponding undirected skeleton graph is a tree.
\end{itemize}

Then we have
\begin{equation}\nonumber
\begin{aligned}
\frac{1}{p}\E\left[ \operatorname{tr}\left(\mf{\Psi}_n^k\right)\right]  & =\frac{1}{p} \sum_{\mf{i},\mf{j}} \E\left[\mf{\Xi}_n\left(i_1, j_1\right) \mf{\Xi}_n^{\tp}\left(j_1, i_2\right) \mf{\Xi}_n\left(i_2, j_2\right) \mf{\Xi}_n^\tp\left(j_2, i_3\right) \cdots \mf{\Xi}_n\left(i_k, j_k\right) \mf{\Xi}_n^{\tp}\left(j_k, i_1\right)\right] \\
& =\frac{1}{p}\sum_{\mf{i},\mf{j}}\E[\mf{\Xi}_n(G(\mf{i},\mf{j}))]\\
& =H_1+H_2+H_3,
\end{aligned}
\end{equation}
where
\begin{equation}\nonumber
\begin{aligned}
       H_j&:=\frac{1}{p} \sum_{\Delta(k, r, s) \in C_j} \sum_{G(\mf{i},\mf{j}) \simeq \Delta(k, r, s)} \E[\mf{\Xi}_n(G(\mf{i},\mf{j}))],\quad j\in[3].
\end{aligned}
\end{equation}
Here $\sum_{\Delta(k, r, s) \in C_j}$ is taken over all different canonical $\Delta$-graphs in Category $j$, and  $\sum_{G(\mf{i},\mf{j}) \simeq \Delta(k, r, s)}$ is taken on all isomorphic graphs of the canonical $\Delta(k, r,s)$-graph $\Delta(k,r,s)$. Since $\E[\mf{\Xi}_n(G(\mf{i},\mf{j}))]$ is unchanged under graph isomorphisms, we have

\begin{equation}\nonumber
\begin{aligned}
       H_j&=\frac{1}{p} \sum_{\Delta(k, r, s) \in C_j} p^{r+s+1}(1+o(1))\E[\mf{\Xi}_n(\Delta(k,r,s))],\quad j\in[3].
\end{aligned}
\end{equation}

    {\bf Category 1.} From  \citet[Lemma 1.4]{bai2010spectral}, the number of different canonical graphs $\Delta(k,r,s)$ in $C_1$ equals to
$$
\frac{1}{r+1}\binom{k}{r}\binom{k-1}{r}.
$$
For any $\Delta(k,r,s)\in C_1$, from Corollary \ref{Corollary:tree-independence-coef},
\bea\nonumber
\E[\mf{\Xi}_n(\Delta(k,r,s))]
&=\E\l[\prod_{(u,v)\in E(\Delta^0(k,r,s))}({\Xi}^{(n)}_{uv})^{2}\r]\\
&=\prod_{(u,v)\in E(\Delta^0(k,r,s))}\E\l[({\Xi}^{(n)}_{uv})^{2}\r]\\&=\l(\frac{2}{5n}+O(n^{-2})\r)^k.
\eea
Then
\bea\label{Emoments-convergence-MPlaw-H1}
H_1&=\frac{1}{p}\sum_{r=0}^{k-1}\frac{1}{r+1}\binom{k}{r}\binom{k-1}{r}p^{r+s+1}(1+o(1))\l(\frac{2}{5n}+O(n^{-2})\r)^k\\
&=\left(\frac{2\gamma}{5}\right)^{k}\sum_{r=0}^{k-1}\frac{1}{r+1}\binom{k}{r}\binom{k-1}{r}+O(n^{-1}).
\eea

    {\bf Category 2.} For any $\Delta(k,r,s)\in C_2$, from Proposition \ref{Graph-Estimate-EDelta0}, $$\E[\mf{\Xi}_n(\Delta(k,r,s))]=O(n^{-\# E(\Delta^0(k,r,s))}).$$ 
As $\overline\Delta^0(k,r,s)$ is not a tree, it implies that 
$$
r+s+1=\# V(\Delta^0(k,r,s))<\# E(\Delta^0(k,r,s))+1.
$$
Then
\bea\label{Emoments-convergence-MPlaw-H2}
H_2&=\frac{1}{p}\sum_{\Delta(k, r, s) \in C_2}p^{r+s+1}(1+o(1)) O\l(n^{-\# E(\Delta(k,r,s))}\r)\\&=\sum_{\Delta(k, r, s) \in C_2}O\l(n^{r+s-\# E(\Delta^0(k,r,s))}\r)\\&=O(n^{-1}).
\eea

    {\bf Category 3.} Lastly, consider $\Delta(k,r,s)\in C_3$. If there is an edge $e_1=(u_1,v_1)\in \Delta^0(k,r,s)$ such that $s_{u_1v_1}=1$, since $\overline{\Delta}^0(k,r,s)$ is a tree, 
from Corollary \ref{Corollary:tree-independence-coef} and $\E[\Xi_{u_1v_1}^{(n)}]=0$, we have
\bea\nonumber
\E[\mf{\Xi}_n(\Delta(k,r,s))]=\E[\Xi_{u_1v_1}^{(n)}]\E\l[\prod_{(u,v)\in \big(E(\Delta^0(k,r,s))-\{e_1\}\big)}({\Xi}^{(n)}_{uv})^{s_{uv}}\r]=0.
\eea
Note that from the definition of $\Delta$-graphs,\ $(v_1,u_1)\notin E(\Delta^0(k,r,s))$. Thus it suffices to consider those $\Delta(k,r,s)$ such that 
\[
s_{uv}\geq 2, \text{ for any }(u,v)\in E(\Delta^0(k,r,s)), 
\]
and there is an edge $(u_1,v_1)$ $\in E(\Delta^0(k,r,s))$ such  that $s_{u_1v_1}\geq 3$. Then $\# E(\Delta^0(k,r,s))\leq k-1$. Since $\overline{\Delta}^0(k,r,s)$ is a tree,
$$
r+s+1=\# V(\Delta^0(k,r,s))=\# E(\Delta^0(k,r,s))+1\leq k.
$$
Since $\sqrt{n}\Xi_{uv}^{(n)}\Rightarrow\ml{N}(0,2/5)$ 
we have $\E[\mf{\Xi}_n(\Delta(k,r,s))]=O(n^{-k})$. Therefore
\bea\label{Emoments-convergence-MPlaw-H3}
H_3=\frac{1}{p}\sum_{\Delta(k, r, s) \in C_3}p^{r+s+1}(1+o(1)) O\l(n^{-k}\r)=\sum_{\Delta(k, r, s) \in C_3}O\l(n^{r+s-k}\r)=O(n^{-1}).
\eea

Combining \eqref{Emoments-convergence-MPlaw-H1}-\eqref{Emoments-convergence-MPlaw-H3} then finishes the proof.
\end{proof}

\begin{proposition}\label{moments-concentration-MPlaw}
    For any fixed $k\in \mb{N}$, we have, as $n\rw\infty$,
    \bea\nonumber
    \operatorname{Var}\l(\frac{1}{p}\operatorname{tr}(\mf{\Psi}_n^k)\r)=O(n^{-2}).
    \eea
\end{proposition}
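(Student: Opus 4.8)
The plan is to repeat the four–category graph argument from the proof of Proposition~\ref{moments-concentration-semicircle}, with the $\Delta$-graphs $G(\mf{i},\mf{j})$ of the proof of Proposition~\ref{Emoments-convergence-MPlaw} playing the role of the $\overline{\Gamma}$-graphs, and with Corollary~\ref{Corollary:tree-independence-coef} and Proposition~\ref{Graph-Estimate-EDelta0} replacing Lemmas~\ref{tree-independence-semicircle} and~\ref{EPhi(Delta)-semicirclelaw}. First I would expand
\bea\nonumber
\operatorname{Var}\!\Big(\tfrac1p\operatorname{tr}(\mf{\Psi}_n^k)\Big)
=\frac{1}{p^2}\sum_{(\mf{i}_1,\mf{j}_1),\,(\mf{i}_2,\mf{j}_2)}
\Big(\E[\mf{\Xi}_n(G_1)\mf{\Xi}_n(G_2)]-\E[\mf{\Xi}_n(G_1)]\,\E[\mf{\Xi}_n(G_2)]\Big),
\eea
where $G_\ell=G(\mf{i}_\ell,\mf{j}_\ell)$ is a $\Delta$-graph with $2k$ directed edges, and set $G=G_1\cup G_2$. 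For fixed $k$ there are only finitely many isomorphism classes of pairs $(G_1,G_2)$ under relabeling of $[p]$, and a class with $\#V(G)$ vertices is realized $p^{\#V(G)}(1+o(1))$ times; hence it suffices to show that for each class the bracketed difference, multiplied by $p^{\#V(G)-2}$, is $O(n^{-2})$.

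I would then split the pairs into the same four categories as in Proposition~\ref{moments-concentration-semicircle}: (i) $V(G_1)\cap V(G_2)=\emptyset$; (ii) $V(G_1)\cap V(G_2)\neq\emptyset$ and $\overline{G}^0$ a tree; (iii) $V(G_1)\cap V(G_2)\neq\emptyset$, $\overline{G}^0$ not a tree, while $\overline{G}_1^0$ and $\overline{G}_2^0$ are both trees; (iv) $V(G_1)\cap V(G_2)\neq\emptyset$, $\overline{G}^0$ not a tree, and at least one of $\overline{G}_1^0,\overline{G}_2^0$ not a tree. Category (i) contributes $0$, since disjoint vertex sets make $\mf{\Xi}_n(G_1)$ and $\mf{\Xi}_n(G_2)$ functions of disjoint families of independent uniform permutations. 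The combinatorial input for categories (ii) and (iii) is that the underlying undirected multigraph $\overline{G}_\ell$ of a $\Delta$-graph is precisely the edge-multiset of the closed walk $i_1\to j_1\to i_2\to\cdots\to j_k\to i_1$; consequently, if $\overline{G}_\ell^0$ is a tree then every edge of $\overline{G}_\ell$ is a bridge and is traversed an even number of times, so carries even multiplicity $\ge2$. Combined with the boundedness of $\Xi_{uv}^{(n)}$ and the convergence of $\sqrt{n}\,\Xi_{uv}^{(n)}$, Corollary~\ref{Corollary:tree-independence-coef} then yields $\E[\mf{\Xi}_n(G_\ell)]=O(n^{-k})$ whenever $\overline{G}_\ell^0$ is a tree.

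In category (ii), $\overline{G}_1^0$ and $\overline{G}_2^0$ are subtrees of $\overline{G}^0$, so all multiplicities in $\overline{G}$ are even: if $E(\overline{G}_1^0)\cap E(\overline{G}_2^0)=\emptyset$ then Corollary~\ref{Corollary:tree-independence-coef} makes $\mf{\Xi}_n(G_1)$ and $\mf{\Xi}_n(G_2)$ independent and the difference vanishes, while if they share an edge then that edge has multiplicity $\ge4$, which together with $\overline{G}^0$ being a tree forces $\#V(G)\le 2k$, so the contribution is $O\!\big(p^{\,2k-2}\,n^{-2k}\big)=O(n^{-2})$ using $\E[\mf{\Xi}_n(G_1)\mf{\Xi}_n(G_2)]=\E[\mf{\Xi}_n(G)]=O(n^{-2k})$. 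In category (iii) all multiplicities in $\overline{G}$ are again even, so $\overline{G}$ has no single edge; here $\overline{G}^0$ not being a tree is used twice: first to get $\#V(G)\le 2k$ (otherwise the two subtrees meet in a single vertex, making $\overline{G}^0$ a tree, while meeting in two or more vertices already gives $\#V(G)\le\#V(G_1)+\#V(G_2)-2\le 2k$), and second, together with $\#E(G^0)\ge\#E(\overline{G}^0)\ge\#V(G)$, to invoke Proposition~\ref{Graph-Estimate-EDelta0} and obtain $\E[\mf{\Xi}_n(G_1)\mf{\Xi}_n(G_2)]=O(n^{-\#V(G)})$; the joint and the product terms are then each $O(n^{-2})$. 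For category (iv) I would apply Proposition~\ref{Graph-Estimate-EDelta0} directly: writing $Q_0$ for the number of multiplicity-one edges of $G$, the remaining $\#E(G^0)-Q_0$ edges have multiplicity $\ge2$, so $\#E(G)+Q_0=4k+Q_0\ge 2\,\#E(G^0)\ge 2\,\#V(G)$ (the last inequality since $\overline{G}^0$ is not a tree), whence $\E[\mf{\Xi}_n(G_1)\mf{\Xi}_n(G_2)]=O(n^{-\#V(G)})$ and the joint contribution is $O\!\big(p^{\#V(G)-2}n^{-\#V(G)}\big)=O(n^{-2})$; the product contribution is estimated exactly as in Category~4 of the proof of Proposition~\ref{moments-concentration-semicircle}, using $\#V(G_\ell)\le\#E(\overline{G}_\ell^0)$ for whichever of $\overline{G}_1^0,\overline{G}_2^0$ is not a tree. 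Summing the four categories gives the claim.

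The step I expect to demand the most care is the vertex counting in categories (ii)--(iv). Unlike the Wigner setting, Proposition~\ref{Graph-Estimate-EDelta0} only bounds $\E[\mf{\Xi}_n(G)]$ by $n^{-(\#E(G^0)\wedge(\#E(G)+Q_0)/2)}$ rather than by $n^{-\#E(G^0)}$, so recovering the exponent $-2$ requires combining the single-edge penalty with the non-tree inequality $\#E(G^0)\ge\#V(G)$ and with the ``even multiplicity on a tree skeleton'' observation, and checking that whenever the covariance does not outright vanish one indeed has $\#V(G)\le 2k$.
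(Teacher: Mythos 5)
Your proof is correct and mirrors the paper's own argument: the same four-category split of pairs of $\Delta$-graphs according to whether $V(G_1)\cap V(G_2)$ is empty and whether $\overline{G}^0$, $\overline{G}_1^0$, $\overline{G}_2^0$ are trees, the same invocations of Corollary~\ref{Corollary:tree-independence-coef} and Proposition~\ref{Graph-Estimate-EDelta0}, and the same vertex/edge counting inequalities. The only difference is a presentational shortcut in Category~2, where you observe explicitly that a tree undirected skeleton forces all edge multiplicities of $\overline{G}$ to be even (closed-walk parity), which renders the paper's ``$G$ contains an edge of multiplicity one'' sub-case vacuous there.
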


\begin{proof}


We have
\begin{align*}
\operatorname{Var}\!\left(\frac{1}{p}\operatorname{tr}(\mf{\Psi}_n^k)\right)
&= \frac{1}{p^2} \sum_{\i_1,\i_2,\j_1,\j_2} 
\Big( \E\!\left[\mf{\Xi}_n(G(\i_1,\j_1)) \, \mf{\Xi}_n(G(\i_2,\j_2))\right] 
- \E\!\left[\mf{\Xi}_n(G(\i_1,\j_1))\right] 
  \E\!\left[\mf{\Xi}_n(G(\i_2,\j_2))\right] \Big).
\end{align*}
For brevity, write $G_1 = G(\i_1,\j_1)$ and $G_2 = G(\i_2,\j_2)$, and denote $G = G_1 \cup G_2$.  
Note that $G_1$, $G_2$, and $G$ are directed multigraphs.  
We highlight that $G^0$ and $\overline{G}^0$ denote the directed and undirected skeleton graphs of $G$, respectively.  
Analogously, the notations $G_1^0$, $\overline{G}_1^0$, $G_2^0$, and $\overline{G}_2^0$ are used for $G_1$ and $G_2$.

    Now we classify all possible pairs of directed multigraphs $(G_1,G_2)$ into the following categories.

\begin{itemize}
    \item {\bf Category 1 ($\overline{C}_1$)}:\ $V(G_1)\cap V(G_2)=\emptyset$.

    \item {\bf Category 2 ($\overline{C}_2$)}: \ $V(G_1)\cap V(G_2)\ne\emptyset$, and $\overline{G}^0$ is a tree.

    \item {\bf Category 3 ($\overline{C}_3$)}:\ $V(G_1)\cap V(G_2)\ne\emptyset$, $\overline{G}^0$ is not a tree, and both $\overline{G}_1^0,\ \overline{G}_2^0$ are trees.

    \item {\bf Category 4 ($\overline{C}_4$)}:\ $V(G_1)\cap V(G_2)\ne\emptyset$, $\overline{G}^0$ is not a tree, and at least one of $\overline{G}_1^0,\ \overline{G}_2^0$ is not a tree.
    \end{itemize}
    
    Then
    \bea\nonumber
    \operatorname{Var}\l(\frac{1}{p}\operatorname{tr}(\mf{\Psi}_n^k)\r)=\overline{H}_1+\overline{H}_2+\overline{H}_3+\overline{H}_4,
    \eea
    where, for each $j\in[4]$, 
    \bea\nonumber
    \overline{H}_j:=\frac{1}{p^2}\sum_{(G_1,G_2)\in \overline{C}_j}p^{\# V(G)}\big(1+o(1)\big)\big(\E[\mf{\Xi}_n(G_1)\mf{\Xi}_n(G_2)]-\E[\mf{\Xi}_n(G_1)]\E[\mf{\Xi}_n(G_2)]\big).
    \eea
    Here the summation is taken over all pairs of directed multigraphs $(G_1,G_2)$ in Category $j$ up to graph isomorphism.
    
    {\bf Category 1.} For any $(G_1,G_2)\in \overline{C}_1$,\ $\mf{\Xi}_n(G_1)$ and $\mf{\Xi}_n(G_2)$ are independent. Then 
    \bea\label{moments-concentration-MPlaw-H1}
    \overline{H}_1=0.
    \eea

    {\bf Category 2.}  For any $(G_1,G_2)\in \overline{C}_2$, if $E(G_1)\cap E(G_2)=\emptyset$, then by Corollary~\ref{Corollary:tree-independence-coef},  
$\mf{\Xi}_n(G_1)$ and $\mf{\Xi}_n(G_2)$ are independent.  
If $G$ contains an edge with multiplicity $1$, then
\[
\E\!\left[\mf{\Xi}_n(G_1)\mf{\Xi}_n(G_2)\right]
= \E\!\left[\mf{\Xi}_n(G_1)\right]\E\!\left[\mf{\Xi}_n(G_2)\right]
= 0.
\]
If $E(G_1)\cap E(G_2)\neq \emptyset$ and every edge in $G$ has multiplicity at least $2$, then $G$ must contain an edge with multiplicity at least $4$.  
In this case, $\# V(G)\leq 2k$.  
Moreover,
\[
\E[\mf{\Xi}_n(G_1)] = O(n^{-k}), 
\quad \E[\mf{\Xi}_n(G_2)] = O(n^{-k}), 
\quad \E[\mf{\Xi}_n(G_1)\mf{\Xi}_n(G_2)] = O(n^{-2k}).
\]
Therefore,
\begin{equation}\label{moments-concentration-MPlaw-H2}
\overline{H}_2
= \sum_{(G_1,G_2)\in \overline{C}_2} O\!\left(n^{\# V(G)-2k-2}\right)
= O(n^{-2}).
\end{equation}

    {\bf Category 3.}  For any $(G_1,G_2)\in \overline{C}_3$, we must have $\#(V(G_1)\cap V(G_2)) \geq 2$.  
Otherwise, since $\overline{G}_1^0$ and $\overline{G}_2^0$ are both trees, $\overline{G}^0$ would also be a tree.  
In that case,
\[
\# V(G)\leq \# V(G_1)+\# V(G_2)-2.
\]
From Corollary~\ref{Corollary:tree-independence-coef},
\[
\E[\mf{\Xi}_n(G_1)] = O\!\left(n^{-\# E(G_1^0)}\right), 
\quad \E[\mf{\Xi}_n(G_2)] = O\!\left(n^{-\# E(G_2^0)}\right), 
\quad \E[\mf{\Xi}_n(G_1)\mf{\Xi}_n(G_2)] = O\!\left(n^{-\# E(G^0)}\right).
\]
Since $\overline{G}^0$ is not a tree, it follows that $\# V(G)\leq \# E(G^0)$.  
Moreover, using $\# V(G_1)=\# E(G_1^0)+1$ and $\# V(G_2)=\# E(G_2^0)+1$, we obtain
\begin{equation}\label{moments-concentration-MPlaw-H3}
\overline{H}_3
= \sum_{(G_1,G_2)\in \overline{C}_3} 
O\!\left(n^{\# V(G)-\# E(G^0)-2}\right) 
+ O\!\left(n^{\# V(G_1)+\# V(G_2)-\# E(G_1^0)-\# E(G_2^0)-4}\right)
= O(n^{-2}).
\end{equation}

    {\bf Category 4.}  For any $(G_1,G_2)\in \overline{C}_4$, without loss of generality assume that $\overline{G}_1^0$ is not a tree.  
Then $\# V(G_1)\leq \# E(G_1^0)$.  
From Corollary~\ref{Corollary:tree-independence-coef}, together with the relations  
$\# V(G_2)\leq \# E(G_2^0)+1$, \ $\# V(G)\leq \# E(G^0)+1$, and \ $\# V(G)\leq \# V(G_1)+\# V(G_2)-1$, we obtain
\bea\label{moments-concentration-MPlaw-H4}
\overline{H}_4
= \sum_{(G_1,G_2)\in \overline{C}_4} 
   O\!\left(n^{\# V(G)-\# E(G^0)-2}\right)
   + O\!\left(n^{\# V(G_1)+\# V(G_2)-\# E(G_1^0)-\# E(G_2^0)-3}\right)
= O(n^{-2}).
\eea

    Combining \eqref{moments-concentration-MPlaw-H1}-\eqref{moments-concentration-MPlaw-H4} thus finishes the proof.
\end{proof}

Leveraging Propositions \ref{Emoments-convergence-MPlaw} and \ref{moments-concentration-MPlaw}, combined with Borel-Cantelli lemma, then proves Theorem \ref{thm:mp}.

\subsection{Proof of Theorem \ref{thm:clt}}\label{Section-CLT}
First, we derive the limiting variance function.  By the same proof procedure we obtain the covariance function in Theorem \ref{thm:clt}.
\begin{proposition}\label{CLT-variance}  For any fixed $k\in\N$,
\bea\nonumber
    \lim_{n\rw\infty} \operatorname{Var}\l(\tr(\mf{\Psi}_n^k)\r)=\l(\frac{2\gamma}{5}\r)^{2k}\l(2\binom{2k}{k+1}^2+2\sum_{t=2}^{k}t(2t-1)^2\l(\sum_{\ell=0}^{k-t}\frac{\binom{2k-2\ell-1}{k-\ell-t}\binom{2\ell+1}{\ell+1}}{2k-2\ell-1}\r)^2\r).
\eea
\end{proposition}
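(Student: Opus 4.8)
The plan is to push the moment method one order further than in Proposition~\ref{moments-concentration-MPlaw}. Writing
\[
\operatorname{Var}\bigl(\tr(\mf{\Psi}_n^k)\bigr)=\sum_{(G_1,G_2)}\Bigl(\E[\mf{\Xi}_n(G)]-\E[\mf{\Xi}_n(G_1)]\,\E[\mf{\Xi}_n(G_2)]\Bigr),\qquad G:=G_1\cup G_2,
\]
where $G_1=G(\mf{i}_1,\mf{j}_1)$ and $G_2=G(\mf{i}_2,\mf{j}_2)$ run over pairs of $\Delta$-graphs and the union multigraph $G$ has multiplicities added (so that indeed $\mf{\Xi}_n(G_1)\mf{\Xi}_n(G_2)=\mf{\Xi}_n(G)$), I would first group the pairs into joint isomorphism classes, each contributing $p^{\#V(G)}(1+o(1))$ times the covariance of a representative. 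By Corollary~\ref{Corollary:tree-independence-coef} that covariance is exactly $0$ unless $G_1$ and $G_2$ share an edge, and it vanishes whenever a skeleton edge lying in a tree part of $G$ has odd multiplicity; together with Proposition~\ref{Graph-Estimate-EDelta0} this restricts attention, exactly as in the four-category decomposition of Proposition~\ref{moments-concentration-MPlaw}, to pairs in which every skeleton edge of $G$ has multiplicity at least $2$. Since $\#E(G)=4k$, and $\#V(G)\le\#E(\overline{G}^0)+1$ with equality only for trees while $\#V(G)\le\#E(\overline{G}^0)$ once $\overline{G}^0$ contains a cycle, a short count (together with the parity constraint that each edge of $G$ acquires an even multiplicity from each closed walk unless that walk traverses a cycle through it) shows that a class contributes at order $\Theta(1)$ precisely when $\#V(G)=2k$, and then $\overline{G}^0$ is forced into exactly one of two shapes: \emph{(I) the quadruple-edge tree}, where $\overline{G}^0$ is a tree, one edge has multiplicity $4$ and all others multiplicity $2$ (equivalently $G_1$ and $G_2$ are each closed walks of length $2k$ whose trace is a tree with every edge doubled, sharing exactly one edge traversed in the same direction by both); and \emph{(II) the one-cycle graph}, where $\overline{G}^0$ contains a single cycle and every edge has multiplicity exactly $2$ (equivalently $G_1$ and $G_2$ each traverse a common cycle once and then perform doubled tree excursions off it, with otherwise disjoint edge sets). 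Every remaining class — disjoint pairs (covariance $0$), shared edges of opposite orientation (the covariance is killed because $\Xi_{uv}^{(n)}$ and $\Xi_{vu}^{(n)}$ are asymptotically independent by Proposition~\ref{prop:crc}(iii), see also Proposition~\ref{approx1}, so the $\E[\mf{\Xi}_n(G)]$ term matches the product term to leading order), configurations spending two or more edges on quadrupling, configurations with two or more independent cycles, etc. — is $o(1)$ by the same vertex/edge counting used to prove Proposition~\ref{moments-concentration-MPlaw}.

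For family (I), Proposition~\ref{prop:crc}(iv) gives $\E[(\Xi_{uv}^{(n)})^2]=\tfrac{2}{5n}(1+o(1))$ and, since $\E[(\Xi_{uv}^{(n)})^4]=\operatorname{Var}\bigl((\Xi_{uv}^{(n)})^2\bigr)+\bigl(\E[(\Xi_{uv}^{(n)})^2]\bigr)^2$, also $\E[(\Xi_{uv}^{(n)})^4]=3\bigl(\tfrac{2}{5n}\bigr)^2(1+o(1))$, the Gaussian fourth-moment relation predicted by Proposition~\ref{prop:crc}(i). Hence for a representative of (I), $\E[\mf{\Xi}_n(G)]=3\bigl(\tfrac{2}{5n}\bigr)^{2k}(1+o(1))$ (the quadruple edge contributes the factor $3$, the remaining $2k-2$ doubled edges being independent by Corollary~\ref{Corollary:tree-independence-coef}), while $\E[\mf{\Xi}_n(G_1)]\E[\mf{\Xi}_n(G_2)]=\bigl(\tfrac{2}{5n}\bigr)^{2k}(1+o(1))$; the covariance is therefore $2\bigl(\tfrac{2}{5n}\bigr)^{2k}(1+o(1))$, and after the $p^{\#V(G)}=p^{2k}$ weighting each such isomorphism class contributes $2\bigl(\tfrac{2\gamma}{5}\bigr)^{2k}(1+o(1))$. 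The number of aligned quadruple-edge classes — ordered pairs of doubled-tree closed walks of length $2k$ glued along a distinguished common edge — equals $\binom{2k}{k+1}^2$, since $\binom{2k}{k+1}=\tfrac{k}{k+1}\binom{2k}{k}$ is exactly the number of such walks with one marked edge; this yields the term $2\binom{2k}{k+1}^2\bigl(\tfrac{2\gamma}{5}\bigr)^{2k}$. For family (II), the $2k$ skeleton edges of $G$ are distinct relative-rank pairs, so by Proposition~\ref{approx1} they are asymptotically independent $\mathcal N(0,2/(5n))$ and $\E[\mf{\Xi}_n(G)]=\bigl(\tfrac{2}{5n}\bigr)^{2k}(1+o(1))$, whereas each $\E[\mf{\Xi}_n(G_i)]$ is only $O(n^{-(k+t)})$ (the walk $G_i$ already carries a cycle; invoke Proposition~\ref{Graph-Estimate-EDelta0}), so the product term is negligible after the $p^{2k}$ weighting and each class contributes $\bigl(\tfrac{2\gamma}{5}\bigr)^{2k}(1+o(1))$. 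Enumerating the one-cycle classes by first fixing the common cycle — necessarily of even length $2t$ with $2\le t\le k$, since $\overline{G}^0$ is bipartite — and then counting the ways to graft the two families of doubled tree excursions onto it produces, for each $t$, a count of $2t(2t-1)^2\Bigl(\sum_{\ell=0}^{k-t}\tfrac{\binom{2k-2\ell-1}{k-\ell-t}\binom{2\ell+1}{\ell+1}}{2k-2\ell-1}\Bigr)^2$ (a factor $t(2t-1)^2$ times $2$ for the latching data of the two walks, and the square of a ballot-type convolution counting the graftings along the cycle on each of the two sides), giving the term $2\sum_{t=2}^{k}t(2t-1)^2\bigl(\sum_{\ell}\cdots\bigr)^2\bigl(\tfrac{2\gamma}{5}\bigr)^{2k}$. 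Summing families (I) and (II) gives the claimed limit; the covariance function of Theorem~\ref{thm:clt} follows by repeating the argument with two walks of lengths $2k_1$ and $2k_2$, the quadruple-edge family then contributing $2\binom{2k_1}{k_1+1}\binom{2k_2}{k_2+1}$, and the common cycle having length at most $2(k_1\wedge k_2)$, which is the source of the truncation $t\le k_1\wedge k_2$.

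The step I expect to be the main obstacle is the enumeration of the one-cycle family: one must show that grafting doubled tree excursions onto a fixed $2t$-cycle, subject to the constraint that the result be the trace of a closed walk of length $2k$ of the prescribed $(\mf{\Xi}_n-\mf{I}_p)(\mf{\Xi}_n-\mf{I}_p)^{\top}$ type, is counted by the stated convolution of ballot numbers, and that the positions and orientations with which the two walks latch onto the cycle contribute exactly the $t(2t-1)^2$ prefactor. This requires carefully tracking the bipartite structure of $\Delta$-graphs and the forced alternation between row- and column-indices — in particular that the two walks must run around the cycle in the \emph{same} direction, the opposite-direction case being annihilated for the same reason as the anti-aligned shared edge above. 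By comparison, ruling out all non-leading classes is essentially Proposition~\ref{moments-concentration-MPlaw} read off at the sharper scale, and the moment inputs for families (I) and (II) are immediate from Propositions~\ref{prop:crc} and~\ref{approx1}.
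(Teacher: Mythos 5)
Your proposal takes the same moment-method route as the paper: expand $\operatorname{Var}(\tr(\mf{\Psi}_n^k))$ over isomorphism classes of $\Delta$-graph pairs $(G_1,G_2)$, rule out all classes with $\#V(G)<2k$ (the paper organizes this as a three-way split into $\overline{Q}_1,\overline{Q}_2,\overline{Q}_3$ by $\#V(G)$ versus $\#E(G^0)$, using Proposition~\ref{Graph-Estimate-EDelta0}, Proposition~\ref{approx1}, Corollary~\ref{Corollary:tree-independence-coef}, and Lemma~\ref{cycle-tree independence}), and identify exactly your families (I) and (II) as the surviving contributions. Your moment inputs and your two counts, $\binom{2k}{k+1}^2$ for (I) and $2t(2t-1)^2\bigl(\sum_{\ell}\cdots\bigr)^2$ for (II), match the paper. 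In fact your marked-edge reading $\binom{2k}{k+1}=kC_k$ for family~(I) is cleaner than the paper's in-text narrative, which says ``$C_k$ ways to choose $G_1$ or $G_2$ and $(k+1)^2$ ways to merge'' but then writes the count as $\bigl(\tfrac{\binom{2k}{k+1}}{k+1}\bigr)^2(k+1)^2=\binom{2k}{k+1}^2=k^2C_k^2$; and your remark that anti-aligned shared edges ($\Xi_{uv}^{(n)}$ in $G_1$, $\Xi_{vu}^{(n)}$ in $G_2$) are killed at leading order by Proposition~\ref{prop:crc}(iii) is a genuine subtlety that the paper leaves implicit yet is needed to make the count $k^2C_k^2$ rather than $2k^2C_k^2$.

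One piece of your reasoning is wrong even though the formula you write is right: for family~(II) you assert that ``the two walks must run around the cycle in the \emph{same} direction, the opposite-direction case being annihilated for the same reason as the anti-aligned shared edge.'' Reversing the cyclic orientation of $G_2$'s traversal of the shared cycle $C^0$ does \emph{not} produce anti-aligned edges: in a $\Delta$-graph every edge is directed from an $I$-vertex to a $J$-vertex, and the reflection symmetry of a bipartite $2t$-cycle permutes the directed cycle edges among themselves, so both orientations yield $(\Xi_{uv}^{(n)})^2$ on every cycle edge and contribute identically. That is precisely the source of the factor $2$ in the paper's ``$t$ ways to merge the cycle and $2$ ways to choose the cycle direction''; you should not attribute the $2$ to latching data and then claim a direction constraint on top.

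The genuine gap, which you flag yourself, is the derivation of $\sum_{\ell=0}^{k-t}\frac{\binom{2k-2\ell-1}{k-\ell-t}\binom{2\ell+1}{\ell+1}}{2k-2\ell-1}$ as the number of ways to graft doubled trees along a fixed $2t$-cycle so as to realize a $\Delta$-graph closed walk of length $2k$. The paper does this in two steps you would need to supply: Lemma~\ref{cycle-tree independence} gives exact factorization of $\E[\mf{\Xi}_n(G)]$ over the cycle $C$ and the pendant trees $(T_1,\dots,T_s)$, which is what legitimizes isolating the combinatorics; and Lemma~\ref{k-fold involution of Catalan number} evaluates $\sum_{i_1+\cdots+i_m=n}\prod_j C_{i_j}=\frac{m}{2n+m}\binom{2n+m}{n}$. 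The paper then parametrizes $G_1$ by the size $2\ell$ of the root tree $T_1$ ($C_\ell$ shapes and $2\ell+1$ moments to enter the cycle, so $(2\ell+1)C_\ell=\binom{2\ell+1}{\ell+1}$ choices), followed by the $2t-1$ remaining pendant trees of total size $k-t-\ell$, and applies the lemma with $m=2t-1$ to get exactly the stated convolution. Reproducing this parametrization — in particular verifying that the $\Delta$-graph constraint forces $T_1$ to have even size and forces the closed-walk bookkeeping you gesture at — is the technical core of the proposition and is where your sketch stops.
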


\begin{proof}
    We have
    \bea\nonumber
    \operatorname{Var}\l(\operatorname{tr}(\mf{\Psi}_n^k)\r)
=\sum_{\i_1,\i_2,\j_1,\j_2}\big(\E[\mf{\Xi}_n(G(\i_1,\j_1))\mf{\Xi}_n(G(\i_2,\j_2))]-\E[\mf{\Xi}_n(G(\i_1,\j_1))]\E[\mf{\Xi}_n(G(\i_2,\j_2))]\big). 
    \eea
    For brevity, write $G_1=G(\i_1,\j_1),\ G_2=G(\i_2,\j_2)$, and denote $G=G_1\cup G_2$. We classify all possible graph pairs $(G_1,G_2)$ into the following three categories.

\begin{itemize}
    \item {\bf Category 1 ($\overline{Q}_1$)}:\ $\# V(G)<\# E(G^0)$.
    
    \item {\bf Category 2 ($\overline{Q}_2$)}:\ $\# V(G)=\# E(G^0)$.
    
    \item {\bf Category 3 ($\overline{Q}_3$)}:\ $\# V(G)=\# E(G^0)+1$.
\end{itemize}

    Then
    \bea\nonumber
    \operatorname{Var}\l(\operatorname{tr}(\mf{\Psi}_n^k)\r)=\overline{H}_1+\overline{H}_2+\overline{H}_3,
    \eea
    where for each $j\in[3]$,
    \bea\nonumber
    \overline{H}_j:=\sum_{(G_1,G_2)\in \overline{Q}_j}p^{\# V(G)}\big(1+o(1)\big)\big(\E[\mf{\Xi}_n(G_1)\mf{\Xi}_n(G_2)]-\E[\mf{\Xi}_n(G_1)]\E[\mf{\Xi}_n(G_2)]\big).
    \eea
    Here the summation is taken over all different graph pairs $(G_1,G_2)$ in Category $j$ up to graph isomorphism. Note that the summand is 0 when $V(G_1)\cap V(G_2)=\emptyset$.

    {\bf Category 1.} For $\overline{H}_1$, from Proposition \ref{Graph-Estimate-EDelta0}, we have
    \bea\nonumber
    \overline{H}_1=\sum_{(G_1,G_2)\in \overline{Q}_1}\bigg(O(n^{\# V(G)-\# E(G^0)})+O(n^{\# V(G)-\# E(G_1^0)-\# E(G_2^0)})\bigg)=O(n^{-1}).
    \eea

    {\bf Category 2.} For $\overline{H}_2$, from Proposition \ref{Graph-Estimate-EDelta0}, we have,
    \bea\nonumber
     \overline{H}_2&=\sum_{(G_1,G_2)\in \overline{Q}_2}\bigg(O(n^{\# E(G^0)-(\# E(G)+Q_0(G))/2})+O(n^{\# E(G^0)-(\# E(G_1)+Q_0(G_1)+\# E(G_2)+Q_0(G_2))/2})\bigg)\\
     &=\sum_{(G_1,G_2)\in \overline{Q}_2}O(n^{\# E(G^0)-(\# E(G)+Q_0(G))/2}).
    \eea
Therefore, for those $(G_1,G_2)$ such that $G$ contains an edge with multiplicity at least $3$, the corresponding summand contributes $O(n^{-1})$.  
Hence, we may assume that all edges of $G$ have multiplicity at most $2$.

For any $(G_1,G_2)\in \overline{Q}_2$, the skeleton graph $G^0$ contains a unique cycle 
$C^0 = (i_1, i_2, \dots, i_s, i_1)$.  
We can then decompose $G^0$ as 
\[
G^0 = C^0 \;\bigcup\; T_1^0 \;\bigcup\; T_2^0 \;\bigcup\; \cdots \;\bigcup\; T_s^0,
\]
where $(T_k^0)_{k=1}^s$ is a sequence of disjoint trees satisfying 
$V(T_k^0) \cap V(C^0) = \{i_k\}$ and $E(T_k^0) \cap E(C^0) = \emptyset$ for each $k$.  

Correspondingly, we can write
\[
G = C \;\bigcup\; T_1 \;\bigcup\; T_2 \;\bigcup\; \cdots \;\bigcup\; T_s,
\]
where $T_k$ (or $C$) denotes the maximal subgraph of $G$ whose skeleton graph is $T_k^0$ (or $C^0$).  

For any subgraph $G'\subset G^0$, denote
\[
\ml{R}(G') = \l\{ R_v R_u^{-1} : (u,v) \in E(G') \r\}.
\]

    \begin{lemma}\label{cycle-tree independence}
        $\{\ml{R}(C^0),\ml{R}(T_1^0),\ml{R}(T_2^0),\cdots,\ml{R}(T_s^0)\}$ are independent.
    \end{lemma}
   
As a corollary, the random variables 
\[
\{\mf{\Xi}_n(C), \mf{\Xi}_n(T_1), \mf{\Xi}_n(T_2), \dots, \mf{\Xi}_n(T_s)\}
\] 
are independent.  Consequently, if $G$ contains a single edge in any of its tree components $(T_k)_{k=1}^s$, the corresponding summand in $\overline{H}_2$ is zero.

If $\overline{G}_1^0$ and $\overline{G}_2^0$ are both trees, note that in a tree $\Delta$-graph every edge has multiplicity at least $2$.  
It follows that $E(G_1^0)\cap E(G_2^0) = \emptyset$ and that all edges of $G$ have multiplicity $2$.  
Then, by Proposition~\ref{approx1}, such $(G_1,G_2)$ contributes $o(1)$.  If $G_1^0$ is a tree but $G_2^0$ is not, then $G_1^0$ must contain the entire cycle $C^0$, and $G_2$ is a sub-tree of $T_k$ for some $k \in [s]$.  
By Lemma~\ref{cycle-tree independence}, $\mf{\Xi}_n(G_1)$ is independent of $\mf{\Xi}_n(G_2)$, so such $(G_1,G_2)$ contributes $0$.

It thus suffices to consider the case where both $G_1^0$ and $G_2^0$ are not trees.  
In this case, both $G_1^0$ and $G_2^0$ contain the entire cycle $C^0$, and the edges of $C^0$ must have multiplicity $1$ in $G_1$ and $G_2$.  
The edges in the trees $(T_k)_{k=1}^s$ have multiplicity $2$.  
The length of $C^0$ takes values in $\{2t : 2 \leq t \leq k\}$.  

Suppose the length of $C^0$ is $2t$. We construct $G_1$ as follows.  
Assume that the path of $G_1$ originates in $T_1$, and let $\# E(T_1) = 2\ell$ with $0 \leq \ell \leq k-t$.
Then there are $C_\ell$ ways to choose $T_1$, where $C_\ell$ denotes the $\ell$-th Catalan number.  
For each such path, there are $(2\ell+1)$ choices for the moment at which the path enters $C^0$.  
The number of ways to choose the remaining trees is
\[
\sum_{\substack{i_2 + \cdots + i_{2t} = k-t-\ell \\ i_2, \dots, i_{2t} \geq 0}} \prod_{j=2}^{2t} C_{i_j}.
\]

The following lemma is a direct corollary of \citet[Lemma 27]{Bowman14}.

    \begin{lemma}\label{k-fold involution of Catalan number}
        For any $n, m\in\N$, 
          
        \bea\nonumber
         \sum_{\substack{i_1+\cdots+i_{m}=n\\i_1,\cdots,i_m\geq 0}}\prod_{j=1}^{m}C_{i_j}=\frac{m}{2n+m}\binom{2n+m}{n}.
        \eea
    \end{lemma}
    From Lemma \ref{k-fold involution of Catalan number}, the number of choices of $G_1$ is
    \bea\nonumber
    \sum_{\ell=0}^{k-t}C_\ell(2\ell+1)\sum_{\substack{i_2+\cdots+i_{2t}=k-t-\ell\\i_2,\cdots,i_{2t}\geq 0}}\prod_{j=2}^{2t}C_{i_j}=\sum_{\ell=0}^{k-t}\frac{2t-1}{2k-2\ell-1}\binom{2k-2\ell-1}{k-t-\ell}\binom{2\ell+1}{\ell+1}.
    \eea
    The number of choices of $G_2$ is the same. After choosing $G_1,G_2$, there are $t$ ways to merge the cycle and 2 ways to choose the cycle direction. Each $(G_1,G_2)$ contributes expectation $(\E[(\mf{\Xi}_{12}^{(n)})^2])^{2k}$. Then we have
    \bea\nonumber
    \overline{H}_2=p^{2k}(1+o(1))\l(\frac{2}{5n}\r)^{2k}\sum_{t=2}^{k}2t\l[\sum_{\ell=0}^{k-t}\frac{2t-1}{2k-2\ell-1}\binom{2k-2\ell-1}{k-t-\ell}\binom{2\ell+1}{\ell+1}\r]^2.
    \eea
    
    {\bf Category 3.}  Lastly, we consider $\overline{H}_3$.  
For any $(G_1,G_2)\in \overline{Q}_3$, the graphs $G$, $G_1$, and $G_2$ are all trees, so all edges in $G$, $G_1$, and $G_2$ have multiplicity at least $2$.  

If $E(G_1^0) \cap E(G_2^0) = \emptyset$, then by Corollary~\ref{Corollary:tree-independence-coef}, $\mf{\Xi}_n(G_1)$ and $\mf{\Xi}_n(G_2)$ are independent.  
If $\#(E(G_1^0) \cap E(G_2^0)) \geq 2$, then there are at least two edges of multiplicity $4$ in $G$, so that $\# V(G) \leq 2k-1$, and the contribution to $\overline{H}_3$ is $O(n^{-1})$.  

Therefore, it suffices to consider the case $\#(E(G_1^0) \cap E(G_2^0)) = 1$, where all edges in $G_1$ and $G_2$ have multiplicity exactly $2$, and there is exactly one edge of multiplicity $4$ in $G$.  
For such $(G_1,G_2)$, there are $C_k$ ways to choose $G_1$ or $G_2$, and {$k^2$} ways to merge $G_1$ and $G_2$.  
Each $(G_1,G_2)$ contributes an expectation of
\[
\big(\E[(\mf{\Xi}_{12}^{(n)})^4] - (\E[(\mf{\Xi}_{12}^{(n)})^2])^2\big) \, (\E[(\mf{\Xi}_{12}^{(n)})^2])^{2k-2}.
\]

Hence,
{
\bea\nonumber
\overline{H}_3 &= p^{2k} (1+o(1)) \cdot 2 \left(\frac{2}{5n}\right)^{2k} \left(\frac{\binom{2k}{k}}{k+1}\right)^2 k^2.\\
&= p^{2k} (1+o(1)) \cdot 2 \left(\frac{2}{5n}\right)^{2k} \binom{2k}{k+1}  ^2.
\eea
}

    Then we have proved Proposition \ref{CLT-variance}. 
\end{proof}

Now we prove the Gaussianity.
\begin{proposition}
    Assume Assumption~\ref{assump:dgp} and \eqref{eq:asymptotics}. Then, as $n\to\infty$,
\[
\Big\{\tr(\mPsi_n^k)-\E\tr(\mPsi_n^k)\Big\}_{k=1}^\infty 
\;\Rightarrow\; \{G_k\}_{k=1}^{\infty},
\]
where $\{G_k\}_{k=1}^{\infty}$ is a Gaussian process.
\end{proposition}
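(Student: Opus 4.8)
The plan is to establish convergence of all finite-dimensional distributions by the method of moments and then pass to the process. Fix $r\in\N$ and exponents $k_1,\dots,k_r\in\N$, and set $Y_i:=\tr(\mf{\Psi}_n^{k_i})-\E\tr(\mf{\Psi}_n^{k_i})$. Since a multivariate Gaussian law is determined by its moments, it suffices to prove that, as $n\to\infty$,
\[
\E\Big[\prod_{i=1}^{r}Y_i\Big]\;\longrightarrow\;\sum_{\pi\in\mathcal{P}_2(r)}\ \prod_{\{a,b\}\in\pi}\Cov(G_{k_a},G_{k_b}),
\]
where $\mathcal{P}_2(r)$ denotes the set of pair partitions of $[r]$ (empty when $r$ is odd) and $\Cov(G_{k_a},G_{k_b})$ is the limiting covariance produced by the argument of Proposition~\ref{CLT-variance}; this is exactly Wick's formula, so the right-hand side is the $r$-th joint moment of a mean-zero Gaussian vector with that covariance. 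Because the limiting covariance matrix of any finite subcollection is a limit of genuine covariance matrices, it is positive semidefinite, hence the Gaussian process $\{G_k\}_{k\ge1}$ exists by Kolmogorov's extension theorem, and convergence in $\R^\infty$ with the product topology follows from convergence of every finite-dimensional marginal.

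Expanding each $Y_i$ as a sum of centered graph weights $\mf{\Xi}_n(G)-\E[\mf{\Xi}_n(G)]$ over $\Delta$-graphs $G$ built from a cycle of length $k_i$ rewrites $\E[\prod_iY_i]$ as a sum over $r$-tuples of $\Delta$-graphs $(G_1,\dots,G_r)$ of a centered expectation that vanishes whenever some $G_i$ is vertex-disjoint from the union of the others. Classifying the tuples by the combinatorics of $G:=G_1\cup\cdots\cup G_r$ and invoking Proposition~\ref{Graph-Estimate-EDelta0}, the contribution of each tuple is of order $p^{\#V(G)}n^{-\Theta}$ with $\Theta$ governed by $\#E(G^0)$ and the number of single edges; since $p\asymp n$, a tuple contributes at leading order only when $\#V(G)$ is as large as the edge multiplicities permit. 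As in the proof of Proposition~\ref{CLT-variance}, this forces the $G_i$ to organize into vertex-disjoint \emph{pairs}, each pair either being a doubled tree with a single quadruple edge or together forming a doubled one-cycle graph; every configuration in which three or more of the cycles are linked, so that $G$ cannot be split into vertex-disjoint blocks each meeting at most two of the $G_i$, satisfies $\#V(G)<\tfrac12(\#E(G)+Q_0(G))$ strictly and hence contributes $O(n^{-1})$. For the surviving paired configurations, counting graph shapes and merge positions within each pair reproduces precisely the two terms of $\Cov(G_{k_a},G_{k_b})$, while Proposition~\ref{approx1} ensures that the expectation attached to each doubled one-cycle factorizes into the appropriate product of pairwise second moments with the correct constant and that odd residual terms vanish; summing over the pairings of the $r$ cycles yields the Wick sum above.

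The main obstacle is the last combinatorial step: showing that every ``connected'' (non-pairable) $r$-tuple is genuinely negligible. This requires generalizing the Categories analysis of Propositions~\ref{moments-concentration-MPlaw} and~\ref{CLT-variance} from $r=2$ to arbitrary $r$, and in particular tracking how single edges lying on the cycle of a non-tree skeleton reduce the exponent of $n$; the single-edge penalty built into Proposition~\ref{Graph-Estimate-EDelta0} is exactly what makes this reduction quantitative. Once negligibility is in place, the identification of the limit with the Wick formula, and therefore the joint Gaussianity of the finite-dimensional distributions and of the limiting process $\{G_k\}_{k\ge1}$, is routine.
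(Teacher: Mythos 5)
Your overall strategy matches the paper's: method of moments, reduction to Wick's formula, expansion over tuples of $\Delta$-graphs, factorization by connected components, and identification of the surviving pair-partition configurations. The issue is in your negligibility argument for tuples that cannot be organized into vertex-disjoint pairs.

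You assert that every such configuration satisfies $\#V(G)<\tfrac12(\#E(G)+Q_0(G))$ \emph{strictly}, so that Proposition~\ref{Graph-Estimate-EDelta0} alone yields $O(n^{-1})$. That is false. Consider a connected component $D_j$ meeting three or more of the $G_i$ with $\#V(D_j)=\#E(D_j^0)$ (one-cycle skeleton) and every edge of multiplicity exactly $2$. Then $\#E(D_j)=2\#E(D_j^0)$, $Q_0(D_j)=0$, so $\tfrac12(\#E(D_j)+Q_0(D_j))=\#E(D_j^0)=\#V(D_j)$ — equality, not strict inequality — and the crude bound gives $p^{\#V(D_j)}\cdot O(n^{-\#E(D_j^0)})=O(1)$. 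Equality also occurs when edges of multiplicity $1$ appear, since lowering a multiplicity from $2$ to $1$ decreases $\#E(D_j)$ by one but raises $Q_0$ by one, leaving $\tfrac12(\#E(D_j)+Q_0)$ unchanged; your appeal to the single-edge penalty does not break this tie. In the paper these configurations are disposed of by a different mechanism: Lemma~\ref{cycle-tree independence} allows one to assume the trees hanging off the cycle have no single edges (else the centered expectation vanishes outright); if all $G_i$ in the component were non-trees they would each contain the full cycle, forcing the cycle to have multiplicity $\geq 3$, a contradiction; and when some $G_i$ is a tree sharing an edge with the cycle, Proposition~\ref{approx1} (joint Gaussian approximation of the entries) shows the resulting centered expectation is $o(1)$. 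You invoke Proposition~\ref{approx1} only for the surviving pair terms, but it is equally essential for ruling out these non-pairable components. A similar finer counting (using the fact that three or more tree skeletons each sharing edges must drop $\#E(D_j^0)$ by at least two below $2\sum k_i$) is needed in the case $\#V(D_j)=\#E(D_j^0)+1$, where again the crude bound alone is not decisive.
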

\begin{proof}
For $k_1, \dots, k_L \in \N$, we have
\bea\nonumber
\E\Bigg[\prod_{\ell=1}^L \Big(\tr(\mf{\Psi}_n^{k_\ell}) - \E[\tr(\mf{\Psi}_n^{k_\ell})]\Big)\Bigg] 
= \sum_{\i_1, \j_1, \dots, \i_L, \j_L} 
\E\Bigg[\prod_{\ell=1}^L \Big(\mf{\Xi}_n(G_\ell) - \E[\mf{\Xi}_n(G_\ell)]\Big)\Bigg],
\eea
where $G_\ell = G(\i_\ell, \j_\ell)$.  
Denote $G = \bigcup_{\ell=1}^L G_\ell$.  Note that the summand is non-zero only if each $G_\ell$ shares at least one vertex with some $G_{\ell'}$ with $\ell \neq \ell'$.  
Consequently, the number of connected components in $G$, denoted by $c(G)$, satisfies $c(G) \leq \lceil L/2 \rceil$.  

We classify all possible graph sequences $(G_1, \dots, G_L)$ into the following $\lceil L/2 \rceil$ categories:
\begin{itemize}
\item {\bf Category $t$ ($\overline{Q}_t$)}: $c(G)=t$,\quad\quad $t=1,2,\cdots,\lceil L/2\rceil$. 
\end{itemize}

Then 
\bea\nonumber
\E\l[\prod_{\ell=1}^L\l(\tr(\mf{\Psi}_n^{k_\ell})-\E[\tr(\mf{\Psi}_n^{k_\ell})]\r)\r]=\sum_{t=1}^{\lceil L/2 \rceil} \overline{H}_t,
\eea
where, for each $t\in[\lceil L/2 \rceil]$,
\bea\label{Proof_Gaussianity_Ht}
\overline{H}_t:=\sum_{(G_1,\cdots,G_L)\in \overline{Q}_t}p^{\# V(G)}\big(1+o(1)\big)\E\l[\prod_{\ell=1}^L \big(\mf{\Xi}_n(G_\ell)-\E[\mf{\Xi}_n(G_\ell)]\big)\r].
\eea

The summation is taken over all different graph sequences up to graph isomorphism. For each $(G_1,\cdots,G_L)\in \overline{Q}_t$, denote connected components of $G$ as $\{D_j\}_{j=1}^t$. Then $D_j=\cup_{i\in \uptau^{-1}(j)} G_i$, for some $\uptau$ that is a partition on $[L]$ with $t$ blocks. Then by independence, the summand of \eqref{Proof_Gaussianity_Ht} equals to
\bea\nonumber
&\big(1+o(1)\big)\prod_{j=1}^t \l(p^{\#  V(D_j)}\E\l[\prod_{i\in \uptau^{-1}(j)}\big(\mf{\Xi}_n(G_i)-\E[\mf{\Xi}_n(G_i)]\big)\r]\r)
\\ =:&\big(1+o(1)\big)\prod_{j=1}^t \omega^\uptau_j.
\eea

If $\#  \uptau^{-1}(j)=2$, from the proof covariance function, $\omega_j^{\uptau}=O(1)$. If $\# \uptau^{-1}(j)\geq 3$, we prove $\omega_j^\uptau=o(1)$ by considering the following three possible cases.

\textbf{Case 1:} $\# V(D_j)<\# E(D_j^0)$. From Proposition \ref{Graph-Estimate-EDelta0},
\bea\nonumber
\omega_j^\uptau=O(n^{\# V(D_j)-\# E(D_j^0)})+O(n^{\# V(D_j)-\sum_{i\in \uptau^{-1}(j)}\# E(G_i^0)})=O(n^{-1}).
\eea

\textbf{Case 2:} $\# V(D_j)=\# E(D_j^0)$. From Proposition \ref{Graph-Estimate-EDelta0},
\bea\nonumber
\omega_j^\uptau&=O(n^{\# E(D_j^0)-(\# E(D_j)+Q_0(D_j)/2})+O(n^{\# E(D_j^0)-\sum_{i\in \uptau^{-1}(j)}(\# E(G_i)+Q_0(G_i))/2})\\
&=O(n^{\# E(D_j^0)-(\# E(D_j)+Q_0(D_j)/2}).
\eea
Hence, $\omega_j^\uptau = O(n^{-1})$ if $D_j$ contains an edge of multiplicity at least $3$.  
Now, assume that every edge of $D_j$ has multiplicity at most $2$.  
Since $\# V(D_j) = \# E(D_j^0)$, we can still represent $D_j^0$ as a cycle $C$ with trees attached along the cycle.  
By Lemma~\ref{cycle-tree independence}, we may further assume that the tree components contain no single edges.  

If $G_i$ is a tree for some $i \in \uptau^{-1}(j)$, then $\omega_j^\uptau$ is non-zero only if $E(G_i) \cap E(C) \neq \emptyset$.  
In this case, all such $G_i$ are trees with no coincident edges, and by Proposition~\ref{approx1}, we have $\omega_j^\uptau = o(1)$.  

If every $G_i$ is a non-tree for $i \in \uptau^{-1}(j)$, then each $G_i$ contains the cycle $C$.  
Since $\# \uptau^{-1}(j) \geq 3$, the cycle $C$ would appear with multiplicity at least $3$ in $D_j$, which contradicts our assumption.  

Therefore, in Case~2, we conclude that $\omega_j^\uptau = o(1)$.

\textbf{Case 3:} $\# V(D_j)=\# E(D_j^0)+1$. Then $D_j$ and all $G_i$ for $i \in \uptau^{-1}(j)$ are trees, with all edges having multiplicity at least $2$.  
By Corollary~\ref{Corollary:tree-independence-coef}, each $G_i^0$ must share edges with $G_{i'}^0$ for $i \neq i'$.  
Since $\# \uptau^{-1}(j) \geq 3$, we have 
\[
\# E(D_j^0) \leq 2 \Big(\sum_{i \in \uptau^{-1}(j)} k_i\Big) - 2,
\] 
and hence
\[
\omega_j^\uptau = O\Big(n^{\# V(D_j) - 2 \sum_{i \in \uptau^{-1}(j)} k_i}\Big) = O(n^{-1}).
\]

Thus, we have shown that $\omega_j^\uptau = o(1)$ whenever $\# \uptau^{-1}(j) \geq 3$.  
It follows that
\[
\prod_{j=1}^t \omega_j^\uptau =
\begin{cases}
O(1), & \text{if } \# \uptau^{-1}(1) = \cdots = \# \uptau^{-1}(t) = 2,\ t = L/2,\\
o(1), & \text{otherwise}.
\end{cases}
\]

Let $\ml{P}_2(L)$ denote the set of pair partitions of $\{1, 2, \dots, L\}$. Then
\bea\nonumber
&\E\Bigg[\prod_{\ell=1}^L \Big(\tr(\mf{\Psi}_n^{k_\ell}) - \E[\tr(\mf{\Psi}_n^{k_\ell})]\Big)\Bigg] \\
= &\sum_{\uppi \in \ml{P}_2(L)} 
\prod_{\{i,j\} \in \uppi} 
\E\Big[\big(\mf{\Xi}_n(G_i)-\E[\mf{\Xi}_n(G_i)]\big) \big(\mf{\Xi}_n(G_j)-\E[\mf{\Xi}_n(G_j)]\big)\Big] + o(1) \\
= &\sum_{\uppi \in \ml{P}_2(L)} \prod_{\{i,j\} \in \uppi} M(k_i, k_j) + o(1),
\eea
where $M(\cdot,\cdot)$ is the covariance function defined in Theorem~\ref{thm:clt}.  

By Wick's formula \citep{Wick-50}, the vector $\{\tr(\mf{\Psi}_n^{k_\ell}) - \E[\tr(\mf{\Psi}_n^{k_\ell})]\}_{\ell=1}^L$ converges in distribution to a Gaussian vector with covariance $M(\cdot, \cdot)$. We then finish the proof.
\end{proof}

\subsection{Proof of Theorem \ref{thm:clt:Phi}}
{
\begin{proof}
We only prove the covariance function in detail. The proof of Gaussianity is similar to that of Theorem~\ref{thm:clt}, with the directed $\Delta$-graphs there replaced by the undirected $\overline{\Gamma}$-graphs used in the proof of Theorem~\ref{thm:sc}. For an undirected multigraph $\overline{\Delta}$, write in this proof
\[
\mf{\Phi}_n(\overline{\Delta})
:=\prod_{\{u,v\}\in E(\overline{\Delta}^0)}
\big(\Phi_{uv}^{(n)}\big)^{s_{uv}},
\]
where $\overline{\Delta}^0$ is the undirected skeleton graph of $\overline{\Delta}$.

Let $\mathbf{i}_a=(i_{a,1},\ldots,i_{a,k_a})$, $a=1,2$, with $i_{a,u}\neq i_{a,u+1}$ and $i_{a,k_a+1}=i_{a,1}$. Denote $\overline{G}_a=\overline{T}(\mathbf{i}_a)$ and $\overline{G}=\overline{G}_1\cup \overline{G}_2$. Then
\bea\nonumber
&\Cov\l(\tr((\mf{\Phi}_n-\mf{I}_p)^{k_1}),\tr((\mf{\Phi}_n-\mf{I}_p)^{k_2})\r)\\
&=\sum_{\mathbf{i}_1,\mathbf{i}_2}
\Big(\E\big[\mf{\Phi}_n(\overline{G}_1)\mf{\Phi}_n(\overline{G}_2)\big]
-\E\big[\mf{\Phi}_n(\overline{G}_1)\big]\E\big[\mf{\Phi}_n(\overline{G}_2)\big]\Big).
\eea
As in the proof of Theorem~\ref{thm:clt}, we classify all possible graph pairs into the following three categories:
\begin{itemize}
\item {\bf Category 1 ($\overline{Q}_1$)}:\ $\# V(\overline{G})<\# E(\overline{G}^0)$;
\item {\bf Category 2 ($\overline{Q}_2$)}:\ $\# V(\overline{G})=\# E(\overline{G}^0)$;
\item {\bf Category 3 ($\overline{Q}_3$)}:\ $\# V(\overline{G})=\# E(\overline{G}^0)+1$.
\end{itemize}
Thus the covariance can be written as $\overline{H}_1+\overline{H}_2+\overline{H}_3$, where, for $j\in[3]$,
\bea\nonumber
\overline{H}_j:=\sum_{(\overline{G}_1,\overline{G}_2)\in \overline{Q}_j}
p^{\# V(\overline{G})}(1+o(1))
\Big(\E\big[\mf{\Phi}_n(\overline{G}_1)\mf{\Phi}_n(\overline{G}_2)\big]
-\E\big[\mf{\Phi}_n(\overline{G}_1)\big]\E\big[\mf{\Phi}_n(\overline{G}_2)\big]\Big).
\eea
Here the summation is taken over all different graph pairs up to graph isomorphism. The summand is zero when $V(\overline{G}_1)\cap V(\overline{G}_2)=\emptyset$.

{\bf Category 1.} From Lemma~\ref{EPhi(Delta)-semicirclelaw}, applied also to $\overline{G}$, we have
\bea\nonumber
\overline{H}_1
=\sum_{(\overline{G}_1,\overline{G}_2)\in \overline{Q}_1}
\Big(O(n^{\# V(\overline{G})-\# E(\overline{G}^0)})
+O(n^{\# V(\overline{G})-\# E(\overline{G}_1^0)-\# E(\overline{G}_2^0)})\Big)
=O(n^{-1}).
\eea

{\bf Category 2.} We first remove negligible graph pairs. If $\overline{G}$ contains an edge with multiplicity at least $3$, then the expansion $\Phi_{uv}^{(n)}=(\Xi_{uv}^{(n)}+\Xi_{vu}^{(n)})/2$, together with Proposition~\ref{Graph-Estimate-EDelta0}, gives a contribution of order $O(n^{-1})$. Hence it suffices to consider graph pairs for which every edge in $\overline{G}$ has multiplicity exactly $2$.

For such a graph pair, $\overline{G}^0$ contains a unique cycle
\[
C^0=(v_1,v_2,\ldots,v_s,v_1).
\]
We decompose
\[
\overline{G}^0=C^0\bigcup T_1^0\bigcup T_2^0\bigcup\cdots\bigcup T_s^0,
\]
where $(T_j^0)_{j=1}^s$ are disjoint trees with
$V(T_j^0)\cap V(C^0)=\{v_j\}$ and $E(T_j^0)\cap E(C^0)=\emptyset$. By the same argument as Lemma~\ref{cycle-tree independence}, applied after expanding each $\Phi$-entry into the two possible directed $\Xi$-entries, the variables associated with the cycle and with the attached trees are independent. Therefore any graph pair in which one of the tree components carries a single edge in either $\overline{G}_1$ or $\overline{G}_2$ gives zero contribution.

It remains to consider the case where both $\overline{G}_1^0$ and $\overline{G}_2^0$ are not trees. Then both contain the cycle $C^0$, every edge of $C^0$ has multiplicity $1$ in each of $\overline{G}_1$ and $\overline{G}_2$, and all edges in the attached trees have multiplicity $2$. In particular, the parity of the cycle length agrees with the parity of both $k_1$ and $k_2$. Thus, if one of $k_1,k_2$ is odd and the other is even, Category 2 contributes $o(1)$.

Suppose first that $k_1,k_2$ are both odd. Then the length of $C^0$ is $2t+1$, where $1\leq t\leq \lfloor (k_1\wedge k_2)/2\rfloor$. Fix such a $t$. To construct $\overline{G}_1$, assume that its path starts from $T_1$, and let $\# E(T_1)=2\ell$, where $0\leq \ell\leq \lfloor k_1/2\rfloor-t$. There are $C_\ell$ ways to choose $T_1$, where $C_\ell$ is the $\ell$-th Catalan number, and $(2\ell+1)$ choices for the moment at which the path enters the cycle. The number of choices for the remaining attached trees is
\[
\sum_{\substack{i_2+\cdots+i_{2t+1}=\lfloor k_1/2\rfloor-t-\ell\\
i_2,\ldots,i_{2t+1}\geq 0}}
\prod_{j=2}^{2t+1}C_{i_j}.
\]
By Lemma~\ref{k-fold involution of Catalan number}, the number of choices of $\overline{G}_1$ is
\[
\sum_{\ell=0}^{\lfloor k_1/2\rfloor-t}
\frac{2t}{k_1-2\ell-1}
\binom{k_1-2\ell-1}{\lfloor k_1/2\rfloor-t-\ell}
\binom{2\ell+1}{\ell+1}.
\]
The corresponding number for $\overline{G}_2$ is
\[
\sum_{\ell=0}^{\lfloor k_2/2\rfloor-t}
\frac{2t}{k_2-2\ell-1}
\binom{k_2-2\ell-1}{\lfloor k_2/2\rfloor-t-\ell}
\binom{2\ell+1}{\ell+1}.
\]
There are $(2t+1)$ ways to merge the vertices on the cycle and $2$ choices for the direction of the cycle. Since
\[
\E\big[(\Phi_{12}^{(n)})^2\big]=\frac{1}{5n}+O(n^{-2}),
\]
the contribution from Category 2 in the odd case is
\bea\nonumber
\overline{H}_2
&=p^{(k_1+k_2)/2}(1+o(1))\left(\frac{1}{5n}\right)^{(k_1+k_2)/2}\\
&\quad\times
\sum_{t=1}^{\lfloor(k_1\wedge k_2)/2\rfloor}2(2t+1)
\left(\sum_{\ell=0}^{\lfloor k_1/2\rfloor-t}
\frac{2t}{k_1-2\ell-1}
\binom{k_1-2\ell-1}{\lfloor k_1/2\rfloor-t-\ell}
\binom{2\ell+1}{\ell+1}\right)\\
&\quad\times
\left(\sum_{\ell=0}^{\lfloor k_2/2\rfloor-t}
\frac{2t}{k_2-2\ell-1}
\binom{k_2-2\ell-1}{\lfloor k_2/2\rfloor-t-\ell}
\binom{2\ell+1}{\ell+1}\right).
\eea

Suppose next that $k_1,k_2$ are both even. Then the length of $C^0$ is $2t$, where $2\leq t\leq (k_1\wedge k_2)/2$. Repeating the same enumeration gives the number of choices of $\overline{G}_1$ as
\[
\sum_{\ell=0}^{k_1/2-t}
\frac{2t-1}{k_1-2\ell-1}
\binom{k_1-2\ell-1}{k_1/2-t-\ell}
\binom{2\ell+1}{\ell+1},
\]
and the corresponding number for $\overline{G}_2$ as
\[
\sum_{\ell=0}^{k_2/2-t}
\frac{2t-1}{k_2-2\ell-1}
\binom{k_2-2\ell-1}{k_2/2-t-\ell}
\binom{2\ell+1}{\ell+1}.
\]
There are $2t$ ways to merge the cycle vertices and $2$ choices for the direction of the cycle. Therefore, in the even case,
\bea\nonumber
\overline{H}_2
&=p^{(k_1+k_2)/2}(1+o(1))\left(\frac{1}{5n}\right)^{(k_1+k_2)/2}\\
&\quad\times
\sum_{t=2}^{(k_1\wedge k_2)/2}2(2t)
\left(\sum_{\ell=0}^{k_1/2-t}
\frac{2t-1}{k_1-2\ell-1}
\binom{k_1-2\ell-1}{k_1/2-t-\ell}
\binom{2\ell+1}{\ell+1}\right)\\
&\quad\times
\left(\sum_{\ell=0}^{k_2/2-t}
\frac{2t-1}{k_2-2\ell-1}
\binom{k_2-2\ell-1}{k_2/2-t-\ell}
\binom{2\ell+1}{\ell+1}\right).
\eea

{\bf Category 3.} In this case $\overline{G}^0$ is a tree. If $E(\overline{G}_1^0)\cap E(\overline{G}_2^0)=\emptyset$, the two products are independent by Lemma~\ref{tree-independence-semicircle}. If $\#(E(\overline{G}_1^0)\cap E(\overline{G}_2^0))\geq 2$, then $\overline{G}$ contains at least two edges of multiplicity $4$, and the contribution is $O(n^{-1})$. Hence it remains to consider the case
\[
\#(E(\overline{G}_1^0)\cap E(\overline{G}_2^0))=1.
\]
Then both $\overline{G}_1^0$ and $\overline{G}_2^0$ are trees, all edges have multiplicity $2$ except the common edge, which has multiplicity $4$, and this is possible only when $k_1,k_2$ are both even. For such graph pairs, there are $C_{k_1/2}$ and $C_{k_2/2}$ ways to choose $\overline{G}_1$ and $\overline{G}_2$, respectively. To merge the two trees, there are $(k_1/2)(k_2/2)$ choices for the common edge and $2$ choices for its direction. Moreover, Proposition~\ref{approx1} implies
\[
\E\big[(\Phi_{12}^{(n)})^4\big]
-\Big(\E\big[(\Phi_{12}^{(n)})^2\big]\Big)^2
=2\left(\frac{1}{5n}\right)^2+O(n^{-3}).
\]
Consequently,
\bea\nonumber
\overline{H}_3
&=p^{(k_1+k_2)/2}(1+o(1))\cdot
2\left(\frac{1}{5n}\right)^{(k_1+k_2)/2}
C_{k_1/2}C_{k_2/2}\left(\frac{k_1}{2}\right)\left(\frac{k_2}{2}\right)\cdot 2\\
&=p^{(k_1+k_2)/2}(1+o(1))\cdot
4\left(\frac{1}{5n}\right)^{(k_1+k_2)/2}
\binom{k_1}{k_1/2+1}\binom{k_2}{k_2/2+1}.
\eea

Combining the three categories, if $k_1$ and $k_2$ have different parity, the covariance tends to $0$. If $k_1,k_2$ are both odd, only Category 2 contributes. If $k_1,k_2$ are both even, Categories 2 and 3 contribute. Letting $n\to\infty$ and using $p/n\to\gamma$ gives the covariance function in Theorem~\ref{thm:clt:Phi}.

It remains to record why the limiting process is Gaussian. For any fixed $L$ and $k_1,\ldots,k_L$, expand
\[
\E\prod_{\ell=1}^L
\Big(\tr((\mf{\Phi}_n-\mf{I}_p)^{k_\ell})
-\E\tr((\mf{\Phi}_n-\mf{I}_p)^{k_\ell})\Big)
\]
as a sum over $L$ undirected $\overline{\Gamma}$-graphs. The same connected-component decomposition used in the proof of Theorem~\ref{thm:clt} applies. A component containing exactly two graphs contributes the covariance computed above, while a component containing at least three graphs is $o(1)$ by the three-category estimates just proved, together with Lemma~\ref{tree-independence-semicircle} and Proposition~\ref{approx1}. Hence only pair partitions survive asymptotically. Wick's formula then gives the desired Gaussian finite-dimensional limits.
\end{proof}
}

\subsection{Proof of Theorem \ref{thm:power-Qxi2} and Corollary \ref{cor:equi-gaussian-power-Qxi2}}\label{section-power-Qxi2}
We need the following lemmas. Their proofs are given in Section~\ref{Section-aux-power-Qxi2}.
\begin{lemma}[One-row stability of Chatterjee's rank correlation]\label{lem:chatterjee-one-row-stability}
Let $(X_1,Y_1),\ldots,(X_n,Y_n)$ and $(X'_1,Y'_1),\ldots,(X'_n,Y'_n)$ be two bivariate samples without ties. If the two samples differ in only one observation, then
\[
|\xi_n(\mX,\mY)-\xi_n(\mX',\mY')|\leq \frac{18}{n},
\]
where $\mX=(X_1,\ldots,X_n)$, $\mY=(Y_1,\ldots,Y_n)$, and similarly for $\mX'$ and $\mY'$.
\end{lemma}

\begin{lemma}[Gaussian local behavior of Chatterjee's coefficient]\label{lem:gaussian-local-xi}
Let $(X_1,Y_1),\ldots,(X_n,Y_n)$ be independent copies of a bivariate
standard Gaussian vector with Pearson correlation $\rho_n$, and
define
\[
\mu_n(\rho_n)
:=
\E\{\xi_n((X_1,\ldots,X_n),(Y_1,\ldots,Y_n))\}.
\]
Suppose that $\lim_{n\to \infty} \rho_n=0$ and $\liminf_{n\to\infty} n\rho_n^{4}>0$, then
\[
\mu_n(\rho_n)=\frac{\sqrt{3}}{\pi}\rho_n^2(1+o(1)),\quad \text{ as } n\to\infty.
\]
\end{lemma}

Now we prove Theorem \ref{thm:power-Qxi2}.
\begin{proof}[Proof of Theorem~\ref{thm:power-Qxi2}]
Write 
\[
T_n:=\operatorname{tr}(\mPsi_n)=\sum_{i\ne j}\bigl(\Xi_{ij}^{(n)}\bigr)^2.
\]
Under the null hypothesis $H_0$, Theorem~\ref{thm:clt} gives, with $\E_0$ denoting null expectation,
\[
\E_0T_n
=
\frac{p(p-1)(n-2)(4n-7)}{10(n-1)^2(n+1)}.
\]
Consequently the rejection event may be written as $\{T_n>a_n\}$, where
\[
a_n:=\E_0T_n+z_\alpha\sqrt{8p^2/(25n^2)}.
\]
Then there exists a constant
$A_{\alpha}>0$ such that
\[
a_n\leq A_{\alpha}\frac{p^2}{n}
\]
for all sufficiently large $n$.

Let
\[
\boldsymbol \xi_n:=\bigl(\Xi_{ij}^{(n)}\bigr)_{i\ne j}\in\R^{p(p-1)},
\qquad
\boldsymbol\mu_n:=\E_D\boldsymbol\xi_n.
\]
Then $T_n=\|\boldsymbol\xi_n\|_2^2$. Moreover, since the diagonal of $\mXi_n$ is deterministic and equal to one,
\[
\|\overline{\mXi}_n-\mf{I}_p\|_{\mathrm F}^2=\|\boldsymbol\mu_n\|_2^2.
\]

Choose $B>0$ so large that
\[
\frac{B^2}{4}>A_{\alpha},
\qquad
\frac{4\cdot 648}{B^2}<1-\beta,
\]
and choose $C>B$. If
$\|\overline{\mXi}_n-\mf{I}_p\|_{\mathrm F}\geq Cpn^{-1/2}$, then, for all sufficiently large $n$,
\[
\|\boldsymbol\mu_n\|_2^2
=
\|\overline{\mXi}_n-\mf{I}_p\|_{\mathrm F}^2
\geq C^2\frac{p^2}{n}
> B^2\frac{p^2}{n}.
\]
Thus $\|\boldsymbol\mu_n\|_2\geq Bpn^{-1/2}$.

Let
\[
u_n:=\frac{\boldsymbol\mu_n}{\|\boldsymbol\mu_n\|_2},
\qquad
W_n:=\langle u_n,\boldsymbol\xi_n\rangle .
\]
Then
\[
\E_DW_n=\|\boldsymbol\mu_n\|_2\geq Bpn^{-1/2}.
\]

We next control the fluctuations of $W_n$. Let $W_n^{(r)}$ denote the statistic obtained after replacing the $r$th row of the data matrix by an independent copy, while keeping the same vector $u_n$. Then Lemma~\ref{lem:chatterjee-one-row-stability} and the Cauchy--Schwarz inequality yield, uniformly in $r$,
\[
|W_n-W_n^{(r)}|
\leq
\frac{18}{n}\sum_{i\ne j}|u_{n,ij}|
\leq
\frac{18p}{n}
\]
for all sufficiently large $n$. Hence the Efron--Stein inequality gives
\[
\operatorname{Var}_D(W_n)
\leq \frac12\sum_{r=1}^n\E_D\{(W_n-W_n^{(r)})^2\}
\leq \frac{648p^2}{n}.
\]

By Chebyshev's inequality,
\[
\P_D\left(W_n<\frac{Bp}{2\sqrt n}\right)
\leq
\frac{4\operatorname{Var}_D(W_n)}{\|\boldsymbol\mu_n\|_2^2}
\leq
\frac{4\cdot 648}{B^2}
<1-\beta.
\]
On the complementary event,
\[
T_n=\|\boldsymbol\xi_n\|_2^2
\geq \langle u_n,\boldsymbol\xi_n\rangle^2
=W_n^2
\geq \frac{B^2p^2}{4n}.
\]
By the choice of $B$, we have $B^2p^2/(4n)>a_n$ for all sufficiently large $n$. Therefore, uniformly over all $D\in\Dc_p$ satisfying $\|\overline{\mXi}_n-\mf{I}_p\|_{\mathrm F}\geq Cpn^{-1/2}$,
\[
\E_D\{\phi_\alpha(Q_{\xi,2})\}
=
\P_D(T_n>a_n)
\geq
1-\frac{4\cdot 648}{B^2}
>
\beta.
\]
Taking the infimum over the indicated class and then the limit inferior proves the theorem.
\end{proof}

The proof of Corollary~\ref{cor:equi-gaussian-power-Qxi2} is given below.
\begin{proof}[Proof of Corollary~\ref{cor:equi-gaussian-power-Qxi2}]
Let $C_0=C_0(\alpha,\beta)$ be the constant in
Theorem~\ref{thm:power-Qxi2}. We first show that, for $C$ sufficiently
large, the implication
\[
\|\fR-\fI_p\|_{\mathrm F}\geq C pn^{-1/4}
\quad\Longrightarrow\quad
\|\overline{\mXi}_n-\mf{I}_p\|_{\mathrm F}\geq C_0pn^{-1/2}
\]
holds for all sufficiently large $n$. Suppose otherwise. Then, along a
violating subsequence, there are equicorrelated Gaussian alternatives satisfying the
left-hand side but not the right-hand side. Under the equicorrelated Gaussian model, all off-diagonal pairs have the
same bivariate Gaussian distribution with Pearson correlation $\rho=\rho_n$.
Thus all off-diagonal entries of $\overline{\mXi}_n=\E_D\mXi_n$ are equal to
\[
\mu_n(\rho)
=
\E_{\rho}\{\xi_n(\fX_{\cdot i},\fX_{\cdot j})\},
\qquad i\ne j,
\]
and hence
\[
\|\overline{\mXi}_n-\mf{I}_p\|_{\mathrm F}^2
=
p(p-1)\mu_n(\rho)^2.
\]
Moreover,
\[
\|\fR-\fI_p\|_{\mathrm F}
=
\sqrt{p(p-1)}\,|\rho|.
\]
Therefore the assumed lower bound on $\|\fR-\fI_p\|_{\mathrm F}$
implies
\be\label{eq-lower-bound-rho}
|\rho|
\geq
C\,n^{-1/4}\sqrt{\frac{p}{p-1}}.
\ee

If $\rho=\rho_n$ does not converge to 0, then it has a further subsequence on which it is
bounded away from zero. Along this further subsequence,
consistency of Chatterjee's $\xi$ implies that $\mu_n(\rho)$ is also bounded away from zero. Consequently
$\|\overline{\mXi}_n-\mf{I}_p\|_{\mathrm F}\asymp p$, contradicting the
failure of the right-hand side. Thus any violating sequence must satisfy
$\rho\to0$. The lower bound \eqref{eq-lower-bound-rho} gives $\liminf n\rho^4>0$.
By Lemma~\ref{lem:gaussian-local-xi}, there exists a constant $c_\xi>0$
such that
\[
\mu_n(\rho)\geq c_\xi\rho^2
\]
for all sufficiently large $n$. Hence
\[
\|\overline{\mXi}_n-\mf{I}_p\|_{\mathrm F}
\geq
\sqrt{p(p-1)}\,\mu_n(\rho)
\geq
c_\xi\sqrt{p(p-1)}\,\rho^2.
\]
Again using \eqref{eq-lower-bound-rho}, we obtain
\[
\|\overline{\mXi}_n-\mf{I}_p\|_{\mathrm F}
\geq
c_\xi C^2\sqrt{p(p-1)}
\,\frac{p}{p-1}n^{-1/2}
\geq
c_\xi C^2pn^{-1/2}.
\]
Choose $C$ large enough so that $c_\xi C^2\geq C_0$. Then
\[
\|\fR-\fI_p\|_{\mathrm F}\geq C pn^{-1/4}
\quad\Longrightarrow\quad
\|\overline{\mXi}_n-\mf{I}_p\|_{\mathrm F}\geq C_0pn^{-1/2}
\]
eventually, contradicting the chosen violating sequence. Therefore, for all
sufficiently large $n$,
\[
\Nc_p^{\mathrm{equi}}
(\|\fR-\fI_p\|_{\mathrm F}\geq C pn^{-1/4})
\subseteq
\Dc_p(\|\overline{\mXi}_n-\mf{I}_p\|_{\mathrm F}\geq C_0pn^{-1/2}).
\]
Applying Theorem~\ref{thm:power-Qxi2} gives
\bea\nonumber
&\liminf_{n\to\infty}
\inf_{D\in\Nc_p^{\mathrm{equi}}
(\|\fR-\fI_p\|_{\mathrm F}\geq C pn^{-1/4})}
\E_D\{\phi_\alpha(Q_{\xi,2})\}\\
\geq
&\liminf_{n\to\infty}
\inf_{D\in \Dc_p(\|\overline{\mXi}_n-\mf{I}_p\|_{\mathrm F}\geq C_0pn^{-1/2})}
\E_D \{\phi_\alpha(Q_{\xi,2})\}\\
>&\beta,
\eea
which proves the claim.
\end{proof}

\subsection{Proof of Theorem \ref{thm-optimal-Bonferroni}}
\begin{proof}[Proof of Theorem \ref{thm-optimal-Bonferroni}]
By definition of the Bonferroni test $B_{\xi,\tau}$, 
$$
\E_D[\phi_\alpha(B_{\xi,\tau})]=\P_D(\{Q_{\xi,2}>z_{\alpha/2}\}\cup \{T_{\tau}>z_{\alpha/2}\})\geq  \P_D(T_{\tau}>z_{\alpha/2})=\E_D[\phi_{\alpha/2}(T_\tau)].
$$

By \citep[Theorem 5.2]{leung2018testing}, there exists contsant $C=C(\alpha,\beta,\gamma)>0$ such that
$$
\liminf_{n\to\infty}\inf_{D\in\Nc_p^{\mathrm{equi}}
(\|\fR-\fI_p\|_{\mathrm F}\ge C)}
\E_D\{\phi_{\alpha/2}(T_{\tau})\}
>\beta.
$$
This implies
    $$
    \liminf_{n\to\infty}\inf_{D\in\Nc_p^{\mathrm{equi}}
(\|\fR-\fI_p\|_{\mathrm F}\ge C)}
\E_D\{\phi_\alpha(B_{\xi,\tau})\}
>\beta.
    $$
\end{proof}

\section{Proofs of auxiliary results}



\subsection{Proofs in Section \ref{Section-technical-tools}}

\subsubsection{Proof of Proposition \ref{lem:key}}
Recall from Section \ref{sec:graph-notation} that $\Delta^0,\ \overline{\Delta}^0$ are the directed, undirected skeleton graph of $\Delta$, respectively.
 ~\\   
{\bf One direction.} If $\overline{\Delta}^0$ is not a tree, then it contains a cycle $L$ of length at least $2$.  

First, suppose that the edges in $\Delta^0$ along $L$ all have the same direction.  
That is, $L = (i_1, i_2, \dots, i_k, i_1)$ with $k \geq 2$ is a directed path in $\Delta^0$.  
Then we have
\[
R_{i_1} R_{i_k}^{-1} = (R_{i_1} R_{i_2}^{-1}) (R_{i_2} R_{i_3}^{-1}) \cdots (R_{i_{k-1}} R_{i_k}^{-1}).
\]

If, instead, the edges along $L$ are not all in the same direction,  
note that $R_u R_v^{-1} = (R_v R_u^{-1})^{-1}$.  
Hence, $R_{i_1} R_{i_k}^{-1}$ can always be expressed as a product of permutations in $\ml{R} \setminus \{R_{i_1} R_{i_k}^{-1}\}$ or their inverses along $L$.  

Therefore, the random permutations in $\ml{R}$ are dependent.

~\\
{\bf The other direction.}   If $\overline{\Delta}^0$ is a tree, we proceed by induction on $\# V(\Delta^0)$ to show that $\ml{R}$ is a set of i.i.d.\ uniform random permutations on $[n]$.  
The case $\# V(\Delta^0)=2$ is trivial.  

Assume that the random permutations in $\ml{R}$ are jointly independent when $\# V(\Delta^0) = k$, $k \geq 2$.  
Now consider the case $\# V(\Delta^0) = k+1$ and write $V(\Delta^0) = \{i_1, i_2, \dots, i_{k+1}\}$.  
Without loss of generality, by relabeling vertices and possibly reversing edge directions, assume that $i_1$ is a leaf node of $\overline{\Delta}^0$ and $(i_1, i_2) \in E(\Delta^0)$.  

Let $\{\sigma_{uv} : (u,v) \in E(\Delta^0)\}$ be a set of deterministic permutations. Then
\begin{align*}
&\P\Big(\bigcap_{(u,v)\in E(\Delta^0)} \{ R_v R_u^{-1} = \sigma_{uv} \}\Big) \\
=& \P\Big(\bigcap_{(u,v)\in E(\Delta^0)} \{ R_v = \sigma_{uv} R_u \}\Big) \\
=& \sum_{\uptau_1} \P\Big( \{ R_{i_2} = \sigma_{12} \uptau_1 \} \cap 
\bigcap_{\substack{(u,v)\in E(\Delta^0)\\(u,v) \neq (i_1,i_2)}} \{ R_v = \sigma_{uv} R_u \} \,\Big|\, R_{i_1} = \uptau_1 \Big) \, \P(R_{i_1} = \uptau_1) \\
=& \frac{1}{n!} \sum_{\uptau_1} \P\Big( \{ R_{i_2} = \sigma_{12} \uptau_1 \} \cap 
\bigcap_{\substack{(u,v)\in E(\Delta^0)\\(u,v) \neq (i_1,i_2)}} \{ R_v = \sigma_{uv} R_u \} \Big) \\
=& \frac{1}{n!} \sum_{\uptau_1} \P(R_{i_2} = \sigma_{12} \uptau_1) \, 
\P\Big( \bigcap_{\substack{(u,v)\in E(\Delta^0)\\(u,v) \neq (i_1,i_2)}} \{ R_v = \sigma_{uv} R_u \} \Big) \\
=& \frac{1}{n!} \P \Big( \bigcap_{\substack{(u,v)\in E(\Delta^0)\\(u,v) \neq (i_1,i_2)}} \{ R_v R_u^{-1} = \sigma_{uv} \} \Big) \\
=& \left( \frac{1}{n!} \right)^k.
\end{align*}
Here, the third equality holds because $R_{i_1}$ is independent of $\{ R_{i_r} : r \ge 2 \}$, and $(u,v) \neq (i_1,i_2)$ implies $u \neq i_1$ and $v \neq i_1$.  
The fourth equality follows from Proposition~\ref{prop:basic}, and the fifth equality uses the uniformity of $R_{i_2}$.  The last equality follows from the induction hypothesis applied to the sub-tree with nodes $\{i_2, \dots, i_{k+1}\}$ and $(k-1)$ edges.  Then by induction we finish the proof.

\subsubsection{Proof of Proposition \ref{approx1}}

 We first prove the joint distribution is asymptoticly Gaussian. Define
    \bea\nonumber
    H_n^k=\frac{1}{\sqrt{n}(n-1)}\l[\sum_{h=1}^{n-1}\l|(R_{j_k}\circ R_{i_k}^{-1})(h+1)-(R_{j_k}\circ R_{i_k}^{-1})(h)\r|-\frac{n(n-1)}{3}\r],\quad 1\leq k\leq K.
    \eea
Let $U_k(h)=F_{j_k}\l(X_{j_k,R_{i_k}^{-1}(h)}\r)$. From  \citet[Equations (6)-(9)]{MR1378827}, we obtain
    \bea\nonumber
    H_n^k=\frac{1}{\sqrt{n}}\sum_{h=1}^{n-1}\l[\l|U_k(h+1)-U_k(h)\r|+2U_k(h)(1-U_k(h))-\frac{2}{3}\r]+R_n^k,\quad 1\leq k\leq K,
    \eea
    with $R_n^k\stackrel{\P}{\rw}0$ for all $k$. Then 
    \bea\nonumber
    \sqrt{n}\Xi_{i_kj_k}^{(n)}=H_n^k-\frac{\sum_{h=1}^{n-1}\l|(R_{j_k}\circ R_{i_k}^{-1})(h+1)-(R_{j_k}\circ R_{i_k}^{-1})(h)\r|}{\sqrt{n}(n-1)(n+1)}=H_n^k+O\l(\frac{1}{\sqrt{n}}\r).
    \eea
    By using the Cramér-Wold device, it suffices to show that, for any $K$ fixed constants $\{a_k\}_{k=1}^K$, 
    $\sum_{k=1}^K a_k\sqrt{n}\Xi_{i_kj_k}^{(n)}$ is asymptoticly Gaussian. We have
    \bea\nonumber
    \sum_{k=1}^K a_k\sqrt{n}\Xi_{i_kj_k}^{(n)}&=\frac{1}{\sqrt{n}}\sum_{k=1}^K\sum_{h=1}^{n-1}a_k\l[\l|U_k(h+1)-U_k(h)\r|+2U_k(h)(1-U_k(h))-\frac{2}{3}\r]+O\l(\frac{1}{\sqrt{n}}\r).
    \eea
     Then it suffces to show that 
     \bea\nonumber
     W_n=\frac{1}{\sqrt{n}}\sum_{k=1}^K\sum_{h=1}^{n}a_k\l[\l|U_k(h+1)-U_k(h)\r|+2U_k(h)(1-U_k(h))-\frac{2}{3}\r]
     \eea
     is asymptoticly Gaussian, since the extra term $h=n$ contributes also $O\l(n^{-1/2}\r)$. We will proceed using Chatterjee's normal approximation technique for graphical statistics \citep{MR2435859, auddy2021exact,lin2022limit}. Let $i_{(1)}<\cdots<i_{(S)}$ be all distinct elements in row indices $\{i_k:1\leq k\leq K\}$. Let

     \bea\nonumber
     \mathcal{M}=
\left(
\begin{bmatrix}
    X_{i_{(1)},1} \\
    X_{i_{(2)},1} \\
    \vdots \\
    X_{i_{(s)},1}
\end{bmatrix},
\cdots,
\begin{bmatrix}
    X_{i_{(1)},n} \\
    X_{i_{(2)},n} \\
    \vdots \\
    X_{i_{(s)},n}
\end{bmatrix}
\right),\quad
\mathcal{M}'=
\left(
\begin{bmatrix}
    X_{i_{(1)},1}' \\
    X_{i_{(2)},1}' \\
    \vdots \\
    X_{i_{(s)},1}'
\end{bmatrix},
\cdots,
\begin{bmatrix}
    X_{i_{(1)},n}' \\
    X_{i_{(2)},n}' \\
    \vdots \\
    X_{i_{(s)},n}'
\end{bmatrix}
\right)
     \eea
be two i.i.d. samples at the rows involved. Let 
\bea\nonumber
\mathcal{M}^\ell=
\left(
\begin{bmatrix}
    X_{i_{(1)},1} \\
    X_{i_{(2)},1} \\
    \vdots \\
    X_{i_{(s)},1}
\end{bmatrix},
\cdots,
\begin{bmatrix}
    X_{i_{(1)},\ell}' \\
    X_{i_{(2)},\ell}' \\
    \vdots \\
    X_{i_{(s)},\ell}'
\end{bmatrix},
\cdots,
\begin{bmatrix}
    X_{i_{(1)},n} \\
    X_{i_{(2)},n} \\
    \vdots \\
    X_{i_{(s)},n}
\end{bmatrix}
\right),
\eea

\bea\nonumber
\mathcal{M}^r=
\left(
\begin{bmatrix}
    X_{i_{(1)},1} \\
    X_{i_{(2)},1} \\
    \vdots \\
    X_{i_{(s)},1}
\end{bmatrix},
\cdots,
\begin{bmatrix}
    X_{i_{(1)},r}' \\
    X_{i_{(2)},r}' \\
    \vdots \\
    X_{i_{(s)},r}'
\end{bmatrix},
\cdots,
\begin{bmatrix}
    X_{i_{(1)},n} \\
    X_{i_{(2)},n} \\
    \vdots \\
    X_{i_{(s)},n}
\end{bmatrix}
\right),
\eea

\bea\nonumber
\mathcal{M}^{\ell r}=
\left(
\begin{bmatrix}
    X_{i_{(1)},1} \\
    X_{i_{(2)},1} \\
    \vdots \\
    X_{i_{(s)},1}
\end{bmatrix},
\cdots,
\begin{bmatrix}
    X_{i_{(1)},\ell}' \\
    X_{i_{(2)},\ell}' \\
    \vdots \\
    X_{i_{(s)},\ell}'
\end{bmatrix},
\begin{bmatrix}
    X_{i_{(1)},r}' \\
    X_{i_{(2)},r}' \\
    \vdots \\
    X_{i_{(s)},r}'
\end{bmatrix},
\cdots,
\begin{bmatrix}
    X_{i_{(1)},n} \\
    X_{i_{(2)},n} \\
    \vdots \\
    X_{i_{(s)},n}
\end{bmatrix}
\right)
\eea
be the replacement with i.i.d. copies at column $\ell,r$, and at both column $\ell$ and $r$, respectively. For $s\in [S]$ and $j',j''\in [n]$, define 
\bea\nonumber
\ml{D}_{\ml{M}}^s(j',j'')=\begin{cases}
    \infty,\quad \text{if}\  X_{i(s),j'}>X_{i(s),j''},\\
    \# \{t:\ X_{i(s),j'}<X_{i(s),t}<X_{i(s),j''}\},\quad \text{if}\ X_{i(s),j'}<X_{i(s),j''}.
\end{cases}
\eea

Now, the graphical rule $\ml{G}(\ml{M})$ on $[n]$ is defined as follows. For any two indices $j',j''\in [n]$ an edge occurs between them, if there exists $s\in [S]$ and $j'''\in [n]$ such that, $$\max\{\ml{D}_{\ml{M}}^s(j''',j'),\ml{D}_{\ml{M}}^s(j''',j'')\}\leq 2.$$ 

The existence of an edge is only related to ranks of the entries in their corresponding rows. Hence such graphical rule $\ml{G}(\ml{M})$ is a symmetric rule.

Now we show that, $\ml{G}(\ml{M})$ is also an interaction rule. First, we can write
\bea\nonumber
W_n=W_n(\ml{M})=\sum_{k=1}^{K}W_{n,k}(\ml{M}), 
\eea
\bea\nonumber
W_{n,k}(\ml{M})=\frac{1}{\sqrt{n}}\sum_{h=1}^{n}a_k\l[\l|U_k(h+1)-U_k(h)\r|+2U_k(h)(1-U_k(h))-\frac{2}{3}\r].
\eea

For any pair of indices ${j',j''}$, if no edge occurs between them, then for every $s\in [S]$, there's no $j'''\in[n]$ such that $\max\{\ml{D}_{\ml{M}}^s(j''',j'),\ml{D}_{\ml{M}}^s(j''',j'')\}\leq 2.$ Then from the proof of  \citet[Lemma 2.4 of supplement]{auddy2021exact}, we have for every $k\in [K]$
\bea\nonumber
W_{n,k}(\ml{M})-W_{n,k}(\ml{M}^{j'})-W_{n,k}(\ml{M}^{j''})+W_{n,k}(\ml{M}^{j'j''})=0.
\eea
Then 
\bea\nonumber
W_{n}(\ml{M})-W_{n}(\ml{M}^{j'})-W_{n}(\ml{M}^{j''})+W_{n}(\ml{M}^{j'j''})=0.
\eea
Hence $\ml{G}(\ml{M})$ is a symmetric interaction rule based on $W_n(\ml{M})$.
To apply \citet[Theorem 2.5]{MR2435859},
let $$\Delta_j=W_n(\ml{M})-W_n(\ml{M}^j), M=\max_j |\Delta_j|.$$ There then exists a constant $C_1>0$ such that $\max(M,|\Delta_j|)<\frac{C_1K}{\sqrt{n}}$.
The extended graph $\ml{G}'$ on $[n+4]$ can be defined as follows. For any two indices $j',j''$ an edge occurs between them, if there exists $s\in [S]$ and $j'''\in [n]$ such that, $$\max\{\ml{D}_{\ml{M}}^s(j''',j'),\ml{D}_{\ml{M}}^s(j''',j'')\}\leq 6.$$ The degree of any vertices in $\mathcal{G'}$ is bounded by $C_2K$, $C_2>0$ is constant. Then from  \citet[Theorem 2.5]{MR2435859}, there is a universal constant $C>0$ such that
\bea\nonumber
\delta(W_n)\leq \frac{CK^3}{\sqrt{n}\sigma^2}+\frac{CK^3}{2\sqrt{n}\sigma^3},
\eea
where $\sigma^2=\operatorname{Var}(W_n)$, 
 and $\delta(W_n)$ is the Wasserstein distance between $(W_n-\E W_n)/\sigma$ and standard Gaussian distribution. Analogous with the calculation in the proof of  \citet[Lemma 3]{zhang2022asymptotic}, we have 
 $$
 \sigma^2=\operatorname{Var}(W_n)=C_{a_1,\cdots,a_K}+O\l(\frac{1}{n}\r),
 $$
 where constant $C_{a_1,\cdots,a_K}>0$ depends only on $a_1,\cdots,a_K$. Then $\delta(W_n)\rw 0$, and hence $W_n$ converges in distribution to Gaussian distribution. And then by the Cramér-Wold device, 
 \[
 (\sqrt{n}\Xi_{i_1j_1}^{(n)},\cdots,\sqrt{n}\Xi_{i_Kj_K}^{(n)}) 
 \]
 converges in distribution to a $K$ dimensional Gaussian distribution.

Now it remains to specify the covariance.  
It was shown in \cite{chatterjee2020new} that 
\[
\operatorname{Var}\big(\sqrt{n}\,\Xi^{(n)}_{12}\big) \longrightarrow \frac{2}{5}.
\]  
Consider two distinct pairs $(i_1,j_1)$ and $(i_2,j_2)$.  

\textbf{Case 1:} $\#\big(\{i_1,j_1\} \cap \{i_2,j_2\}\big) \le 1$.  
In this case, by Proposition~\ref{prop:basic}, $\Xi^{(n)}_{i_1 j_1}$ and $\Xi^{(n)}_{i_2 j_2}$ are independent.  

\textbf{Case 2:} $\#\big(\{i_1,j_1\} \cap \{i_2,j_2\}\big) = 2$, i.e., $i_1 = j_2$ and $i_2 = j_1$.  
Then, by \citet[Corollary 1]{zhang2022asymptotic}, 
\[
\operatorname{Cov}\big(\sqrt{n}\,\Xi^{(n)}_{i_1 j_1}, \sqrt{n}\,\Xi^{(n)}_{j_1 i_1}\big) = O\Big(\frac{1}{n}\Big).
\]  

Therefore, the covariance of the limiting Gaussian distribution is $\frac{2}{5}\,\mf{I}_K$.

\subsection{Proofs in Section \ref{Section-combinatornics}}

\subsubsection{Proof of Proposition \ref{Graphical-Representation}}

By definition,
\bea\label{EXiD}
&\E[\mf{\Xi}_n(\Delta)]=\Big(\frac{3}{n^2-1}\Big)^{\# E(\Delta)}\E\Bigg[\prod_{(u,v)\in E(\Delta^0)} \Bigg(\sum_{k=1}^{n-1} a\Big(R_vR_u^{-1}(k),R_vR_u^{-1}(k+1)\Big)\Bigg)^{s_{uv}}\Bigg]\\
&=\Big(\frac{3}{n^2-1}\Big)^{\# E(\Delta)}\sum_{\cK}
\E\Bigg[\prod_{\substack{(u,v)\in E(\Delta)\\1\leq s\leq s_{uv}}}a\Big(R_vR_u^{-1}(k_{uv}^s),R_vR_u^{-1}(k_{uv}^s+1)\Big)\Bigg]\\
&=\Big(\frac{3}{n^2-1}\Big)^{\# E(\Delta)}\sum_{\cK}\sum_{\cL(\Delta)}
\E\Bigg[\prod_{\substack{(u,v)\in E(\Delta)\\1\leq s\leq s_{uv}}}a\Big(R_vR_u^{-1}(k_{uv}^s),R_vR_u^{-1}(k_{uv}^s+1)\Big)\ \Bigg|\ A_{\cK,\cL(\Delta)}\Bigg] \P[A_{\cK,\cL(\Delta)}].
\eea
Here the summation $\sum_{\mathcal{K}}$ is over the index set 
\[
\mathcal{K} = \{ k_{uv}^s : (u,v) \in E(\Delta^0),\ 1 \le s \le s_{uv} \},
\] 
where $s$ appears as a superscript, and each $k_{uv}^s$ ranges from $1$ to $n-1$.  

Recall that the summation $\sum_{\cL(\Delta)}$ is over the index set
\[
\cL(\Delta) = \{ \ell_{uv}^{s,r} : (u,v) \in E(\Delta^0),\ 1 \le s \le s_{uv},\ r \in \{1,2\} \},
\] 
where each $\ell_{uv}^{s,r}$ ranges from $1$ to $n$, subject to the constraint $\ell_{uv}^{s,1} \ne \ell_{uv}^{s,2}$.  

For each $\mathcal{K}$ and $\cL(\Delta)$, define the event
\[
A_{\mathcal{K}, \cL(\Delta)} = \Big\{ R_u(\ell_{uv}^{s,1}) = k_{uv}^s, \ R_u(\ell_{uv}^{s,2}) = k_{uv}^s + 1,\ \text{for all } (u,v) \in E(\Delta^0),\ 1 \le s \le s_{uv} \Big\}.
\]
Since $\{R_u : u \in V(\Delta)\}$ are i.i.d. uniform random permutations on $[n]$, we have
\begin{align*}
\P[A_{\mathcal{K},\cL(\Delta)}] 
&= \P\Bigg[ \bigcap_{u \in V(\Delta)} \Big\{ R_u(\ell_{uv}^{s,1}) = k_{uv}^s, \ R_u(\ell_{uv}^{s,2}) = k_{uv}^s + 1, \ \text{for all } v \in N^+(u),\ 1 \le s \le s_{uv} \Big\} \Bigg] \\
&= \prod_{u \in V(\Delta)} \P\Bigg[ R_u(\ell_{uv}^{s,1}) = k_{uv}^s, \ R_u(\ell_{uv}^{s,2}) = k_{uv}^s + 1, \ \text{for all } v \in N^+(u),\ 1 \le s \le s_{uv} \Bigg].
\end{align*}
Furthermore,
\begin{align*}
&\E\Bigg[ \prod_{\substack{(u,v)\in E(\Delta)\\1 \le s \le s_{uv}}} a\Big(R_v R_u^{-1}(k_{uv}^s), R_v R_u^{-1}(k_{uv}^s + 1)\Big) \ \Big| \ A_{\mathcal{K},\cL(\Delta)} \Bigg] \\
=& \E\Bigg[ \prod_{\substack{(u,v)\in E(\Delta)\\1 \le s \le s_{uv}}} a\Big(R_v(\ell_{uv}^{s,1}), R_v(\ell_{uv}^{s,2})\Big) \ \Big| \ A_{\mathcal{K},\cL(\Delta)} \Bigg] \\
=& \prod_{v \in V(\Delta)} \E\Bigg[ \prod_{\substack{u \in N^-(v)\\1 \le s \le s_{uv}}} a\Big(R_v(\ell_{uv}^{s,1}), R_v(\ell_{uv}^{s,2})\Big) \ \Big| \ 
A_{\mathcal{K},\cL(\Delta)}^v
\Bigg],
\end{align*}
where for $v \in V(\Delta)$, we have defined the event
\[
A_{\mathcal{K},\cL(\Delta)}^v = \Big\{ R_v(\ell_{vw}^{s,1}) = k_{vw}^s, \ R_v(\ell_{vw}^{s,2}) = k_{vw}^s + 1, \ \text{for all } w \in N^+(v),\ 1 \le s \le s_{vw} \Big\}.
\]
It follows that
\[
\sum_{\mathcal{K}_v^+} A_{\mathcal{K},\cL(\Delta)}^v
= \bigcap_{\substack{w \in N^+(v) \\ 1 \le s \le s_{vw}}} \Big\{ R_v(\ell_{vw}^{s,1}) + 1 = R_v(\ell_{vw}^{s,2}) \Big\},
\]
where the summation $\sum_{\mathcal{K}_v^+}$ runs over the index set
\[
\mathcal{K}_v^+ = \Big\{ k_{vw}^s : w \in N^+(v),\ 1 \le s \le s_{vw} \Big\},
\]
with each $k_{vw}^s$ taking values from $1$ to $n-1$.

Then \eqref{EXiD} becomes
{\small
\bea\nonumber
&\E[\mf{\Xi}_n(\Delta)]\\=&\Big(\frac{3}{n^2-1}\Big)^{\# E(\Delta)}\sum_{\cK}\sum_{\cL(\Delta)}\prod_{v\in V(\Delta)}\E\Bigg[\prod_{\substack{u\in N^-(v)\\1\leq s\leq s_{uv}}}a\Big(R_v(\ell_{uv}^{s,1}),R_v(\ell_{uv}^{s,2})\Big)\ \Bigg|\ A_{\cK,\cL(\Delta)}^v\Bigg]\P[A_{\cK,\cL(\Delta)}^v]\\
=&\Big(\frac{3}{n^2-1}\Big)^{\# E(\Delta)}\sum_{\cL(\Delta)}\prod_{v\in V(\Delta)}\sum_{\cK_v^+}\E\Bigg[\prod_{\substack{u\in N^-(v)\\1\leq s\leq s_{uv}}}a\Big(R_v(\ell_{uv}^{s,1}),R_v(\ell_{uv}^{s,2})\Big)\ \Bigg|\ A_{\cK,\cL(\Delta)}^v\Bigg]\P[A_{\cK,\cL(\Delta)}^v]\\
=&\Big(\frac{3}{n^2-1}\Big)^{\# E(\Delta)}\sum_{\cL(\Delta)}\prod_{v\in V(\Delta)}\E\Bigg[\prod_{\substack{u\in N^-(v)\\1\leq s\leq s_{uv}}}a\Big(R_v(\ell_{uv}^{s,1}),R_v(\ell_{uv}^{s,2})\Big)\cdot \ind\Big(\sum_{\cK_v^+}A_{\cK,\cL(\Delta)}^v\Big) \Bigg]\\
=&\Big(\frac{3}{n^2-1}\Big)^{\# E(\Delta)}\sum_{\cL(\Delta)}\prod_{v\in V(\Delta)}\E\Bigg[\prod_{\substack{u\in N^-(v)\\1\leq s\leq s_{uv}}}a\Big(R_v(\ell_{uv}^{s,1}),R_v(\ell_{uv}^{s,2})\Big)\cdot\prod_{\substack{w\in N^+(v)\\1\leq s\leq s_{vw}} }\ind\Big(R_v(\ell_{vw}^{s,1})+1=R_v(\ell_{vw}^{s,2})\Big) \Bigg]\\
=&\Big(\frac{3}{n^2-1}\Big)^{\# E(\Delta)}\sum_{\cL(\Delta)}\prod_{v\in V(\Delta)}\E\Bigg[\prod_{\substack{u\in N^-(v)\\1\leq s\leq s_{uv}}}a\Big(\sigma(\ell_{uv}^{s,1}),\sigma(\ell_{uv}^{s,2})\Big)\cdot\prod_{\substack{w\in N^+(v)\\1\leq s\leq s_{vw}} }\ind\Big(\sigma(\ell_{vw}^{s,1})+1=\sigma(\ell_{vw}^{s,2})\Big) \Bigg].
\eea
}
Here, the second equality follows from the fact that the term
\[
\E\Bigg[\prod_{\substack{u\in N^-(v)\\1\le s\le s_{uv}}} 
a\Big(R_v(\ell_{uv}^{s,1}),R_v(\ell_{uv}^{s,2})\Big) \ \Bigg|\ A_{\mathcal{K},\cL(\Delta)}^v\Bigg] 
\P[A_{\mathcal{K},\cL(\Delta)}^v]
\]
depends only on the summation indices in $\mathcal{K}_v^+$ among all indices in $\mathcal{K}$.  

In the last equality, we recall that $\sigma$ denotes a uniform random permutation on $[n]$. This completes the proof.

\subsubsection{Proof of Lemma \ref{lem1}}
Clearly \eqref{11} and \eqref{12} are equivalent. We will prove \eqref{11} by induction on $s(\uppi)$. The $s(\uppi)=0$ case is trivial from the bound $|a(i,j)|< 2n$. For the case $s(\uppi)=1$, without loss of generality suppose that $\uppi(1)=1,\ \uppi(2)=2,\ \uppi(k)\geq 3,\ 3\le k\leq 2L$. Denote
    $$
    S_n(\uppi)=\sum_{1\leq i_1\ne\cdots\ne i_{\# \uppi}\leq n}\prod_{k=1}^L a(i_{\uppi(2k-1)},i_{\uppi(2k)}).
    $$
    Then since $\sum_{1\leq i\ne j\leq n}a(i,j)=0$,
    \bea\nonumber
    S_n(\uppi)&=
    \sum_{1\leq i_3\ne\cdots\ne i_{\# \uppi}\leq n}\Big(\sum_{\substack{1\leq i_1\ne i_2\leq n\\ i_1,i_2\notin \{i_3,\cdots,i_{\# \uppi}\}}}a(i_1,i_2)\Big)\prod_{k=3}^La(i_{\uppi(2k-1)},i_{\uppi(2k)})\\
    &=-\sum_{1\leq i_3\ne\cdots\ne i_{\# \uppi}\leq n}\sum_{\substack{1\leq i_1\ne i_2\leq n\\ \{i_1,i_2\}\cap \{i_3,\cdots,i_{\# \uppi}\}\ne \emptyset}}a(i_1,i_2)\prod_{k=3}^La(i_{\uppi(2k-1)},i_{\uppi(2k)})\\
    &=O(n^{\# \uppi+L-1}).
    \eea
    Here in the last equality, the summation has $O(n^{\# \uppi-1})$ number of terms and the summand is $O(n^L)$.
    
    Now suppose that \eqref{11} holds for all $s(\uppi)<r,\ r\geq 2$ and consider the case $s(\uppi)=r$. Without loss of generality suppose that $\uppi(k)=k,\ \text{for any }1\leq k\leq2r$ and $\uppi(k)>2r,\ \text{for any }2r<k\leq 2L$. Denote $[i,j]=\{i,i+1,\cdots,j\}$ for $i<j$. We define two types of adjoint partitions of $\uppi$ as
    \bea\nonumber
    \operatorname{adj}_1(\uppi)=\Big\{\uptau\in \mathcal{RP}(L):\ \uptau\Big|_{[1,2r]}=\uppi\Big|_{[1,2r]},\ &\uptau\Big|_{[2r+1,2L]}=\uppi\Big|_{[2r+1,2L]},\\ &\,\uptau([1,2r])\cap\uptau([2r+1,2L])\ne \emptyset\Big\},
    \eea
    \bea\nonumber
    \operatorname{adj}_2(\uppi)=\Big\{\uptau\in \mathcal{RP}(L):\ &\uptau\Big|_{[2r+1,2L]}=\uppi\Big|_{[2r+1,2L]},\\ &\exists(i,j\in [r],i\ne j)\  \uptau([2i-1,2i])\cap\uptau([2j-1,2j])\ne \emptyset\Big\}.
    \eea
    Noticing that
    \begin{align*}
&\Bigg(
\sum_{\substack{
    i_1\ne\cdots\ne i_{2r}\\
    i_1,\dots,i_{2r} \notin \{ i_{2r+1},\dots,i_{\# \uppi} \}
}}
+
\sum_{\substack{
    i_1\ne\cdots\ne i_{2r}\\
    \{i_1,\dots,i_{2r}\} \cap \{ i_{2r+1},\dots,i_{\# \uppi} \} \neq \emptyset
}}
+
\sum_{\substack{
    i_1\ne i_2,\dots,i_{2r-1}\ne i_{2r}\\
    \exists(w,w'\in [r],\, w\ne w')\\
    \{ i_{2w-1},i_{2w} \} \cap \{ i_{2w'-1},i_{2w'} \} \neq \emptyset
}}
\Bigg)
\prod_{k=1}^r a(i_{2k-1},i_{2k}) \notag\\
&= \sum_{i_1\ne i_2,\dots,i_{2r-1}\ne i_{2r}}
\prod_{k=1}^r a(i_{2k-1},i_{2k})
= \prod_{k=1}^r \sum_{i_{2k-1}\ne i_{2k}} a(i_{2k-1},i_{2k}) = 0,
    \end{align*}
    we have 
    \bea\nonumber
    S_n(\uppi)=&\sum_{i_{2r+1}\ne\cdots\ne i_{\# \uppi}}\Big(\sum_{\substack{ i_1\ne\cdots\ne i_{2r}\\ i_1,\cdots,i_{2r}\notin \{i_{2r+1},\cdots,i_{\# \uppi}\}}}\prod_{k=1}^r a(i_{2k-1},i_{2k})\Big)\prod_{k=r+1}^L a(i_{\uppi(2k-1)},i_{\uppi(2k)})\\
    =&-\sum_{i_{2r+1}\ne\cdots\ne i_{\# \uppi}}\Big(\sum_{\substack{ i_1\ne\cdots\ne i_{2r}\\ \{i_1,\cdots,i_{2r}\}\cap \{i_{2r+1},\cdots,i_{\# \uppi}\}\ne\emptyset}}\prod_{k=1}^r a(i_{2k-1},i_{2k})\Big)\prod_{k=r+1}^L a(i_{\uppi(2k-1)},i_{\uppi(2k)})\\
     &-\sum_{i_{2r+1}\ne\cdots\ne i_{\# \uppi}}\Big(\sum_{\substack{ i_1\ne i_2,\cdots,i_{2r-1}\ne i_{2r}\\ \exists(w,w'\in [r],w\ne w')\\\{i_{2w-1},i_{2w}\}\cap\{i_{2w'-1},i_{2w'}\}\ne\emptyset}}\prod_{k=1}^r a(i_{2k-1},i_{2k})\Big)\prod_{k=r+1}^L a(i_{\uppi(2k-1)},i_{\uppi(2k)})\\
     =&-\sum_{\uptau\in \operatorname{adj}_1(\uppi)}S_n(\uptau)-\sum_{\uptau\in \operatorname{adj}_2(\uppi)}S_n(\uptau).
    \eea
    
    Now let $\uptau\in \operatorname{adj}_1(\uppi)$. We then have $$\# \uptau\leq \# \uppi-1,\ s(\uptau)\leq s(\uppi)-1.$$ Since each $j\in [r]$ such that $\uptau([2j-1,2j+1])\cap\uptau([2r+1,2L])$ reduces $\# \uptau$ by at least 1 compared to $\# \uppi$, we have
    $$
    \# \uppi-\# \uptau\geq s(\uppi)-s(\uptau).
    $$
    Then
    \bea\nonumber
    L+\# \uptau-\Big\lceil\frac
    {s(\uptau)}{2}\Big\rceil&\leq L+\# \uppi-s(\uppi)+s(\uptau)-\Big\lceil\frac{s(\uptau)}{2}\Big\rceil\\
    &=L+\# \uppi-s(\uppi)+\Big\lfloor\frac{s(\uptau)}{2}\Big\rfloor\\
    &\leq L+\# \uppi-s(\uppi)+\Big\lfloor\frac{s(\uppi)}{2}\Big\rfloor\\
    &=L+\# \uppi-\Big\lceil\frac
    {s(\uppi)}{2}\Big\rceil.
    \eea
    From the induction assumption,
    $$
    S_n(\uptau)=O\Big(n^{\# \uptau+L-\lceil \frac{s(\uptau)}{2} \rceil}\Big)= O\Big(n^{\# \uppi+L-\lceil \frac{s(\uppi)}{2} \rceil}\Big),\quad \text{for any }\uptau\in \operatorname{adj}_1(\uppi).
    $$

    Now consider $\uptau\in\operatorname{adj}_2(\uppi)$, then $\# \uptau\leq \# \uppi-1,\ s(\uptau)\leq s(\uppi)-1$ still holds. Since each pair of $i\ne j (i,j\in [r])$ such that $\uptau([2i-1,2i])\cap\uptau([2j-1,2j])\ne\emptyset$ reduces $\# \uptau$ by at least $1$ from $\# \uppi$, we have
    $$
    \# \uppi-\# \uptau\geq\Big\lceil\frac
    {s(\uppi)-s(\uptau)}{2}\Big\rceil.
    $$
    Then
    \bea\nonumber
    L+\# \uptau-\Big\lceil\frac
    {s(\uptau)}{2}\Big\rceil&\leq L+\# \uppi-\Big\lceil\frac
    {s(\uppi)-s(\uptau)}{2}\Big\rceil-\Big\lceil\frac{s(\uptau)}{2}\Big\rceil\\
    &\leq L+\# \uppi-\Big\lceil\frac
    {s(\uppi)}{2}\Big\rceil.
    \eea
    From the induction assumption,
    $$
    S_n(\uptau)=O\Big(n^{\# \uptau+L-\lceil \frac{s(\uptau)}{2} \rceil}\Big)= O\Big(n^{\# \uppi+L-\lceil \frac{s(\uppi)}{2} \rceil}\Big),\quad \text{for any }\uptau\in \operatorname{adj}_2(\uppi).
    $$
    Since $\# \operatorname{adj}_1(\uppi)=O(1),\ \# \operatorname{adj}_2(\uppi)=O(1)$, we have $S_n(\uppi)=O\Big(n^{\# \uppi+L-\lceil \frac{s(\uppi)}{2} \rceil}\Big)$ and from induction we finish the proof.

\subsubsection{Proof of Lemma \ref{lem2}}
 \begin{enumerate}[itemsep=-.5ex,label=(\roman*)]
        \item Since a permutation is a one-to-one map from $[n]$ to itself, \eqref{lem2prod} does not degenerate to 0 if and only if $G_\uppi^0$ has no cycles and all its vertices with both in-degrees and out-degrees at most 1. The only possibility of such directed graphs is the disjoint union of chains.
        \item When \eqref{lem2prod} is non-zero, write
        \bea\nonumber
        &\E\Big[\prod_{k=1}^L \ind\Big(\sigma(\uppi(2k-1))+1=\sigma(\uppi(2k))\Big)\Big]
        =\frac{1}{(n)_{\# \uppi}}\sum_{i_1\ne \cdots\ne i_{\# \uppi}}\prod_{k=1}^L \ind\Big(i_{\uppi(2k-1)}+1=i_{\uppi(2k)}\Big).
        \eea
        Let $A=\{\uppi(j_1)\rw\uppi(j_2)\rw\cdots\rw\uppi(j_l)\}$ be one of the {chains} in $G_\uppi^0$, then 
        $$
        \prod_{k=1}^{\ell-1}\ind\Big(i_{\uppi(j_k)}+1=i_{\uppi(j_k+1)}\Big)=\prod_{k=2}^{\ell}\ind\Big(i_{\uppi(j_k)}=i_{\uppi(j_1)}+k-1\Big).
        $$
        Therefore the summation indices $\{i_1,\cdots,i_{\# \uppi}\}$ reduces to those $i_{\uppi(j_1)}$ such that $\uppi(j_1)$ is the starting point of a {chain} in $G_{\uppi}^0$. 
        Since $G_{\uppi}^0$ is the union of {chains}, we have 
        $$\# \{\text{chains in }G_{\uppi}^0\}=\# V(G_\uppi^0)-\# E(G_\uppi^0).$$
        Then we have \eqref{eq19} since $\# \uppi=\# V(G_\uppi^0)$.
    \end{enumerate}
    
\subsubsection{Proof of Proposition \ref{prop2}}
  Denote $\uppi_2=\uppi|_{[2L_1+1,2L]}$. If $\uppi_2$ is not valid, then from Lemma \ref{lem2} we obtain
    $$ \prod_{k=L_1+1}^L\ind\Big(\sigma(\uppi(2k-1))+1=\sigma(\uppi(2k))\Big)
    $$
    is always 0. If $\uppi_2$ is valid, we have
    \bea\nonumber
S^*(\uppi):=&\E\Big[\prod_{k=1}^{L_1}a\Big(\sigma(\uppi(2k-1)),\sigma(\uppi(2k))\Big)\prod_{k=L_1+1}^L\ind\Big(\sigma(\uppi(2k-1))+1=\sigma(\uppi(2k))\Big)\Big]
\\=&\frac{1}{(n)_{\# \uppi}}\sum_{i_1\ne\cdots\ne i_{\# \uppi}}\prod_{k=1}^{L_1}a\Big(i_{\uppi(2k-1)},i_{\uppi(2k)}\Big)\prod_{k=L_1+1}^L\ind\Big(i_{\uppi(2k-1)}+1=i_{\uppi(2k)}\Big).
    \eea
Now let $A \subset G_{\uppi_2}^0$ be a chain. Consider those $A$ for which $\#(A \cap \uppi([2L_1])) \ge 2$.  
Suppose that $\uppi(j_1), \uppi(j_2)$ with $1 \le j_1 \ne j_2 \le 2L_1$ lie in the same segment of $G_{\uppi_2}^0$. Then the summand being nonzero implies
\[
i_{\uppi(j_1)} + d(j_1,j_2) = i_{\uppi(j_2)},
\]
where $d(j_1,j_2)$ is the signed distance between $\uppi(j_1)$ and $\uppi(j_2)$ in $G_{\uppi_2}^0$, taken as positive if the path goes from $\uppi(j_1) \to \cdots \to \uppi(j_2)$ and negative if it goes from $\uppi(j_2) \to \cdots \to \uppi(j_1)$.

Since $d(j_1,j_2)$ is fixed, as $n \to \infty$ we can replace the summation indices $i_{\uppi(j_1)}$ and $i_{\uppi(j_2)}$ with a single index without changing the asymptotic order of $S^*(\uppi)$. Repeating this procedure for all such pairs $j_1 \ne j_2$, we effectively merge $j_1$ and $j_2$ in the partition $\uppi|_{[2L_1]}$ whenever their indices are merged.

By the definition of $\uppi_1^*$, the first product in the summation then reduces to
\[
\prod_{k=1}^{L_1} a\Big(i_{\uppi_1^*(2k-1)}, i_{\uppi_1^*(2k)}\Big),
\]
and the summation indices
\[
\{ i_{\uppi(j)} : j \le 2L_1 \text{ or } \uppi(j) \in A,\ \#(A \cap \uppi([2L_1])) \ge 2 \}
\]
reduce to $\{ i_1^*, \dots, i_{\# \uppi_1^*}^* \}$ after relabeling.

Now we consider those $A$ such that $\#(A \cap \uppi([2L_1])) = 1$.  
For each such $A$, among the indices $\{i_{\uppi(j)} : \uppi(j) \in A\}$, there is exactly one $i_{\uppi(j_0)}$ with $j_0 \le 2L_1$ that already occurs in $\{i_1^*, \dots, i_{\# \uppi_1^*}^*\}$.  
Then, for those $\uppi(j)$ with $j > 2L_1$ that lie in the same segment as $\uppi(j_0)$ in $G_{\uppi_2}^0$, the summand being nonzero implies the constraint
\[
i_{\uppi(j)} + d(j,j_0) = i_{\uppi(j_0)},
\] 
so that the index $i_{\uppi(j)}$ is effectively determined and can be eliminated from the summation.

For those $A$ such that $A \cap \uppi([2L_1]) = \emptyset$, the same constraint for nonzero summands implies that the summation indices
\[
I_2 = \{ i_{\uppi(j)} : \uppi(j) \in A, \ A \cap \uppi([2L_1]) = \emptyset \}
\] 
reduce to choosing exactly one representative from each such segment $A$, giving a total of $h(\uppi)$ free indices.

Combining all the arguments above, we obtain
\[
S^*(\uppi) = \big(1+o(1)\big)\ {n^{-\# \uppi}} \sum_{i_1^* \ne \cdots \ne i_{\# \uppi_1^*}^*} 
\prod_{k=1}^{L_1} a\Big(i_{\uppi_1^*(2k-1)}, i_{\uppi_1^*(2k)}\Big) \cdot n^{h(\uppi)}
= O\Bigg(n^{-\# \uppi + L_1 - \Big\lceil \frac{s(\uppi_1^*)}{2} \Big\rceil + \# \uppi_1^* + h(\uppi)}\Bigg),
\]
where the last equality follows from Lemma \ref{lem1} and Remark \ref{remark:lem1}.

\subsubsection{Proof of Proposition \ref{Graph-Estimate-EDelta0}}
Recall that an edge $e=(u,v)\in E(\Delta^0)$ is said to be a single edge if the multiplicity $s_{uv}=1$. If further more $(v,u)\notin E(\Delta^0)$, then $e$ is said to be a reducible single edge. If $s_{uv}\geq 2$, $e$ is said to be a multiple edge. We denote $E_\text{RSE}(\Delta^0)$ as the set of reducible single edges.

Now let $e_1=(u_1,v_1)\in E_{\text{RSE}}(\Delta^0)$. Denote $\cL_{e_1}(\Delta)=\{\ell_{u_1v_1}^{1,1},\ell_{u_1v_1}^{1,2}\}$. From Proposition \ref{Graphical-Representation}, $\E[\mf{\Xi}_n(\Delta)]=S_{11}+S_{12}$, where 
\bea\nonumber
S_{11}=\Big(\frac{3}{n^2-1}\Big)^{\# E(\Delta)}\sum_{\cL\1(\Delta)}\prod_{v\in V(\Delta)}G_v,\quad 
S_{12}=\Big(\frac{3}{n^2-1}\Big)^{\# E(\Delta)}\sum_{\wt{\cL}\1(\Delta)}\prod_{v\in V(\Delta)}G_v.
\eea
Here
\bea\nonumber
\cL\1(\Delta)&=\{\ell_{uv}^{s,r}:(u,v)\in E(\Delta^0), 1\leq s\leq s_{uv}, r\in \{1,2\}, \cL_{e_1}(\Delta)\cap (\cL_{u_1}(\Delta)\setminus \cL_{e_1}(\Delta))\ne\emptyset\},\\
\wt{\cL}\1(\Delta)&=\{\ell_{uv}^{s,r}:(u,v)\in E(\Delta^0), 1\leq s\leq s_{uv}, r\in \{1,2\}, \cL_{e_1}(\Delta)\cap (\cL_{u_1}(\Delta)\setminus \cL_{e_1}(\Delta))=\emptyset\},\\
G_v&=\E\Bigg[\prod_{\substack{u\in N^-(v)\\1\leq s\leq s_{uv}}}a\Big(\sigma(\ell_{uv}^{s,1}),\sigma(\ell_{uv}^{s,2})\Big)\cdot\prod_{\substack{w\in N^+(v)\\1\leq s\leq s_{vw}} }\ind\Big(\sigma(\ell_{vw}^{s,1})+1=\sigma(\ell_{vw}^{s,2})\Big) \Bigg],\quad v\in V(\Delta),
\eea
and the summation $\sum_{\cL^{(t)}(\Delta)}$ runs each index $\ell_{uv}^{s,r}$ in $\cL^{(t)}(\Delta)$ from $1$ to $n$ satisfying $\ell_{uv}^{s,1}\ne \ell_{uv}^{s,2}$. 

We have
\bea\nonumber
S_{12}&=\Big(\frac{3}{n^2-1}\Big)^{\# E(\Delta)}\sum_{\cL(\Delta)\setminus \cL_{e_1}(\Delta)}\sum_{\substack{\cL_{e_1}(\Delta)\\\cL_{e_1}(\Delta)\cap (\cL_{u_1}(\Delta)\setminus \cL_{e_1}(\Delta))=\emptyset}}\prod_{v\in V(\Delta)}G_v\\
&=\Big(\frac{3}{n^2-1}\Big)^{\# E(\Delta)}\sum_{\cL(\Delta)\setminus \cL_{e_1}(\Delta)} \l(\prod_{\substack{v\in V(\Delta)\\v\ne u_1,v_1}}G_v\r)\sum_{\substack{\cL_{e_1}(\Delta)\\\cL_{e_1}(\Delta)\cap (\cL_{u_1}(\Delta)\setminus \cL_{e_1}(\Delta))=\emptyset}}G_{u_1}G_{v_1}\\
&=\Big(\frac{3}{n^2-1}\Big)^{\# E(\Delta)}\sum_{\cL(\Delta)\setminus \cL_{e_1}(\Delta)} \l(\prod_{\substack{v\in V(\Delta)\\v\ne u_1,v_1}}G_v\r)\wt{G}_{u_1}^{v_1}\sum_{\substack{\cL_{e_1}(\Delta)\\\cL_{e_1}(\Delta)\cap (\cL_{u_1}(\Delta)\setminus \cL_{e_1}(\Delta))=\emptyset}}G_{v_1}.
\eea
Here
\bea\nonumber
\wt{G}_{u_1}^{v_1}=\E\Bigg[\prod_{\substack{u\in N^-(u_1)\\1\leq s\leq s_{uu_1}}}a\Big(\sigma(\ell_{uu_1}^{s,1}),\sigma(\ell_{uu_1}^{s,2})\Big)\cdot\prod_{\substack{v\in N^+(u_1)\\v\ne v_1\\1\leq s\leq s_{u_1v}} }\ind\Big(\sigma(\ell_{u_1v}^{s,1})+1=\sigma(\ell_{u_1v}^{s,2})\Big)\cdot\ind\Big(\sigma(b_1)+1=\sigma(b_2)\Big) \Bigg],
\eea
where $b_1,b_2\in [n]-(\cL_{u_1}(\Delta)\setminus \cL_{e_1}(\Delta)),\ b_1\ne b_2$. 

We have
\bea\nonumber
&\sum_{\substack{\cL_{e_1}(\Delta)\\\cL_{e_1}(\Delta)\cap (\cL(\Delta)\setminus \cL_{e_1}(\Delta))=\emptyset}}G_{v_1}\\\
=&\sum_{\substack{\cL_{e_1}(\Delta)\\\cL_{e_1}(\Delta)\cap (\cL(\Delta)\setminus \cL_{e_1}(\Delta))=\emptyset}}\E\Bigg[\prod_{\substack{u\in N^-(v_1)\\1\leq s\leq s_{uv_1}}}a\Big(\sigma(\ell_{uv_1}^{s,1}),\sigma(\ell_{uv_1}^{s,2})\Big)\cdot\prod_{\substack{w\in N^+(v_1)\\1\leq s\leq s_{v_1w}} }\ind\Big(\sigma(\ell_{v_1w}^{s,1})+1=\sigma(\ell_{v_1w}^{s,2})\Big) \Bigg]\\
=&\E\Bigg[\prod_{\substack{u\in N^-(v_1)\\u\ne u_1\\1\leq s\leq s_{uv_1}}}a\Big(\sigma(\ell_{uv_1}^{s,1}),\sigma(\ell_{uv_1}^{s,2})\Big)\cdot\l(\sum_{\substack{\cL_{e_1}(\Delta)\\\cL_{e_1}(\Delta)\cap (\cL(\Delta)\setminus \cL_{e_1}(\Delta))=\emptyset}}a\Big(\sigma(\ell_{u_1v_1}^{s,1}),\sigma(\ell_{u_1v_1}^{s,2})\Big)\r)\cdot\\ &\quad \prod_{\substack{w\in N^+(v_1)\\1\leq s\leq s_{v_1w}} }\ind\Big(\sigma(\ell_{v_1w}^{s,1})+1=\sigma(\ell_{v_1w}^{s,2})\Big) \Bigg].
\eea
Since 
$$
\l(\sum_{\substack{\cL_{e_1}(\Delta)\\\cL_{e_1}(\Delta)\cap (\cL(\Delta)\setminus \cL_{e_1}(\Delta))=\emptyset}}+\sum_{\substack{\cL_{e_1}(\Delta)\\\cL_{e_1}(\Delta)\cap (\cL(\Delta)\setminus \cL_{e_1}(\Delta))\ne\emptyset}}\r)a\Big(\sigma(\ell_{u_1v_1}^{s,1}),\sigma(\ell_{u_1v_1}^{s,2})\Big)=\sum_{1\leq i\ne j\leq n}a(i,j)=0,
$$
we have
\bea\nonumber
\sum_{\substack{\cL_{e_1}(\Delta)\\\cL_{e_1}(\Delta)\cap (\cL(\Delta)\setminus \cL_{e_1}(\Delta))=\emptyset}}G_{v_1}=\sum_{\substack{\cL_{e_1}(\Delta)\\\cL_{e_1}(\Delta)\cap (\cL(\Delta)\setminus \cL_{e_1}(\Delta))\ne\emptyset}}(-G_{v_1}).
\eea

Then
\bea\nonumber
S_{12}&=\Big(\frac{3}{n^2-1}\Big)^{\# E(\Delta)}\sum_{\cL(\Delta)\setminus \cL_{e_1}(\Delta)} \l(\prod_{\substack{v\in V(\Delta)\\v\ne u_1,v_1}}G_v\r)\wt{G}_{u_1}^{v_1}\sum_{\substack{\cL_{e_1}(\Delta)\\\cL_{e_1}(\Delta)\cap (\cL_{u_1}(\Delta)\setminus \cL_{e_1}(\Delta))\ne\emptyset}}(-G_{v_1})\\
&=\Big(\frac{3}{n^2-1}\Big)^{\# E(\Delta)}\sum_{\cL\1(\Delta)}\l((-\wt{G}_{u_1}^{v_1})\prod_{\substack{v\in V(\Delta)\\v\ne u_1}}G_v\r).
\eea
Therefore
\bea\nonumber
\E[\mf{\Xi}_n(\Delta)]=\Big(\frac{3}{n^2-1}\Big)^{\# E(\Delta)}\sum_{\cL\1(\Delta)}\l((G_{u_1}-\wt{G}_{u_1}^{v_1})\prod_{\substack{v\in V(\Delta)\\v\ne u_1}}G_v\r).
\eea

Now suppose that there is $e_2=(u_2,v_2)\in E_{\text{RSE}}(\Delta),\ e_2\ne e_1$. Then $\E[\mf{\Xi}_n(\Delta)]=S_{21}+S_{22}$, where
\bea\nonumber
S_{21}&=\Big(\frac{3}{n^2-1}\Big)^{\# E(\Delta)}\sum_{\cL\2(\Delta)}\l((G_{u_1}-\wt{G}_{u_1}^{v_1})\prod_{\substack{v\in V(\Delta)\\v\ne u_1}}G_v\r),\\
S_{22}&=\Big(\frac{3}{n^2-1}\Big)^{\# E(\Delta)}\sum_{\wt{\cL}\2(\Delta)}\l((G_{u_1}-\wt{G}_{u_1}^{v_1})\prod_{\substack{v\in V(\Delta)\\v\ne u_1}}G_v\r).
\eea
Here
\bea\nonumber
\cL\2(\Delta)&=\{\ell_{uv}^{s,r}:\cL_{e_1}(\Delta)\cap (\cL_{u_1}(\Delta)\setminus \cL_{e_1}(\Delta))\ne\emptyset,\cL_{e_2}(\Delta)\cap (\cL_{u_2}(\Delta)\setminus \cL_{e_2}(\Delta))\ne\emptyset\},\\
\wt{\cL}\2(\Delta)&=\{\ell_{uv}^{s,r}:\cL_{e_1}(\Delta)\cap (\cL_{u_1}(\Delta)\setminus \cL_{e_1}(\Delta))\ne\emptyset,\cL_{e_2}(\Delta)\cap (\cL_{u_2}(\Delta)\setminus \cL_{e_2}(\Delta))=\emptyset\}.
\eea
Then,
\bea\nonumber
S_{22}=&\Big(\frac{3}{n^2-1}\Big)^{\# E(\Delta)}\\ &\quad \cdot \sum_{\substack{\cL(\Delta)\setminus \cL_{e_2}(\Delta)\\\cL_{e_1}(\Delta)\cap (\cL_{u_1}(\Delta)\setminus \cL_{e_1}(\Delta))\ne\emptyset}}~~\sum_{\substack{\cL_{e_2}(\Delta)\\\cL_{e_2}(\Delta)\cap (\cL_{u_2}(\Delta)\setminus \cL_{e_2}(\Delta))=\emptyset}}\l((G_{u_1}-\wt{G}_{u_1}^{v_1})\prod_{\substack{v\in V(\Delta)\\v\ne u_1}}G_v\r).
\eea

\medskip
\noindent\underline{Case I:}\ $u_1\ne u_2$. We have
    \begin{eqnarray*} 
    & &\Big(\frac{3}{n^2-1}\Big)^{-\# E(\Delta)} S_{22}\\
    &=&\sum_{\substack{\cL(\Delta)\setminus \cL_{e_2}(\Delta)\\\cL_{e_1}(\Delta)\cap (\cL_{u_1}(\Delta)\setminus \cL_{e_1}(\Delta))\ne\emptyset}} \l(\prod_{\substack{v\in V(\Delta)\\v\ne u_1,u_2,v_2}}G_v\r)(G_{u_1}-\wt{G}_{u_1}^{v_1})\sum_{\substack{\cL_{e_2}(\Delta)\\\cL_{e_2}(\Delta)\cap (\cL_{u_2}(\Delta)\setminus \cL_{e_2}(\Delta))=\emptyset}}F_{u_2}F_{v_2}\\
    &=& \sum_{\substack{\cL(\Delta)\setminus \cL_{e_2}(\Delta)\\\cL_{e_1}(\Delta)\cap (\cL_{u_1}(\Delta)\setminus \cL_{e_1}(\Delta))\ne\emptyset}} \l(\prod_{\substack{v\in V(\Delta)\\v\ne u_1,u_2,v_2}}G_v\r)(G_{u_1}-\wt{G}_{u_1}^{v_1})(-\wt{G}_{u_2}^{v_2})\sum_{\substack{\cL_{e_2}(\Delta)\\\cL_{e_2}(\Delta)\cap (\cL_{u_2}(\Delta)\setminus \cL_{e_2}(\Delta))\ne\emptyset}}F_{v_2}\\
    &=& \sum_{\cL\2(\Delta)}\l((G_{u_1}-\wt{G}_{u_1}^{v_1})(-\wt{G}_{u_2}^{v_2})\prod_{\substack{v\in V(\Delta)\\v\ne u_1,u_2}}G_v\r).
    \end{eqnarray*}
Then
\bea\nonumber
\E[\mf{\Xi}_n(\Delta)]=\Big(\frac{3}{n^2-1}\Big)^{\# E(\Delta)}\sum_{\cL\2(\Delta)}\l((G_{u_1}-\wt{G}_{u_1}^{v_1})(F_{u_2}-\wt{G}_{u_2}^{v_2})\prod_{\substack{v\in V(\Delta)\\v\ne u_1,u_2}}G_v\r).
\eea

\medskip
\noindent\underline{Case II}:\ $u_1=u_2,\ v_1\ne v_2$. We have
 \begin{eqnarray*} 
    & &\Big(\frac{3}{n^2-1}\Big)^{-\# E(\Delta)} S_{22}\\
    &= &\sum_{\substack{\cL(\Delta)\setminus \cL_{e_2}(\Delta)\\\cL_{e_1}(\Delta)\cap (\cL_{u_1}(\Delta)\setminus (\cL_{e_1}(\Delta)\cup \cL_{e_2}(\Delta)))\ne\emptyset}} \l(\prod_{\substack{v\in V(\Delta)\\v\ne u_1,v_2}}G_v\r)\sum_{\substack{\cL_{e_2}(\Delta)\\\cL_{e_2}(\Delta)\cap (\cL_{u_2}(\Delta)\setminus \cL_{e_2}(\Delta))=\emptyset}}(G_{u_1}-\wt{G}_{u_1}^{v_1})F_{v_2}\\
    &=& \sum_{\substack{\cL(\Delta)\setminus \cL_{e_2}(\Delta)\\\cL_{e_1}(\Delta)\cap (\cL_{u_1}(\Delta)\setminus (\cL_{e_1}(\Delta)\cup \cL_{e_2}(\Delta)))\ne\emptyset}} \l(\prod_{\substack{v\in V(\Delta)\\v\ne u_1,v_2}}G_v\r)(\wt{G}_{u_1}^{v_1v_2}-\wt{G}_{u_1}^{v_2})\sum_{\substack{\cL_{e_2}(\Delta)\\\cL_{e_2}(\Delta)\cap (\cL_{u_2}(\Delta)\setminus \cL_{e_2}(\Delta))\ne\emptyset}}F_{v_2}\\
    &=& \sum_{\cL\2(\Delta)}\l((\wt{G}_{u_1}^{v_1v_2}-\wt{G}_{u_1}^{v_2})\prod_{\substack{v\in V(\Delta)\\v\ne u_1,u_2}}G_v\r).
\end{eqnarray*}
Here
\bea\nonumber
\wt{G}_{u_1}^{v_1v_2}=&\E\Bigg[\prod_{\substack{u\in N^-(u_1)\\1\leq s\leq s_{uu_1}}}a\Big(\sigma(\ell_{uu_1}^{s,1}),\sigma(\ell_{uu_1}^{s,2})\Big)\cdot\prod_{\substack{v\in N^+(u_1)\\v\ne v_1,v_2\\1\leq s\leq s_{u_1v}} }\ind\Big(\sigma(\ell_{u_1v}^{s,1})+1=\sigma(\ell_{u_1v}^{s,2})\Big)\cdot\\&\quad\quad\ind\Big(\sigma(b_1)+1=\sigma(b_2)\Big)\cdot\ind\Big(\sigma(b_3)+1=\sigma(b_4)\Big) \Bigg],
\eea
where $b_1,b_2,b_3,b_4$ are distinct elements in $[n]-(\cL_{u_1}(\Delta)\setminus (\cL_{e_1}(\Delta)\cup \cL_{e_2}(\Delta)))$. 

Then
\bea\nonumber
\E[\mf{\Xi}_n(\Delta)]=\Big(\frac{3}{n^2-1}\Big)^{\# E(\Delta)}\sum_{\cL\2(\Delta)}\l(
(G_{u_1}-\wt{G}_{u_1}^{v_1}-\wt{G}_{u_1}^{v_2}+\wt{G}_{u_1}^{v_1v_2})\prod_{\substack{v\in V(\Delta)\\v\ne u_1}}G_v\r).
\eea
Now suppose that $E_{\text{RSE}}(\Delta^0)=\{e_q=(u_q,v_q):\ q\in [Q_0]\}$. Define
\bea\nonumber
\cL^{(q)}(\Delta)=\{\ell_{uv}^{s,r}:\ \cL_{e_q'}(\Delta)\cap (\cL_{u_q'}(\Delta)\setminus \cL_{e_q'}(\Delta))=\emptyset,\ \text{for any }q'\in [q]\},\quad q\in [Q_0].
\eea
For $u\in V(\Delta), \cA\subset N^+(u)$,
define
\bea\nonumber
\wt{G}^{\cA}_u=\E\Bigg[\prod_{\substack{w\in N^-(u)\\1\leq s\leq s_{wu}}}a\big(\sigma(\ell_{wu}^{s,1}),\sigma(\ell_{wu}^{s,2})\big)\cdot& \prod_{\substack{v\in (N^+(u)-\cA)\\1\leq s\leq s_{uv}}}\ind\big(\sigma(\ell_{uv}^{s,1})+1=\sigma(\ell_{uv}^{s,2})\big)\\ \cdot& \prod_{v\in \cA}\ind\big(\sigma(b_v^1)+1=\sigma(b_v^2)\big)\Bigg],
\eea
here $\{b_{v}^1,b_v^2:\ v\in \cA\}$ are distinct elements in $[n]\setminus \big(\cL_u(\Delta)\setminus \cup_{v\in \cA}\{\ell_{uv}^{1,1},\ell_{uv}^{1,2}\}\big)$.
Denote $\cU_0=\{u_1,\cdots,u_{Q_0}\},\ \cV_0=\{v_1,\cdots,v_{Q_0}\}$. For $u\in \cU_0$, let $V_{\text{RSE}}(u)=\{v_q\in \cV_0:\ (u,v_q)\in E_{\text{RSE}}(\Delta)\}$. By convention $\wt{G}_u^\emptyset=G_u$.
Then, by induction, we have the following.

\begin{proposition}\label{Graphical-Representation-Reduced} We have
\bea\nonumber
\E[\mf{\Xi}_n(\Delta)]=\Big(\frac{3}{n^2-1}\Big)^{\# E(\Delta)}\sum_{\cL^{(Q_0)}(\Delta)}\l(
\prod_{u\in \cU_0}\bigg(\sum_{\cA\subset V_{\text{RSE}}(u)}(-1)^{\# \cA}\wt{G}_u^\cA\bigg)\cdot\prod_{\substack{u\in (V(\Delta)\setminus \cU_0)}}G_u\r).
\eea
\end{proposition}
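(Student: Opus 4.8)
The plan is to prove the identity by induction on the number $q$ of reducible single edges that have been eliminated, starting from the graphical representation \eqref{eqrep} of Proposition~\ref{Graphical-Representation} (the case $q=0$) and following exactly the pattern of the explicit reductions of $e_1$ and of $e_2$ (Cases~I and~II) carried out in the paragraphs above. Writing $\cU_0^{(q)}:=\{u_1,\dots,u_q\}$ and $V_{\text{RSE}}^{(q)}(u):=V_{\text{RSE}}(u)\cap\{v_1,\dots,v_q\}$, the inductive hypothesis is that
\[
\E[\mf{\Xi}_n(\Delta)]=\Big(\tfrac{3}{n^2-1}\Big)^{\#E(\Delta)}\sum_{\cL^{(q)}(\Delta)}\Bigg(\prod_{u\in\cU_0^{(q)}}\Big(\sum_{\cA\subset V_{\text{RSE}}^{(q)}(u)}(-1)^{\#\cA}\wt{G}_u^{\cA}\Big)\prod_{u\notin\cU_0^{(q)}}G_u\Bigg),
\]
where $\cL^{(q)}(\Delta)$ is the index set in which $\cL_{e_{q'}}(\Delta)$ meets $\cL_{u_{q'}}(\Delta)\setminus\cL_{e_{q'}}(\Delta)$ for every $q'\le q$; the case $q=0$ recovers Proposition~\ref{Graphical-Representation} and $q=Q_0$ is the assertion.

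For the step $q\to q+1$, I would single out the sum over the pair $\cL_{e_{q+1}}(\Delta)=\{\ell^{1,1}_{u_{q+1}v_{q+1}},\ell^{1,2}_{u_{q+1}v_{q+1}}\}$ and split it, as in the decomposition $\E=S_{11}+S_{12}$ above, into the ``colliding'' part, where these two indices meet the remaining indices at the tail $u_{q+1}$ (hence consistent with $\cL^{(q+1)}(\Delta)$), and the complementary ``fresh'' part. Two structural facts drive the fresh part. First, by exchangeability of the uniform permutation $\sigma$ attached to $u_{q+1}$, when $\ell^{1,1}_{u_{q+1}v_{q+1}},\ell^{1,2}_{u_{q+1}v_{q+1}}$ avoid the other index values at $u_{q+1}$ the factor $\ind(\sigma(\ell^{1,1}_{u_{q+1}v_{q+1}})+1=\sigma(\ell^{1,2}_{u_{q+1}v_{q+1}}))$ may be evaluated at two fresh arguments instead, which turns the tail factor $G_{u_{q+1}}$ — or, if $u_{q+1}$ was already a tail of an earlier eliminated edge, each summand $\wt{G}_{u_{q+1}}^{\cA}$ — into $\wt{G}_{u_{q+1}}^{\cA\cup\{v_{q+1}\}}$ and makes it independent of $\cL_{e_{q+1}}(\Delta)$. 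Second, the factor $a(\sigma(\ell^{1,1}_{u_{q+1}v_{q+1}}),\sigma(\ell^{1,2}_{u_{q+1}v_{q+1}}))$ is the \emph{only} appearance of the names $\ell^{1,1}_{u_{q+1}v_{q+1}},\ell^{1,2}_{u_{q+1}v_{q+1}}$ in the expectation attached to $v_{q+1}$, so summing it over all distinct pairs gives $\sum_{1\le i\ne j\le n}a(i,j)=0$; consequently the fresh part equals the negative of the colliding part in which every $\wt{G}_{u_{q+1}}^{\cA}$ (and $G_{u_{q+1}}$) is replaced by $\wt{G}_{u_{q+1}}^{\cA\cup\{v_{q+1}\}}$ (resp. $\wt{G}_{u_{q+1}}^{\{v_{q+1}\}}$). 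Adding the colliding part to this, the factor at $u_{q+1}$ becomes $\sum_{\cA\subset V_{\text{RSE}}^{(q)}(u_{q+1})}(-1)^{\#\cA}\big(\wt{G}_{u_{q+1}}^{\cA}-\wt{G}_{u_{q+1}}^{\cA\cup\{v_{q+1}\}}\big)$, which, using $(-1)^{\#(\cA\cup\{v_{q+1}\})}=-(-1)^{\#\cA}$, regroups into $\sum_{\cA\subset V_{\text{RSE}}^{(q+1)}(u_{q+1})}(-1)^{\#\cA}\wt{G}_{u_{q+1}}^{\cA}$, while the range of summation tightens to $\cL^{(q+1)}(\Delta)$. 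This is the hypothesis at level $q+1$; it specializes to the Case~I computation when $u_{q+1}$ is new and to the Case~II computation when it repeats a previous tail.

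The routine parts are the two structural facts just described: the first is immediate from uniformity of $\sigma$ together with the freedom to choose the fresh arguments $b^{\cdot}_{v_{q+1}}$ disjoint from all relevant indices (which also makes each $\wt{G}_u^{\cA}$ well defined, its value not depending on the choice), and the second is the zero-sum identity $\sum_{i\ne j}a(i,j)=0$, which applies verbatim because the summand $a(\sigma(\cdot),\sigma(\cdot))$ does not see coincidences among the remaining indices. The part that requires care is the combinatorial bookkeeping of several eliminated edges sharing a tail vertex (where several outgoing indicators at one vertex are simultaneously in play, the Case~II phenomenon) and of the situation in which the head $v_{q+1}$ of the new edge coincides with the tail of an already-eliminated edge: in the latter the $v_{q+1}$-factor is already an inclusion--exclusion sum $\sum_{\cA}(-1)^{\#\cA}\wt{G}_{v_{q+1}}^{\cA}$, and one must observe that the $a$-factor of $e_{q+1}$ still sits inside each $\wt{G}_{v_{q+1}}^{\cA}$ untouched — since the tilde operation only rewrites the outgoing indicator factors and leaves incoming $a$-factors alone — so the zero-sum step goes through unchanged. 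Keeping track of the resulting nested telescoping of the sums over the subsets $\cA$ is the only genuinely delicate point; everything else is formal.
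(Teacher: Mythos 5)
Your proof follows the paper's route exactly: the paper eliminates the first two reducible single edges explicitly (with Cases I and II to handle a repeated tail) and then declares the general statement "by induction"; you have supplied the precise inductive hypothesis and carried out the step $q\to q+1$, including the two subtleties the paper implicitly needs — that when $u_{q+1}$ already lies in $\cU_0^{(q)}$ the replacement acts on every summand $\wt{G}_{u_{q+1}}^{\cA}$ and telescopes via $(-1)^{\#(\cA\cup\{v_{q+1}\})}=-(-1)^{\#\cA}$, and that the zero-sum identity still applies when $v_{q+1}$ is itself a previously eliminated tail because the tilde operation only rewrites outgoing indicator factors while leaving the incoming $a$-factor for $e_{q+1}$ intact. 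This is a correct filling-in of the induction the paper sketches, not a different argument.
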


Now define
\bea\nonumber
&\ml{RP}_\text{RSE}(\cL(\Delta))\\=&\l\{\uppi\in \ml{RP}(\cL(\Delta)):\ \uppi\big(\cL_{e_q}(\Delta)\big)\cap\bigg(\uppi\big(\cL_{u_q}(\Delta)\big)\setminus \uppi\big(\cL_{e_q}(\Delta)\big)\bigg)\ne\emptyset,\ \text{for any }q\in [Q_0]\r\}.
\eea
For $\rA=(\cA_u)_{u\in \cU_0},\ \cA_u\subset V_\text{RSE}(u)$, define
\bea\nonumber
\wt{G}(\uppi,\rA)=(n)_{\# \uppi}\prod_{u\in \cU_0}\wt{G}_u^{\cA_u}(\uppi)\prod_{u\in V(\Delta)\setminus \cU_0}G_u(\uppi),\quad \uppi\in \ml{RP}_\text{RSE}(\cL(\Delta)).
\eea
Then
\bea\label{Final-graphical-representation}
\E[\mf{\Xi}_n(\Delta)]=\Big(\frac{3}{n^2-1}\Big)^{\# E(\Delta)}\sum_{\uppi\in \ml{RP}_\text{RSE}(\cL(\Delta))}\sum_{\rA=(\cA_u)_{u\in \cU_0}}(-1)^{\sum_{u\in \cU_0}\# \cA_u}\wt{G}(\uppi,\rA).
\eea
To give an estimate of the order of $\E[\mf{\Xi}_n(\Delta)]$, it suffices to consider the order of $\wt{G}(\uppi,\rA)$. In the sum on $\uppi$, those $\uppi\in \ml{RP}_\text{RSE}(\cL(\Delta))$ such that $\uppi(\cL_{e_q}(\Delta))\cap\uppi(\cL_{e_q'}(\Delta))=\emptyset$ for all $e_q,e_q'$ such that $u_q\neq u_q'$ contributes the leading order term of $\E[\mf{\Xi}_n(\Delta)]$. The set of $\uppi$'s with this property is denoted as $\overline{\ml{RP}}_\text{RSE}(\cL(\Delta))$.

Define $\uppi_0$ as the partition on $\cL(\Delta)$ such that every block is a singleton. In general, any $\uppi\in \mathcal{RP}_\text{RSE}(\cL(\Delta))$ such that $\uppi^v|_{\cL_v^+(\Delta)}$ is valid for all $v\in V(\Delta)$ can be obtained from $\uppi_0$ in finite steps
following the two procedures below. For $\uptau\in\ml{RP}(\cL(\Delta))$, we define $\operatorname{merge}(\uptau,\ell_{uv}^{s,r},\ell_{u'v'}^{s',r'})$ as the partition obtained from $\uptau$ by merging the blocks that contain $\ell_{uv}^{s,r},\ell_{u'v'}^{s',r'}$. If $\ell_{uv}^{s,r},\ell_{u'v'}^{s',r'}$ are already in the same block of $\uptau$, then $\operatorname{merge}(\uptau,\ell_{uv}^{s,r},\ell_{u'v'}^{s',r'})=\uptau$.

\textbf{Procedure 1. }For $q\in[Q_0]$, $\uppi_q$ is obtained from $\uppi_{q-1}$ by follows. Since $\uppi\in \overline{\ml{RP}}_\text{RSE}(\cL(\Delta))$, at least one of $\ell_{u_qv_q}^{1,1},\ell_{u_qv_q}^{1,2}$ is not singleton in $\uppi^{u_q}$. For such $\ell_{u_qv_q}^{1,t}$, pick a $\ell_{u_qv'}^{s',t'}$ that is in the same block as $\ell_{u_qv_q}^{1,t}$ in $\uppi$, let $\uppi_{q}=\operatorname{merge}(\uppi_{q-1},\ell_{u_qv_q}^{1,t},\ell_{u_qv'}^{s',t'})$.

\textbf{Procedure 2. }For $q\geq Q_0+1$, if $\uppi_{q-1}=\uppi$, terminate the procedure. If $\uppi_{q-1}\ne \uppi$, 
there is a pair $(\ell_{uv}^{s,r},\ell_{u'v'}^{s',r'})$ such that $\ell_{uv}^{s,r},\ell_{u'v'}^{s',r'}$ are in the same block of $\uppi$ but are in distinct blocks of $\uppi_{q-1}$, and $\ell_{uv}^{s,r}$ is a singleton in $\uppi_{q-1}$. Then let $\uppi_q=\operatorname{merge}(\uppi_{q-1},\ell_{uv}^{s,r},\ell_{u'v'}^{s',r'})$.

By definition, $\uppi_{q}\in\overline{\ml{RP}}_\text{RSE}(\cL(\Delta)),\ \text{for any } q\geq Q_0$. In both procedures we have $\# \uppi_{q}=\# \uppi_{q-1}-1$, therefore the procedure 2 must terminate in finite steps. If $\uppi_{q+1}^v|_{\cL_v^+(\Delta)}$ is valid for all $v\in V(\Delta)$, then $\uppi_{q}^v|_{\cL_v^+(\Delta)}$ is also valid for all $v\in V(\Delta)$. Therefore, $\uppi_{q}^v|_{\cL_v^+(\Delta)}$ is valid for all $v\in V(\Delta)$ and all $q\in [Q_0]$.
From Proposition \ref{Local-Estimate-Partition}, for any $\cA_u\subset V_{\text{RSE}}(u)$,
$$\wt{G}_u^{\cA_u}=G_u(\uppi_0)=O(n^{\chi_u(\uppi_0)}),\quad \chi_u(\uppi_0)=d^-(u)-d^+(u)-\bigg\lceil\frac{d^-(u)}{2}\bigg\rceil.$$
Then for any $\rA=(\cA_u)_{u\in \cU_0}$, 
$
\wt{G}(\uppi_0,\rA)=O(n^{\wt{\chi}(\uppi_0,\rA)})$, where
\bea\nonumber
\wt{\chi}(\uppi_0,\rA)=2\# E(\Delta)-\sum_{u\in V(\Delta)}\bigg\lceil\frac{d^-(u)}{2}\bigg\rceil.
\eea
\begin{lemma}\label{Lemma-Reduce-Procedure-Before-Q0}
    For $q\in [Q_0]$, $\wt{G}(\uppi_q,\rA)=O(n^{\wt{\chi}(\uppi_0,\rA)-\lceil q/2\rceil})$.
\end{lemma}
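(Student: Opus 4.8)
The plan is to argue by induction on $q$, from $q=0$ up to $q=Q_0$. The base case $q=0$ is exactly the estimate $\wt{G}(\uppi_0,\rA)=O(n^{\wt{\chi}(\uppi_0,\rA)})$ recorded just above, which is uniform in $\rA$. For the inductive step I would track how the single merge $\uppi_q=\operatorname{merge}(\uppi_{q-1},\ell_{u_qv_q}^{1,t},\ell_{u_qv'}^{s',t'})$ affects the summand $\wt{G}(\uppi,\rA)$, using $(n)_{\#\uppi}=n^{\#\uppi}(1+o(1))$, the local bound $G_u(\uppi)=O(n^{\chi_u(\uppi)})$ from Proposition~\ref{Local-Estimate-Partition}, and the identical bound $\wt{G}_u^{\cA}(\uppi)=O(n^{\chi_u^{\cA}(\uppi)})$ for the freed factors, where $\chi_u^{\cA}$ differs from $\chi_u$ only by a harmless relabelling of the freed edges; recall also that each restriction $\uppi_q^{v}|_{\cL_v^+(\Delta)}$ is valid, so none of these local factors is forced to vanish. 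Exactly two changes occur in passing from $\uppi_{q-1}$ to $\uppi_q$: the construction gives $\#\uppi_q=\#\uppi_{q-1}-1$, so the prefactor $(n)_{\#\uppi}$ loses one factor $n$; and, since the two merged labels are incident only to $u_q$, $v_q$ and $v'$, every local factor other than those at $u_q$, $v_q$, $v'$ is literally unchanged, so it suffices to control the movements of $\chi_{u_q}$, $\chi_{v_q}$, $\chi_{v'}$.

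At the source $u_q$ the merge identifies two indices sitting inside the chain-union carried by the indicator part of the $u_q$-local expectation; by Lemma~\ref{lem2} it either annihilates that indicator product — a strict improvement — or fuses two chains into one without changing the number of chain-edges. Feeding this into the defining formula for $\chi_{u_q}$, in which $\#\uppi^{u_q}$ drops by one, $h(\uppi^{u_q})$ drops by at most one, and the $*$-restriction term $\#\uppi_1^{u_q*}-\lceil s(\uppi_1^{u_q*})/2\rceil$ cannot increase, one gets that $\chi_{u_q}$ rises by at most $1$. At a target $v_q$ (and symmetrically at $v'$) the only possible effect is that the $a$-pair attached to the reducible single edge $e_q$ is forced to collide with other in-indices of $v_q$, which can happen only if an earlier merge had already routed such an index into the block now being merged; this can only lower the isolated-pair count $s(\uppi_1^{v_q*})$, and since $-\lceil s/2\rceil$ increases precisely when $s$ drops from an odd to an even value, the exponent at $v_q$ rises by at most $1$, and by a full $1$ only when $s$ was odd before the step.

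These per-vertex estimates by themselves would merely say that $\sum_u\chi_u$ increases by a bounded amount per step; the real content — and the step I expect to be the main obstacle — is the amortized bookkeeping giving $\sum_u\chi_u(\uppi_q)-\sum_u\chi_u(\uppi_0)\le\lfloor q/2\rfloor$, which, once the $n^{-q}$ from the prefactor is absorbed, is exactly the asserted $\wt{G}(\uppi_q,\rA)=O(n^{\wt{\chi}(\uppi_0,\rA)-\lceil q/2\rceil})$. The mechanism I would aim to make precise: any \emph{recovering} step — one that buys back a full factor $n$, necessarily by a chain extension at its source or by breaking an isolated $a$-pair at a target with odd $s$ — exploits a configuration that was put in place by some earlier \emph{preparatory} step, and a preparatory step has non-positive net increase; charging each recovering step to its preparatory step, and using that Procedure~1 processes the reducible single edges one at a time so that neither label of $e_q$ has been touched before step $q$, shows each preparatory step is charged at most once, so that at most $\lfloor q/2\rfloor$ of the first $q$ steps can be recovering ones. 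With Proposition~\ref{Local-Estimate-Partition} and Lemmas~\ref{lem1}--\ref{lem2} in hand, everything else is routine once this alternation is established.
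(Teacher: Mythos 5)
Your overall architecture --- induction on $q$, step-by-step bookkeeping of the exponent $\wt{\chi}(\uppi_q,\rA)$ through Proposition~\ref{Local-Estimate-Partition}, and an amortization to account for the $\lceil q/2\rceil$ --- is the paper's. But two concrete claims underpinning your accounting are false. First, ``$\#\uppi_q=\#\uppi_{q-1}-1$'' does not hold in general: a Procedure~1 merge can be trivial. If an earlier step $q'<q$ with $u_{q'}=u_q$ has already placed $\ell_{u_qv_q}^{1,t}$ and $\ell_{u_qv'}^{s',t'}$ in one block of $\uppi_{q'}$, then $\operatorname{merge}$ is the identity and $\uppi_q=\uppi_{q-1}$; the paper records this explicitly as Case~I of its $q=2$ analysis, where $\#\uppi_2=\#\uppi_1$. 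Your scheme relies on $(n)_{\#\uppi}$ losing exactly $q$ factors of $n$ to absorb the alleged $\lfloor q/2\rfloor$ increases in $\sum_v\chi_v$, so losing even one such drop breaks the bookkeeping. Second, and for the same reason, the assertion that ``neither label of $e_q$ has been touched before step $q$'' fails: the partner label chosen at an earlier step with the same source $u_q$ may itself be a label of $e_q$.

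More fundamentally, the proposal misidentifies which steps are expensive. In the paper's explicit $q=1,2$ cases the local exponents $\chi_v$ are exactly unchanged on every Procedure~1 step: either the unit drop in $\#\uppi^{u_q}$ is matched by a unit drop in $h(\uppi^{u_q})$, or the merged label is freed by $\cA_{u_q}$ so that every effective restriction $\uppi^u$ is untouched; and $\chi_{v_q}$, $\chi_{v'}$ are literally unchanged because the partner $\ell_{u_qv'}^{s',t'}$ is not a label of $v_q$. The entire decrease of $\wt{\chi}$ comes from the prefactor, which falls precisely on nontrivial merges and stays put on trivial ones. The amortization the paper needs is therefore that at most $\lfloor q/2\rfloor$ of the first $q$ Procedure~1 merges can be trivial --- a trivial step at $q$ needs a prior nontrivial step with the same source $u_q$ to have pre-merged the pair, and each nontrivial step can prepare at most one later trivial step --- not a bound on how often $\sum_v\chi_v$ rises, which in Procedure~1 never happens at all. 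Your ``recovering step''/``preparatory step'' charging is aimed at a non-occurring phenomenon and hung on a false claim, so even after being made precise it would not establish the required inequality.
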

\begin{proof}[Proof of Lemma \ref{Lemma-Reduce-Procedure-Before-Q0}]
    First, consider $q=1$. Write $\uppi_1=\operatorname{merge}(\uppi_0,\ell_{u_1v_1}^{1,1},\ell_{u_1v'}^{s',t'})$.
    
    \textbf{Case I: }$(u_1,v_1)\notin \cA_{u_1}$. We have
    $$
    \# \uppi_1=\# \uppi_0-1,\quad \# \uppi_1^{u_1}=\# \uppi_0^{u_1}-1,\quad h(\uppi_1^v)=h(\uppi_0^v)-1,\quad \uppi_{11}^{u_1*}=\uppi_{01}^{u_1*}.
    $$
    Then from Proposition \ref{Local-Estimate-Partition}, $\wt{G}_{u_1}(\uppi_1,\rA)=O(n^{{\chi}_{u_1}(\uppi_0)-1})$. For $v\ne u_1$, $\uppi_1^v=\uppi_0^v$, hence $\wt{G}_{v}(\uppi_1,\rA)=O(n^{{\chi}_{v}(\uppi_0)})$. Therefore $\wt{G}(\uppi_1,\rA)=O(n^{\wt{\chi}(\uppi_0,\rA)-1})$.
    
    \textbf{Case II: }$(u_1,v_1)\in \cA_{u_1}$. Then $\uppi_1^u=\uppi_0^u,\ \text{for any } u\in V(\Delta)$, and $\# \uppi_1=\# \uppi_0-1$. Again we have
    $\wt{G}(\uppi_1,\rA)=O(n^{\wt{\chi}(\uppi_0,\rA)-1})$.

    Now consider $q=2$. Write $\uppi_2=\operatorname{merge}(\uppi_1,\ell_{u_2v_2}^{1,1},\ell_{u_2v''}^{s'',t''})$.

    \textbf{Case I: }$u_1=u_2,\ \ell_{u_1v_1}^{1,1}=\ell_{u_2v''}^{s'',t''},\ell_{u_2v_2}^{1,1}=\ell_{u_1v'}^{s',t'}$. Then $\uppi_2=\uppi_1,\ \wt{G}(\uppi_2,\rA)=O(n^{\wt{\chi}(\uppi_0,\rA)-1})$. 

    \textbf{Case II: }$u_1=u_2,\ \# \{\ell_{u_1v_1}^{1,1},\ell_{u_2v_2}^{1,1},\ell_{u_1v'}^{s',r'},\ell_{u_2v''}^{s'',r''}\}\geq 3$. Then
    $$
    \# \uppi_2=\# \uppi_1-1,\quad \# \uppi_2^{u_1}-h(\uppi_2^{u_1})=\# \uppi_1^{u_1}-h(\uppi_1^{u_1}),\quad \uppi_{21}^{u_1*}=\uppi_{11}^{u_1*}.
    $$
    For $v\ne u_1, \uppi_2^v=\uppi_1^v$. Then from Proposition \ref{Local-Estimate-Partition} and the case $q=1$, $\wt{G}(\uppi_2,\rA)=O(n^{\wt{\chi}(\uppi_0,\rA)-2})$.

    \textbf{Case III: }$u_1\ne u_2$. Then
    $$
    \# \uppi_2=\# \uppi_0-2,\quad \# \uppi_2^{u_t}-h(\uppi_2^{u_t})=\# \uppi_0^{u_t}-h(\uppi_0^{u_t}),\ \uppi_{21}^{u_t*}=\uppi_{01}^{u_t*},\ t\in \{1,2\}.
    $$
    For $v\ne u_1,u_2$, $\uppi_2^v=\uppi_0^v$. Then from Proposition \ref{Local-Estimate-Partition}, $\wt{G}(\uppi_2,\rA)=O(n^{\wt{\chi}(\uppi_0,\rA)-2})$.

    The cases $q\geq 3$ can be done by induction on $q$. $\wt{G}(\uppi_q,\rA)$ reaches its maximal order when there is a sequence $k_1<k_2<\cdots<k_b\leq q\ ,\ b=\lceil q/2\rceil$ such that $\uppi_{q_{k_{b'}}}=\uppi_{q_{{k_{b'}}-1}},\ \text{for any }b'\in[b]$.
\end{proof}

\begin{lemma}\label{Lemma-Reduce-Procedure-After-Q0}
    For $q\geq Q_0+1$,
    $\wt{G}(\uppi_q,\rA)=O(n^{\wt{\chi}(\uppi_0,\rA)-\lceil 
Q_0/2 \rceil}).$
\end{lemma}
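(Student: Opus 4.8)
The plan is to argue by induction on $q\ge Q_0$, taking $q=Q_0$ from Lemma~\ref{Lemma-Reduce-Procedure-Before-Q0} and showing that each merge of Procedure~2 leaves the exponent $\wt{\chi}(\uppi_q,\rA)$ controlling $\wt{G}(\uppi_q,\rA)$ through Proposition~\ref{Local-Estimate-Partition} and the representation~\eqref{Final-graphical-representation} non-increasing. Write $\wt{G}(\uppi,\rA)=O(n^{\wt{\chi}(\uppi,\rA)})$ with $\wt{\chi}(\uppi,\rA)=\#\uppi+\sum_{u\in\cU_0}\chi_u^{\cA_u}(\uppi)+\sum_{u\in V(\Delta)\setminus\cU_0}\chi_u(\uppi)$, the superscript $\cA_u$ marking the exponent of $\wt{G}_u^{\cA_u}$ (which depends on $\uppi$ only through $\uppi^u$, since the fresh indices $b^j_v$ never enter a merge). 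Lemma~\ref{Lemma-Reduce-Procedure-Before-Q0} gives $\wt{\chi}(\uppi_{Q_0},\rA)\le\wt{\chi}(\uppi_0,\rA)-\lceil Q_0/2\rceil$, so it suffices to show $\wt{\chi}(\uppi_q,\rA)\le\wt{\chi}(\uppi_{q-1},\rA)$ for $q\ge Q_0+1$. Fix such a $q$ and write $\uppi_q=\operatorname{merge}(\uppi_{q-1},\ell,\ell')$, where $\ell=\ell_{uv}^{s,r}$ is a singleton block of $\uppi_{q-1}$ and $\ell'$ lies in a different block $B$ (so that $\ell$ is adjoined to $B$). Since $\ell\in\cL_w(\Delta)$ only for $w\in\{u,v\}$, the set partition $\uppi^w_{q-1}$ (hence $\chi_w$) is untouched for $w\notin\{u,v\}$, while $\#\uppi$ drops by one; writing $\delta_w$ for the increment of the exponent attached to $w$, this reduces the claim to $\delta_u+\delta_v\le 1$.

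The case analysis is organised by whether $B\cap\cL_u(\Delta)$ and $B\cap\cL_v(\Delta)$ are empty. If both are empty, nothing at $u$ or $v$ changes and $\delta_u=\delta_v=0$. If exactly one is nonempty, say $B\cap\cL_v(\Delta)$, then only $\uppi^v$ moves; here $\ell$ is an $a$-slot at $v$ ($\ell\in\cL_v^-(\Delta)$), and I would bound $\delta_v$ by tracking the terms of $\chi_v=-\#\uppi^v+d^-(v)-\lceil s(\uppi_1^{v*})/2\rceil+\#\uppi_1^{v*}+h(\uppi^v)$. The one-element merge raises $-\#\uppi^v$ by one; the point I would prove is that $h(\uppi^v)$ \emph{never increases} under a singleton merge — any label of a chain of $G^0_{\uppi|_{\cL_v^+(\Delta)}}$ that disappears from $\uppi^v(\cL_v^-(\Delta))$ is simultaneously replaced in that chain by the absorbing label, which still lies in $\uppi^v(\cL_v^-(\Delta))$ because $\ell\in\cL_v^-(\Delta)$ — and that $-\lceil s(\uppi_1^{v*})/2\rceil+\#\uppi_1^{v*}$ does not increase, so $\delta_v\le 1$; and $\delta_u=0$ since $B\cap\cL_u(\Delta)=\emptyset$. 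The delicate case is when both $B\cap\cL_u(\Delta)$ and $B\cap\cL_v(\Delta)$ are nonempty, so that $-\#\uppi^u$ and $-\#\uppi^v$ can both rise: applying the same analysis at each endpoint gives $\delta_u,\delta_v\le 1$, and to upgrade this to $\delta_u+\delta_v\le 1$ I would show that the configuration forcing $\delta_u=\delta_v=1$ entails a vertex of in-degree or out-degree at least two in $G^0_{\uppi_q|_{\cL_u^+(\Delta)}}$ or $G^0_{\uppi_q|_{\cL_v^+(\Delta)}}$, making $\uppi_q$ fail validity and the corresponding factor — hence $\wt{G}(\uppi_q,\rA)$ — vanish. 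In all cases, loss of validity of $\uppi_q^w|_{\cL_w^+(\Delta)}$ for some $w$ makes the bound trivial.

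I expect this last case to be the main obstacle: showing that when a single merge affects both endpoints of the edge carrying $\ell$, the only way both local exponents can go up by one is ruled out by validity of the indicator-restrictions. The underlying reasons the estimate of Proposition~\ref{Local-Estimate-Partition} is monotone along Procedure~2 — that a singleton slot carries no extra saturation, and that the $*$-restriction merges whole chains rather than arbitrary blocks — are the same two facts that make $h$ monotone; pinning down the interaction of $h(\uppi^v)$, $\#\uppi_1^{v*}$, and $s(\uppi_1^{v*})$ in the mixed situations at a vertex is where the bookkeeping is heaviest. Once $\delta_u+\delta_v\le 1$ is verified in every case, the induction closes and $\wt{\chi}(\uppi_q,\rA)\le\wt{\chi}(\uppi_{q-1},\rA)\le\cdots\le\wt{\chi}(\uppi_{Q_0},\rA)\le\wt{\chi}(\uppi_0,\rA)-\lceil Q_0/2\rceil$ for all $q\ge Q_0$, which is the claim.
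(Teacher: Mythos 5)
Your overall framework matches the paper's: induction on $q\ge Q_0$, showing $\wt{\chi}(\uppi_q,\rA)\le\wt{\chi}(\uppi_{q-1},\rA)$ for each Procedure~2 merge, with the correct observations that $\#\uppi$ drops by one, that only $\uppi^u$ and $\uppi^v$ can change, and that both $h$ and $-\lceil s(\uppi_1^{\cdot*})/2\rceil+\#\uppi_1^{\cdot*}$ are non-increasing under a singleton merge; and the reduction to $\delta_u+\delta_v\le 1$ is a clean reformulation of what the paper does implicitly. But the proposal has a genuine gap exactly where you flag one.

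First, the case ``exactly one of $B\cap\cL_u(\Delta)$, $B\cap\cL_v(\Delta)$ is nonempty, say the $v$-side'' is not a without-loss-of-generality reduction. Since $\ell=\ell_{uv}^{s,r}$ enters $G_v$ as an $a$-slot ($\ell\in\cL_v^-(\Delta)$) but enters $G_u$ as an indicator-slot ($\ell\in\cL_u^+(\Delta)$), the analysis when $B$ touches only $\cL_u(\Delta)$ is structurally different from the one you wrote out for $v$; the paper handles it as a separate sub-case (its Case~I with $f=3$ and $u=u'$), and you would need to redo the bookkeeping there since the merge now alters the chain structure of $G^0_{\uppi^u|_{\cL_u^+(\Delta)}}$ directly rather than the marking of existing chains.

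Second, and more seriously, the hard case ($B$ touches both endpoints; the paper's Case~II with $u=u'$, $v=v'$) is closed in the paper not by a validity-failure argument but by a conditional cancellation: if $h(\uppi_q^u)=h(\uppi_{q-1}^u)$, then necessarily $s(\uppi_{q1}^{v*})=s(\uppi_{q-1,1}^{v*})$ while $\#\uppi_{q1}^{v*}\le\#\uppi_{q-1,1}^{v*}-1$, which forces the $v$-side exponent down by one and hence $\delta_v\le 0$. This is a cross-vertex statement tying the unchanged chain count at $u$ to the saturation count $s$ at $v$; it has no obvious analogue in your sketch. Your proposed route, that $\delta_u=\delta_v=1$ makes $\uppi_q^u|_{\cL_u^+(\Delta)}$ or $\uppi_q^v|_{\cL_v^+(\Delta)}$ invalid, is unverified and I do not see that it holds: merging a tail $\ell_{uv}^{s,1}$ into a block containing a head $\ell_{uv}^{s',2}$ with $s\ne s'$ concatenates two chains and \emph{preserves} validity while still dropping $\#\uppi^u$ by one, so ruling out $\delta_u=\delta_v=1$ requires the finer $h$--$s$ interaction, not a degree obstruction. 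Since you explicitly leave this case as ``I would show\ldots,'' the argument as written does not close.
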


\begin{proof}[Proof of Lemma \ref{Lemma-Reduce-Procedure-After-Q0}]
    From Proposition \ref{Local-Estimate-Partition}, we have $\wt{G}(\uppi_q,\rA)=O(n^{\wt{\chi}(\uppi_q,\rA)})$, where
    $$
    \wt{\chi}(\uppi_q,\rA)=\# \uppi_q+\sum_{v\in V(\Delta)} \l(-\# \uppi_q^v+d^-(v)-\bigg\lceil\frac{s(\uppi_{q1}^{v*})}{2}\bigg\rceil+\# \uppi_{q1}^{v*}+h(\uppi^v_{q})\r).
    $$
    By induction on $q$, it suffices to prove that $\wt{\chi}(\uppi_q,\rA)\leq \wt{\chi}(\uppi_{q-1},\rA),\ \text{for any }q\geq Q_0+1$. Recall that $\uppi_q=\operatorname{merge}(\uppi_{q-1},\ell_{uv}^{s,r},\ell_{u'v'}^{s',r'})$. Then $\# \uppi_{q}=\# \uppi_{q-1}-1$. For every $v\in V(\Delta)$, $\uppi_{q1}^{w*}$ is either $\uppi_{q-1,1}^{w*}$ or obtained from $\uppi_{q-1,1}^{w*}$ by merging blocks. Therefore
    \bea\label{Lemma-Reduce-Procedure-Before-Q0-CaseI-eq1}
    -\bigg\lceil\frac{s(\uppi_{q1}^{w*})}{2}\bigg\rceil+\# \uppi_{q1}^{w*}\leq -\bigg\lceil\frac{s(\uppi_{q-1,1}^{w*})}{2}\bigg\rceil+\# \uppi_{q-1,1}^{w*}.
    \eea
    Since merging blocks does not increase connected components in $G_{\uppi^w}^0$ and at least eliminate one connected component, we have $h(\uppi^w_{q-1})-1\leq h(\uppi^w_{q})\leq h(\uppi^w_{q-1})$. Now let $f=\# \{u,v,u',v'\}\in\{2,3,4\}$.
    
    \textbf{Case I: }$f\geq 3.$ If $f=4$, then $\# \uppi_{q}^w=\# \uppi_{q-1}^w,\ \text{for any }w\in V(\Delta)$. If $f=3$, suppose $v\ne u=u'\ne v'$, then $\# \uppi_{q}^u=\# \uppi_{q-1}^u-1,\ \# \uppi_q^w=\# \uppi_{q-1}^w,\ \text{for any }w\ne u$. Hence
    \bea\label{Lemma-Reduce-Procedure-Before-Q0-CaseI-eq2}
    \# \uppi_q-\sum_{w\in V(\Delta)}\# \uppi_q^w\leq \# \uppi_{q-1}-\sum_{w\in V(\Delta)}\# \uppi_{q-1}^w.
    \eea
     Then from \eqref{Lemma-Reduce-Procedure-Before-Q0-CaseI-eq1} and \eqref{Lemma-Reduce-Procedure-Before-Q0-CaseI-eq2}, we have $\wt{\chi}(\uppi_q,\rA)\leq \wt{\chi}(\uppi_{q-1},\rA)$.

    \textbf{Case II: } $f=2$. Then $u=u',\ v=v'$ since $\uppi_q^u$ is valid. Then $\# \uppi_{q}^u=\# \uppi_{q-1}^u-1,\ \# \uppi_{q}^v=\# \uppi_{q-1}^v-1,\ $and $\# \uppi_{q}^w=\# \uppi_{q-1}^w, \text{for any }w\ne u,v$. If $h(\uppi_q^u)=h(\uppi_{q-1}^u)$, then ${s(\uppi_{q1}^{v*})}={s(\uppi_{q-1,1}^{v*})}$. Then together with $\uppi_{q1}^{v*}\leq\uppi_{q-1,1}^{v*}-1$ and \eqref{Lemma-Reduce-Procedure-Before-Q0-CaseI-eq2} we have
    $$
      h(\uppi_q^u)-\bigg\lceil\frac{s(\uppi_{q1}^{v*})}{2}\bigg\rceil+\# \uppi_{q1}^{v*}\leq h(\uppi_{q-1}^u)-\bigg\lceil\frac{s(\uppi_{q-1,1}^{v*})}{2}\bigg\rceil+\# \uppi_{q-1,1}^{v*}-1.
    $$
    Then again from \eqref{Lemma-Reduce-Procedure-Before-Q0-CaseI-eq2} for all $w\ne v$, we have $\wt{\chi}(\uppi_q,\rA)\leq \wt{\chi}(\uppi_{q-1},\rA)$. 
\end{proof}
    From Lemmas \ref{Lemma-Reduce-Procedure-Before-Q0} and \ref{Lemma-Reduce-Procedure-After-Q0}, we have
    \bea\nonumber
    \wt{G}(\uppi,\rA)=&O\big(n^{2\# E(\Delta)-\sum_{u\in V(\Delta)}\lceil d^-(u)/2\rceil-\lceil Q_0/2\rceil}\big)\\=&O\big(n^{2\# E(\Delta)-(\# E(\Delta^0)\land (\# E(\Delta)+Q_0)/2)}\big).
    \eea
    Then from \eqref{Final-graphical-representation} we finish the proof.

\subsection{Proofs in Section \ref{section-LSD-SC}}

\subsubsection{Proof of Lemma \ref{tree-independence-semicircle}}

    It is a direct corollary of Corollary \ref{Corollary:tree-independence-coef}, choosing the $f$ as
    $$
    f_\phi(\sigma)=\frac{1}{2}f_\xi(\sigma)+\frac{1}{2}f_\xi(\sigma^{-1}).
    $$

\subsubsection{Proof of Lemma \ref{EPhi(Delta)-semicirclelaw}}

    When $u\ne v$, $\Phi_{uv}^{(n)}=(\Xi_{uv}^{(n)}+\Xi_{vu}^{(n)})/2$. Then
    $$
\E[\mf{\Phi}_n(\overline{\Delta})]=2^{-k}\sum_{\mf{g}}\E[\Xi_{i_1i_2}^{(n,g_1)}\Xi_{i_2i_3}^{(n,g_2)}\cdots\Xi_{i_{k-1}i_k}^{(n,g_{k-1})}\Xi_{i_ki_1}^{(n,g_k)}].
    $$
    Here the summation is taken on all $\mf{g}=(g_1,\cdots,g_k)^\tp,\ g_u\in \{0,1\},\ \text{for any }u\in [k]$, and $\Xi_{ij}^{(n,0)}=\Xi_{ij}^{(n)},\ \Xi_{ij}^{(n,1)}=\Xi_{ji}^{(n)}$. From Proposition \ref{Graph-Estimate-EDelta0}, the summand is $O(n^{-\# E(\overline{\Delta}^0)})$.

\subsection{Proof in Section \ref{Section-CLT}}

\subsubsection{Proof of Lemma \ref{cycle-tree independence}}

        Let $\{\sigma_{uv}:\ (u,v)\in E(G^0)\}$ be a set of deterministic permutations. We then have
{\small
\bea\nonumber
    &\P\l(\bigcap_{(u,v)\in E(G^0)}\big\{R_vR_u^{-1}=\sigma_{uv}\big\}\r)\\
    =&\sum_{\uptau_1}\P\l(\bigcap_{(u,v)\in E(G^0)}\big\{R_v=\sigma_{uv}R_u\big\}\bigg| R_{i_1}=\uptau_1\r)\P(R_{i_1}=\uptau_1)\\
    =&\sum_{\uptau_1}\P\l(\bigcap_{(u,v)\in (E(G^0)\setminus E(T_1^0))}\big\{R_v=\sigma_{uv}R_u\big\}\bigg| R_{i_1}=\uptau_1\r)\P\l(\bigcap_{(u,v)\in E(T_1^0)}\big\{R_v=\sigma_{uv}R_u\big\}\bigg| R_{i_1}=\uptau_1\r)\P(R_{i_1}=\uptau_1)\\
    =&\P\l(\bigcap_{(u,v)\in E(T_1^0)}\big\{R_v=\sigma_{uv}R_u\big\}\r)\sum_{\uptau_1}\P\l(\bigcap_{(u,v)\in (E(G^0)\setminus E(T_1^0))}\big\{R_v=\sigma_{uv}R_u\big\}\bigg| R_{i_1}=\uptau_1\r)\P(R_{i_1}=\uptau_1)\\
    =&\P\l(\bigcap_{(u,v)\in E(T_1^0)}\big\{R_vR_u^{-1}=\sigma_{uv}\big\}\r)\P\l(\bigcap_{(u,v)\in (E(G^0)\setminus E(T_1^0))}\big\{R_vR_u^{-1}=\sigma_{uv}\big\}\r).
    \eea
}
    Then $\ml{R}(T_1^0)$ is independent of $\ml{R}(C^0)\cup\ml{R}(T_2^0)\cup\cdots\cup\ml{R}(T_s^0)$. Repeating this process, $$\{\ml{R}(C^0),\ml{R}(T_1^0),\ml{R}(T_2^0),\cdots,\ml{R}(T_s^0)\}$$ are independent. 

\subsection{Proof in Section \ref{section-power-Qxi2}}\label{Section-aux-power-Qxi2}

\subsubsection{Proof of Lemma \ref{lem:chatterjee-one-row-stability}}

\begin{proof}[Proof of Lemma \ref{lem:chatterjee-one-row-stability}]
For a sequence $z=(z_1,\ldots,z_m)$, write
\[
V(z):=\sum_{k=1}^{m-1}|z_{k+1}-z_k|.
\]
For a no-ties bivariate sample, let $R_X$ and $R_Y$ be the two rank permutations. With
\[
\uppi:=R_Y\circ R_X^{-1},
\]
Chatterjee's statistic has the representation
\[
\xi_n(\mX,\mY)=1-\frac{3}{n^2-1}V(\uppi).
\]
Thus it suffices to bound the change in $V$. Suppose that the modified observation has old $X$-rank $a$, old $Y$-rank $c$, new $X$-rank $b$, and new $Y$-rank $d$. The relative $X$-order and the relative $Y$-order of the remaining $n-1$ observations are unchanged. Hence the new permutation $\uppi'$ is obtained from $\uppi$ by deleting the entry $c$ at position $a$, relabeling the remaining ranks, and then inserting the entry $d$ at position $b$.

Deleting one entry from a sequence with values in $[n]$ changes $V$ by at most $2(n-1)$: if the deleted entry is internal, two adjacent edges are replaced by their shortcut, and the assertion follows from the triangle inequality; if it is an endpoint, the bound is immediate. The same bound holds for inserting one entry, by the reverse argument.

It remains only to consider the relabeling step. After deleting rank $c$, suppose that a remaining rank $v\in [n]\setminus \{c\}$ is mapped to the relabeled rank $f(v)$. Then this map satisfies $|f(v)-v|\leq 1$ for all $v\in[n]\setminus\{c\}$. Therefore, for each adjacent pair $(u_k,u_{k+1})$ in the intermediate sequence,
\[
\bigl||f(u_{k+1})-f(u_k)|-|u_{k+1}-u_k|\bigr|\leq 2.
\]
There are $n-2$ such pairs, so relabeling changes $V$ by at most $2(n-2)$. Combining the three bounds gives
\[
|V(\uppi')-V(\uppi)|
\leq 2(n-1)+2(n-2)+2(n-1)
=6n-8.
\]
Consequently,
\[
|\xi_n(\mX,\mY)-\xi_n(\mX',\mY')|
\leq
\frac{3(6n-8)}{n^2-1}
\leq
\frac{18}{n},
\]
as claimed.
\end{proof}

\subsubsection{Proof of Lemma \ref{lem:gaussian-local-xi}}

\begin{proof}[Proof of Lemma \ref{lem:gaussian-local-xi}]
Let $\xi(\rho)$ denote the population Chatterjee correlation of a bivariate
standard Gaussian vector with Pearson correlation $\rho$. Then by Taylor expansion, as $\rho\to0$,
\[
\xi(\rho)=\frac{\sqrt{3}}{\pi}\rho^2+o(\rho^2);
\]
see for example, \citet[Section 3]{auddy2021exact}. By \citet[Theorem 2.1]{auddy2021exact},
\[
\mu_n(\rho_n)=\xi(\rho_n)+o(n^{-1/2})
\]
uniformly for $\rho$ in a neighborhood of zero. Therefore, if
$\lim \rho_n = 0$ and $\liminf n\rho_n^4>0$, then
\[
\frac{\mu_n(\rho_n)}{\rho_n^2}
=
\frac{\sqrt{3}}{\pi}
+o(1)
+o\left(\frac{1}{\sqrt{n}\rho_n^2}\right)
\to
\frac{\sqrt{3}}{\pi}.
\]  
\end{proof}

{
\bibliographystyle{apalike}
\bibliography{reference}
}

\end{document}